\def\Z{\mathbf{Z}}
\def\C{\mathbf{C}}
\DeclareMathOperator{\Mul}{Mul}
\DeclareMathOperator{\Spectra}{Sp}
\DeclareMathOperator{\Proj}{Proj}
\DeclareMathOperator{\CDer}{{\mathcal D}er}
\DeclareMathOperator{\LPrest}{L\mathcal{P}r}
\DeclareMathOperator{\RPrest}{R\mathcal{P}r}
\DeclareMathOperator{\LPress}{\widehat{\Cat}_{\infty}^{\LPrest}}
\DeclareMathOperator{\RPress}{\widehat{\Cat}_{\infty}^{\RPrest}}
\DeclareMathOperator{\Ring}{Ring}
\DeclareMathOperator{\MRing}{Ring^{+}}
\DeclareMathOperator{\Stabb}{\sigma}
\DeclareMathOperator{\aug}{aug}
\DeclareMathOperator{\Post}{Post}
\newcommand{\Andre}{Andr\'{e}}
\DeclareMathOperator{\RPressStab}{\widehat{\Cat}_{\infty}^{\RPrest, \Stabb}}
\newcommand{\EInfty}{ {\mathfrak E}_{\infty}}
\newcommand{\Kahler}{K\"{a}hler\,}
\DeclareMathOperator{\Der}{Der}
\DeclareMathOperator{\conn}{conn}
\DeclareMathOperator{\Exc}{Exc}
\newcommand{\seg}[1]{{\langle #1 \rangle}}
\DeclareMathOperator{\Alg}{Alg}
\DeclareMathOperator{\CAlg}{CAlg}
\DeclareMathOperator{\LFun}{\Fun^{L}}
\newcommand{\toposref}[1]{T.\ref{HTT-#1}}
\newcommand{\stableref}[1]{S.\ref{STA-#1}}
\newcommand{\monoidref}[1]{M.\ref{MON-#1}}
\newcommand{\symmetricref}[1]{C.\ref{SYM-#1}}
\newcommand{\bicatref}[1]{B.\ref{BIC-#1}}
\newcommand{\degree}{\text{o}}
\DeclareMathOperator{\sCoNerve}{\mathfrak{C}}
\DeclareMathOperator{\Cat}{\mathcal{C}at}
\newcommand{\h}[1]{\rm{h} \! #1}
\newcommand{\Adjoint}[4]{\xymatrix@1{#2 \ar@<.4ex>[r]^-{#1} & #3 \ar@<.4ex>[l]^-{#4}}}
\DeclareMathOperator{\coker}{coker}
\newcommand{\etale}{{\'{e}tale}\,\,}
\newcommand{\etalenospace}{{\'{e}tale}}
\newcommand{\et}{\'{e}t}
\newcommand{\mathet}{\text{\et}}
\newcommand{\Etale}{\'{E}tale\,}
\newcommand{\bigdot}{\bullet}
\DeclareMathOperator{\Stab}{Stab}
\DeclareMathOperator{\bHom}{Map}
\DeclareMathOperator{\Sym}{Sym}
\DeclareMathOperator{\Mod}{\mathcal{M}od}
\DeclareMathOperator{\CMod}{c\mathcal{M}od}
\DeclareMathOperator{\calM}{\mathcal{M}}
 \DeclareMathOperator{\G}{\mathbf{G}}
\DeclareMathOperator{\Ext}{Ext} 
\DeclareMathOperator{\DerRing}{\mathcal{SCR}}
\DeclareMathOperator{\Q}{\mathbf{Q}}
\DeclareMathOperator{\Tor}{Tor} 
\DeclareMathOperator{\colim}{colim}
\DeclareMathOperator{\bd}{\partial}
\newcommand{\Sphere}{S}
\DeclareMathOperator{\calE}{\mathcal{E}}
\DeclareMathOperator{\calO}{\mathcal{O}}
\DeclareMathOperator{\FinSpace}{\mathcal{S}_{\ast}^{fin}}
\DeclareMathOperator{\Spec}{{\bf Spec}}
\DeclareMathOperator{\Hom}{Hom} 
\DeclareMathOperator{\HH}{H} 
\DeclareMathOperator{\id}{id} \DeclareMathOperator{\Fun}{Fun}
\DeclareMathOperator{\calC}{\mathcal{C}}
\DeclareMathOperator{\SSet}{\mathcal{S}}
\DeclareMathOperator{\calX}{\mathcal{X}}
\DeclareMathOperator{\calD}{\mathcal{D}}
\DeclareMathOperator{\Ind}{Ind} 
\newtheorem{theorem}{Theorem}[subsection]
\newtheorem{lemma}[theorem]{Lemma}
\newtheorem{proposition}[theorem]{Proposition}
\newtheorem{corollary}[theorem]{Corollary}
\theoremstyle{definition}
\newtheorem{definition}[theorem]{Definition}
\newtheorem{example}[theorem]{Example}
\newtheorem{notation}[theorem]{Notation}
\newtheorem{warning}[theorem]{Warning}
\newtheorem{remark}[theorem]{Remark}
\begin{document}

\title{Derived Algebraic Geometry IV: Deformation Theory}

\maketitle
\tableofcontents

\section*{Introduction}

Let $X = \Spec A$ be a smooth affine algebraic variety, defined over the field $\C$ of complex numbers. The holomorphic cotangent bundle $T^{\ast}_{X}$ has the structure of an algebraic vector bundle over $X$, associated to a projective $A$-module $\Omega_{A/\C}$ of finite rank. The $A$-module
$\Omega_{A/\C}$ is called the {\it module of \Kahler differentials} of $A$ (relative to $\C$). In order to describe it, we first recall a bit of terminology. 
Let $M$ be an arbitrary $A$-module. A {\it $\C$-linear derivation} from $A$ into $M$ is a map of complex vector spaces $d: A \rightarrow M$ which satisfies the Leibniz rule $d(ab) = a (db) + b (da)$. Let $\Der_{\C}(A,M)$ denote the set of all $\C$-linear derivations
of $A$ into $M$. The module $\Omega_{A/ \C}$ is the receptacle for the {\em universal} derivation; in other words, there exists a derivation $d: A \rightarrow \Omega_{A/\C}$ such that, for every $A$-module $M$, composition with $d$ induces an isomorphism $$ \Hom_{A}( \Omega_{A/\C}, M) \rightarrow \Der_{\C}(A, M).$$
More concretely, $\Omega_{A/\C}$ is generated (as an $A$-module) by symbols $\{ df \}_{f \in A}$, which are subject to the usual rules of calculus:
$$ d(f+g) = df + dg$$
$$ d(fg) = f (dg) + g (df)$$
$$ d \lambda = 0, \lambda \in \C.$$
The module of \Kahler differentials $\Omega_{A/\C}$ is a fundamental invariant of the algebraic variety $X$, and plays an important role in the study of deformations of $X$. The goal of this paper is to describe the analogue of $\Omega_{A/\C}$ in the case where the $\C$-algebra $A$ is replaced by an $E_{\infty}$-ring.

The first step is to reformulate the theory of derivations. Let $A$ be a $\C$-algebra and let $M$ be an $A$-module. Complex-linear derivations of $A$ into $M$ can be identified with
$\C$-algebra maps $A \rightarrow A \oplus M$, which are sections of the canonical projection
$A \oplus M \rightarrow A$. Here $A \oplus M$ is endowed with the ``trivial square-zero''
algebra structure, described by the formula $$ (a,m)(a',m') = (aa', am' + a'm).$$

The advantage of the above formulation is that it generalizes easily to other contexts. For example, suppose now that $A$ is an $E_{\infty}$-ring, and that $M$ is an $A$-module. The direct sum
$A \oplus M$ (formed in the $\infty$-category of spectra) admits a canonical $E_{\infty}$-ring structure, generalizing the trivial square-zero structure described above. Moreover, there is a canonical projection
$p: A \oplus M \rightarrow A$. We will refer to sections of $p$ (in the $\infty$-category of $E_{\infty}$-rings) as {\it derivations} of $A$ into $M$. As in the classical case, there is a universal example of
an $A$-module $M$ equipped with a derivation from $A$; this $A$-module is called the
{\it absolute cotangent complex of $A$} (or the {\it topological \Andre-Quillen homology} of $A$), and
will be denoted by $L_{A}$. 

The theory of the cotangent complex plays a fundamental role in the foundations of derived algebraic geometry. Let $A$ be a connective $E_{\infty}$-ring, and let $\pi_0 A$ denote the underlying ordinary commutative ring of connected components of $A$. The canonical map $\phi: A \rightarrow \pi_0 A$
should be viewed as an {\em infinitesimal} extension, whose kernel consists of ``nilpotents'' in
$A$. Consequently, the difference between the theory of (connective) $E_{\infty}$-rings and that of ordinary commutative rings can be reduced to problems in deformation theory, which are often conveniently phrased in terms of the cotangent complex. As a simple illustration of this principle, we offer the following example (Corollary \ref{twain}): if $f: A \rightarrow B$ is a morphism of connective $E_{\infty}$-rings
which induces an isomorphism of ordinary commutative rings $\pi_0 A \rightarrow \pi_0 B$, then
$f$ is an equivalence if and only if the {\em relative} cotangent complex $L_{B/A}$ vanishes.

Our goal in this paper is to define the cotangent complex $L_{A}$ of an $E_{\infty}$-ring $A$, and to 
study the associated deformation theory. We have divided this paper into four parts, whose contents we will now sketch; a more detailed summary can be found at the beginning of each part.

We will begin in \S \ref{gentheory} with a very general formalism. To every presentable $\infty$-category $\calC$, we will define a {\it tangent bundle} $T_{\calC}$. Roughly speaking, $T_{\calC}$ is an $\infty$-category whose objects can be viewed as pairs $(A, M)$, where $A \in \calC$ and
$M$ is an infinite loop object in the $\infty$-category $\calC^{/A}$ of objects of $\calC$ lying
over $A$. In this case, we can take the ``0th space'' of $M$, to obtain an object
of $\calC$ which we will denote by $A \oplus M$. The functor
$(A, M) \mapsto A \oplus M$ admits a left adjoint, given by
$A \mapsto (A, L_A)$. We will refer to $L$ as the {\it absolute cotangent complex functor} of $\calC$. 
Many of the basic formal properties of the cotangent complex can be 
established in the general setting. For example, to every morphism
$A \rightarrow B$ in $\calC$, we can define a {\em relative} cotangent complex
$L_{B/A}$. Moreover, to every commutative triangle
$$ \xymatrix{ & B \ar[dr]^{f} & \\
A \ar[ur] \ar[rr] & & C }$$
in $\calC$, we can associate a distinguished triangle
$$ f_{!} L_{B/A} \rightarrow L_{C/A} \rightarrow L_{C/B} 
\rightarrow
f_{!} L_{B/A}[1] $$
in the homotopy category of $\Stab( \calC^{/C} )$  (Corollary \ref{longtooth}). Here
$f_{!}: \Stab( \calC^{/B}) \rightarrow \Stab( \calC^{/C})$ denotes the base change functor
associated to $f$. 

Of course, we are primarily interested in the situation where $\calC$ is the $\infty$-category
of $E_{\infty}$-rings. In \S \ref{sec2}, we will identify the $\infty$-category
$T_{\calC}$ in this case. The main result, Theorem \ref{scummer2}, asserts that
$T_{\calC}$ can be identified with the $\infty$-category of pairs $(A,M)$, where
$A$ is an $E_{\infty}$-ring and $M$ is an $A$-module. In this case, the
functor $(A, M) \mapsto A \oplus M$ implements the idea sketched above: as a spectrum,
$A \oplus M$ can be identified with the coproduct of $A$ with $M$, and the multiplication
on $A \oplus M$ is trivial on $M$ (Remark \ref{toaster}). 

In \S \ref{sec3}, we will study the cotangent complex functor $A \mapsto L_{A}$ in the
setting of $E_{\infty}$-rings. Here our objectives are more quantitative. Our main result, Theorem \ref{tulbas}, asserts that the connectivity properties of a map $f: A \rightarrow B$ of connective $E_{\infty}$-rings are closely related to the connectivity properties of the relative cotangent complex $L_{B/A}$. 
This result has many consequences; for example, it implies that the cotangent complex $L_{B/A}$ can be used to test whether or not $f$ has good finiteness properties (Theorem \ref{sucker}). 
Our other objective in \S \ref{sec3} is to introduce the theory of \etale maps between $E_{\infty}$-rings, and to show that the relative cotangent complex of an \etale map vanishes (Proposition \ref{etrel}).

Our goal in the final section of this paper (\S \ref{sec4}) is to study square-zero extensions of $E_{\infty}$-rings. The idea is very general: given an $E_{\infty}$-ring $A$
and a map of $A$-modules $\eta: L_{A} \rightarrow M[1]$, we can build an associated
``square-zero'' extension $$ M \rightarrow A^{\eta} \rightarrow A.$$
The main results in this context are existence theorems, which assert that a large class
of maps $\widetilde{A} \rightarrow A$ arise via this construction (Theorems \ref{h2h2} and
\ref{exsqze}). We will apply these results to prove a crucial comparison result: for every $E_{\infty}$-ring $A$, the $\infty$-category of \etale $A$-algebras is equivalent to the ordinary category of
\etale $\pi_0 A$-algebras (Theorem \ref{turncoat}). This result will play an essential role in developing the foundations of derived algebraic geometry; see \cite{structured}.


\begin{remark}
The theory of the cotangent complex presented here is not new. For expositions in a similar spirit, we refer the reader to \cite{schwede} and \cite{basterra}.
\end{remark}

\begin{warning}
In this paper, we will generally be concerned with the {\em topological} version of \Andre-Quillen homology. This theory is closely related to the classical \Andre-Quillen theory, but generally yields different answers even for discrete commutative rings. More precisely, suppose that $R$ is a discrete commutative ring. In this case, the relative
topological \Andre-Quillen homology $L_{R/ \Z}$ admits the structure of an $(R \otimes_{\Sphere} \Z)$-module, where the tensor product is taken over the sphere spectrum $\Sphere$. The relative tensor product $$ L_{R/ \Z} \otimes_{R \otimes_{\Sphere} \Z} R$$
can be identified (as an $R$-module spectrum) with the classical cotangent complex $L^{\degree}_{R}$ constructed by Quillen as the nonabelian left derived functor of the \Kahler differentials. In particular, for each $i \geq 0$ we obtain an induced map
$$\phi_{i}: \pi_{i} L_{R} \rightarrow \pi_{i} L^{\degree}_{R}.$$ 
This map is an isomorphism for $i \leq 1$ and
a surjection when $i = 2$. Moreover, if $R$ is a $\Q$-algebra, then $\phi_{i}$ is an isomorphism
for all $i \in \Z$.
\end{warning}

\subsection*{Notation and Terminology}

Throughout this paper, we will freely use the theory of $\infty$-categories developed in
\cite{topoi}. We will also use \cite{DAGStable} as a reference for the theory of stable $\infty$-categories, \cite{monoidal} as reference for the theory of monoidal $\infty$-categories, and
\cite{symmetric} as a reference for the theory of symmetric monoidal $\infty$-categories and
$E_{\infty}$-rings. We will also use \cite{derivative} as a reference for some rudiments of the Goodwille calculus.

References to \cite{topoi} will be indicated by use of the letter T, references to \cite{DAGStable} will be indicated by use of the letter S, references to \cite{monoidal} will be indicated by use of the letter M,
references to \cite{symmetric} will be indicated by use of the letter C, and references to
\cite{derivative} will be indicated by use of the letter B.
For example, Theorem \toposref{mainchar} refers to Theorem \ref{HTT-mainchar} of \cite{topoi}.

If $p: X \rightarrow S$ is a map of simplicial sets and $s$ is a vertex of $S$, we will typically write
$X_{s}$ to denote the fiber $X \times_{S} \{s\}$. 
 
Let $n$ be an integer. We say that a spectrum $X$ is {\it $n$-connective} if
$\pi_{i} X$ vanishes for $i < n$. We say that $X$ is {\it connective} if it is $0$-connective.
We will say that a map $f: X \rightarrow Y$ of spectra is {\it $n$-connective} if 
the fiber $\ker(f)$ is $n$-connective.

We let $\Spectra$ denote the $\infty$-category of spectra. The $\infty$-category
$\CAlg( \Spectra)$ of $E_{\infty}$-rings will be denoted by $\EInfty$. We let
$\EInfty^{\conn}$ denote the full subcategory of $\EInfty$ spanned by 
the {\em connective} $E_{\infty}$-rings.

Let $\calC \rightarrow \calD$ be an inner fibration of $\infty$-categories. For each object
$D \in \calD$, we let $\calC_{D}$ denote the fiber $\calC \times_{\calD} \{ D \}$. We will apply a similar notation for functors: given a commutative diagram
$$ \xymatrix{ \calC \ar[rr]^{F} \ar[dr]^{p} & & \calC' \ar[dl]^{q} \\
& \calD & }$$
in which $p$ and $q$ are inner fibrations, 
we let $F_{D}$ denote the induced map of fibers $\calC_{D} \rightarrow \calC'_{D}$.
In this situation, we also let $\Fun_{\calD}( \calC, \calC')$ denote the 
fiber product $\Fun( \calC, \calC') \times_{ \Fun( \calC, \calD) } \{ p \}$. 

\section{The Cotangent Complex: General Theory}\label{gentheory}

Our goal in this section is to introduce the basic formalism underlying the theory of cotangent complex. 
Let us begin by reviewing the classical theory of \Kahler differentials.
Given a commutative ring $A$ and an $A$-module $M$, we define a {\em derivation} of $A$ into $M$ to be a section of the projection map $A \oplus M \rightarrow A$. This definition depends on our ability to endow the direct sum $A \oplus M$ with the structure of a commutative ring. To describe the situation a little bit more systematically, let $\Ring$ denote the category of commutative rings, and
$\MRing$ the category of pairs $(A, M)$, where $A$ is a commutative ring and $M$ is an
$A$-module. A morphism in the category $\MRing$ is a pair of maps $(f,f'): (A, M) \rightarrow (B, N)$,
where $f: A \rightarrow B$ is a ring homomorphism and $f': M \rightarrow N$ is a map of $A$-modules,
(here we regard $N$ as an $A$-module via transport of structure along $f$). Let
$G: \MRing \rightarrow \Ring$ be the square-zero extension functor given by the formula
$(A, M) \mapsto A \oplus M$. Then the functor $G$ admits a left adjoint $F$, which is described by the formula $F(A) = (A, \Omega_{A})$. Here $\Omega_{A}$ is the $A$-module of {\it absolute \Kahler differentials}: it is generated by symbols $\{ da \}_{a \in A}$, subject to the relations
$$ d(a+a') = da + da'$$
$$ d(aa') = a da' + a' da.$$
For every commutative ring $A$, the unit map $u_{A}: A \rightarrow (G \circ F)(A) = A \oplus \Omega_{A}$
is given by the formula $a \mapsto a + da$.

We now make two fundamental observations concerning the above situation:
\begin{itemize}
\item[$(1)$] In addition to the functor $G$, there is a forgetful functor
$G': \MRing \rightarrow \Ring$, given by $(A, M) \mapsto A$. 
Moreover, there is a natural transformation of functors from $G$ to $G'$, which
can itself be viewed as a functor from $\MRing$ into the category
$\Fun( [1], \Ring)$ of arrows in $\Ring$.

\item[$(2)$] For every commutative ring $A$, the fiber
${G'}^{-1} \{A \}$ is an abelian category (namely, the category of $A$-modules).
\end{itemize}
We wish to produce an analogous theory of derivations in the case where the category
$\Ring$ is replaced by an arbitrary presentable $\infty$-category $\calC$. 
What is the proper analogue of $\MRing$ in this general situation?
Observation $(1)$ suggests that we should choose another $\infty$-category
$\calC^{+}$ equipped with a functor $\calC^{+} \rightarrow \Fun( \Delta^1, \calC)$.
Observation $(2)$ suggests that the fibers of composite map
$$ \phi: \calC^{+} \rightarrow \Fun( \Delta^1, \calC) \rightarrow \Fun( \{1\}, \calC) \simeq \calC$$
should be ``abelian'' in some sense. There is a good $\infty$-categorical analogue of the theory of abelian categories: the theory of {\em stable} $\infty$-categories, as presented in \cite{DAGStable}.
It is therefore natural to require that the the fibers of $\phi$ be stable. It turns out that there
is a canonical choice for the $\infty$-category $\calC^{+}$ with these properties. We will refer to this canonical choice as the {\it tangent bundle} to $\calC$ and denote by $T_{\calC}$. Roughly speaking, an object of $T_{\calC}$ consists of a pair
$(A, M)$, where $A \in \calC$ and $M \in \Stab( \calC^{/A})$; here $\Stab$ denotes the {\em stabilization}
construction introduced in \S \stableref{stable9.1}. In \S \ref{sec2}, we will see that this really
is a good analogue of the algebraic situation considered above: if
$\calC$ is the $\infty$-category of $E_{\infty}$-rings, then $T_{\calC}$ can be identified
with the $\infty$-category of pairs $(A, M)$ where $A$ is an $E_{\infty}$-ring and $M$ is an $A$-module.

Once we have established the theory of tangent bundles, we can proceed to define
the analogue of the \Kahler differentials functor. Namely, for any presentable $\infty$-category $\calC$, we will define the {\it cotangent complex functor} $L: \calC \rightarrow T_{\calC}$ to be a left
adjoint to the forgetful functor 
$$T_{\calC} \rightarrow \Fun( \Delta^1, \calC) \rightarrow \Fun( \{0\}, \calC) \simeq \calC.$$
However, it is important to exercise some care here: in the algebraic situation, we want to
make sure that the cotangent complex $L_{A}$ of an $E_{\infty}$-ring produces an $A$-module.
In other words, we want to ensure that the composition
$$ \calC \stackrel{L}{\rightarrow} T_{\calC} \rightarrow \Fun( \Delta^1, \calC)
\rightarrow \Fun( \{1\}, \calC) \simeq \calC$$
is the identity functor. We will construct a functor $L$ with this property using the theory of
{\em relative adjunctions}, which we present in \S \ref{reladj}.

Given an object $A \in \calC$ and $M \in T_{\calC} \times_{\calC} \{A\}$, we can define the notion
of a {\em derivation} of $A$ into $M$. This can be described either as map from
$L_{A}$ into $M$ in the $\infty$-category $T_{\calC} \times_{\calC} \{A\}$, or as
a section of the canonical map $G(M) \rightarrow A$ in $\calC$. For many purposes, it is convenient to work in an $\infty$-category containing both $\calC$ and $T_{\calC}$, in which the morphisms are given by derivations. Such an $\infty$-category is readily available: namely, the correspondence associated to the pair of adjoint functors
$\Adjoint{L}{\calC}{T_{\calC}}{G}$, where $G$ and $L$ are defined as above.
We will call this $\infty$-category the {\it tangent correspondence} to $\calC$; an explicit construction will be given in \S \ref{gen1}.

In the classical theory of \Kahler differentials, it is convenient to consider the absolute \Kahler differentials $\Omega_{A}$ of a commutative ring $A$, but also the module of relative \Kahler differentials $\Omega_{B/A}$ associated to a ring homomorphism $A \rightarrow B$. In \S \ref{relcot} we will introduce an analogous {\em relative} version of the cotangent complex $L$. We will then establish some of the basic formal properties of the relative cotangent complex. For example, given a sequence of commutative ring homomorphisms $A \rightarrow B \rightarrow C$, there
is an associated short exact sequence
$$ \Omega_{B/A} \otimes_{B} C \rightarrow \Omega_{C/A} \rightarrow \Omega_{C/B} \rightarrow 0.$$
Corollary \ref{longtooth} provides an $\infty$-categorical analogue of this statement:
for every commutative diagram
$$ \xymatrix{ & B \ar[dr]^{f} & \\
A \ar[ur] \ar[rr] & & C}$$
in a presentable $\infty$-category $\calC$, there is an associated distinguished triangle
$$ f_{!} L_{B/A} \rightarrow L_{C/A} \rightarrow L_{C/B} \rightarrow f_{!} L_{B/A}[1]$$
in the triangulated category $\h \Stab( \calC^{/C} )$.

\subsection{Stable Envelopes and Tangent Bundles}\label{cotangent1}

In the last section, we introduced the definition of an extension structure on an $\infty$-category $\calC$.
In this section, we will show that every presentable category $\calC$ admits a natural
extension structure, which we will call the {\it tangent bundle} to $\calC$. We begin with some generalities on the stable envelope of $\infty$-categories.

\begin{definition}\label{defstabbb}
Let $\calC$ be a presentable $\infty$-category. A {\it stable envelope} ({\it pointed envelope}) of $\calC$ is a categorical fibration $u: \calC' \rightarrow \calC$ with the following properties:
\begin{itemize}
\item[$(i)$] The $\infty$-category $\calC'$ is stable (pointed) and presentable.
\item[$(ii)$] The functor $u$ admits a left adjoint.
\item[$(iii)$] For every presentable stable (pointed) $\infty$-category $\calE$, composition with
$u$ induces an equivalence of $\infty$-categories
$\Fun^{R}( \calE, \calC' ) \rightarrow \Fun^{R}( \calE, \calC).$
Here $\Fun^{R}( \calE, \calC' )$ denotes the full subcategory of $\Fun( \calE, \calC' )$ spanned by those functors which admit left adjoints, and $\Fun^{R}( \calE, \calC)$ is defined similarly.
\end{itemize}

More generally, suppose that $p: \calD \rightarrow \calC$ is a presentable fibration.
A {\it stable envelope} ({\it pointed envelope}) of $p$ is a categorical fibration
$u: \calC' \rightarrow \calC$ with the following properties:
\begin{itemize}
\item[$(1)$] The composition $p \circ u$ is a presentable fibration.
\item[$(2)$] The functor $u$ carries $(p \circ u)$-Cartesian morphisms of $\calC'$ to
$p$-Cartesian morphisms of $\calC$.
\item[$(3)$] For every object $D \in \calD$, the induced map
$\calC'_{D} \rightarrow \calC_{D}$ is a stable envelope (pointed envelope) of $\calC'_{D}$.
\end{itemize}
\end{definition}

\begin{remark}
Let $\calC$ be a presentable $\infty$-category, so that the projection
$p: \calC \rightarrow \Delta^0$ is a presentable fibration. It follows immediately from the definitions that
a map $u: \calC' \rightarrow \calC$ is a stable envelope (pointed envelope) of $\calC$ if and only if $u$ is a stable envelope (pointed envelope) of $p$.
\end{remark}

Let $p: \calC \rightarrow \calD$ be a presentable fibration, and let $u: \calC' \rightarrow \calC$ be a stable envelope (pointed envelope) of $u$. We will often abuse terminology by saying that $\calC'$ is a a {\it stable envelope} ({\it pointed envelope}) of $p$, or that {\it $u$ exhibits $\calC'$ as a stable envelope $(${\it pointed envelope}$)$ of $p$}. In the case where $\calD \simeq \Delta^0$, we will say instead that {\it $\calC'$ is a stable envelope $(${\it pointed envelope}$)$ of $\calC$}, or that {\it $u$ exhibits $\calC'$ as a stable envelope $(${\it pointed envelope}$)$ of $\calC$}.

\begin{remark}\label{soon}
Suppose given a pullback diagram of simplicial sets
$$ \xymatrix{ \calC_0 \ar[r] \ar[d]^{p_0} & \calC \ar[d]^{p} \\
\calD_0 \ar[r] & \calD }$$
where $p$ (and therefore also $p_0$) is a presentable fibration. If
$u: \calC' \rightarrow \calC$ is a stable envelope (pointed envelope) of the presentable fibration $p$, then
the induced map $\calC' \times_{\calC} \calC_0 \rightarrow \calC_0$ is a stable envelope
(pointed envelope) of the presentable fibration $p_0$.
\end{remark}

\begin{example}
Let $\calC$ be a presentable $\infty$-category and $\Stab(\calC)$ its stabilization. 
Then the composite map $\Stab(\calC) \stackrel{ \Omega^{\infty}_{\calC_\ast}}{\rightarrow} \calC_{\ast} \rightarrow \calC$ exhibits $\Stab(\calC)$ as a stable envelope of $\calC$. This follows immediately from
Corollary \stableref{mapprop}.
\end{example}

\begin{example}
Let $p: \calC \rightarrow \calD$ be a presentable fibration, and suppose that
each fiber $\calC_{D}$ of $p$ is pointed. Then the $\infty$-category
$\Spectra(p)$ of Definition \bicatref{linfib} is a stable envelope of $\calC$;
this can be deduced easily from Proposition \bicatref{jagger}.
\end{example}

\begin{example}\label{pinah}
Let $p: \calC \rightarrow \calD$ be a presentable fibration. We can explicitly construct a stable envelope of $\calC$ as follows. Let $\FinSpace$ denote the $\infty$-category of finite pointed spaces (Notation \stableref{finner}). We let $\calC'$ denote the full subcategory of the fiber product
$$ \Fun( \FinSpace, \calC) \times_{ \Fun( \FinSpace, \calD ) } \calD$$
spanned by those maps which correspond to {\em excisive} functors
$\FinSpace \rightarrow \calC_{D}$ for some object $D \in \calD$. Evaluation
on the zero sphere $S^0 \in \FinSpace$ induces a forgetful functor
$u: \calC' \rightarrow \calC$. The functor $u$ identifies $\calC'$ with a stable envelope of the presentable fibration $p$. The proof is easily reduced to the case where $\calD$ consists of a single point, in which case the result follows from Corollary \stableref{surritt}.
\end{example}

\begin{example}
Let $\calC$ be a presentable $\infty$-category. Let $\calC_{\ast} \subseteq \Fun( \Delta^1, \calC)$
denote the full subcategory of $\calC$ spanned by those morphisms $f: X \rightarrow Y$ such that
$X$ is a final object of $\calC$. Let $u: \calC_{\ast} \rightarrow \calC$ be given by evaluation
at the vertex $\{1\} \subseteq \Delta^1$. We claim that $u$ is a pointed envelope of $\calC$.

It is clear that $\calC_{\ast}$ is pointed (Lemma \toposref{pointerprime}) and that $u$ is a categorical fibration. Using Lemma \toposref{pointer}, we can identify $\calC_{\ast}$ with $\calC^{1/}$, where $1 \in \calC$ is a final object. It follows that
the forgetful functor $u: \calC_{\ast} \rightarrow \calC$ preserves limits (Proposition \toposref{needed17}) and filtered colimits (Proposition \toposref{goeselse}), and therefore admits a left adjoint (Corollary \toposref{adjointfunctor}). To complete the proof, it will suffice to show that if $\calD$ is a pointed presentable category, then composition with $u$ induces an equivalence
$$ \psi: \Fun^{R}( \calD, \calC_{\ast}) \rightarrow \Fun^{R}( \calD, \calC).$$
We now observe that $\Fun^{R}( \calD, \calC_{\ast})$ is isomorphic to the $\infty$-category
of pointed objects $\Fun^{R}( \calD, \calC)_{\ast}$. In view of Lemma \toposref{pointer}, the functor $\psi$ is an equivalence if and only if the $\infty$-category $\Fun^{R}( \calD, \calC)$ is pointed. We now
observe that $\Fun^{R}( \calD, \calC)^{op}$ is canonically equivalent to the full subcategory
$\LFun( \calC, \calD) \subseteq \Fun( \calC, \calD)$ spanned by the colimit-preserving functors.
Since $\calD$ has a zero object $0 \in \calD$, the $\infty$-category $\Fun( \calC, \calD)$ also has a zero object, given by the constant functor taking the value $0$. This functor preserves colimits, and is
therefore a zero object of $\LFun( \calC, \calD)$ as well. 
\end{example}

\begin{remark}\label{huggt}
Let $\calC$ be a presentable $\infty$-category. A stable envelope of $\calC$ is determined uniquely up to equivalence by the universal property given in Definition \ref{defstabbb}, and is therefore equivalent to $\Stab(\calC)$. More precisely, suppose we are given a commutative diagram
$$ \xymatrix{ \calC' \ar[rr]^{w} \ar[dr]^{u} & & \calC'' \ar[dl]^{v} \\
& \calC & }$$
in which $u$ and $v$ are stable envelopes of $\calC$. Then the functor $w$ is an equivalence of
$\infty$-categories (observe that in this situation, the functor $w$ automatically admits a left adjoint by virtue of Proposition \stableref{urtusk22}). Similar reasoning shows that pointed envelopes of $\calC$ are unique up to equivalence.
\end{remark}

Our next goal is to establish a relative version of Remark \ref{huggt}.
First, we need to introduce a bit of notation. Suppose we are given a diagram
$$ \xymatrix{ \calC \ar[dr]^{p} & & \calD \ar[dl]^{q} \\
& \calE & }$$
of $\infty$-categories, where $p$ and $q$ are presentable fibrations.
We let $\Fun^{R}_{\calE}( \calC, \calD)$ denote the full subcategory of
$\Fun_{\calE}(\calC, \calD)$ spanned by those functors $G: \calC \rightarrow \calD$
with the following properties:
\begin{itemize}
\item[$(i)$] The functor $G$ carries $p$-Cartesian edges of $\calC$ to $q$-Cartesian edges of $\calD$.
\item[$(ii)$] For each object $E \in \calE$, the induced functor $G_{E}: \calC_{E} \rightarrow \calD_{E}$
admits a left adjoint.
\end{itemize}
We let $\Fun^{R,\sim}_{\calE}( \calC, \calD)$ denote the largest Kan complex contained
in $\Fun^{R}_{\calE}(\calC, \calD)$.

\begin{proposition}\label{psycher}
Let $p: \calC \rightarrow \calD$ be a presentable fibration of $\infty$-categories.
Then there exists a functor $u: \calC' \rightarrow \calC$ with the following properties:
\begin{itemize}

\item[$(1)$] The functor $u$ is a stable envelope (pointed envelope) of the presentable fibration $p$.

\item[$(2)$] Let $q: \calE \rightarrow \calD$ be a presentable fibration, and assume that
each fiber of $q$ is a stable (pointed) $\infty$-category. Then composition with $u$ induces a trivial
Kan fibration
$$ \Fun^{R, \sim}_{\calD}( \calE, \calC') \rightarrow \Fun^{R, \sim}_{\calD}( \calE, \calC).$$

\item[$(3)$] Let $v: \calE \rightarrow \calC$ be any stable envelope (pointed envelope) of $p$. Then
$v$ factors as a composition
$$ \calE \stackrel{ \overline{v} }{\rightarrow} \calC' \stackrel{u}{\rightarrow} \calC,$$
where $\overline{v}$ is an equivalence of $\infty$-categories.
\end{itemize}
\end{proposition}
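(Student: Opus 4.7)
The plan is to construct $u: \calC' \rightarrow \calC$ explicitly using the excisive-functor model of Example \ref{pinah} (and its pointed analogue), then verify the three properties by reducing to the fiberwise case, where everything follows from Corollary \stableref{mapprop} together with Remark \ref{huggt}. Concretely, I would take
$$\calC' \subseteq \Fun(\FinSpace, \calC) \times_{\Fun(\FinSpace, \calD)} \calD$$
to be the full subcategory of pairs $(F, D)$ with $F: \FinSpace \rightarrow \calC_D$ excisive, and let $u$ be evaluation at $S^0$ (with an analogous $\Fun(\Delta^1, \calC)$-model built from objects $(X \to Y, D)$ with $X$ a zero object of $\calC_D$ for the pointed case). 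Base-changing over each $D \in \calD$ by Remark \ref{soon} and applying Example \ref{pinah} together with Proposition \bicatref{jagger}, one obtains: $p \circ u$ is a presentable fibration, $u$ carries $(p \circ u)$-Cartesian edges to $p$-Cartesian edges, and each fiberwise functor $u_D$ is a stable envelope of $\calC_D$. This establishes (1).

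For (2), I would straighten $p$, $p \circ u$, and $q$ to diagrams $\chi_\calC, \chi_{\calC'}, \chi_\calE: \calD^{\op} \rightarrow \widehat{\Cat}_\infty$. Using the theory of relative adjunctions from \S \ref{reladj}, the Kan complex $\Fun^{R,\sim}_{\calD}(\calE, \calF)$ is identified with a homotopy limit over $\calD^{\op}$ of the Kan complexes $\Fun^R(\calE_D, \calF_D)^{\sim}$, the limit encoding the Cartesian-edge condition as compatibility with the pullback functors. Since each fiber $\calE_D$ is stable (respectively pointed), Corollary \stableref{mapprop} implies that composition with the stable envelope $u_D$ induces an equivalence $\Fun^R(\calE_D, \calC'_D) \rightarrow \Fun^R(\calE_D, \calC_D)$; as $u_D$ is a categorical fibration, the induced map on maximal Kan complexes is not merely an equivalence but a trivial Kan fibration. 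Because a map of simplicial presheaves on $\calD^{\op}$ whose components are all trivial Kan fibrations induces a trivial Kan fibration on homotopy limits, the assertion (2) follows.

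Part (3) is then immediate: given any stable envelope $v: \calE \rightarrow \calC$, the functor $v$ is a vertex of $\Fun^{R,\sim}_{\calD}(\calE, \calC)$, so (2) produces a lift $\overline{v}: \calE \rightarrow \calC'$ with $u \circ \overline{v} \simeq v$. For each $D$, $\overline{v}_D: \calE_D \rightarrow \calC'_D$ is then a map between two stable envelopes of $\calC_D$ (both $v_D$ and $u_D \circ \overline{v}_D = v_D$ are such), hence an equivalence by Remark \ref{huggt}. A map of presentable fibrations over $\calD$ which is a fiberwise equivalence and preserves Cartesian edges is itself an equivalence of $\infty$-categories, so $\overline{v}$ is an equivalence.

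The main obstacle is (2): the identification of $\Fun^{R,\sim}_{\calD}(\calE, \calF)$ with a homotopy limit of the fiberwise spaces $\Fun^R(\calE_D, \calF_D)^{\sim}$ is nontrivial, precisely because the Cartesian-edge-preservation condition couples the fibers and must be unwound via the relative-adjunction formalism of \S \ref{reladj}. Once that identification is established, propagation of the fiberwise universal property of Corollary \stableref{mapprop} to a global trivial Kan fibration is essentially formal, and parts (1) and (3) require no new ideas beyond the explicit construction and the uniqueness result of Remark \ref{huggt}.
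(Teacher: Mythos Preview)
Your outline is essentially correct and parallels the paper's argument, but the packaging differs in a way worth noting.

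The paper does not build $\calC'$ via the excisive-functor model and then attempt to decompose $\Fun^{R,\sim}_{\calD}(\calE,\calC)$ as a homotopy limit of fiberwise mapping spaces. Instead, it straightens $p$ to a functor $\chi: \calD^{op} \to \RPress$, observes that the inclusion $\RPressStab \subseteq \RPress$ admits a right adjoint $G$ (namely $\calX \mapsto \Stab(\calX)$) by Corollary \stableref{mapprop}, and \emph{defines} $\calC'$ to be the fibration classified by $G \circ \chi$, with $u$ classified by the counit $G \circ \chi \to \chi$. Then, after identifying $\Fun^{R,\sim}_{\calD}(\calE,\calC)$ with $\bHom_{\Fun(\calD^{op},\RPress)}(\chi',\chi)$ via straightening (Theorem \toposref{straightthm} and Proposition \toposref{gumby444}), assertion (2) becomes the single observation that $\chi'$ factors through $\RPressStab$, so the adjunction gives the equivalence directly. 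No fiberwise decomposition or reassembly of a limit is needed; the adjunction does all the work at once in the functor $\infty$-category.

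Your route via Example \ref{pinah} and a homotopy-limit decomposition is equivalent in content, since the mapping space in $\Fun(\calD^{op},\RPress)$ \emph{is} that limit, but two points deserve correction. First, your invocation of \S \ref{reladj} for the identification $\Fun^{R,\sim}_{\calD}(\calE,\calF) \simeq \lim_D \Fun^R(\calE_D,\calF_D)^{\sim}$ is misplaced: that section concerns the existence of relative left adjoints, not this identification, which is really a consequence of straightening. Second, to upgrade the homotopy equivalence in (2) to a trivial Kan fibration you need $u$ itself (not just the $u_D$) to be a categorical fibration; the paper handles this by fibrant replacement, and you should too. Part (3) is handled identically in both approaches, reducing to Remark \ref{huggt} fiberwise via Corollary \toposref{usefir}.
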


\begin{remark}
Assertion $(2)$ of Proposition \ref{psycher} implies the stronger property that the map
$$ \Fun^{R}_{\calD}( \calE, \calC') \rightarrow \Fun^{R}_{\calD}( \calE, \calC)$$
is a trivial Kan fibration, but we will not need this fact.
\end{remark}

\begin{proof}
We will give the proof in the case of stable envelopes; the case of pointed envelopes can be handled using similar arguments. Let $\RPress$ denote the $\infty$-category whose objects are presentable $\infty$-categories
and whose morphisms are functors which admit left adjoints (see \S \toposref{colpres}), and let
$\RPressStab$ be the full subcategory of $\RPress$ spanned by those presentable $\infty$-categories which are stable. It follows from Corollary \stableref{mapprop} that the inclusion
$\RPressStab \subseteq \RPress$ admits a right adjoint, given by the construction
$\calX \mapsto \Stab( \calX ).$
Let us denote this right adjoint by $G$.

The presentable fibration $p$ is classified by a functor
$\chi: \calD^{op} \rightarrow \RPress$. Let $\alpha$ denote the counit transformation
$G \circ \chi \rightarrow \chi.$
Then $\alpha$ is classified by a map $u: \calC' \rightarrow \calC$ of presentable fibrations
over $\calD$. Making a fibrant replacement if necessary, we may suppose that $u$ is a categorical fibration (see Proposition \monoidref{umpertein}). Assertion $(1)$ now follows immediately from the construction. 

To prove $(2)$, let us suppose that the presentable fibration $q$ is classified by a functor
$\chi': \calD^{op} \rightarrow \RPress$. Using Theorem \toposref{straightthm} and
Proposition \toposref{gumby444},
we deduce the existence of a commutative diagram
$$ \xymatrix{ \bHom_{ \Fun( \calD^{op}, \RPress)}( \chi', G \circ \chi) \ar[r] \ar[d] &
\bHom_{ \Fun( \calD^{op}, \RPress)}( \chi', \chi ) \ar[d] \\
\Fun^{R, \sim}_{\calD}( \calE, \calC') \ar[r] & \Fun^{R, \sim}_{\calD}(\calE, \calC) }$$
in the homotopy category of spaces, where the vertical arrows are homotopy equivalences.
Since the fibers of $q$ are stable, $\chi'$ factors through $\RPressStab \subseteq \RPress$, so the
upper horizontal arrow is a homotopy equivalence. It follows that the lower horizontal arrow is a homotopy equivalence as well. Since $u$ is a categorical fibration, the lower horizontal arrow is also a Kan fibration, and therefore a trivial Kan fibration.

We now prove assertion $(3)$. The existence of $\overline{v}$ (and its uniqueness up to homotopy)
follows immediately from $(2)$. To prove that $\overline{v}$ is an equivalence, we first invoke Corollary \toposref{usefir} to reduce to the case where $\calD$ consists of a single vertex. In this case, the result follows from Remark \ref{huggt}.
\end{proof}

\begin{remark}\label{spukk}
Let $p: \calC \rightarrow \calD$ be a presentable fibration. Let
$u: \calC' \rightarrow \calC$ be a stable envelope of $u$, and let
$v: \calC'' \rightarrow \calC$ be a pointed envelope of $u$. Since every stable $\infty$-category
is pointed, Proposition \ref{psycher} implies the existence of a commutative diagram
$$ \xymatrix{ \calC' \ar[dr]^{u} \ar[rr]^{w} & & \calC'' \ar[dl]^{v} \\
& \calC, & }$$
where $w$ admits a left adjoint relative to $\calD$. Moreover, $w$ is uniquely determined up to homotopy. In the case where $\calD$ consists of a single vertex, we can identify $w$
with the usual infinite loop functor $\Omega^{\infty}_{\ast}: \Stab(\calC) \rightarrow \calC_{\ast}$.
\end{remark}

\begin{definition}\label{uwe}
Let $\calC$ be a presentable $\infty$-category. A {\it tangent bundle to $\calC$}
is a functor $T_{\calC} \rightarrow \Fun( \Delta^1, \calC)$ which exhibits
$T_{\calC}$ as the stable envelope of the presentable fibration
$\Fun( \Delta^1, \calC) \rightarrow \Fun( \{1\}, \calC) \simeq \calC$.
\end{definition}

In the situation of Definition \ref{uwe}, we will often abuse terminology by referring to
$T_{\calC}$ as the {\it tangent bundle} to $\calC$. We note that $T_{\calC}$ is determined
up equivalence by $\calC$. Roughly speaking, we may think of an object
of $T_{\calC}$ as a pair $(A, M)$, where $A$ is an object of $\calC$ and
$M$ is an infinite loop object of $\calC_{/A}$. In the case where
$\calC$ is the $\infty$-category of $E_{\infty}$-rings, we can identify
$M$ with an $A$-module (Theorem \ref{subbe}).
In this case, the functor $T_{\calC} \rightarrow \Fun( \Delta^1, \calC)$ associates
to $(A,M)$ the projection morphism $A \oplus M \rightarrow A$. Our terminology is justified as follows:
we think of this morphism as a ``tangent vector'' in the $\infty$-category $\calC$, relating the object
$A$ to the ``infinitesimally near'' object $A \oplus M$.

We conclude this section with a few remarks about limits and colimits in the tangent bundle to a presentable $\infty$-category $\calC$.

\begin{proposition}\label{kinder}
Let $\calC$ be a presentable $\infty$-category. Then the tangent bundle
$T_{\calC}$ is also presentable.
\end{proposition}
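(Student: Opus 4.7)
The plan is to use the explicit construction of the stable envelope given in Example \ref{pinah}. Writing $p: \Fun(\Delta^1, \calC) \to \calC$ for evaluation at $\{1\} \subseteq \Delta^1$, that construction identifies $T_{\calC}$ with the full subcategory of
$$ \calE := \Fun(\FinSpace, \Fun(\Delta^1, \calC)) \times_{\Fun(\FinSpace, \calC)} \calC $$
spanned by those pairs $(A, F)$ for which $F: \FinSpace \to \Fun(\Delta^1, \calC)$ factors through the fiber $\calC_{/A}$ and is excisive as a functor into that fiber. It therefore suffices to prove (i) that $\calE$ is presentable and (ii) that $T_{\calC}$ sits inside $\calE$ as an accessible reflective subcategory.

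For (i), since $\calC$ is presentable, so is $\Fun(\Delta^1, \calC)$; and because $\FinSpace$ is essentially small, the functor $\infty$-categories $\Fun(\FinSpace, \Fun(\Delta^1, \calC))$ and $\Fun(\FinSpace, \calC)$ are presentable as well. The map between them induced by $p$ preserves small colimits (evaluation is computed pointwise), and the constant-diagram inclusion $\calC \to \Fun(\FinSpace, \calC)$ preserves all small colimits and limits, being simultaneously the right adjoint of $\colim_{\FinSpace}$ and the left adjoint of $\lim_{\FinSpace}$. Thus $\calE$ is a pullback of presentable $\infty$-categories along accessible functors, hence itself presentable.

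For (ii), the condition that $F$ be excisive can be tested by requiring $F$ to send each of a small family of generating pushout squares in $\FinSpace$ to a pullback square in the target. Each such condition cuts out an accessible reflective subcategory of $\Fun(\FinSpace, \Fun(\Delta^1, \calC))$, and the intersection of a small family of accessible subcategories remains accessible. The main obstacle I expect is verifying compatibility of the excisive reflection with the fiber product condition defining $\calE$: namely, that reflecting a functor $F$ whose composite $p \circ F$ is the constant diagram on some $A$ yields a functor whose composite with $p$ is still that constant diagram. This should follow from the naturality of excisive approximation together with the observation that any constant diagram is tautologically excisive, so postcomposition with the excisive reflection commutes with $p_{\ast}$ on constant diagrams. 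Granting this, $T_{\calC}$ is an accessible reflective subcategory of the presentable $\infty$-category $\calE$ and therefore presentable.
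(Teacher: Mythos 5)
Your argument uses the same basic ingredients as the paper's proof — the explicit model for $T_{\calC}$ from Example \ref{pinah}, presentability of functor categories on a small source, closure of presentable $\infty$-categories under limits in $\RPress$, and accessibility of the reflection onto excisive functors — but you perform the two key reductions in the opposite order. The paper first forms the reflective localization $\calE_0 \subseteq \Fun(\FinSpace \times \Delta^1, \calC)$ (the functors sending $\ast$ to an equivalence and pushouts to pullbacks, with no fiber condition imposed), obtaining a presentable $\infty$-category outright, and only afterward takes the pullback $\calE_0 \times_{\Fun(\FinSpace \times \{1\},\calC)} \calC$, which is handled entirely by Theorem T.\ref{HTT-surbus}. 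You instead take the pullback first, getting a presentable $\calE$, and then attempt to reflect onto the excisive subcategory \emph{inside} $\calE$. This reversal is the reason you run into the compatibility issue you rightly flag: you must show that the excisive reflection $L$ on $\Fun(\FinSpace, \Fun(\Delta^1,\calC))$ carries $\calE$ back into $\calE$, i.e.\ that $p_{\ast}(LF)$ is still a constant diagram when $p_{\ast} F$ is. Your sketch for that step is correct in spirit: one checks that $p_{\ast}$ commutes with the excisive reflection because $p$ (evaluation at $\{1\}$) preserves all limits and colimits, the reflection $P_1$ is assembled from finite limits and a sequential colimit, and a constant diagram is already excisive, so $P_1(p_{\ast}F) \simeq p_{\ast}F$. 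Written out fully, this amounts to an extra lemma that the paper never needs, because by reflecting before pulling back, the paper keeps the localization step entirely inside an unconstrained functor category. Your route is valid, but the paper's ordering buys a cleaner argument by sidestepping exactly the compatibility check you identify as the sticking point.
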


\begin{proof}
We will give an explicit construction of a tangent bundle to $\calC$. 
Let $\FinSpace$ denote the $\infty$-category of finite pointed spaces (see Notation \stableref{finner}).
Let $\calE = \Fun( \FinSpace, \Fun( \Delta^1, \calC) ) \simeq \Fun( \FinSpace \times \Delta^1, \calC)$.
Since $\FinSpace$ is essentially small, Proposition \toposref{presexp} implies that
$\calE$ is presentable. We let $\calE_0$ denote the full subcategory of $\calE$
spanned by those functors $F: \FinSpace \rightarrow \Fun(\Delta^1,\calC)$ with the following
properties:
\begin{itemize}
\item[$(i)$] The value $F(\ast)$ is an equivalence in $\calC$.
\item[$(ii)$] For every pushout diagram
$$ \xymatrix{ X \ar[r] \ar[d] & Y \ar[d] \\
X' \ar[r] & Y' }$$
in $\FinSpace$, the induced diagram
$$ \xymatrix{ F(X) \ar[r] \ar[d] & F(Y) \ar[d] \\
F(X') \ar[r] & F(Y') }$$
is a pullback diagram in $\Fun( \Delta^1, \calC)$.
\end{itemize}
Using Lemmas \toposref{stur1}, \toposref{stur2}, and \toposref{stur3}, we conclude
that $\calE_0$ is a strongly reflective subcategory of $\calE$, and therefore presentable.
Form a pullback diagram
$$ \xymatrix{ T_{\calC} \ar[r] \ar[d]^{p} & \calE_0 \ar[d] \\
\calC \ar[r] & \Fun( \FinSpace \times \{1\}, \calC ). }$$
It follows from Example \ref{pinah} that we can identify $T_{\calC}$ with a tangent
bundle to $\calC$. Theorem \toposref{surbus} implies that $T_{\calC}$ is presentable.
\end{proof}

It follows from Proposition \ref{kinder} that if $\calC$ is a presentable $\infty$-category, then the tangent bundle $T_{\calC}$ admits small limits and colimits. The following result
describes these limits and colimits in more detail:

\begin{proposition}\label{tanlim}
Let $\calC$ be a presentable $\infty$-category, let $T_{\calC}$ be a tangent bundle to
$\calC$, and let $p$ denote the composition
$$T_{\calC} \rightarrow \Fun( \Delta^1, \calC) \rightarrow \Fun( \{1\}, \calC) \simeq \calC.$$ 
Then:
\begin{itemize}
\item[$(1)$] A small diagram $\overline{q}: K^{\triangleright} \rightarrow T_{\calC}$ is a
colimit diagram if and only if $\overline{q}$ is a $p$-colimit diagram and
$p \circ \overline{q}$ is a colimit diagram in $\calC$.

\item[$(2)$] A small diagram $\overline{q}: K^{\triangleleft} \rightarrow T_{\calC}$ is a
limit diagram if and only if $\overline{q}$ is a $p$-limit diagram and
$p \circ \overline{q}$ is a limit diagram in $\calC$.
\end{itemize}
\end{proposition}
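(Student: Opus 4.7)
The plan is to observe that the map $p \colon T_{\calC} \to \calC$ is itself a presentable fibration and then to apply the standard characterization of limits and colimits in the total space of such a fibration.

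I would first verify that $p$ is a presentable fibration, in particular that it is both a Cartesian and a coCartesian fibration. By Definition \ref{uwe}, $T_{\calC}$ is a stable envelope of the presentable fibration $\Fun(\Delta^1, \calC) \to \Fun(\{1\}, \calC) \simeq \calC$ obtained by evaluation at the terminal vertex, so clause $(1)$ of Definition \ref{defstabbb} tells us that the composite $p$ is again a presentable fibration. Its fiber over $A \in \calC$ is equivalent to the presentable stable $\infty$-category $\Stab(\calC_{/A})$, which admits all small limits and colimits; in particular it is both complete and cocomplete.

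The ``if'' directions of $(1)$ and $(2)$ then follow from the transitivity property of relative (co)limits from Chapter~4 of \cite{topoi}: a diagram which is a $p$-(co)limit and whose composition with $p$ is a (co)limit in $\calC$ is automatically a (co)limit in $T_{\calC}$. For the ``only if'' directions, the strategy is to produce a preferred (co)limit cone and then compare. Given $q \colon K \to T_{\calC}$, I would construct a cone $\overline{q}'$ in the colimit case as follows: first compute $\colim(p \circ q)$ in $\calC$ (available since $\calC$ is presentable), then transport $q$ into the fiber over this colimit using the coCartesian structure on $p$, and finally form the colimit inside that fiber. The resulting $\overline{q}'$ projects to a colimit in $\calC$ and is a $p$-colimit by construction, hence is itself a colimit by the ``if'' direction. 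Since $T_{\calC}$ is presentable (Proposition \ref{kinder}), any other colimit $\overline{q}$ of $q$ is equivalent to $\overline{q}'$ and therefore inherits both properties. The limit statement is handled dually using the Cartesian structure on $p$ and the limit-completeness of $\Stab(\calC_{/A})$.

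The main obstacle is checking that the cone $\overline{q}'$ obtained by the three-stage construction really is a $p$-(co)limit, but this is the standard characterization of (co)limits in (co)Cartesian fibrations from Chapter~4 of \cite{topoi}, applied fiberwise; once the presentable fibration structure on $p$ is in place the proof becomes a bookkeeping exercise in assembling these HTT results.
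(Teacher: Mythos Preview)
Your proposal is correct and follows essentially the same approach as the paper: both arguments note that $p$ is a presentable fibration, invoke the HTT result relating $p$-colimits and colimits in the base (Proposition \toposref{basrel}) for the ``if'' direction, and for the converse construct a preferred cone by first taking the colimit downstairs and then lifting to a $p$-colimit, concluding by uniqueness of colimit cones. Your three-stage description (transport via coCartesian edges, then take the fiberwise colimit) simply unpacks what the paper compresses into the phrase ``lift $\overline{q}_0$ to a $p$-colimit diagram''; the invocation of Proposition~\ref{kinder} for uniqueness is harmless but unnecessary, since colimit cones are unique whenever they exist.
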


\begin{proof}
We will prove $(1)$; assertion $(2)$ will follow from the same argument.
The ``if'' direction follows from Proposition \toposref{basrel}. The converse then follows
from the uniqueness of colimit diagrams and the following assertion:
\begin{itemize}
\item[$(\ast)$] Let $K$ be a small simplicial set, and let $q: K \rightarrow T_{\calC}$ be a diagram.
Then $q$ admits an extension $\overline{q}: K^{\triangleright} \rightarrow T_{\calC}$ such
that $\overline{q}$ is a $p$-colimit diagram, and $p \circ \overline{q}$ is a colimit diagram in $\calC$.
\end{itemize}
To prove $(\ast)$, we first invoke the assumption that $\calC$ is presentable to deduce
the existence of a colimit diagram $\overline{q}_0: K^{\triangleright} \rightarrow \calC$ extending
$p \circ q$. It then suffices to show that we can lift $\overline{q}_0$ to a $p$-colimit diagram
in $T_{\calC}$; this follows from that fact that $p$ is a presentable fibration.
\end{proof}

\subsection{Relative Adjunctions}\label{reladj}

Let $\calC$ be a presentable $\infty$-category. Our goal in this section is to produce a left adjoint to the composite functor
$$ T_{\calC} \rightarrow \Fun( \Delta^1, \calC) \rightarrow \Fun( \{0\}, \calC) \simeq \calC.$$
The existence of the desired left adjoint can be deduced easily from the adjoint functor theorem
(Corollary \toposref{adjointfunctor}). However, we will later need more detailed information about
$L$, which can be deduced from the following {\em relative} version of the adjoint functor theorem: 

\begin{proposition}\label{reladjprop}
Suppose given a commutative diagram
$$ \xymatrix{ \calC \ar[dr]^{q} & & \calD \ar[dl]^{p} \ar[ll]^{G} \\
& \calE & }$$
of $\infty$-categories with the following properties:
\begin{itemize}
\item[$(i)$] The maps $p$ and $q$ are Cartesian fibrations.
\item[$(ii)$] The functor $G$ carries $p$-Cartesian morphisms of $\calD$
to $q$-Cartesian morphisms of $\calC$.
\item[$(iii)$] For each object $E \in \calE$, the induced map
$G_{E}: \calD_{E} \rightarrow \calC_{E}$ admits a left adjoint.
\end{itemize}
Then there exists a functor $F: \calC \rightarrow \calD$ such that
$pF=q$ and a natural transformation $u: \id_{\calC} \rightarrow G \circ F$
with the following properties:
\begin{itemize}
\item[$(1)$] The image of $u$ under the map $q$ is the identity transformation
from $q$ to itself.
\item[$(2)$] For each object $E \in \calE$, the induced transformation
$u_{E}: \id_{\calC_E} \rightarrow G_{E} \circ F_{E}$ is the unit of an adjunction
between $F_E$ and $G_E$.
\item[$(3)$] The map $u$ is the unit of an adjunction between $G$ and $F$.
\end{itemize}
\end{proposition}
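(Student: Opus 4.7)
The plan is to invoke the standard criterion for existence of a left adjoint (via initial objects in comma $\infty$-categories) and to verify its hypothesis pointwise, using the Cartesian fibration structure on $p$ and $q$ to reduce to the given fiberwise adjunctions. Concretely, fix an object $C \in \calC$ and set $E := q(C)$. By hypothesis $(iii)$, the fiberwise functor $G_E: \calD_E \to \calC_E$ admits a left adjoint $F_E$, with unit $u^E_C: C \to G_E F_E(C)$ internal to the fiber $\calC_E$. I claim that the pair $(F_E(C), u^E_C)$, regarded as an object of the comma $\infty$-category $\calC_{C/} \times_\calC \calD$ (where $\calD \to \calC$ is the functor $G$), is an initial object.

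To verify this, let $(D, \alpha: C \to GD)$ be another object with $E' := p(D)$. By the standard formula for mapping spaces out of an object of a slice $\infty$-category, it suffices to show that the map
$$ \phi: \bHom_{\calD}(F_E(C), D) \longrightarrow \bHom_{\calC}(C, G(D)), \qquad \psi \mapsto G(\psi) \circ u^E_C $$
is a weak equivalence. Both sides are right fibrations over the mapping space $\bHom_\calE(E, E')$ (since $p$ and $q$ are Cartesian fibrations), and $\phi$ respects these projections because $G$ is a morphism over $\calE$. It therefore suffices to check that the fiber of $\phi$ over an arbitrary morphism $f: E \to E'$ is a weak equivalence. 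Choosing a $p$-Cartesian lift $\overline{f}: f^*D \to D$ identifies the fiber of the first mapping space with $\bHom_{\calD_E}(F_E(C), f^*D)$; hypothesis $(ii)$ guarantees that $G(\overline{f})$ is $q$-Cartesian, and so the fiber of the second mapping space is identified with $\bHom_{\calC_E}(C, G_E(f^*D))$; the induced map between these fibers is precisely the adjunction equivalence for $F_E \dashv G_E$ applied to $f^*D$. This proves initialness.

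With the pointwise hypothesis of the adjoint functor criterion verified, we obtain a functor $F: \calC \to \calD$ and a natural transformation $u: \id_\calC \to G \circ F$ whose values at each $C$ are (equivalent to) the data $(F_E(C), u^E_C)$, exhibiting $F$ as a left adjoint to $G$. This supplies assertion $(3)$. Since $F_E(C) \in \calD_E$ and $u^E_C$ lies in $\calC_E$, we conclude that $pF = q$ and $q(u) = \id_q$ (up to choosing a representative of $F$ within its essentially unique equivalence class, which can be rigidified using that $p$ is a categorical fibration), yielding assertion $(1)$; and $(2)$ holds because the construction recovers the fiberwise units upon restriction to $\calC_E$. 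The principal obstacle is the initialness verification above, which is exactly where the three hypotheses combine: $(i)$ provides the Cartesian fibration structure needed to reduce mapping spaces to fiberwise mapping spaces, $(ii)$ matches up the Cartesian lifts on the two sides of $\phi$, and $(iii)$ supplies the fiberwise adjunction equivalence on each fiber.
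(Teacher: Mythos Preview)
Your argument is correct, and the heart of it---the mapping space computation reducing to fibers over $\bHom_{\calE}(E,E')$ via Cartesian lifts---is exactly the same calculation that appears in the paper. However, the two proofs package this computation differently, and the difference matters for the strictness assertions in $(1)$.

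The paper does not invoke an abstract adjoint functor criterion. Instead it first builds a correspondence $\calM \to \calE \times \Delta^1$ from $G$, proves that fiberwise $r_E$-coCartesian edges from $\calC$ to $\calD$ are globally $r$-coCartesian (your initialness claim, rephrased), and then constructs $F$ and $u$ by solving lifting problems simplex-by-simplex inside $\calM$, relative to the projection $r: \calM \to \calE \times \Delta^1$. This hands-on construction produces $F$ satisfying $pF = q$ and $u$ satisfying $q(u) = \id_q$ \emph{on the nose}, with no rigidification step needed.

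Your approach via initial objects of comma $\infty$-categories is cleaner conceptually and gets you $(3)$ and $(2)$ immediately, but it naturally produces $F$ and $u$ only up to equivalence over $\calE$. Your parenthetical remark that one can rigidify using the categorical fibration $p$ is correct in principle---one lifts the natural equivalence $pF' \simeq q$ through $p$ to replace $F'$ by an equivalent $F$ with $pF = q$---but note that one must then also adjust the unit so that $q(u)$ is literally the identity, not merely homotopic to it, and you have not indicated how this second adjustment goes. This is routine but deserves a sentence. In short: same mathematics, but the paper's simplex-by-simplex construction in the correspondence bakes the strictness of $(1)$ in from the start, whereas you obtain it as a post-hoc correction.
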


In the situation of Proposition \ref{reladjprop}, we will say that $F$ is a {\it left adjoint
to $G$ relative to $\calE$}.

\begin{proof}
We first construct a correspondence associated to the functor $G$.
Let $X = \calC \coprod_{ \calD \times \{0\} } ( \calD \times \Delta^1)$.
Using the small object argument, we can construct a factorization
$$ X \stackrel{i}{\rightarrow} \calM \stackrel{r}{\rightarrow} \calE \times \Delta^1$$
where $i$ is inner anodyne and $r$ is an inner fibration. Moreover, we may assume that the maps
$$ \calC \rightarrow \calM \times_{\Delta^1} \{0\}$$
$$ \calD \rightarrow \calM \times_{\Delta^1} \{1\}$$
are isomorphisms of simplicial sets. We will henceforth identify $\calC$ and
$\calD$ with full subcategories of $\calM$ via these isomorphisms.

We first claim the following:
\begin{itemize}
\item[$(a)$] Let $g: D \rightarrow D'$ be a $p$-Cartesian morphism in
$\calD$. Then $g$ is an $r$-Cartesian morphism in $\calM$.
\end{itemize}

Fix an object $M \in \calM$. According to Proposition \toposref{charCart}, we get a diagram of spaces
$$ \xymatrix{ \bHom_{\calM}( M, D) \ar[r]^{\circ g} \ar[d] & \bHom_{\calM}( M, D') \ar[d] \\
\bHom_{\calE}( \overline{M}, E) \ar[r] & \bHom_{\calE}( \overline{M}, E' )}$$
which commutes up to specified homotopy, and we need to show that this diagram is homotopy Cartesian. If $M \in \calD$, then this follows from our assumption that $g$ is $p$-Cartesian.
Otherwise, $M \in \calC$, and we have a homotopy commutative diagram
$$ \xymatrix{ \bHom_{\calC}( M, G(D)) \ar[r]^{\circ G(g)} \ar[d] & \bHom_{\calC}( M,  G(D') ) \ar[d] \\
\bHom_{\calM}(M, D) \ar[r] & \bHom_{\calM}( M, D') }$$
where the vertical arrows are homotopy equivalences. Splicing these diagrams, we
deduce the desired result from Proposition \toposref{charCart}, since assumption $(ii)$ guarantees
that $G(g)$ is a $q$-Cartesian morphism in $\calC$. This completes the proof of $(a)$.

Note that any $r$-Cartesian morphism in $\calM$ is also $r'$-Cartesian, where
$r'$ denotes the composite map
$$ \calM \stackrel{r}{\rightarrow} \calE \times \Delta^1 \rightarrow \calE.$$

We next claim:
\begin{itemize}
\item[$(b)$] Let $C \in \calC$ and $D_0 \in \calD$ be objects having the same image
$E \in \calE$, and let $f: C \rightarrow D_0$ be a morphism in $\calM$ which projects
to $\id_{E}$ in $\calE$. Suppose that $f$ is an $r_{E}$-coCartesian morphism
of $\calM \times_{\calE} \{E\}$. Then $f$ is an $r$-coCartesian morphism of $\calM$.
\end{itemize}

To prove $(b)$, we must show that for every object $D' \in \calD$, composition
with $f$ induces a homotopy equivalence $\bHom_{\calD}(D_0, D') \rightarrow
\bHom_{\calM}(C, D')$. Let $E'$ denote the image of $D'$ in $\calE$, so that we have a commutative diagram
$$ \xymatrix{ \bHom_{\calD}(D_0,D') \ar[r] \ar[d]^{\psi'} & \bHom_{\calM}(C, D') \ar[d]^{\psi} \\
\bHom_{\calE}(E,E') \ar[r]^{\sim} & \bHom_{\calE}(E, E'). }$$
To show that the upper horizontal map is a homotopy equivalence, it will suffice to show that
it induces a homotopy equivalence after passing to the homotopy fiber over
any point $\overline{g} \in \bHom_{\calE}(E,E')$. Since $p$ is a Cartesian fibration, we can
lift $\overline{g}$ to a $p$-Cartesian morphism $g: D \rightarrow D'$ in $\calD$. 
Using Proposition \toposref{compspaces}, we can identify the homotopy fiber of $\psi'$
over the point $\overline{g}$ with the mapping space $\bHom_{ \calD_{E} }( D_0, D)$.
Similarly, since assertion $(a)$ implies that $g$ is $r$-Cartesian (and therefore $r'$-Cartesian),
Proposition \toposref{compspaces} allows us to identify the homotopy fiber of $\psi$ with the mapping space $\bHom_{ \calM_{E} }(C, D)$. We are therefore reduced to showing that composition with
$f$ induces a homotopy equivalence
$\bHom_{ \calD_{E} }( D_0, D) \rightarrow \bHom_{ \calM_{E} }( C, D)$, which is simply
a reformulation of the condition that $f$ is $r_{E}$-coCartesian. This completes the proof of $(b)$.

By construction, there is a natural transformation $\alpha: G \rightarrow \id_{\calD}$
of functors from $\calD$ to $\calM$. We next claim:
\begin{itemize}
\item[$(c)$] For every object 
$D \in \calD$, the map $\alpha_{D}: G(D) \rightarrow D$ is an $r$-Cartesian morphism
in $\calM$. 
\end{itemize}
To prove $(c)$, we let $E$ denote the image of $D \in \calE$. Unwinding the definitions, we
must show that the canonical map
$$ \psi: \calM_{/\alpha_D} \rightarrow \calE_{/ \id_{E} } \times_{ \calE_{/E} } \calM_{/D} \times_{\calM} \calC.$$
is a trivial Kan fibration. Since $\psi$ is automatically a right fibration, it suffices to show that
$\psi$ is a categorical equivalence. Since the projection $\calE_{/ \id_{E}} \rightarrow \calE_{/E}$
is a trivial Kan fibration, the induced map
$\calE_{/ \id_{E} } \times_{ \calE_{/E} } \calM_{/D} \times_{\calM} \calC \rightarrow \calM_{/D} \times_{\calM} \calC$ is also a trivial Kan fibration. By the two-out-of-three property, we are reduced to proving that the map
$\calM_{/ \alpha_{D}} \rightarrow \calM_{/D} \times_{\calM} \calC$ is a categorical equivalence.
This follows from the fact that $\alpha_{D}$ is $\overline{r}$-Cartesian, where
$\overline{r}$ denotes the composition
$$ \calM \stackrel{r}{\rightarrow} \calE \times \Delta^1 \rightarrow \Delta^1.$$

We are now ready to proceed with the main step. We will construct a commutative diagram
$$ \xymatrix{ \calC \times \{0\} \ar@{^{(}->}[r] \ar@{^{(}->}[d] & \calM \ar[d]^{r} \\
\calC \times \Delta^1 \ar[r] \ar@{-->}[ur]^{\beta} & \calE \times \Delta^1. }$$
with the following property: for every object $C \in \calC$, the functor
$\beta$ carries $\{C \} \times \Delta^1$ to an $r$-coCartesian morphism of $\calM$.
To construct $\beta$, we work simplex-by-simplex on $\calC$. Let us first consider
the case of zero-dimensional simplices. Fix an object $C \in \calC$, having image
$E \in \calM$. Invoking assumption $(iii)$, we see that the correspondence
$\calM_{E}$ is an adjunction, so there exists an $r_{E}$-coCartesian morphism
$\beta_{C}: C \rightarrow D$ in $\calM_{E}$. Assertion $(b)$ above now implies that
$\beta_{C}$ is $r$-Cartesian as desired.

To handle simplices of larger dimension, we need to solve mapping problems of the form
$$ \xymatrix{ ( \Delta^n \times \{0\} ) \coprod_{ \bd \Delta^n \times \{0\} } ( \bd \Delta^n \times \Delta^1) \ar[r]^-{j} \ar@{^{(}->}[d] & \calM \ar[d]^{r} \\
\Delta^n \times \Delta^1 \ar@{-->}[ur] \ar[r] & \calE \times \Delta^1, }$$
where $n > 0$ and the map $j$ carries $\{0\} \times \Delta^1$ to an $r$-coCartesian morphism
in $\calM$. The existence of the required extension follows from Proposition \toposref{goouse}.

We now define $F: \calC \rightarrow \calD$ to be the restriction of $\beta$ to
$\calC \times \{1\}$. The maps $\alpha$ and $\beta$ together define a diagram
$$ \xymatrix{ & GF \ar[dr]^{\alpha} & \\
\id_{\calC} \ar@{-->}[ur]^{u} \ar[rr]^{\beta} & & F }$$
in the $\infty$-category $\Fun_{\calE}( \calC, \calM)$. Using $(c)$, we can construct the dotted arrow
$u$ indicated in the diagram. It is easy to see that $u$ has the required properties.
\end{proof}

\begin{definition}\label{urtime}
Let $\calC$ be a presentable $\infty$-category, and consider the associated diagram
$$ \xymatrix{ T_{\calC} \ar[rr]^{G} \ar[dr]^{p} & & \Fun( \Delta^1, \calC) \ar[dl]^{q} \\
& \calC & }$$
where $q$ is given by evaluation at $\{1\} \subseteq \Delta^1$. The functor
$G$ carries $p$-Cartesian morphisms to $q$-Cartesian morphisms, and for each
object $A \in \calC$ the induced map $G_{A}: \Stab( \calC^{/A} ) \rightarrow \calC^{/A}$
admits a left adjoint $\Sigma^{\infty}$. Applying Proposition \ref{reladjprop}, we conclude
that $G$ admits a left adjoint relative to $\calC$, which we will denote by $F$.
The {\it absolute cotangent complex functor} $L: \calC \rightarrow T_{\calC}$ is defined
to be the composition
$$ \calC \rightarrow \Fun( \Delta^1, \calC) \stackrel{F}{\rightarrow} T_{\calC},$$
where the first map is given by the diagonal embedding.
We will denote the value of $L$ on an object $A \in \calC$
by $L_{A} \in \Stab( \calC^{/A})$, and will refer to $L_A$ as the {\it cotangent complex of $A$}.
\end{definition}

\begin{remark} Let $\calC$ be a presentable $\infty$-category.
Since the diagonal embedding $\calC \rightarrow \Fun( \Delta^1, \calC)$ is a left adjoint
to the evaluation map $\Fun( \Delta^1, \calC) \rightarrow \Fun( \{0\}, \calC) \simeq \calC$,
we deduce that the absolute cotangent complex functor $L: \calC \rightarrow T_{\calC}$
is left adjoint to the composition
$$ T_{\calC} \rightarrow \Fun( \Delta^1, \calC) \rightarrow \Fun( \{0\}, \calC) \simeq \calC.$$
\end{remark}

\begin{remark}
The terminology of Definition \ref{urtime} is slightly abusive, since the tangent bundle
$T_{\calC}$ and the functor $L$ are only well-defined up to equivalence.
It would perhaps be more accurate to refer to $L: \calC \rightarrow T_{\calC}$ as {\em an} absolute cotangent functor. However, $L$ and $T_{\calC}$ are well-defined up to a contractible space of choices, so we will tolerate the ambiguity.
\end{remark}

\begin{remark}
Let $\calC$ be a presentable $\infty$-category containing an object $A$. We observe that
the fiber of the tangent bundle $T_{\calC}$ over $A \in \calC$ can be identified with the stable envelope
$\Stab( \calC_{/A} )$. Under this identification, the object $L_A \in \Stab( \calC_{/A} )$ corresponds
to the image of $\id_{A} \in \calC_{/A}$ under the suspension spectrum functor
$$\Sigma^{\infty}: \calC_{/A} \rightarrow \Stab( \calC_{/A} ).$$
\end{remark}

\begin{remark}\label{stuck}
Let $\calC$ be a presentable $\infty$-category.
Since the cotangent complex functor $L$ is a left adjoint, it carries colimit diagrams
in $\calC$ to colimit diagrams in $T_{\calC}$. In view of Proposition \ref{tanlim}, 
we see that $L$ also carries small colimit diagrams in $\calC$ to $p$-colimit diagrams
in $T_{\calC}$, where $p$ denotes the composition
$$ T_{\calC} \rightarrow \Fun( \Delta^1, \calC) \rightarrow \Fun( \{1\}, \calC) \simeq \calC.$$
\end{remark}

\begin{remark}\label{urn}
Let $\calC$ be a presentable $\infty$-category, and let $A$ be an initial object of $\calC$.
Using Remark \ref{stuck}, we deduce that $L_A$ is an initial object of the tangent bundle $T_{\calC}$. 
Equivalently, $L_A$ is a zero object of the stable $\infty$-category $\Stab( \calC^{/A})$.
\end{remark}

\subsection{The Tangent Correspondence}\label{gen1}

Let $\calC$ be an $\infty$-category, $T_{\calC}$ a tangent bundle to $\calC$, and
$L: \calC \rightarrow T_{\calC}$ the associated cotangent complex functor.
Then there exists a coCartesian fibration $p: \calM \rightarrow \Delta^1$
with $\calM \times_{ \Delta^1 } \{0\} \simeq \calC$, 
$\calM \times_{ \Delta^1 } \{1\} \simeq T_{\calC}$, such that the associated functor
$\calC \rightarrow T_{\calC}$ can be identified with $L$ (see \S \toposref{afunc1}). 
The $\infty$-category $\calM$ is called a {\it tangent correspondence} to $\calC$.
Our goal in this section is to give an explicit construction of a tangent correspondence to $\calC$, which we will refer to as {\em the} tangent correspondence to $\calC$ and denote by $\calM^{T}(\calC)$.

\begin{remark}
Since the cotangent complex functor $L$ admits a right adjoint, the
coCartesian fibration $p: \calM \rightarrow \Delta^1$ considered above is also a Cartesian fibration, associated to the composite functor
$$ T_{\calC} \rightarrow \Fun( \Delta^1, \calC) \rightarrow \Fun( \{0\}, \calC) \simeq \calC.$$
\end{remark}

Recall that a {\em correspondence} between a pair of $\infty$-categories $\calC$ and $\calD$
is an $\infty$-category $\calM$ equipped with a functor $p: \calM \rightarrow \Delta^1$ and
isomorphisms $\calC \simeq \calM \times_{ \Delta^1 } \{0\}$ and
$\calD \simeq \calM \times_{ \Delta^1 } \{1\}$. If $p$ is a Cartesian fibration, then a correspondence
determines a functor $\calD \rightarrow \calC$, which is well-defined up to homotopy. 
It is therefore reasonable to think of a correspondence as a ``generalized functor''. Our first result describes how to compose these ``generalized functors'' with ordinary functors.

\begin{lemma}\label{susin}
Suppose given sequence of maps $A \stackrel{f}{\rightarrow} B \rightarrow \Delta^1$
in the category of simplicial sets. Let $A_1$ denote the fiber product
$A \times_{ \Delta^1} \{1\}$, and define $B_1$ similarly. If $f$ is a categorical equivalence, then the induced map $A_1 \rightarrow B_1$ is a categorical equivalence.
\end{lemma}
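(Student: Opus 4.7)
The plan is to work in the Joyal model structure on $\sSet$. First I would factor $f$ in $\sSet_{/\Delta^1}$ as $A \stackrel{j}{\to} A' \stackrel{p}{\to} B$, with $j$ a trivial cofibration (monomorphism which is a categorical equivalence) and $p$ a trivial fibration. Since trivial fibrations are characterized by right lifting against monomorphisms, they are pullback-stable, so $p_1 \colon A'_1 \to B_1$ is again a trivial fibration and hence a categorical equivalence. By two-out-of-three, it suffices to show $j_1 \colon A_1 \to A'_1$ is a categorical equivalence, reducing to the case where $f$ itself is a trivial cofibration.

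Let $\mathcal{W}$ denote the class of monomorphisms $X \hookrightarrow Y$ in $\sSet_{/\Delta^1}$ whose fiber over $\{1\}$ is a categorical equivalence. Pullback along $\{1\} \hookrightarrow \Delta^1$ commutes with colimits, and the Joyal model structure is left proper; consequently $\mathcal{W}$ is closed under pushouts along monomorphisms, transfinite composition, and retracts, so it is weakly saturated. By the small object argument, it suffices to check that $\mathcal{W}$ contains a generating set of trivial cofibrations for the Joyal model structure, each equipped with an arbitrary map to $\Delta^1$.

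For an inner horn inclusion $\Lambda^n_i \hookrightarrow \Delta^n$ with $0 < i < n$, a map $\sigma \colon \Delta^n \to \Delta^1$ is specified by an integer $-1 \le k \le n$, with vertices $0,\dots,k$ sent to $0$ and the rest to $1$. When $k = n$ both fibers are empty; when $k = -1$ the fiber inclusion is the original inner horn, which is a categorical equivalence. For intermediate $0 \le k \le n-1$, the set $\{0,\dots,k\} \setminus \{i\}$ is nonempty (using $i \geq 1$), and for any $j$ in it the face $d_j\Delta^n$ contains the fiber $\Delta^{\{k+1,\dots,n\}} = (\Delta^n)_1$; hence $(\Lambda^n_i)_1 = (\Delta^n)_1$ and the fiber inclusion is the identity. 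The remaining generators involving the free-living isomorphism $J$ (such as $\{0\} \hookrightarrow J$) cause no trouble: any simplicial map $J \to \Delta^1$ must be constant, since $\Delta^1$ admits no non-identity isomorphisms, so the fiber is either empty or reproduces the generator itself.

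The main obstacle is not any individual computation but rather fixing a workable generating set of trivial cofibrations for the Joyal model structure and confirming that the case analysis above exhausts them. Once the generators are handled, weak saturation of $\mathcal{W}$ combined with the initial factorization completes the argument.
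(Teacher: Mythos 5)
Your opening reduction is fine: factor $f$ in $\sSet_{/\Delta^1}$ as a trivial cofibration followed by a trivial fibration, note trivial fibrations are pullback-stable, and reduce to the case of a trivial cofibration. The weak-saturation argument for $\mathcal{W}$ is also correct (though you do not actually need left properness: since $\mathcal{W}$ consists of monomorphisms and monomorphisms pull back to monomorphisms, the fiber of a map in $\mathcal{W}$ is already a trivial \emph{cofibration}, and pushouts of trivial cofibrations along arbitrary maps are trivial cofibrations in any model category). The fiber computations for $\Lambda^n_i \hookrightarrow \Delta^n$ and for $\{0\} \hookrightarrow J$ are correct as well.

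The obstacle you flag at the end is, however, a genuine gap and not a cosmetic one. The Joyal model structure is combinatorial and therefore has \emph{some} small set of generating trivial cofibrations, but no explicit such set is known, and in particular the inner horn inclusions together with $\{0\} \hookrightarrow J$ do not generate the trivial cofibrations. (This is closely tied to the fact that Joyal fibrations are characterized as inner fibrations having the right lifting property against $\{0\} \hookrightarrow J$ only when the source and target are quasicategories, not in general.) You could enlarge your list to include the pushout-products $(\{0\}\hookrightarrow J)\,\hat\times\,(\partial\Delta^n \hookrightarrow \Delta^n)$ -- and in fact the same style of computation handles these, since any map $J \times \Delta^n \to \Delta^1$ factors through $\Delta^n$ and one can analyze $(\partial\Delta^n)_1 \hookrightarrow (\Delta^n)_1$ exactly as you did for horns -- but even then you would need to prove, or cite, that these maps generate all Joyal trivial cofibrations, which is not a standard fact. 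As written the cellular induction only establishes the result for a proper subclass of trivial cofibrations.

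The paper's proof is much shorter and avoids this issue entirely by working directly with the definition of categorical equivalence via the rigidification functor $\mathfrak{C}$. The key observation is that $\mathfrak{C}[A_1]$ may be identified with the \emph{full} simplicial subcategory of $\mathfrak{C}[A]$ spanned by the vertices lying over $1$. This holds because $\Delta^1$ is the nerve of a poset with $1$ as its greatest element: any simplex (indeed, any necklace) in $A$ whose first vertex maps to $1$ must map entirely to $1$, and hence lies in $A_1$. Since $f$ is a map over $\Delta^1$, the full subcategories carved out in $\mathfrak{C}[A]$ and $\mathfrak{C}[B]$ correspond under $\mathfrak{C}[f]$; and since the only isomorphism in the homotopy category of $\mathfrak{C}[\Delta^1] = [1]$ at the object $1$ is the identity, these full subcategories are closed under isomorphism. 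A Dwyer--Kan equivalence always restricts to a Dwyer--Kan equivalence on corresponding isomorphism-closed full simplicial subcategories, so $A_1 \to B_1$ is a categorical equivalence. Your approach via the model structure would buy a model-independent formulation, but at the cost of this unresolved generating-set issue; the paper's argument is tied to $\mathfrak{C}$ but is elementary and complete.
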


\begin{proof}
This follows immediately from the definition, since $\sCoNerve(A_1)$ and $\sCoNerve(B_1)$
can be identified with the full simplicial subcategories of $\sCoNerve(A)$ and $\sCoNerve(B)$
lying over the object $\{1\} \in \sCoNerve( \Delta^1 )$.
\end{proof}

\begin{proposition}\label{suskin}
Let $\calC$ and $\calD$ be $\infty$-categories, and let $p: \calM \rightarrow \Delta^1$ be
a correspondence from $\calC$ to $\calD$. Let $G: \calD' \rightarrow \calD$ be a categorical fibration
of simplicial sets. We define a new simplicial set $\calM'$ equipped with a map $p': \calM' \rightarrow \calM$, so that the following universal property is satisfied: for every map of simplicial sets
$A \rightarrow \Delta^1$, we have a pullback diagram of sets
$$ \xymatrix{ \Hom_{ \Delta^1}(A, \calM') \ar[d] \ar[r] & \Hom( A \times_{ \Delta^1} \{1\}, \calD') \ar[d] \\
\Hom_{\Delta^1}(A, \calM) \ar[r] & \Hom( A \times_{ \Delta^1} \{1\}, \calD ). }$$
Then:
\begin{itemize}
\item[$(1)$] The map $\calM' \rightarrow \calM$ is an inner fibration of simplicial sets.

\item[$(2)$] The simplicial set $\calM'$ is an $\infty$-category.

\item[$(3)$] Let $f:C \rightarrow D'$ be a morphism in $\calM'$ from an object of
$\calC$ to an object of $\calD'$. Then $f$ is a $(p \circ p')$-Cartesian morphism of $\calM'$ if and only
if $p'(f)$ is a $p$-Cartesian morphism of $\calM$.

\item[$(4)$] Assume that the map $\calM \rightarrow \Delta^1$ is a Cartesian fibration, associated to a functor $G': \calD \rightarrow \calC$. Then the composite map $\calM' \rightarrow \calM \rightarrow \Delta^1$ is a Cartesian fibration, associated to the functor $G' \circ G$.
\end{itemize}
\end{proposition}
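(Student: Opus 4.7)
The plan is to extract from the defining universal property a concrete description of the simplices of $\calM'$: an $n$-simplex $\sigma: \Delta^n \to \calM'$ lying over a prescribed map $\Delta^n \to \Delta^1$ is the data of an $n$-simplex $\bar\sigma: \Delta^n \to \calM$ together with a lift, along $G$, of the restriction of $\bar\sigma$ to the $1$-fiber $\Delta^n \times_{\Delta^1} \{1\}$. I will use repeatedly that for any map $\Delta^n \to \Delta^1$ the fiber $\Delta^n \times_{\Delta^1} \{1\}$ is either empty or a face $\Delta^k$ of $\Delta^n$ spanned by the last $k+1$ vertices.

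To prove (1), I would consider an inner horn lifting problem $\Lambda^n_i \to \calM'$, $\Delta^n \to \calM$ with $0 < i < n$. Translating through the universal property, this reduces to extending a map $\Lambda^n_i \times_{\Delta^1}\{1\} \to \calD'$ to $\Delta^n \times_{\Delta^1}\{1\}$, compatibly with the given map to $\calD$. Writing the $1$-fiber as $\Delta^k$, a short analysis (using $0 \neq i$, so the face $d_0 \Delta^n$ is contained in $\Lambda^n_i$) shows that when $k < n$ the intersection $\Lambda^n_i \cap \Delta^k$ is already all of $\Delta^k$ and no extension is required, while when $k = n$ we are extending along an inner horn into $\calD'$ over $\calD$, which is possible since the categorical fibration $G$ is in particular an inner fibration. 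Assertion (2) then follows from (1): the composite $\calM' \to \calM \to \Delta^0$ of two inner fibrations is an inner fibration, so $\calM'$ is an $\infty$-category.

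For (3), I would use the standard characterization of Cartesian edges via the lifting property against right-horn inclusions $\Lambda^n_n \hookrightarrow \Delta^n$ for $n \geq 2$, with the final edge $\Delta^{\{n-1,n\}}$ mapped to the given morphism. Let $f: C \to D'$ be a morphism in $\calM'$ with $C$ in the $0$-fiber and $D'$ in the $1$-fiber. In any such lifting problem the map $\Delta^n \to \Delta^1$ is forced by monotonicity: vertex $n-1$ goes to $0$ and vertex $n$ to $1$, so $\Delta^n \times_{\Delta^1} \{1\} = \{n\} = \Delta^0$. Consequently the $\calD'$-component of the lifting problem is already determined by the target vertex $D'$, and solving the lift in $\calM'$ is equivalent to solving the corresponding lifting problem in $\calM$ with final edge $p'(f)$. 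Therefore $f$ is $(p \circ p')$-Cartesian if and only if $p'(f)$ is $p$-Cartesian.

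Finally, for (4), assume $p$ is Cartesian with associated functor $G': \calD \to \calC$, and fix $D' \in \calD' = \calM' \times_{\Delta^1} \{1\}$ with image $D = G(D')$ in $\calD$. Choose a $p$-Cartesian lift $\bar f: G'(D) \to D$ of the edge $0 \to 1$ in $\calM$; the universal property of $\calM'$ packages $\bar f$ together with the vertex $D'$ (lying above $D$) into an edge $f: G'(D) \to D'$ of $\calM'$, and (3) identifies $f$ as $(p \circ p')$-Cartesian. This provides Cartesian lifts over $0 \to 1$ for every vertex of $\calM'_{1}$, so $p \circ p'$ is a Cartesian fibration, with associated functor sending $D'$ to $G'(G(D')) = (G' \circ G)(D')$. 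The main bookkeeping obstacle is the fiberwise horn analysis in (1) and (3) — in particular keeping track of which subsimplices of $\Delta^n$ lie in $\Lambda^n_i$ (resp.\ $\Lambda^n_n$) versus in the $1$-fiber; once that is organized, the remaining assertions are formal.
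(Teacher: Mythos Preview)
Your argument is correct. It takes a more hands-on route than the paper, and it is worth noting the difference.

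For (1), the paper does not analyze horns at all: it shows directly that $p': \calM' \to \calM$ has the right lifting property with respect to every monomorphism $A \hookrightarrow B$ which is a categorical equivalence, by unwinding the universal property to a lifting problem for $G: \calD' \to \calD$ along $A \times_{\Delta^1}\{1\} \hookrightarrow B \times_{\Delta^1}\{1\}$ and then invoking a preparatory lemma (Lemma~\ref{susin}) to the effect that passage to the $1$-fiber preserves categorical equivalences. This proves the stronger statement that $p'$ is a categorical fibration, at the cost of an auxiliary lemma. Your case analysis of $\Lambda^n_i \times_{\Delta^1}\{1\}$ is more elementary and perfectly adequate for the inner-fibration claim actually asserted in (1).

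For (3), the paper instead observes that the natural map of slice categories $\phi: \calM'_{/f} \to \calM_{/\overline f}$ is an \emph{isomorphism} of simplicial sets (for exactly the reason you identified: the $1$-fiber of the relevant simplices is the single vertex $D'$), and then reads off the Cartesian property from the trivial-Kan-fibration criterion. Your horn-lifting argument encodes the same observation at the level of individual lifting problems; both are valid, and both rest on the same key point that the $\calD'$-data is a single vertex already pinned down by $f$.

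For (4) your argument is correct but slightly informal at the end: constructing a $(p\circ p')$-Cartesian lift for each object $D'$ shows $p\circ p'$ is a Cartesian fibration and that the associated functor agrees with $G'\circ G$ on objects, but identifying the associated functor up to equivalence really requires a \emph{natural} Cartesian transformation $\calD' \times \Delta^1 \to \calM'$. The paper obtains this globally by taking a Cartesian transformation $h: \calD \times \Delta^1 \to \calM$ witnessing $G'$, precomposing with $G \times \id$, and lifting to $\calM'$ via the universal property (the $1$-fiber lift being $\id_{\calD'}$). Your pointwise construction is the $0$-simplex shadow of this, and upgrading it to the global statement is straightforward along the same lines.
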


\begin{proof}
We first prove $(1)$. We wish to show that the projection $\calM' \rightarrow \calM$ has the right lifting property with respect to every inclusion $A \rightarrow B$ which is a categorical equivalence of simplicial sets. Fix a map $\alpha: B \rightarrow \Delta^1$; we must show that it is possible to solve any mapping problem of the form
$$ \xymatrix{ A \times_{ \Delta^1} \{1\} \ar[r] \ar@{^{(}->}[d]^{i} & \calD' \ar[d]^{G} \\
B \times_{ \Delta^1} \{1\} \ar[r] & \calD. }$$
Since $G$ is assumed to be a categorical fibration, it will suffice to show that $i$ is a categorical equivalence, which follows from Lemma \ref{susin}.
This completes the proof of $(1)$. Assertion $(2)$ follows immediately.

We now prove $(3)$. Let $\overline{f}$ denote the image of $f$ in $\calM$. 
We have a commutative diagram of simplicial sets
$$ \xymatrix{ & \calM_{/\overline{f} } \ar[dr]^{\psi} & \\
\calM'_{/f} \ar[ur]^{\phi} \ar[rr] & & \calC_{/C}. }$$
We observe that $f$ is $(p \circ p')$-Cartesian if and only if $(\psi \circ \phi)$ is a trivial Kan fibration,
and that $\overline{f}$ is $p$-Cartesian if and only if $\psi$ is a trivial Kan fibration. The desired equivalence now follows from the observation that $\phi$ is an isomorphism.

To prove $(4)$, let us suppose that we are given a map $h: \calD \times \Delta^1 \rightarrow \calM$
which is a $p$-Cartesian natural transformation from $G'$ to $\id_{\calD}$. Using the definition of
$\calM'$, we see that the composition
$$ \calD' \times \Delta^1 \rightarrow \calD \times \Delta^1 \stackrel{h}{\rightarrow} \calM$$
can be lifted uniquely to a map $h': \calD' \times \Delta^1 \rightarrow \calM'$ which
is a natural transformation from $G' \circ G$ to $\id_{\calD'}$. It follows from $(3)$ that
$h'$ is a $(p \circ p')$-Cartesian transformation, so that $(p \circ p')$ is a Cartesian fibration associated to the functor $G' \circ G$.
\end{proof}

We now describe an important example of a correspondence.

\begin{notation}\label{iffle}
Let $K \subseteq \Delta^1 \times \Delta^1$ denote the full subcategory spanned by
the vertices $\{i\} \times \{j\}$ where $i \leq j$ (so that $K$ is isomorphic to a $2$-simplex
$\Delta^2$). For every simplicial set $A$ equipped with a map $f: A \rightarrow \Delta^1$, we
let $\overline{A}$ denote the inverse image of $K$ under the induced map
$$ \Delta^1 \times A \rightarrow \Delta^1 \times \Delta^1.$$
Note that the map $A \stackrel{(f, \id)}{\rightarrow} \Delta^1 \times A$ factors through
$\overline{A}$; we will denote the resulting inclusion by $\psi_{A}: A \rightarrow \overline{A}$.

Let $\calC$ be an $\infty$-category. The {\it fundamental correspondence of $\calC$} is
a simplicial set $\calM^{0}(\calC)$ equipped with a map $p: \calM^{0}(\calC) \rightarrow \Delta^1$, characterized by the following universal property: for every map of simplicial sets $A \rightarrow \Delta^1$, we have a canonical bijection of sets
$$ \Hom_{\Delta^1}( A, \calM^{0}(\calC) ) \simeq \Hom( \overline{A}, \calC). $$
The inclusions $\psi_{A}: A \rightarrow \overline{A}$ determine a map
$q: \calM^{0}(\calC) \rightarrow \calC$. Together $p$ and $q$ determine a map
$\calM^{0}(\calC) \rightarrow \calC \times \Delta^1$, which we will call the
{\it fundamental projection}.
\end{notation}

\begin{remark}
Let $\calC$ be an $\infty$-category, and let $\calM^{0}(\calC)$ be its fundamental correspondence. Then the fiber $\calM^{0}(\calC) \times_{ \Delta^1} \{0\}$ is canonically isomorphic to $\calC$, and
the fiber $\calM^{0}(\calC) \times_{ \Delta^1} \{1\}$ is canonically isomorphic to $\Fun(\Delta^1, \calC)$.
We will generally abuse terminology, and use these isomorphisms identify $\calC$ and $\Fun( \Delta^1, \calC)$ with subsets of $\calM^{0}(\calC)$. The map $q: \calM^{0}(\calC) \rightarrow \calC$ is given by the identity on $\calC$, and by evaluation at
$\{1\} \subseteq \Delta^1$ on $\Fun( \Delta^1, \calC)$.
\end{remark}

\begin{proposition}\label{sss}
Let $\calC$ be an $\infty$-category, let $\calM^{0}(\calC)$ be the fundamental correspondence of $\calC$, and let $\pi: \calM^{0}(\calC) \rightarrow \calC \times \Delta^1$ denote the fundamental projection,
and $p: \calM^{0}(\calC) \rightarrow \Delta^1$ the composition of $\pi$ with projection onto the second factor. Then:
\begin{itemize}
\item[$(1)$] The fundamental projection $\pi$ is a categorical fibration. In particular,
$\calM^{0}(\calC)$ is an $\infty$-category.

\item[$(2)$] The map $p$ is a Cartesian fibration.

\item[$(3)$] Let $A \in \calC \subseteq \calM^{0}(\calC)$, and let
$(f: B \rightarrow C) \in \Fun(\Delta^1, \calC) \subseteq \calM^{0}(\calC)$. Let
$\alpha: A \rightarrow f$ be a morphism in $\calM^{0}(\calC)$, corresponding to a commutative
diagram
$$ \xymatrix{ A \ar[r]^{\overline{\alpha}} \ar[dr] & B \ar[d]^{f} \\
& C }$$
in $\calC$. Then $\alpha$ is $p$-Cartesian if and only if $\overline{\alpha}$ is an equivalence in $\calC$.

\item[$(4)$] The Cartesian fibration $p$ is associated to the functor
$\Fun( \Delta^1, \calC) \rightarrow \calC$ given by evaluation at the vertex
$\{0\} \in \Delta^1$.

\item[$(5)$] The map $p$ is also a coCartesian fibration, associated to the diagonal
inclusion $\calC \rightarrow \Fun( \Delta^1, \calC)$.

\end{itemize}
\end{proposition}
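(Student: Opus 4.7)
The plan is to exploit the defining universal property of $\calM^{0}(\calC)$, which identifies maps $A \to \calM^{0}(\calC)$ over $\calC \times \Delta^1$ with maps $\overline{A} \to \calC$ extending a prescribed restriction $A \to \calC$ along $\psi_A: A \hookrightarrow \overline{A}$. A key technical observation is the pushout presentation $\overline{A} = A \cup_{A_1} (\Delta^1 \times A_1)$, where $A_1 = A \times_{\Delta^1} \{1\}$ is glued in via $\{1\} \times A_1 \hookrightarrow \Delta^1 \times A_1$. From this one derives, for any inclusion $A \hookrightarrow B$ over $\Delta^1$, that the map $\overline{A} \cup_A B \to \overline{B}$ is obtained by pushing out the pushout-product of $\{1\} \hookrightarrow \Delta^1$ with $A_1 \hookrightarrow B_1$.

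To prove (1), I would verify the right lifting property of $\pi$ against every Joyal trivial cofibration $i: A \hookrightarrow B$ over $\Delta^1$. Unwinding via the universal property, this reduces to extending a map $\overline{A} \cup_A B \to \calC$ along the inclusion $\overline{A} \cup_A B \hookrightarrow \overline{B}$. Lemma \ref{susin} guarantees that $A_1 \hookrightarrow B_1$ is a Joyal trivial cofibration, so the cartesian monoidal structure on the Joyal model structure makes the relevant pushout-product, and hence $\overline{A} \cup_A B \hookrightarrow \overline{B}$, a Joyal trivial cofibration. The required extension then exists because $\calC$ is an $\infty$-category.

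For (2), (3), and (4), I would construct the Cartesian structure on $p$ explicitly. Taking the simplicial set $\Fun(\Delta^1, \calC) \times \Delta^1$ with projection to the second factor, the universal property converts the map $\Fun(\Delta^1, \calC) \times K \to \calC$, $(f, (a, b)) \mapsto f(a)$, into a map $h: \Fun(\Delta^1, \calC) \times \Delta^1 \to \calM^{0}(\calC)$ over $\Delta^1$ whose restrictions recover the evaluation $\mathrm{ev}_0: \Fun(\Delta^1, \calC) \to \calC \subseteq \calM^{0}(\calC)$ at $\{0\}$ and the identity inclusion $\Fun(\Delta^1, \calC) \subseteq \calM^{0}(\calC)$ at $\{1\}$. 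For each $f: B \to C$, the edge $h_f: B \to f$ corresponds under $q$ to the $2$-simplex of $\calC$ with $\sigma|_{01} = \id_B$ and $\sigma|_{12} = \sigma|_{02} = f$. Once each $h_f$ is shown to be $p$-Cartesian, (2) and (4) follow, since $h$ exhibits $p$ as the Cartesian fibration classified by $\mathrm{ev}_0$; (3) then follows from the uniqueness of Cartesian lifts, since any edge $\alpha: A \to f$ factors up to equivalence as $h_f$ precomposed with $\overline{\alpha}: A \to B$, and is therefore Cartesian iff $\overline{\alpha}$ is an equivalence. A dual construction handles (5): the map $\calC \times K \to \calC$ projecting onto the $\calC$-factor yields $h': \calC \times \Delta^1 \to \calM^{0}(\calC)$ over $\Delta^1$, restricting at $\{0\}$ to $\id_\calC$ and at $\{1\}$ to the diagonal embedding $\calC \to \Fun(\Delta^1, \calC)$; the candidate coCartesian lifts $h'_A: A \to \id_A$ are the degenerate $2$-simplices at $A$.

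The main obstacle is verifying that the lifts $h_f$ and $h'_A$ are genuinely Cartesian and coCartesian, respectively. I would approach this via the mapping-space characterization (Proposition \toposref{charCart}), reducing the claim for $h_f$ to showing that, for any $Y \in \calM^{0}(\calC)$, composition with $h_f$ induces a homotopy Cartesian square of mapping spaces. Unwinding these mapping spaces via the universal property of $\calM^{0}(\calC)$ converts the requirement into a standard statement about composition of $2$-simplices in the $\infty$-category $\calC$; the combinatorial bookkeeping surrounding the definition of $\overline{A}$ is the delicate part of the argument.
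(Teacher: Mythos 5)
Your proposed proof of $(1)$ rests on a false identity. The simplicial set $A \cup_{A_1} (\Delta^1 \times A_1)$, with $A$ embedded diagonally via $\psi_A$ and the gluing taking place along $\{1\} \times A_1$, is a \emph{proper} subcomplex of $\overline{A}$ in general. Take $A = \Delta^1 \xrightarrow{\id} \Delta^1$: then $\overline{A} = K \cong \Delta^2$, while your pushout is the union of the long edge $(0,0) \to (1,1)$ with the edge $(0,1) \to (1,1)$, i.e.\ the \emph{outer} horn $\Lambda^2_2 \subsetneq \Delta^2$. Worse, the inclusion $\Lambda^2_2 \hookrightarrow \Delta^2$ is not a categorical equivalence (it does not induce an essentially surjective map on $\Hom(0,1)$ after applying $\sCoNerve$), so the gap cannot be repaired just by weakening ``$=$'' to ``$\simeq$''. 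The paper instead works with the mapping cylinder $\widetilde{A} = (\{0\} \times A) \coprod_{\{0\} \times A_1} (\Delta^1 \times A_1)$ of $A_1 \hookrightarrow A$ --- note the gluing is at $\{0\}$, not $\{1\}$, and $A$ is included as $\{0\} \times A$, not diagonally --- and Lemma \ref{toofle} shows by a nontrivial cylinder argument that $\widetilde{A} \hookrightarrow \overline{A}$ \emph{is} a categorical equivalence; the diagonal $\psi_A(A)$ does not even lie inside $\widetilde{A}$ in general, so this is genuinely a different subcomplex from yours. With Lemma \ref{toofle} in hand, the paper proves Lemma \ref{pyle} (that $\overline{A} \to \overline{B}$ is a categorical equivalence whenever $A \to B$ is) and then finishes $(1)$ using left properness of the Joyal model structure rather than a pushout-product; your pushout-product computation of $\overline{A} \cup_A B \hookrightarrow \overline{B}$ does not survive the correction, because $\psi_A(A) \not\subseteq \widetilde{A}$ breaks the cofiber identification.

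The logical organization of your argument for $(2)$--$(5)$ is also inverted relative to the paper, and this is not merely cosmetic. You propose to establish $(4)$ and $(5)$ first by writing down candidate (co)Cartesian lifts $h_f$ and $h'_A$, verify they are (co)Cartesian via $\bHom$-space computations, and then derive $(3)$ from uniqueness of Cartesian lifts. But the (co)Cartesianness verification --- which you acknowledge as ``the delicate part'' --- is precisely what is left unaddressed. The paper goes the other way: it proves $(3)$ \emph{first}, directly, by unwinding the universal property to identify the relevant map of slices with $\calC_{/\alpha} \to \calC_{/f}$, then invokes Proposition \toposref{greenlem} and Proposition \toposref{needed17} to reduce to ``$\overline{\alpha}$ is an equivalence.'' Once $(3)$ is known, $(2)$ follows by exhibiting the explicit lift and citing $(3)$, $(4)$ follows by constructing $h$ and citing $(3)$ to see it is Cartesian, and $(5)$ is a formal adjunction observation. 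Your plan would require carrying out the $\bHom$-space computation you defer, and it is strictly harder than the slice-category argument the paper uses precisely because the paper never has to compute the full mapping space --- it only needs the trivial-Kan-fibration criterion from \toposref{greenlem}.
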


The proof will require a few lemmas. In what follows, we will employ the conventions of
Notation \ref{iffle}. 

\begin{lemma}\label{toofle}
Let $A$ be a simplicial set equipped with a map $A \rightarrow \Delta^1$, and let
$$\widetilde{A} = (A \times \{0\}) \coprod_{ A_1 \times \{0\} } (A_1 \times \Delta^1) \subseteq \overline{A}.$$ 
Then the inclusion $\widetilde{A} \subseteq \overline{A}$ is a categorical equivalence.
\end{lemma}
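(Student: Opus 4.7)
My strategy is to show that the inclusion $\widetilde A \hookrightarrow \overline A$ is inner anodyne, which implies that it is a categorical equivalence.

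The first step is to reduce to the case $A = \Delta^n$ by skeletal induction on $A$. The constructions $A \mapsto \widetilde A$ and $A \mapsto \overline A$ are colimit-preserving functors from $(\sSet)_{/\Delta^1}$ to $\sSet$, and the class of inner anodyne maps is saturated. A standard argument using attachment along $\bd \Delta^n \hookrightarrow \Delta^n$ then reduces the general claim to the statement that for every $n$ and every map $\Delta^n \to \Delta^1$, the relative inclusion
\[
\widetilde{\Delta^n} \cup_{\widetilde{\bd\Delta^n}} \overline{\bd\Delta^n} \;\hookrightarrow\; \overline{\Delta^n}
\]
is inner anodyne.

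Any structure map $\Delta^n \to \Delta^1$ is determined by a cut index $k$, with vertices $\{0, \ldots, k\}$ mapped to $0$ and $\{k+1, \ldots, n\}$ mapped to $1$. The boundary cases $k = -1$ and $k = n$ are trivial, since then $\widetilde{\Delta^n}$ already equals $\overline{\Delta^n}$. In the remaining range $0 \le k < n$, $\overline{\Delta^n}$ is the sub-simplicial set of the prism $\Delta^1 \times \Delta^n$ obtained by deleting the vertices $(1, j)$ with $j \le k$, while $\widetilde{\Delta^n}$ is the union of the bottom face $\{0\} \times \Delta^n$ with the side $\Delta^1 \times \Delta^{\{k+1, \ldots, n\}}$. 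A non-degenerate simplex of $\overline{\Delta^n}$ fails to lie in $\widetilde{\Delta^n}$ precisely when it uses both a vertex of the form $(0, a)$ with $a \le k$ and a vertex of the form $(1, b)$ with $b > k$.

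The core combinatorial step is then to exhibit an explicit filtration building $\overline{\Delta^n}$ from $\widetilde{\Delta^n}$ by attaching these ``mixed'' simplices one at a time, each via an inner horn filling. I would order them in a way analogous to the standard proof that prism-decomposition inclusions of the form $\Delta^n \times \{0\} \cup \bd \Delta^n \times \Delta^1 \hookrightarrow \Delta^n \times \Delta^1$ are inner anodyne: at each stage, the new simplex $\sigma$ should have all of its faces in the current subcomplex except the one opposite a distinguished ``bridge'' vertex at which the $\Delta^1$-coordinate transitions from $0$ to $1$, and this vertex must lie in the strict interior of $\sigma$ (neither first nor last, since both endpoints of any mixed simplex already belong to $\widetilde{\Delta^n}$), so the missing face corresponds to an inner horn $\Lambda^m_i$ with $0 < i < m$. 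The main obstacle is organizing the combinatorial bookkeeping so that at each step exactly one simplex is added along a single inner horn; I expect to manage this by indexing simplices by their staircase path in the $(1 \times n)$-grid and filtering by the column at which the path first leaves level $0$.
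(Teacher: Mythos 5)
Your proof is correct, and it establishes the strictly stronger statement that the inclusion is inner anodyne. The paper's proof is different on both halves: it reduces to $A = \Delta^n$ by noting that $\widetilde{\bullet}$ and $\overline{\bullet}$ carry cell attachments to homotopy pushouts and invoking the cube lemma, and then for $A = \Delta^n$ it identifies $\widetilde{\Delta^n}$ with the mapping cylinder of $A_1 \hookrightarrow A$ inside the Cartesian fibration $\overline{\Delta^n} \to \Delta^1$, citing Proposition T.qequiv. Your reduction instead uses the saturation of the class of inner anodyne maps, which is more elementary and self-contained; the trade-off is that you must carry out a concrete filtration. One small correction to the analogy you invoke: the map $\Delta^n \times \{0\} \cup \bd\Delta^n \times \Delta^1 \hookrightarrow \Delta^n \times \Delta^1$ is left anodyne but not inner anodyne; already for $n = 1$ its final step is an outer horn filling $\Lambda^2_0 \hookrightarrow \Delta^2$. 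It is precisely the deletion of the cut corner $\{(1,a) : a \le k\}$ from $\overline{\Delta^n}$ that renders the present inclusion genuinely inner. The filtration you sketch does work out. For $0 \le k < n$, the nondegenerate simplices of $\overline{\Delta^n}$ not lying in $\widetilde{\Delta^n} \cup \overline{\bd\Delta^n}$ are exactly the $n$-simplices $\sigma_j$ spanned by $(0,0), \ldots, (0,j-1), (1,j), \ldots, (1,n)$ for $k+1 \le j \le n$, together with the $(n+1)$-simplices $\tau_j$ spanned by $(0,0), \ldots, (0,j-1), (1,j-1), (1,j), \ldots, (1,n)$ for $k+2 \le j \le n+1$. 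Every face of $\tau_j$ lies in $\overline{\bd\Delta^n}$ except $\partial_{j-1}\tau_j = \sigma_{j-1}$ and $\partial_j \tau_j$, which equals $\sigma_j$ for $j \le n$ and the already-present face $\{0\} \times \Delta^n \subseteq \widetilde{\Delta^n}$ for $j = n+1$; attaching $\tau_{n+1}, \tau_n, \ldots, \tau_{k+2}$ in decreasing order of $j$ therefore fills the inner horn $\Lambda^{n+1}_{j-1}$ at each step (inner because $k+1 \le j-1 \le n$), contributing one $\tau$-simplex and one $\sigma$-simplex per step.
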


\begin{proof}
The functors $A \mapsto \widetilde{A}$ and $A \mapsto \overline{A}$ both commute with colimits. Since the class of categorical equivalences is stable under filtered colimits, we may reduce to the case where $A$ has only finitely many simplices. We now work by induction on the dimension $n$ of $A$, and the number of nondegenerate simplices of dimension $n$. If $A$ is empty there is nothing to prove; otherwise there exists a pushout diagram
$$ \xymatrix{ \bd \Delta^n \ar[r] \ar[d] & \Delta^n \ar[d] \\
A' \ar[r] & A. }$$
This induces homotopy pushout diagrams
$$ \xymatrix{ \overline{ \bd \Delta^n } \ar[r] \ar[d] & \overline{ \Delta^n} \ar[d] & \widetilde{ \bd \Delta^n} \ar[r] \ar[d] & \widetilde{ \Delta^n} \ar[d] \\
\overline{A}' \ar[r] & \overline{A} & \widetilde{A}' \ar[r] & \widetilde{A}. }$$
It will therefore suffice to prove the lemma after replacing $A$ by $A'$, $\bd \Delta^n$, or $\Delta^n$.
In the first two cases this follows from the inductive hypothesis. We may therefore assume that $A = \Delta^n$. In particular, $A$ is an $\infty$-category. The composite map
$$ \overline{A} \subseteq A \times \Delta^1 \rightarrow \Delta^1$$
is a Cartesian fibration associated to the inclusion $i: A_1 \rightarrow A$, and $\widetilde{A}$
can be identified with the mapping cylinder of $i$. The desired result now follows from
Proposition \toposref{qequiv}.
\end{proof}

\begin{lemma}\label{pyle}
Suppose given maps of simplicial sets $A \stackrel{f}{\rightarrow} B \rightarrow \Delta^1$. If
$f$ is a categorical equivalence, then the induced map $\overline{A} \rightarrow \overline{B}$
is a categorical equivalence.
\end{lemma}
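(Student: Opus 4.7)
The strategy is to apply Lemma \ref{toofle} to reduce the question to the simpler pushout models $\widetilde{A}$ and $\widetilde{B}$, and then invoke standard homotopical properties of the Joyal model structure.

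First, I would compare $\overline{A} \to \overline{B}$ with $\widetilde{A} \to \widetilde{B}$ via the commutative square
$$ \xymatrix{ \widetilde{A} \ar[r] \ar@{^{(}->}[d] & \widetilde{B} \ar@{^{(}->}[d] \\ \overline{A} \ar[r] & \overline{B}. } $$
By Lemma \ref{toofle}, the two vertical inclusions are categorical equivalences, so by the two-out-of-three property it suffices to prove that $\widetilde{A} \to \widetilde{B}$ is a categorical equivalence.

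Next, I would exploit the pushout presentation $\widetilde{A} = (A \times \{0\}) \coprod_{A_1 \times \{0\}} (A_1 \times \Delta^1)$, and similarly for $B$. The gluing map $A_1 \times \{0\} \hookrightarrow A \times \{0\}$ is a monomorphism, hence a cofibration in the Joyal model structure, so both pushouts are in fact homotopy pushouts. The natural map $\widetilde{A} \to \widetilde{B}$ is induced on corners by $A \to B$, $A_1 \to B_1$, and $A_1 \times \Delta^1 \to B_1 \times \Delta^1$: the first is a categorical equivalence by hypothesis, the second by Lemma \ref{susin}, and the third because the Joyal model structure is cartesian, so taking products with the (cofibrant) simplicial set $\Delta^1$ preserves categorical equivalences. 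Since homotopy pushouts preserve categorical equivalences, the induced map $\widetilde{A} \to \widetilde{B}$ is a categorical equivalence, completing the argument.

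The only mildly delicate point is the verification that each of the three corner maps is a categorical equivalence; everything else is formal. In particular, the key input that does real work is Lemma \ref{susin}, which guarantees that a categorical equivalence over $\Delta^1$ restricts to a categorical equivalence on the fiber over $\{1\}$; after that, the rest is a bookkeeping argument with homotopy pushouts in the Joyal model structure.
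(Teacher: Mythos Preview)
Your proposal is correct and follows essentially the same approach as the paper: reduce via Lemma \ref{toofle} to the map $\widetilde{A}\to\widetilde{B}$, present both sides as homotopy pushouts, and check the corner maps are categorical equivalences using the hypothesis and Lemma \ref{susin}. The paper's write-up is slightly terser (it only singles out $A_1\to B_1$, leaving the other two corners implicit), but the argument is the same.
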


\begin{proof}
Let $\widetilde{A}$ and $\widetilde{B}$ be defined as in Lemma \ref{toofle}. We have a commutative diagram
$$ \xymatrix{ \widetilde{A} \ar@{^{(}->}[d] \ar[r]^{\widetilde{f} } & \widetilde{B} \ar@{^{(}->}[d] \\
\overline{A} \ar[r]^{ \overline{f} } & \overline{B}, }$$
where the vertical maps are categorical equivalences by Lemma \ref{toofle}. It will therefore
suffice to show that $\widetilde{f}$ is a categorical equivalence. The map $\widetilde{f}$
determines a map of homotopy pushout diagrams
$$ \xymatrix{ A_1 \times \{0\} \ar[r] \ar[d] & A \times \{0\} \ar[d] & B_1 \times \{0\} \ar[r] \ar[d] & B \times \{0\} \ar[d] \\
A_1 \times \Delta^1 \ar[r] & \widetilde{A} & B_1 \times \Delta^1 \ar[r] & \widetilde{B}. }$$
It therefore suffices to show that the map $A_1 \rightarrow B_1$ is a categorical equivalence, which follows from Lemma \ref{susin}.
\end{proof}

\begin{proof}[Proof of Proposition \ref{sss}]
We first prove $(1)$. Consider a lifting problem
$$ \xymatrix{ A \ar@{^{(}->}[d]^{i} \ar[r] & \calM^{0}(\calC) \ar[d]^{\pi} \\
B \ar@{-->}[ur] \ar[r] & \calC \times \Delta^1, }$$
where $i$ is a monomorphism of simplicial sets. We must show that this lifting problem has a solution if $i$ is a categorical equivalence. Unwinding the definitions (and using the conventions of Notation \ref{iffle}, we are reduced to showing that $\calC$ has the extension property with respect to the inclusion
$j: \overline{A} \coprod_{A} B \rightarrow \overline{B}$. For this, it suffices to show that
$j$ is a categorical equivalence. Since the Joyal model structure is left proper, it will suffice to show that the inclusion $\overline{A} \rightarrow \overline{B}$ is a categorical equivalence, which follows
from Lemma \ref{pyle}.

We next prove $(3)$. Let us identify $\alpha$ with a $2$-simplex in $\calC$. Unwinding the definitions, we see that $\alpha$ is $p$-Cartesian if and only if the map
$\phi: \calC_{/ \alpha} \rightarrow \calC_{/f}$ is a trivial Kan fibration. In view of Proposition
\toposref{greenlem}, this is equivalent to the requirement that the map $A \rightarrow B$
be an equivalence in $\calC_{/C}$, which is equivalent to the requirement that
$\overline{\alpha}$ be an equivalence in $\calC$ (Proposition \toposref{needed17}).

We now prove $(2)$. Since $p$ is the composition of $\pi$ with the projection map $\calC \times \Delta^1 \rightarrow \Delta^1$, we deduce immediately that $p$ is an inner fibration. To show that $p$ is a Cartesian fibration, it will suffice to show that for every object $X \in \calM^{0}(\calC)$ and every morphism
$\overline{\alpha}: y \rightarrow p(x)$ in $\Delta^1$, there exists a $p$-Cartesian morphism
$\alpha: Y \rightarrow X$ lifting $\overline{\alpha}$. If $\overline{\alpha}$ is degenerate, we can choose $\alpha$ to be degenerate. We may therefore assume that $X \in \Fun( \Delta^1, \calC)$ classifies
a map $B \rightarrow C$ in $\calC$. We can then choose $\alpha$ to classify the diagram
$$ \xymatrix{ B \ar[r]^{\id}  \ar[dr] & B \ar[d] \\
& C. }$$
It follows from $(3)$ that $\alpha$ is $p$-Cartesian.

Let $G: \Fun( \Delta^1, \calC) \rightarrow \calC$ denote the functor given by evaluation at the vertex $\{0\}$. To prove $(4)$, we must exhibit a $p$-Cartesian natural transformation
$h: \Delta^1 \times \Fun( \Delta^1, \calC) \rightarrow \calM^{0}(\calC)$ from
$G$ to $\id_{\Fun( \Delta^1, \calC) }$. We now choose $h$ to classify the composite map
$$ K \times \Fun( \Delta^1, \calC) \stackrel{(h_0, \id)}{\rightarrow} \Delta^1 \times \Fun( \Delta^1, \calC) \rightarrow \calC$$
where $K$ is defined as in Notation \ref{iffle}, and $h_0: K \simeq \Delta^2 \rightarrow \Delta^1$
is the map which collapses the edge $\Delta^{ \{0,1\} } \subseteq \Delta^2$. It follows from
$(3)$ that $h$ is a Cartesian transformation with the desired properties.

We now prove $(5)$.
Let $F: \calC \rightarrow \Fun( \Delta^1, \calC)$ denote the diagonal embedding. 
The $G \circ F = \id_{\calC}$. The identity map $\id_{\calC} \rightarrow G \circ F$ is the unit for
an adjunction between $G$ and $F$. Thus $p$ is also a coCartesian fibration, associated to the functor $F$, as desired.
\end{proof}

\begin{definition}\label{extcor}
Let $\calC$ be a presentable $\infty$-category and let $G: T_{\calC} \rightarrow \Fun( \Delta^1, \calC)$ be a tangent bundle to $\calC$. We define the {\it tangent correspondence}
$\calM^{T}(\calC)$ to be the result of applying the construction of Proposition \ref{suskin} using
the fundamental correspondence $\calM^{0}(\calC)$ and the functor 
$G$. 
By construction, $\calM^{T}(\calC)$ is equipped with a projection map $\pi: \calM^{T}(\calC) \rightarrow \Delta^1 \times \calC$.
\end{definition}

\begin{remark}
The terminology of Definition \ref{extcor} is slightly abusive: the tangent correspondence
$\calM^{T}(\calC)$ depends on a choice of tangent bundle $T_{\calC} \rightarrow \Fun( \Delta^1, \calC)$.
However, it is easy to eliminate this ambiguity: for example, we can use an explicit construction
of $T_{\calC}$ such as the one which appears in the proof of Proposition \ref{kinder}.
\end{remark}

The following result is an immediate consequence of Propositions \ref{sss}, Proposition \ref{suskin}, and the definition of the cotangent complex functor $L$:

\begin{proposition}
Let $\calC$ be a presentable $\infty$-category. Then:
\begin{itemize}
\item[$(1)$] The projection $\calM^{T}(\calC) \rightarrow \Delta^1 \times \calC$ is a categorical fibration.
\item[$(2)$] The composite map $p: \calM^{T}(\calC) \rightarrow \Delta^1 \times \calC \rightarrow \Delta^1$ is a Cartesian fibration, associated to the functor
$$ T_{\calC} \rightarrow \Fun( \Delta^1, \calC) \rightarrow \Fun( \{0\}, \calC) \simeq \calC.$$

\item[$(3)$] The map $p$ is also a coCartesian fibration, associated to the cotangent complex
functor $L: \calC \rightarrow T_{\calC}$.
\end{itemize}
\end{proposition}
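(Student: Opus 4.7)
The plan is to read off each of (1), (2), (3) directly from the properties of the fundamental correspondence $\calM^{0}(\calC)$ established in Proposition \ref{sss}, using the fact that by Definition \ref{extcor} the tangent correspondence $\calM^{T}(\calC)$ is built from $\calM^{0}(\calC)$ via the construction of Proposition \ref{suskin} applied to the tangent bundle projection $G: T_{\calC} \to \Fun(\Delta^1, \calC)$.

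For (1), I would observe that $\calM^{0}(\calC) \to \calC \times \Delta^1$ is already a categorical fibration by Proposition \ref{sss}(1), and that the auxiliary projection $\calM^{T}(\calC) \to \calM^{0}(\calC)$ produced by Proposition \ref{suskin} is, by inspection of the lifting argument in the proof of Proposition \ref{suskin}(1), a categorical fibration whenever $G$ is one. Since $G$ is a categorical fibration by Definition \ref{defstabbb} and Definition \ref{uwe}, the composite $\calM^{T}(\calC) \to \calM^{0}(\calC) \to \calC \times \Delta^1$ is a categorical fibration, yielding (1).

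For (2), Proposition \ref{sss}(4) identifies the Cartesian fibration $\calM^{0}(\calC) \to \Delta^1$ with the functor $\Fun(\Delta^1, \calC) \to \calC$ given by evaluation at $\{0\}$. Applying Proposition \ref{suskin}(4) to this Cartesian fibration and to the functor $G$ shows that the induced map $p: \calM^{T}(\calC) \to \Delta^1$ is a Cartesian fibration classified by the composite of $G$ with evaluation at $\{0\}$; this is exactly the functor described in (2).

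For (3), the remark following Definition \ref{urtime} identifies the cotangent complex functor $L: \calC \to T_{\calC}$ as a left adjoint to the functor appearing in (2). The standard dictionary between fibrations over $\Delta^1$ and functors (see \S\toposref{afunc1}) implies that a Cartesian fibration over $\Delta^1$ is also coCartesian precisely when its classifying functor admits a left adjoint, and then the coCartesian structure is classified by that left adjoint; applying this to $p$ yields (3). The only substantive step is to check that the projection produced by Proposition \ref{suskin} is a categorical fibration and not merely an inner fibration, but this is immediate from the lifting argument once we use that $G$ is itself a categorical fibration; every other point is routine bookkeeping between Propositions \ref{sss} and \ref{suskin}.
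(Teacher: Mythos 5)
Your argument is correct and is essentially the paper's own reasoning: the paper dispatches this result as an immediate consequence of Propositions \ref{sss} and \ref{suskin} together with the definition of $L$, and you are just unpacking that claim, routing (1) through the composition $\calM^{T}(\calC)\to\calM^{0}(\calC)\to\Delta^1\times\calC$, (2) through Proposition \ref{sss}(4) and \ref{suskin}(4), and (3) through the remark after Definition \ref{urtime} and the standard $\Delta^1$-fibration dictionary. One detail worth flagging is exactly the one you caught: the statement of Proposition \ref{suskin}(1) asserts only an inner fibration, while the argument given there actually establishes the right lifting property against all monomorphisms that are categorical equivalences, hence a categorical fibration whenever $G$ is a categorical fibration (which it is, since a stable envelope is required to be one by Definition \ref{defstabbb}); your proof of (1) correctly relies on this stronger conclusion rather than on the weaker stated form.
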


\subsection{The Relative Cotangent Complex}\label{relcot}

In applications, it will be convenient to have also a {\em relative} cotangent complex defined for a morphism $f: A \rightarrow B$ in a presentable $\infty$-category $\calC$. In this section, we will define the relative cotangent complex $L_{B/A}$ and establish some of its basic properties.

\begin{definition}\label{uppa}
Let $\calC$ be a presentable $\infty$-category and let $p: T_{\calC} \rightarrow \calC$ be a tangent bundle to $\calC$. A {\it relative cofiber sequence} in $T_{\calC}$ is a diagram $\sigma$:
$$ \xymatrix{ X \ar[r] \ar[d] & Y \ar[d] \\
0 \ar[r] & Z}$$
in $T_{\calC}$ with the following properties:
\begin{itemize}
\item[$(1)$] The map $p \circ \sigma$ factors through the projection
$\Delta^1 \times \Delta^1 \rightarrow \Delta^1$, so that the vertical arrows above
become degenerate in $\calC$.
\item[$(2)$] The diagram $\sigma$ is a pushout square. (Since condition $(1)$ implies that
$p \circ \sigma$ is a pushout square, this is equivalent to the requirement that $\sigma$
be a $p$-colimit diagram; see Proposition \ref{tanlim}).
\end{itemize}

Let $\calE$ denote the full subcategory of $$\Fun( \Delta^1 \times \Delta^1, T_{\calC} )
\times_{ \Fun( \Delta^1 \times \Delta^1, \calC) } \Fun( \Delta^1, \calC)$$ spanned by
the relative cofiber sequences. There is an evident forgetful functor
$\psi: \calE \rightarrow \Fun( \Delta^1, T_{\calC} )$, given by restriction to the upper half of the diagram.
Invoking Proposition \toposref{lklk} twice, we deduce that $\psi$ is a trivial Kan fibration.

The {\it relative cotangent complex functor} is defined to be the composition
$$ \Fun( \Delta^1, \calC) \stackrel{L}{\rightarrow}
\Fun( \Delta^1, T_{\calC} ) \stackrel{s}{\rightarrow} \calE \stackrel{s'}{\rightarrow} T_{\calC},$$
where $s$ is a section of $\psi$ and $s'$ is given by evaluation at the vertex
$\{1\} \times \{1\} \subseteq \Delta^1 \times \Delta^1$.

We will denote the image a morphism $f: A \rightarrow B$ under the relative cotangent complex
functor by $L_{B/A} \in T_{\calC} \times_{ \calC} \{ B \} \simeq \Stab( \calC^{B/} )$.
\end{definition}

\begin{remark}\label{irn}
Let $\calC$ and $p: T_{\calC} \rightarrow \calC$ be as in Notation \ref{uppa}.
By definition, the relative cotangent complex of a morphism $f: A \rightarrow B$
fits into a relative cofiber sequence
$$ \xymatrix{ L_{A} \ar[r] \ar[d] & L_{B} \ar[d] \\
0 \ar[r] & L_{B/A} }$$
in the $\infty$-category $T_{\calC}$.
Using Proposition \toposref{chocolatelast}, we deduce the existence of a distinguished triangle
$$ f_{!} L_A \rightarrow L_B \rightarrow L_{B/A} \rightarrow f_{!} L_A[1] $$
in the stable $\infty$-category $\Stab(\calC^{/B} ) \simeq T_{\calC} \times_{\calC} \{B\}$;
here $f_{!}: \Stab( \calC^{/A} ) \rightarrow \Stab( \calC^{/B} )$ denotes the functor induced by the coCartesian fibration $p$.
\end{remark}

\begin{remark}\label{irk}
Let $\calC$ be a presentable $\infty$-category containing a morphism $f: A \rightarrow B$.
If $A$ is an initial object of $\calC$, then the canonical map
$L_B \rightarrow L_{B/A}$ is an equivalence. This follows immediately from Remark \ref{irn}, since
the absolute cotangent complex $L_{A}$ vanishes (Remark \ref{urn}). We will sometimes invoke this equivalence implicitly, and ignore the distinction between the relative cotangent complex $L_{B/A}$ and the absolute cotangent complex $L_{B}$.
\end{remark}

\begin{remark}\label{blurn}
Let $\calC$ be a presentable $\infty$-category containing a morphism $f: A \rightarrow B$.
If $f$ is an equivalence, then the relative cotangent complex $L_{B/A}$ is a zero object
of $\Stab( \calC^{/B})$. This follows immediately from Remark \ref{irn}. 
\end{remark}

We next study the distinguished triangle of cotangent complexes associated to a triple of morphisms $A \rightarrow B \rightarrow C$.

\begin{proposition}
Let $\calC$ be a presentable $\infty$-category, let $T_{\calC}$ be a tangent bundle to $\calC$. Suppose given a commutative diagram
$$ \xymatrix{ & B \ar[dr] & \\
A \ar[ur] \ar[rr] & & C }$$
in $\calC$. The resulting square
$$ \xymatrix{ L_{B/A} \ar[r]^{f} \ar[d] & L_{C/A} \ar[d] \\
L_{B/B} \ar[r] & L_{C/B} }$$
is a pushout diagram in $T_{\calC}$ $($and therefore a relative cofiber sequence, in view of Remark \ref{blurn}$)$.
\end{proposition}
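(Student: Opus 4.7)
The plan is to embed the asserted square into a larger $3 \times 3$ pasting diagram in $T_\calC$ and then apply the pushout-pasting lemma twice. Specifically, I would construct a coherent diagram
\[
\xymatrix{
L_A \ar[r] \ar[d] & L_B \ar[r] \ar[d] & L_C \ar[d] \\
0 \ar[r] & L_{B/A} \ar[r] \ar[d] & L_{C/A} \ar[d] \\
 & 0 \ar[r] & L_{C/B}
}
\]
in $T_\calC$ with three distinguished features: the top-left square, the outer rectangle spanned by the top two rows, and the outer rectangle spanned by the right two columns should each be the defining relative cofiber sequence (Remark \ref{irn}) for $L_{B/A}$, $L_{C/A}$ and $L_{C/B}$ respectively. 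The remaining horizontal maps $L_{B/A}\to L_{C/A}$ and $0\to L_{C/B}$ are supplied by the functoriality of the relative cotangent complex (Definition \ref{uppa}) applied to the evident morphisms of arrows $(A\to B)\to(A\to C)$ and $(B\to B)\to(B\to C)$.

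Once this coherent diagram is in hand, the rest is purely formal. The top-left square and the top outer rectangle are both pushouts (by construction); by the pasting lemma for pushout squares, which holds in any $\infty$-category and in particular in $T_\calC$, the top-right square is then a pushout. A second application of the pasting lemma, this time vertically to the two right-most columns, uses that the combined outer rectangle there is the defining pushout for $L_{C/B}$ to conclude that the bottom-right square is a pushout in $T_\calC$. Identifying the $0$ in the lower-left corner with $L_{B/B}$ via Remark \ref{blurn} then yields exactly the square in the statement.

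The main obstacle is the coherence: the $3 \times 3$ diagram must be produced as a genuine functor (say $\Delta^2\times\Delta^2 \to T_\calC$), not as an ad hoc collection of compatible squares. The cleanest route is to begin with the composable sequence $A\to B\to C$, apply the cotangent complex functor to obtain a $\Delta^2$-diagram $L_A\to L_B\to L_C$ in $T_\calC$, and then use successive sections of the trivial Kan fibration $\psi:\calE\to\Fun(\Delta^1,T_\calC)$ from Definition \ref{uppa} to extend first to a $\Delta^2\times\Delta^1$ diagram carrying the top two rows, and then again to the full $\Delta^2\times\Delta^2$ diagram including the third row. Once the coherent diagram exists the two pasting steps are immediate, so the technical work is concentrated entirely in this lifting step.
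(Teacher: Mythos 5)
Your proof is correct and follows the paper's approach exactly: construct the same $3\times 3$ staircase diagram in $T_{\calC}$, observe that the upper-left square and both large outer rectangles are the defining relative cofiber sequences (hence pushouts), and apply the pushout pasting lemma twice to conclude that the lower-right square is a pushout. The paper is in fact more terse than you about the coherence of the diagram, so your extra discussion of the lifting is a reasonable elaboration, even if the precise mechanism of iterated sections of $\psi$ would need a little more care to carry out rigorously.
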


\begin{proof}
We have a commutative diagram
$$ \xymatrix{ L_A \ar[r] \ar[d] & L_{B} \ar[r] \ar[d] & L_{C} \ar[d] \\
L_{A/A} \ar[r] & L_{B/A} \ar[r] \ar[d] & L_{C/A} \ar[d] \\
& L_{B/B} \ar[r] & L_{C/B} }$$ 
in the $\infty$-category $T_{\calC}$. Here $L_{A/A}$ and $L_{B/B}$ are zero
objects in the fibers $\Stab( \calC^{/A} )$ and $\Stab( \calC^{/B})$, respectively (Remark \ref{blurn}).
By construction, the upper left square and both large rectangles in this diagram
are coCartesian. It follows first that the upper right square is coCartesian, and then that the lower right square is coCartesian as desired.
\end{proof}

\begin{corollary}\label{longtooth}
Let $\calC$ be a presentable $\infty$-category containing a commutative triangle
$$ \xymatrix{ & B \ar[dr]^{f} & \\
A \ar[ur] \ar[rr] & & C, }$$
and let $f_{!}: \Stab( \calC^{/B} ) \rightarrow \Stab( \calC^{/C} )$ denote the induced map.
Then we have a canonical distinguished triangle
$$ f_! L_{B/A} \rightarrow L_{C/A} \rightarrow L_{C/B} \rightarrow f_{!} L_{B/A}[1]$$
in the homotopy category $\h{ \Stab( \calC^{/C} ) }$. 
\end{corollary}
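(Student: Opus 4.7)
The plan is to convert the pushout square provided by the preceding proposition into a fiberwise cofiber sequence in $\Stab(\calC^{/C})$ via the base-change functor $f_{!}$, and then read off the distinguished triangle.

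More precisely, the preceding proposition supplies a pushout square
$$\xymatrix{ L_{B/A} \ar[r] \ar[d] & L_{C/A} \ar[d] \\ L_{B/B} \ar[r] & L_{C/B} }$$
in $T_{\calC}$. Under the projection $p: T_{\calC} \rightarrow \calC$, the left column lies over $B$ and the right column lies over $C$, with the horizontal arrows covering $f: B \rightarrow C$. By Proposition \ref{tanlim}, a pushout square in $T_{\calC}$ is in particular a $p$-colimit diagram lying over the (automatically pushout) square $p \circ \sigma$ in $\calC$. The first step is therefore to apply Proposition \toposref{chocolatelast} (exactly as in Remark \ref{irn}) to translate this $p$-colimit square into an honest pushout square
$$\xymatrix{ f_{!} L_{B/A} \ar[r] \ar[d] & L_{C/A} \ar[d] \\ f_{!} L_{B/B} \ar[r] & L_{C/B} }$$
inside the fiber $\Stab(\calC^{/C})$.

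Next, I would simplify the lower-left corner. By Remark \ref{blurn}, $L_{B/B}$ is a zero object of $\Stab(\calC^{/B})$. Since $f_{!}: \Stab(\calC^{/B}) \rightarrow \Stab(\calC^{/C})$ is a left adjoint (being the base-change functor of the coCartesian fibration $p$), it preserves initial objects, so $f_{!} L_{B/B}$ is a zero object of $\Stab(\calC^{/C})$. Thus the square above is a pushout in the stable $\infty$-category $\Stab(\calC^{/C})$ whose lower-left vertex vanishes. By the definition of a cofiber sequence in a stable $\infty$-category, this exhibits a cofiber sequence $f_{!} L_{B/A} \rightarrow L_{C/A} \rightarrow L_{C/B}$, which in turn determines a distinguished triangle
$$ f_{!} L_{B/A} \rightarrow L_{C/A} \rightarrow L_{C/B} \rightarrow f_{!} L_{B/A}[1] $$
in the triangulated homotopy category $\h{\Stab(\calC^{/C})}$, as desired.

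There is no real obstacle here, since the preceding proposition does all of the genuine work; the only point that requires any care is the passage from a $p$-colimit square in $T_{\calC}$ to a fiberwise pushout square in $\Stab(\calC^{/C})$, which is the content of the relative pushout result already invoked in Remark \ref{irn}.
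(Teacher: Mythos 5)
Your argument is correct and is essentially the argument the paper leaves implicit: the preceding proposition supplies the pushout (relative cofiber) square in $T_{\calC}$, and the mechanism already used in Remark~\ref{irn} (Proposition~T.chocolatelast, applied to the $p$-colimit square lying over the degenerate square in $\calC$) converts it into a pushout square in the fiber $\Stab(\calC^{/C})$ whose lower-left vertex $f_{!} L_{B/B}$ vanishes, yielding the distinguished triangle.
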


Our next result records the behavior of the relative cotangent complex under base change.


\begin{proposition}\label{basechunge}
Let $\calC$ be a presentable $\infty$-category, $T_{\calC}$ a tangent bundle to $\calC$, and
$p$ the composite map
$$ T_{\calC} \rightarrow \Fun( \Delta^1, \calC) \rightarrow \Fun( \{1\}, \calC) \simeq \calC.$$
Suppose given a pushout diagram
$$ \xymatrix{ A \ar[r] \ar[d] & B \ar[d]^{f} \\
A' \ar[r] & B' }$$
in $\calC$. Then the induced map
$\beta: L_{B/A} \rightarrow L_{B'/A'}$ is a $p$-coCartesian morphism in $T_{\calC}$.
\end{proposition}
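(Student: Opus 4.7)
The plan is to realize $\beta$ as an edge of a cube $F: (\Delta^{1})^{3} \to T_{\calC}$ whose other structure forces the face containing $\beta$ to be a $p$-pushout, and then to identify that $p$-pushout with $f_{!}L_{B/A}$.

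First, I would apply the cotangent complex functor $L: \calC \to T_{\calC}$ to the given pushout square $\sigma$. Since $L$ is a left adjoint, it preserves colimits, and by Remark~\ref{stuck} it carries $\sigma$ to a $p$-colimit diagram $L\sigma$ in $T_{\calC}$ with corners $L_{A}, L_{B}, L_{A'}, L_{B'}$ (which, since it lies over a pushout in $\calC$, is in fact a pushout in $T_{\calC}$ by Proposition~\ref{tanlim}). Second, by Definition~\ref{uppa} the relative cofiber sequences associated to the morphisms $A \to B$ and $A' \to B'$ are $p$-pushout squares
\[
\xymatrix{L_{A}\ar[r]\ar[d] & L_{B}\ar[d]\\0_{A}\ar[r] & L_{B/A}} \qquad \xymatrix{L_{A'}\ar[r]\ar[d] & L_{B'}\ar[d]\\0_{A'}\ar[r] & L_{B'/A'}}
\]
in $T_{\calC}$, where $0_{A}, 0_{A'}$ denote zero objects in the relevant stable fibers. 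These three squares assemble into a cube $F$ whose top face (direction $i_{3} = 0$) is $L\sigma$ and whose two end faces (direction $i_{2}$) are the two relative cofiber sequences. One constructs $F$ functorially by applying the relative cofiber functor of Definition~\ref{uppa} to the morphism $(L_{A} \to L_{B}) \to (L_{A'} \to L_{B'})$ of $\Fun(\Delta^{1}, T_{\calC})$.

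The key step is a pasting argument: because the top face and the two end faces of $F$ are $p$-pushout squares, the bottom face
\[
\xymatrix{0_{A}\ar[r]\ar[d] & L_{B/A}\ar[d]^{\beta}\\0_{A'}\ar[r] & L_{B'/A'}}
\]
is also a $p$-pushout. By Proposition~\ref{tanlim}, since this face lies over the pushout $\sigma$ in $\calC$, it is a pushout in $T_{\calC}$, and passing into the fiber $\Stab(\calC_{/B'})$ via the pushforwards induced by the arrows of $\sigma$, it becomes the ordinary pushout of the span $0_{B'} \leftarrow 0_{B'} \to f_{!}L_{B/A}$ in $\Stab(\calC_{/B'})$ with pushout $L_{B'/A'}$. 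This pushout is canonically $f_{!}L_{B/A}$, so the induced map $f_{!}L_{B/A} \to L_{B'/A'}$ is an equivalence, i.e., $\beta$ is $p$-coCartesian.

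The main obstacle is justifying the pasting argument of the third step: ordinary pasting of pushouts is standard, but pasting of $p$-pushouts in a cube needs care. One reduces via Proposition~\ref{tanlim} to two separate facts: (i) ordinary pasting of pushouts in $\calC$ (automatic from the pushout structure of $\sigma$), and (ii) pasting of $p$-colimits, which uses that $p$ is a coCartesian fibration so that $p$-colimits can be computed by pushforward into the fiber over the vertex. An alternative route avoiding cubes is to verify the universal property of $p$-coCartesian morphisms directly by translating the desired equivalence $\bHom_{T_{\calC}}(L_{B'/A'}, M)_{g} \simeq \bHom_{T_{\calC}}(L_{B/A}, M)_{g\circ f}$ into an equivalence of spaces of $A$- (resp.\ $A'$-) linear derivations into $M$, and invoking the universal property of the pushout $B' = B \sqcup_{A} A'$ to extend a section from $B$ to $B'$ using the zero section over $A'$.
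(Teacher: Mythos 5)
Your proposal is correct and takes essentially the same route as the paper: form the cube whose top face is the image $L\sigma$ of the pushout square, whose two side faces are the relative cofiber sequences, and whose bottom face is the square we want to show is a $p$-pushout. Where you phrase the key step as ``pasting of $p$-pushouts'' and flag it as requiring care, the paper handles it cleanly by two applications of transitivity of relative left Kan extensions (Proposition T.\ref{HTT-acekan}): the cube $\tau$ is by construction a $p$-left Kan extension of its restriction to the subcube $K_1$ omitting both $L_{B/A}$ and $L_{B'/A'}$, hence a $p$-colimit diagram; and since by Remark \ref{stuck} the top face is already a $p$-colimit square, $\tau|K$ is a $p$-left Kan extension of its restriction $\tau|K_0$ to the subcube omitting $L_{B'}$, so transitivity again forces the bottom face to be a $p$-colimit square. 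Your reduction through Proposition~\ref{tanlim} to ordinary pasting of pushouts in $T_{\calC}$ would also work, but you would need to be a bit more explicit that the faces in question are honest colimits in $T_{\calC}$ (not just $p$-colimits) precisely because their images in $\calC$ are pushouts -- this is what Proposition~\ref{tanlim} buys you. Your suggested alternative via the universal property of derivations is a legitimately different route not taken by the paper and would also succeed, at the cost of being less uniform across the general presentable setting.
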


\begin{proof}
Using Definition \ref{uppa}, we deduce the existence of a map between relative
cofiber sequences in $T_{\calC}$, which we can depict as a cubical diagram $\tau$:
$$ \xymatrix{ L_A \ar[rr] \ar[dd] \ar[dr] & & L_B \ar[dd] \ar[dr] & \\
& 0_{A} \ar[dd] \ar[rr] & & L_{B/A} \ar[dd] \\
L_{A'} \ar[rr] \ar[dr] & & L_{B'} \ar[dr] & \\
& 0_{A'} \ar[rr] & & L_{B'/A'}. }$$
Let $K \subseteq \Delta^1 \times \Delta^1 \times \Delta^1$ denote the full simplicial subset
obtained by omitting the final vertex. Let $K_0 \subseteq K$ be obtained by
omitting the vertex $v = \{1\} \times \{1\} \times \{0\}$ such that $\tau(v) = L_{B'}$, and let
$K_1 \subseteq K$ be obtained by omitting the vertex $w = \{1\} \times \{0\} \times \{1\}$ such
that $\tau(w) = L_{B/A}$. By construction, $\tau$ is a $p$-left Kan extension of $\tau|K_1$.
Using Proposition \toposref{acekan}, we conclude that $\tau$ is a $p$-colimit diagram.

Remark \ref{stuck} implies that the square
$$ \xymatrix{ L_A \ar[r] \ar[d] & L_{B} \ar[d] \\
L_{A'} \ar[r] & L_{B'} }$$
is a $p$-colimit diagram, so that $\tau| K$ is a $p$-left Kan extension of $\tau|K_0$.
Invoking Proposition \toposref{acekan} again, we deduce that $\tau$ is a $p$-left Kan
extension of $\tau| K_0$. It follows that $\tau$ restricts to a $p$-colimit square:
$$ \xymatrix{ 0_{A} \ar[r] \ar[d] & L_{B/A} \ar[d] \\
0_{A'} \ar[r] & L_{B'/A'}. }$$
Proposition \toposref{chocolatelast} implies that the induced square
$$ \xymatrix{ 0 \ar[r] \ar[d] & f_! L_{B/A} \ar[d]^{\alpha} \\
0 \ar[r] & L_{B'/A'} }$$
is a pushout square in $\Stab( \calC^{/B'} )$; in other words, the map $\alpha$ is an equivalence.
This is simply a reformulation of the assertion that $\beta$ is $p$-coCartesian.
\end{proof}

There is another way to view the relative cotangent complex: if we fix an object
$A \in \calC$, then the functor $B \mapsto L_{B/A}$ can be identified with the
{\em absolute} cotangent complex for the $\infty$-category $\calC_{A/}$. The rest of this section
will be devoted to justifying this assertion. These results will not be needed elsewhere in this paper, and may be safely omitted by the reader.
We begin by describing the tangent bundle to an $\infty$-category of the form $\calC_{A/}$.

\begin{proposition}\label{sametan}
Let $\calC$ be a presentable $\infty$-category containing an object $A$, and let
$\calD = \calC_{A/}$. Let $T_{\calC}$ and $T_{\calD}$ denote tangent bundles to $\calC$ and $\calD$, respectively. Then there is a canonical equivalence
$$ T_{\calD} \simeq T_{\calC} \times_{ \calC } \calD$$
of presentable fibrations over $\calD$.
\end{proposition}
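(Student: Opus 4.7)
The plan is to exhibit $T_{\calC} \times_{\calC} \calD$ as a stable envelope of the presentable fibration $q_{\calD}: \Fun(\Delta^1, \calD) \to \calD$ given by evaluation at the target, so that the conclusion follows from the essential uniqueness of stable envelopes (Proposition \ref{psycher}, or Remark \ref{huggt} fiberwise).

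First I would invoke Remark \ref{soon}: pulling back along the forgetful $\calD \to \calC$ shows that $T_{\calC} \times_{\calC} \calD \to \calD$ is a stable envelope of the presentable fibration $\Fun(\Delta^1, \calC) \times_{\calC} \calD \to \calD$, whose fiber over an object $(\phi: A \to B) \in \calD$ is $\calC_{/B}$. On the other hand, the fiber of $q_{\calD}$ over $(A \to B)$ is the double slice $\calD_{/(A \to B)} \simeq \calC_{A//B}$, i.e., the $\infty$-category of factorizations $A \to X \to B$ in $\calC$. There is an evident forgetful functor $\Phi: \Fun(\Delta^1, \calD) \to \Fun(\Delta^1, \calC) \times_{\calC} \calD$ over $\calD$ sending a morphism $(A \to X) \to (A \to Y)$ in $\calD$, i.e., a commutative triangle $A \to X \to Y$ in $\calC$, to the pair $(X \to Y,\ A \to Y)$; on fibers it restricts to the forgetful $U: \calC_{A//B} \to \calC_{/B}$ dropping the $A$-structure. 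A direct check shows $\Phi$ carries Cartesian edges to Cartesian edges (both classes being detected by pullback squares in $\calC$), and that $U$ admits a left adjoint, sending $X \to B$ to the coproduct $X \sqcup A \to B$ formed in $\calC_{/B}$. Applying Proposition \ref{psycher}(2) to the stable presentable fibration $T_{\calD} \to \calD$, I would then lift the composite $T_{\calD} \to \Fun(\Delta^1, \calD) \xrightarrow{\Phi} \Fun(\Delta^1, \calC) \times_{\calC} \calD$ uniquely to a morphism $\Phi_\ast: T_{\calD} \to T_{\calC} \times_{\calC} \calD$ of presentable fibrations over $\calD$.

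The remaining work is to show that $\Phi_\ast$ is an equivalence, which by the usual fiberwise criterion for equivalences of Cartesian fibrations reduces to showing the induced functor $\Stab(U): \Stab(\calC_{A//B}) \to \Stab(\calC_{/B})$ is an equivalence for each $(A \to B) \in \calD$. Using the model of Example \ref{pinah}, this amounts to showing that post-composition with $U$ induces an equivalence of $\infty$-categories of reduced excisive functors $\Exc_{\ast}(\FinSpace, \calC_{A//B}) \to \Exc_{\ast}(\FinSpace, \calC_{/B})$. The key observation is that any reduced functor $G: \FinSpace \to \calC_{/B}$ satisfies $G(\ast) \simeq (\id_B: B \to B)$, so for every $X \in \FinSpace$ the canonical map $\ast \to X$ produces a natural section $s_X: B \to G(X)$; composing with $\phi: A \to B$ produces a canonical, functorial lift $\widetilde G: \FinSpace \to \calC_{A//B}$ with $U \circ \widetilde G = G$. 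Since $U$ preserves limits, $\widetilde G$ remains excisive, and conversely any reduced excisive $F: \FinSpace \to \calC_{A//B}$ is determined by $U \circ F$ via the same canonical lift (the unique-section-under-$A$ condition is forced by the reducedness of $F$). I expect the main obstacle to lie in verifying carefully that $\Phi$ preserves Cartesian edges and in constructing the lift $\Phi_\ast$ cleanly through Proposition \ref{psycher}; once those formal points are in place, the fiberwise equivalence follows from the reduced-excisive argument above.
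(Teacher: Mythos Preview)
Your proposal is correct and matches the paper's architecture exactly: use Remark \ref{soon} to recognize $T_{\calC} \times_{\calC} \calD$ as a stable envelope of the pulled-back fibration, lift the comparison through Proposition \ref{psycher}, and then verify fiberwise that $\Stab(\calC_{A//B}) \to \Stab(\calC_{/B})$ is an equivalence. The paper isolates this last step as Lemma \ref{spstud} (for any $\calE$ with finite limits and any $A \in \calE$, the forgetful $\calE_{A/} \to \calE$ induces an equivalence on pointed objects and hence on stabilizations), proved by identifying $(\calE_{A/})_\ast \simeq \calE_{u/}$ and $\calE_\ast \simeq \calE_{1/}$ for $u: A \to 1$ the map to the terminal object and observing that $\calE_{u/} \to \calE_{1/}$ is a trivial Kan fibration since $\{1\} \hookrightarrow \Delta^1$ is right anodyne; this is slicker than your hands-on reduced-excisive lift, though the content is the same. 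One small correction: to conclude that $\widetilde{G}$ is excisive you need that $U$ \emph{reflects} pullbacks (which it does, as a forgetful functor from a coslice), not merely that it preserves them.
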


Proposition \ref{sametan} a relative version of the following more elementary observation:

\begin{lemma}\label{spstud}
Let $\calC$ be an $\infty$-category which admits finite limits and let $A$ be an object of $\calC$.
The forgetful functor
$\calC_{A/} \rightarrow \calC$
induces equivalences of $\infty$-categories
$$f: ( \calC_{A/} )_{\ast} \rightarrow \calC_{\ast} \quad \quad g: \Stab( \calC_{A/} ) \rightarrow \Stab( \calC ).$$
\end{lemma}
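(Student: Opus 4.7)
Let $1_\calC \in \calC$ be a final object. By Proposition \toposref{needed17}, the forgetful functor $\pi \colon \calC_{A/} \to \calC$ preserves limits, so $\calC_{A/}$ has a final object $T = (A \to 1_\calC)$, and $\pi(T) = 1_\calC$. Applying Lemma \toposref{pointer} on both sides yields canonical equivalences $\calC_\ast \simeq \calC_{1_\calC/}$ and $(\calC_{A/})_\ast \simeq (\calC_{A/})_{T/}$; under these equivalences, $f$ corresponds to the functor between under-categories induced by $\pi$.

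The next step is to identify $(\calC_{A/})_{T/}$ with the under-category $\calC_{\sigma/}$, where $\sigma \colon \Delta^1 \to \calC$ is the edge $A \to 1_\calC$. This is essentially tautological at the level of simplicial sets: an $n$-simplex of either $\infty$-category is the same data as an $(n+2)$-simplex of $\calC$ whose $0$-$1$ edge is $\sigma$. Under this identification, $f$ becomes the restriction functor $r \colon \calC_{\sigma/} \to \calC_{1_\calC/}$ along the inclusion $\{1\} \hookrightarrow \Delta^1$.

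The main step is to check that $r$ is a trivial Kan fibration. The inclusion $\{1\} \hookrightarrow \Delta^1$ is right anodyne (being the horn $\Lambda^1_1 \hookrightarrow \Delta^1$), and a direct unwinding shows that the lifting problem for $\partial \Delta^n \hookrightarrow \Delta^n$ against $r$ reduces to filling an inner horn $\Lambda^{n+2}_1 \hookrightarrow \Delta^{n+2}$ in $\calC$, which can always be done because $\calC$ is an $\infty$-category. Thus $f$ is an equivalence.

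Finally, $g$ follows formally. The stable $\infty$-category $\Stab(\calD)$ is the homotopy inverse limit of the tower $\cdots \stackrel{\Omega}{\to} \calD_\ast \stackrel{\Omega}{\to} \calD_\ast$, and $\Omega$ is a finite limit construction. Since $\pi$ preserves finite limits, the functor induced by $\pi$ on pointed objects commutes with $\Omega$ up to equivalence, and the equivalence $f$ propagates to an equivalence $g$ on the inverse limit. The only mildly delicate step in the argument is the combinatorial identification $(\calC_{A/})_{T/} \simeq \calC_{\sigma/}$; everything else is bookkeeping and a single horn-filling.
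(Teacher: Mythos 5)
Your argument is correct and essentially the same as the paper's: both identify $(\calC_{A/})_\ast$ with the under-category on the edge $u\colon A\to 1_\calC$ via Lemma \toposref{pointer} and then reduce the claim about $f$ to the fact that $\{1\}\hookrightarrow \Delta^1$ is right anodyne (which you unwind explicitly as inner-horn filling $\Lambda^{n+2}_1 \subseteq \Delta^{n+2}$ in $\calC$). The paper simply cites the right-anodyne fact and dismisses $g$ as an "obvious consequence," while you spell out the tower-of-loops argument; these are expository differences, not a different route.
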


\begin{proof}
We will prove that $f$ is an equivalence; the assertion that $g$ is an equivalence is an obvious consequence.
Let $1$ denote a final object of $\calC$. Using Proposition \toposref{needed17}, 
we deduce that $\calC_{A/}$ admits a final object, given by a morphism $u: A \rightarrow 1$.
Using Lemma \toposref{pointer}, we deduce the existence of a commutative diagram
$$ \xymatrix{ \calC_{u/} \ar[r]^{f'} \ar[d] & \calC_{1/} \ar[d] \\
( \calC_{A/})_{\ast} \ar[r]^{f} & \calC_{\ast}, }$$
where the vertical arrows are equivalences. It follows that $f$ is an equivalence
if and only if $f'$ is an equivalence. But $f$ is a trivial Kan fibration, since the inclusion
$\{1\} \subseteq \Delta^1$ is right anodyne.
\end{proof}

\begin{proof}[Proof of Proposition \ref{sametan}]
Let $\calE = \Fun( \Delta^1, \calC) \times_{ \Fun( \{1\}, \calC) } \calD$, so that we have a commutative diagram
$$ \xymatrix{ \Fun( \Delta^1, \calD) \ar[dr]^{q} \ar[rr]^{f} & & \calE \ar[dl]^{q'} \\
& \calD, & }$$
where $q$ and $q'$ are presentable fibrations. We first claim that $f$ carries $q$-limit diagrams
to $q'$-limit diagrams. In view of Propositions \toposref{chocolatelast} and \toposref{relcolfibtest}, it will suffice to verify the following pair of assertions:

\begin{itemize}

\item[$(i)$] For each object $\overline{B} \in \calD$, corresponding to a morphism
$A \rightarrow B$ in $\calC$, the induced map of fibers
$$ f_{\overline{B}}: \calD^{/\overline{B}} \rightarrow \calC^{/B}$$
preserves limits.

\item[$(ii)$] The map $f$ carries $q$-Cartesian morphisms to $q'$-Cartesian morphisms.
\end{itemize}

To prove $(i)$, we observe that $f_{\overline{B}}$ is equivalent to the forgetful functor
$(\calC_{/B})_{A/} \rightarrow \calC_{/B},$
which preserves limits by Proposition \toposref{needed17}.
Assertion $(ii)$ is equivalent to the requirement that the forgetful functor
$\calD \rightarrow \calC$ preserves pullback diagrams, which follows again from Proposition \toposref{needed17}. 

Using Remark \ref{soon}, we can identify $T_{\calC} \times_{ \calC} \calD$ with the
stable envelope of the presentable fibration $q'$. It follows from the universal property
of Proposition \ref{psycher} that the map $f$ fits into a commutative diagram
$$ \xymatrix{ T_{\calD} \ar[r]^{\overline{f} } \ar[d] & T_{\calC} \times_{\calC} \calD \ar[d] \\
\Fun( \Delta^1, \calD) \ar[r]^{f} & \calE. }$$
To complete the proof, we will show that $\overline{f}$ is an equivalence.
In view of Corollary \toposref{usefir}, it will suffice to show that for each $\overline{B} \in \calD$ classifying
a map $A \rightarrow B$ in $\calC$, the induced map
$\Stab( \calD^{/\overline{B}} ) \rightarrow \Stab( \calC^{/B} )$
is an equivalence of $\infty$-categories. This follows immediately from Lemma \ref{spstud}.
\end{proof}

We now wish to study the relationship between the cotangent complex functors
of $\calC$ and $\calC_{A/}$, where $A$ is an object of $\calC$. For this, it is convenient to introduce a bit of terminology.

\begin{definition}
Let $F, F': \calC \rightarrow \calD$ be a functors from an $\infty$-category $\calC$ to an $\infty$-category $\calD$, and let $\alpha: F \rightarrow F'$ be a natural transformation. We will say that $\alpha$
is {\it coCartesian} if, for every morphism $C \rightarrow C'$ in $\calC$, the induced diagram
$$ \xymatrix{ F(C) \ar[r] \ar[d]^{\alpha_{C}} & F(C') \ar[d]^{\alpha_{C'}} \\
F'(C) \ar[r] & F'(C') }$$
is a pushout square in $\calD$.
\end{definition}

The basic properties of the class of coCartesian natural transformations are summarized in the following lemma:

\begin{lemma}\label{kudz}
\begin{itemize}
\item[$(1)$] Let $F, F', F'': \calC \rightarrow \calD$ be functors between $\infty$-categories,
and let $\alpha: F \rightarrow F'$ and $\beta: F' \rightarrow F''$ be natural transformations.
If $\alpha$ is coCartesian, then $\beta$ is coCartesian if and only if $\beta \circ \alpha$
is coCartesian.

\item[$(2)$] Let $F: \calC \rightarrow \calD$ be a functor between $\infty$-categories,
let $G,G': \calD \rightarrow \calE$ be a pair of functors, and let $\alpha: G \rightarrow G'$
be a natural transformation. If $\alpha$ is coCartesian, then so is the induced transformation
$GF \rightarrow G'F$.

\item[$(3)$]  Let $F,F': \calC \rightarrow \calD$ be a pair of functors between $\infty$-categories,
let $G: \calD \rightarrow \calE$ another functor, and let $\alpha: F \rightarrow F'$
be a natural transformation. If $\alpha$ is coCartesian and $G$ preserves all pushout squares which exist in $\calD$, then the induced transformation $GF \rightarrow GF'$ is coCartesian.

\end{itemize}
\end{lemma}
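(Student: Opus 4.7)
The plan is to reduce each statement to a pointwise condition and then invoke elementary facts about pushout squares in $\infty$-categories. Given any natural transformation $\gamma: H \to H'$ of functors from $\calC$ to some target $\infty$-category and any morphism $u: C \to C'$ in $\calC$, write $S_\gamma(u)$ for the associated naturality square with vertices $H(C), H(C'), H'(C), H'(C')$; by definition, $\gamma$ is coCartesian precisely when every such $S_\gamma(u)$ is a pushout square.

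For part $(1)$, stacking $S_\alpha(u)$ on top of $S_\beta(u)$ produces a $3 \times 2$ grid in $\calD$ whose outer rectangle is canonically identified with $S_{\beta \circ \alpha}(u)$. Since $\alpha$ is coCartesian, the top square is a pushout; the pasting lemma for pushout squares in an $\infty$-category then implies that the bottom square is a pushout if and only if the outer rectangle is. Letting $u$ range over all morphisms in $\calC$ yields the claimed equivalence.

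For part $(2)$, the square $S_{\alpha F}(u)$ is, up to canonical identification of vertices, precisely $S_\alpha(F(u))$, a naturality square for $\alpha$ evaluated on the morphism $F(u)$ of $\calD$; this is a pushout by hypothesis. For part $(3)$, the square $S_{G \alpha}(u)$ is obtained by applying $G$ pointwise to the pushout square $S_\alpha(u)$ in $\calD$, and since $G$ is assumed to preserve those pushouts in $\calD$ which exist, the result is a pushout in $\calE$.

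The only nonformal input is the pasting lemma for pushouts in an $\infty$-category, used in part $(1)$; parts $(2)$ and $(3)$ reduce immediately to the definition via the functoriality of the naturality square construction, so I do not anticipate any real obstacle.
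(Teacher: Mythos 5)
Your proof is correct, and since the paper states Lemma~\ref{kudz} without proof, the pointwise reduction you give — identifying the naturality square of a composite, of a precomposite, and of a postcomposite transformation, and invoking the pasting lemma for pushouts in part~$(1)$ — is precisely the argument being left implicit. The pasting lemma you cite is the standard two-out-of-three property for pushout squares in an $\infty$-category (the dual of Lemma T.\ref{HTT-transplantt} in \cite{topoi}), so nothing is missing.
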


\begin{definition}
We will say that a commutative diagram of $\infty$-categories 
$$ \xymatrix{ \calD \ar[r]^{H} \ar[d]^{G} & \calC \ar[d]^{G'} \\
\calD' \ar[r]^{H'} & \calC' }$$
is {\it rectilinear} if the following conditions are satisfied:
\begin{itemize}
\item[$(1)$] The functors $G$ and $G'$ admit left adjoints, which we will denote by
$F$ and $F'$ respectively.
\item[$(2)$] The identity map $H' G \simeq G' H$ induces a coCartesian natural transformation
$F' H' \rightarrow HF$. 
\end{itemize}
\end{definition}

\begin{remark}
The condition of being rectilinear is closely related to the condition of being {\it left adjointable},
as defined in \S \toposref{propertopoi}.
\end{remark}

\begin{proposition}\label{spazzz}
Let $\calC$ be a presentable $\infty$-category containing an object $A$ and
let $\calD = \calC_{A/}$. Let $G: T_{\calC} \rightarrow \calC$ denote the composite map
$$T_{\calC} \rightarrow \Fun( \Delta^1, \calC) \rightarrow \Fun( \{0\}, \calC) \simeq \calC,$$
and let $G': T_{\calD} \rightarrow \calD$ be defined similarly, so that we have a commutative diagram
$$ \xymatrix{ T_{\calD} \ar[r] \ar[d] & T_{\calC} \ar[d] \\
\calD \ar[r] & \calC }$$
$($see the proof of Proposition \ref{sametan}$)$. Then the above diagram is rectilinear.
\end{proposition}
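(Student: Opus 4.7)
My plan is to verify the two clauses of the rectilinearity definition. Clause (1), the existence of left adjoints to $G$ and $G'$, is already given: these adjoints are the absolute cotangent complex functors $L_{\calC}$ and $L_{\calD}$ produced in Definition \ref{urtime}. So the content lies in clause (2), the assertion that the mate natural transformation $\alpha \colon L_{\calC} H \to \Phi L_{\calD}$ is coCartesian, where $H \colon \calD \to \calC$ and $\Phi \colon T_{\calD} \to T_{\calC}$ denote the horizontal forgetful functors.

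Fix a morphism $\overline{f} \colon \overline{B} \to \overline{B}'$ in $\calD$, corresponding to a diagram $A \to B \stackrel{f}{\to} B'$ in $\calC$, and consider the induced square $\sigma$ in $T_{\calC}$ that I need to show is a pushout. By Proposition \ref{sametan}, under $p \colon T_{\calC} \to \calC$ (evaluation at $\{1\}$) all four corners of $\sigma$ lie over $B$ or $B'$, so the projected square in $\calC$ is the degenerate pushout with rows $B \to B'$ and identity verticals. Proposition \ref{tanlim} then reduces the claim to the assertion that $\sigma$ is a $p$-colimit, which in turn amounts to a pushout condition in the stable fiber $\Stab(\calC_{/B'})$ after applying the base-change functor $f_! \colon \Stab(\calC_{/B}) \to \Stab(\calC_{/B'})$.

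The crux of the argument---and the step I expect to be the main obstacle---is to identify $\Phi L_{\calD}(\overline{B}) \in \Stab(\calC_{/B})$ with the relative cotangent complex $L_{B/A}$. I would do this by matching universal properties: since $H$ preserves products, the split square-zero extension $\overline{B} \oplus M$ in $\calD$ (for $M \in \Stab(\calD_{/\overline{B}})$ corresponding to $\widetilde{M} \in \Stab(\calC_{/B})$ under the equivalence of Lemma \ref{spstud}) reduces under $H$ to $B \oplus \widetilde{M}$ in $\calC$. A section of the former in $\calD$ then unwinds to a derivation $d \colon B \to B \oplus \widetilde{M}$ in $\calC$ whose precomposition with $A \to B$ factors through the zero-section---that is, a derivation of $B$ into $\widetilde{M}$ whose restriction to $A$ vanishes. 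The universal such derivation is parameterized by $L_{B/A}$ via the distinguished triangle of Remark \ref{irn}, and the parallel argument yields $\Phi L_{\calD}(\overline{B}') \simeq L_{B'/A}$.

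With these identifications, the square to verify as a pushout in $\Stab(\calC_{/B'})$ becomes
$$\xymatrix{f_! L_B \ar[r] \ar[d] & L_{B'} \ar[d] \\ f_! L_{B/A} \ar[r] & L_{B'/A}.}$$
Its two columns extend upward to cofiber sequences with common first term $(A \to B')_! L_A$---the left column being the image under $f_!$ of the defining cofiber sequence for $L_{B/A}$, and the right being the defining cofiber sequence for $L_{B'/A}$---with the identity morphism between those first terms. In any stable $\infty$-category a morphism of cofiber sequences with an equivalence at the first term yields a pushout on the square formed by the remaining two rows, completing the verification.
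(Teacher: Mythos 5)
Your proof is correct in outline but takes a genuinely different route from the paper's. The paper factors the given commutative square through the two-stage tower
$$ \xymatrix{ T_{\calD} \ar[r] \ar[d] & T_{\calC} \ar[d] \\
\Fun(\Delta^1, \calD) \ar[r] \ar[d] & \Fun(\Delta^1, \calC) \ar[d] \\
\calD \ar[r] & \calC, }$$
proves rectilinearity layer by layer (Lemmas~\ref{lem1}--\ref{lem3}), and further decomposes the top layer through the pointed envelopes $P_{\ast}(-)$ so that the whole claim eventually reduces to Lemma~\ref{lem5}, an elementary statement about pushouts of coproducts in $\calC$. You instead compute the mate transformation directly: you identify $\Phi L_{\calD}(\overline{B})$ with $L_{B/A}$ by corepresentability (sections of split square-zero extensions in $\calD = \calC_{A/}$ unwind, via Lemma~\ref{spstud}, to derivations in $\calC$ that trivialize along $A$), and then verify the pushout condition in the stable fiber using the rotated cofiber sequences of Remark~\ref{irn}. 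The paper's method is modular and makes each step nearly tautological; yours is more direct and conceptually transparent, and in effect establishes the Corollary after Proposition~\ref{spazzz} (the identification of $q \circ L^{\calD}$ with $B \mapsto L_{B/A}$) first, inverting the paper's logical order. One step you should make explicit: your cofiber-sequence argument needs the mate component $\alpha_{\overline{B}} \colon L_B \to \Phi L_{\calD}(\overline{B}) \simeq L_{B/A}$ to be the \emph{canonical} map appearing in $g_! L_A \to L_B \to L_{B/A}$; otherwise the two columns of your final square are not actually the tails of the cofiber sequences you extend them to. This does hold---both maps corepresent the same "forget the nullhomotopy along $A$" operation, and the corepresenting object together with its structure map are unique---but as written you identify only the objects, not the morphisms, and the pushout conclusion genuinely depends on the latter.
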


\begin{corollary}
Let $\calC$ and $\calD = \calC_{A/}$ be as in Proposition \ref{spazzz}, and let
$L^{\calC}: \calC \rightarrow T_{\calC}$ and $L^{\calD}: \calD \rightarrow T_{\calD}$
be cotangent complex functors for $\calC$ and $\calD$, respectively.
Then:
\begin{itemize}
\item[$(1)$] Let $p: \calD \rightarrow \calC$ be the projection, and let
$q: T_{\calD} \rightarrow T_{\calC}$ be the induced map. Then
there is a coCartesian natural transformation $L^{\calC} \circ p \rightarrow
q \circ L^{\calD}$.

\item[$(2)$] There is a pushout diagram of functors
$$ \xymatrix{ L^{\calC}_{A} \ar[r] \ar[d] & L^{\calC} \circ p \ar[d] \\
0 \ar[r] & q \circ L^{\calD}. }$$
Here the terms in the left hand column indicate the constant functors
taking the values $L^{\calC}_{A}, 0 \in \Stab(\calC^{/A}) \subseteq T_{\calC}$.

\item[$(3)$] The functor $q \circ L^{\calD}: \calD \rightarrow T_{\calC}$ can be identified with the functor
$B \mapsto L_{B/A}$. 
\end{itemize}
\end{corollary}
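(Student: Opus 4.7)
The plan is to derive all three parts directly from the rectilinearity asserted in Proposition \ref{spazzz}. For part $(1)$, observe that the left adjoints of the two vertical forgetful functors in the square of Proposition \ref{spazzz} are, by Definition \ref{urtime}, precisely the absolute cotangent complex functors $L^{\calD}$ and $L^{\calC}$. Unwinding the definition of rectilinear, the induced natural transformation of left adjoints takes the form $L^{\calC} \circ p \to q \circ L^{\calD}$, and rectilinearity asserts exactly that this transformation is coCartesian.

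For part $(2)$, I would evaluate the coCartesian transformation from $(1)$ on the essentially unique morphism $\id_A \to B$ in $\calD$, where $\id_A$ is the initial object of $\calD = \calC_{A/}$. Applying Remark \ref{urn} in $\calD$, the object $L^{\calD}(\id_A)$ is initial in $T_{\calD}$. Under the equivalence $T_{\calD} \simeq T_{\calC} \times_{\calC} \calD$ of Proposition \ref{sametan}, the stable fiber over $\id_A$ is identified with $\Stab(\calC^{/A}) \subseteq T_{\calC}$, and since any equivalence of stable $\infty$-categories carries zero objects to zero objects, $q \circ L^{\calD}(\id_A)$ is a zero object of $\Stab(\calC^{/A})$. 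The coCartesian square furnished by $(1)$ therefore specializes, naturally in $B \in \calD$, to a pushout square of precisely the shape required in $(2)$.

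For part $(3)$, I would compare the pushout square produced in $(2)$ with the defining pushout square for the relative cotangent complex from Definition \ref{uppa}, as recorded in Remark \ref{irn}. Both are pushouts in $T_{\calC}$ whose upper corners are $L^{\calC}_A$ and $L^{\calC}_B$ (with top edge obtained by applying $L^{\calC}$ to the underlying morphism $A \to B$ of $B \in \calD$), and whose lower-left corner is a zero object of $\Stab(\calC^{/A})$. The uniqueness of pushouts then yields a canonical equivalence $q \circ L^{\calD}(B) \simeq L_{B/A}$, functorial in $B$. The only potentially delicate point is that parts $(2)$ and $(3)$ assert statements about diagrams of functors rather than pointwise families of squares, but this is automatic from the definition of coCartesian natural transformation used in $(1)$, which already packages the pushout property naturally in $B$.
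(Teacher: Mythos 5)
Your proposal is correct and follows essentially the same route as the paper: part $(1)$ is read off as a direct reformulation of the rectilinearity asserted in Proposition \ref{spazzz}, part $(2)$ by evaluating the coCartesian transformation on the morphisms $\id_A \to B$ (the paper packages this as the natural transformation $\alpha: e \to \id_{\calD}$, which is the same move and already yields the naturality you flag as the ``delicate point''), and part $(3)$ by comparing with the defining relative cofiber sequence of Remark \ref{irn}. Your extra care in $(2)$ — using Proposition \ref{sametan} to identify the fiber $\Stab(\calD^{/\id_A})$ with $\Stab(\calC^{/A})$ so that $q\circ L^{\calD}(\id_A)$ is genuinely a zero object of the latter — is a detail the paper leaves implicit but is exactly the right thing to check.
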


\begin{proof}
Assertion $(1)$ is merely a reformulation of Proposition \ref{spazzz}.
To prove $(2)$, we let $e: \calD \rightarrow \calD$ denote the constant functor
taking the value $\id_{A} \in \calD$, so that we have a natural transformation
$\alpha: e \rightarrow \id_{\calD}$. Applying the coCartesian transformation 
of $(1)$ to $\alpha$ yields the desired diagram, since
$L^{\calD} \circ e$ vanishes by Remark \ref{urn}.
Assertion $(3)$ follows immediately from $(2)$ and the definition
of the relative cotangent complex.
\end{proof}

To prove Proposition \ref{spazzz}, we observe that the square in question fits into a commutative diagram
$$ \xymatrix{ T_{\calD} \ar[r] \ar[d] & T_{\calC} \ar[d] \\
\Fun( \Delta^1, \calD) \ar[r] \ar[d] & \Fun( \Delta^1, \calC) \ar[d] \\
\Fun( \{0\}, \calD) \ar[r] & \Fun( \{0\}, \calC). }$$
It will therefore suffice to prove the following three results:

\begin{lemma}\label{lem1}
Suppose given a commutative diagram of $\infty$-categories
$$ \xymatrix{ \calD \ar[r]^{H} \ar[d]^{G_0} & \calC \ar[d]^{G'_0} \\
\calD' \ar[r]^{H'} \ar[d]^{G'_0} & \calC' \ar[d]^{G'_1} \\
\calD'' \ar[r]^{H''} & \calC'' }
$$
If the upper and lower squares are rectilinear, then the outer square is rectilinear.
\end{lemma}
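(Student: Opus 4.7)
The plan is to check the two defining conditions of rectilinearity for the outer square. For condition $(1)$, the vertical maps of the outer square are the composites $G_1 \circ G_0$ and $G'_1 \circ G'_0$, which admit left adjoints $F_0 \circ F_1$ and $F'_0 \circ F'_1$ respectively, since composition of adjunctions is an adjunction. For condition $(2)$, I will show that the associated comparison natural transformation $F'_0 F'_1 H'' \to H F_0 F_1$ is coCartesian by exhibiting it as a composite of two coCartesian transformations.

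The key point is to verify that the comparison transformation for the outer square factors as
\[ F'_0 F'_1 H'' \xrightarrow{\alpha} F'_0 H' F_1 \xrightarrow{\beta} H F_0 F_1, \]
where $\alpha$ is $F'_0$ whiskered on the left of the comparison transformation $F'_1 H'' \to H' F_1$ of the bottom square, and $\beta$ is the comparison transformation $F'_0 H' \to H F_0$ of the top square whiskered on the right by $F_1$. This is the standard pasting identity for Beck--Chevalley transformations, which follows from the triangle identities of the relevant adjunctions (one can make this rigorous by working in the homotopy $2$-category, or by unwinding the construction of the comparison transformations from the units and counits).

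Once this factorization is in hand, the conclusion follows from the three parts of Lemma \ref{kudz}. Since the top square is rectilinear, the transformation $F'_0 H' \to H F_0$ is coCartesian; whiskering on the right by $F_1$ preserves this property by Lemma \ref{kudz}(2), so $\beta$ is coCartesian. Since the bottom square is rectilinear, the transformation $F'_1 H'' \to H' F_1$ is coCartesian; because $F'_0$ is a left adjoint, it preserves all small colimits (and in particular pushout squares), so by Lemma \ref{kudz}(3) the whiskered transformation $\alpha$ is also coCartesian. Finally, Lemma \ref{kudz}(1) implies that the composite $\beta \circ \alpha$ is coCartesian, which is exactly the assertion required for the outer square to be rectilinear.

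The only nontrivial obstacle is the first paragraph's identification of the outer comparison transformation with the pasting $\beta \circ \alpha$. Everything else is a direct application of Lemma \ref{kudz}. In practice this compatibility is routine, but since the excerpt does not spell out a formal construction of the Beck--Chevalley transformation in the $\infty$-categorical setting, one must either record the pasting identity as a separate lemma or redefine the outer comparison transformation via the composite and check that it agrees (up to canonical equivalence) with the one built from the outer adjunction data.
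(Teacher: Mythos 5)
Your proof is correct and follows essentially the same route as the paper's: both decompose the outer comparison transformation as a pasting of the bottom-square transformation (whiskered on the left by $F'_0$) with the top-square transformation (whiskered on the right by $F_1$), and then apply the three parts of Lemma \ref{kudz} in exactly the same way. You are somewhat more explicit than the paper both about which clause of Lemma \ref{kudz} is used at each step (the paper simply says ``which follows from Lemma \ref{kudz}'') and about the need to identify the outer Beck--Chevalley transformation with this pasting, which the paper takes for granted.
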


\begin{lemma}\label{lem2}
Let $p: \calD \rightarrow \calC$ be a functor between $\infty$-categories. Then the commutative diagram
$$ \xymatrix{ \Fun( \Delta^1, \calD) \ar[r] \ar[d]^{G} & \Fun( \Delta^1, \calC) \ar[d]^{G'} \\
\Fun( \{0\}, \calD) \ar[r] & \Fun( \{0\}, \calC) }$$
is rectilinear.
\end{lemma}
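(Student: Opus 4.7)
The plan is to construct left adjoints to $G$ and $G'$ explicitly as diagonal embeddings, and then observe that the resulting Beck--Chevalley comparison transformation is in fact a natural equivalence, from which the coCartesian condition is automatic.

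First, I would note that for any $\infty$-category $\calE$, the evaluation functor $\Fun(\Delta^1, \calE) \to \Fun(\{0\}, \calE) \simeq \calE$ admits a left adjoint given by the diagonal embedding $\delta_{\calE}: \calE \to \Fun(\Delta^1, \calE)$, sending an object $X$ to the identity morphism $\id_X$. This is precisely the observation used in the proof of part $(5)$ of Proposition \ref{sss}. Specializing to $\calE = \calD$ and $\calE = \calC$, I obtain left adjoints $F = \delta_{\calD}$ and $F' = \delta_{\calC}$ to $G$ and $G'$ respectively, verifying condition $(1)$ in the definition of rectilinear.

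For condition $(2)$, I would compute the Beck--Chevalley transformation $F'H' \to HF$ directly. Under the identifications above, both $HF$ and $F'H'$ send an object $X \in \calD$ to the constant diagram on $p(X) \in \calC$, since post-composition with $p$ strictly commutes with the diagonal embedding: $H(F(X)) = p \circ \delta_{\calD}(X) = \delta_{\calC}(p(X)) = F'(H'(X))$. This identification is natural in $X$, and the unit of $F \dashv G$ together with the counit of $F' \dashv G'$ restrict to identities on objects in the image of the respective diagonal embeddings, so the canonical comparison $F'H' \to HF$ is a natural equivalence.

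Finally, any natural equivalence is coCartesian: for each morphism $X \to X'$ in the source, the naturality square has equivalences along its two vertical edges, hence is a pushout square. This completes the argument. The proof is essentially formal, and the only real content is the strict naturality of the diagonal embedding in the ambient $\infty$-category, which makes the Beck--Chevalley transformation an equivalence rather than merely coCartesian; no serious obstacle arises.
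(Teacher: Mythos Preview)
Your proof is correct and follows essentially the same approach as the paper: both identify the left adjoints as the diagonal embeddings $\delta_{\calD}$ and $\delta_{\calC}$, note that the resulting square of left adjoints commutes strictly, and conclude by observing that an invertible natural transformation is automatically coCartesian. The paper's version is terser but the content is identical.
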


\begin{lemma}\label{lem3}
Let $\calC$ be a presentable $\infty$-category containing an object $A$, and let
$\calD = \calC_{A/}$. Then the diagram
$$ \xymatrix{ T_{\calD} \ar[r] \ar[d] & T_{\calC} \ar[d] \\
\Fun( \Delta^1, \calD) \ar[r] & \Fun( \Delta^1, \calC) }$$
$($see the proof of Proposition \ref{sametan}$)$ is rectilinear.
\end{lemma}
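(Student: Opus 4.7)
The plan is to use Proposition \ref{sametan} to reduce to the case of a base-change square, for which the Beck--Chevalley transformation is automatically an equivalence. By Proposition \ref{sametan}, there is an equivalence $\overline f: T_\calD \to T_\calC \times_{\calC} \calD$ of presentable fibrations over $\calD$. Examining its proof shows that $\overline f$ is constructed, via the universal property of Proposition \ref{psycher}, as a lift of the equivalence $f: \Fun(\Delta^1, \calD) \xrightarrow{\simeq} \Fun(\Delta^1, \calC) \times_\calC \calD$. Writing $\calE = \Fun(\Delta^1, \calC) \times_\calC \calD$ and denoting the projections to $T_\calC$ and $\Fun(\Delta^1,\calC)$ by $p$, we obtain a commutative $2 \times 3$ diagram
$$
\xymatrix{
T_\calD \ar[r]^-{\overline f}_-{\simeq} \ar[d] & T_\calC \times_\calC \calD \ar[r]^-{p} \ar[d]^{G'} & T_\calC \ar[d]^G \\
\Fun(\Delta^1, \calD) \ar[r]^-{f}_-{\simeq} & \calE \ar[r]^-{p} & \Fun(\Delta^1, \calC)
}
$$
whose right-hand small square is a pullback of $\infty$-categories, and whose outer rectangle is (up to canonical homotopy) the square of the lemma.

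I would first verify that the right-hand (pullback) square is rectilinear, with Beck--Chevalley an equivalence. The right vertical $G$ admits a left adjoint $F$ by Definition \ref{urtime}. Applying Proposition \ref{reladjprop} to the triangle $T_\calC \times_\calC \calD \to \calE \to \calD$---both legs being presentable fibrations obtained by base change along $\calD \to \calC$ from $T_\calC \to \calC$ and $\Fun(\Delta^1, \calC) \to \calC$---produces a left adjoint $F'$ to $G'$ relative to $\calD$, which acts on the fibre over each $\overline Y \in \calD$ as the suspension-spectrum functor $\Sigma^\infty \colon \calC_{/Y} \to \Stab(\calC_{/Y})$. Since $F$ likewise acts fibrewise as $\Sigma^\infty$, both composites $F \circ p$ and $p \circ F'$ (as functors $\calE \to T_\calC$) send $(\phi \colon X \to Y, \overline Y)$ to $\Sigma^\infty(\phi) \in \Stab(\calC_{/Y})$. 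The Beck--Chevalley transformation between them is therefore an equivalence.

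Next, the left-hand square has both horizontal arrows equivalences, and its left vertical admits a left adjoint by Definition \ref{urtime} applied to $\calD$; uniqueness of left adjoints up to equivalence forces its Beck--Chevalley transformation to be an equivalence as well. Finally, by the naturality of the mate construction, the Beck--Chevalley of the outer rectangle factors as
$$F \circ p \circ f \;\to\; p \circ F' \circ f \;\to\; p \circ \overline f \circ F_\calD,$$
i.e.\ as (middle Beck--Chevalley) pre-composed with $f$, followed by $p$ post-composed with (left Beck--Chevalley). By Lemma \ref{kudz} (and the fact that functors preserve equivalences of natural transformations), each factor is an equivalence, so the outer Beck--Chevalley is a natural equivalence. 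Since any commutative square with two parallel edges that are equivalences is a pushout, a natural equivalence is automatically coCartesian, and the outer rectangle is rectilinear. The main obstacle is purely bookkeeping: tracing through the construction of $\overline f$ in the proof of Proposition \ref{sametan} to verify that the outer rectangle really does reproduce the square of the lemma, and that the canonical identification $p \circ F' \simeq F \circ p$ can be extracted cleanly from the universal characterizations of $F$ and $F'$.
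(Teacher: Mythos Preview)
Your argument contains a genuine error: the map $f: \Fun(\Delta^1, \calD) \to \calE = \Fun(\Delta^1, \calC) \times_{\calC} \calD$ is \emph{not} an equivalence. An object of $\Fun(\Delta^1, \calD)$ is a triangle $A \to X \to Y$ in $\calC$, while an object of $\calE$ records only a morphism $X \to Y$ in $\calC$ together with a map $A \to Y$. The functor $f$ forgets the structure map $A \to X$, retaining only its composite $A \to Y$, and this information cannot be recovered. If you re-examine the proof of Proposition \ref{sametan}, you will see that it never asserts $f$ is an equivalence; it only proves that the induced map $\overline{f}$ on \emph{stable envelopes} is an equivalence (via Lemma \ref{spstud}), which is a genuinely weaker statement.

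This breaks your left-hand square: its bottom horizontal arrow is not an equivalence, so you cannot conclude that its Beck--Chevalley transformation is invertible, and the composition argument collapses. The paper circumvents this by inserting the \emph{pointed} envelopes $P_{\ast}(\calD) \to P_{\ast}(\calC)$ as an intermediate row, obtaining a $3 \times 2$ diagram. The upper square (tangent bundles over pointed envelopes) \emph{is} homotopy Cartesian over the base---this is where Proposition \ref{sametan} really enters---so an argument in the spirit of your pullback-plus-relative-adjoint reasoning applies (this is Lemma \ref{lem4}). But the lower square (pointed envelopes over arrow categories) is \emph{not} a pullback, and rectilinearity there requires an explicit computation (Lemma \ref{lem5}): one must check that for a morphism $B \to C$ under $A$, the comparison between the free pointed object $B \amalg C$ formed in $\calC$ and the free pointed object $B \amalg_{A} C$ formed in $\calC_{A/}$ sits in a pushout square. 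That pushout verification is precisely the content your argument is missing.
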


\begin{proof}[Proof of Lemma \ref{lem1}]
We observe that $G_1 G_0$ admits a left adjoint $L_0 L_1$, where
$L_0$ and $L_1$ are left adjoints to $G_0$ and $G_1$, respectively. Similarly,
$G'_1 G'_0$ admits a left adjoint $L'_0 L'_1$. It remains only to show that the composite transformation
$$ L_0 L_1 H'' \rightarrow L_0 H' L'_1 \rightarrow H L'_0 L'_1$$
is coCartesian, which follows from Lemma \ref{kudz}.
\end{proof}

\begin{proof}[Proof of Lemma \ref{lem2}]
For any $\infty$-category $\calC$, the evaluation functor
$\Fun( \Delta^1, \calC) \rightarrow \Fun( \{0\}, \calC) \simeq \calC$ has a left adjoint given
by the diagonal embedding $\delta_{\calC}: \calC \rightarrow \Fun( \Delta^1, \calC)$. In the situation
of Lemma \ref{lem2}, we obtain a {\em strictly commutative} diagram
of adjoint functors
$$ \xymatrix{ \Fun( \Delta^1, \calD) \ar[r] & \Fun( \Delta^1, \calC) \\
\calD \ar[r] \ar[u]^{\delta_{\calD}} & \calC \ar[u]^{\delta_{\calC}}. }$$
It now suffices to observe that that any invertible natural transformation is automatically
coCartesian.
\end{proof}

To prove Lemma \ref{lem3}, we once again break the work down into two steps.
First, we need a bit of terminology:

\begin{notation}
For every $\infty$-category $\calC$, we let
$P_{\ast}(\calC)$ denote the full subcategory of
$\Fun( \Delta^2, \calC)$ spanned by those diagrams
$$ \xymatrix{ & B \ar[dr] & \\
A \ar[rr]^{f} \ar[ur] & & C }$$
such that $f$ is an equivalence.
If $\calC$ is presentable, then the evaluation map
$$ P_{\ast}(\calC) \rightarrow \Fun( \Delta^{ \{1,2\} }, \calC) \simeq \Fun( \Delta^1, \calC)$$
exhibits $P_{\ast}(\calC)$ as a pointed envelope of the presentable fibration
$\Fun( \Delta^1, \calC) \rightarrow \Fun( \{1\}, \calC) \simeq \calC$.
\end{notation}

Now let $p: \calD \rightarrow \calC$ be as in Lemma \ref{lem3}. The proof of Proposition \ref{sametan} gives a commutative diagram
$$ \xymatrix{ T_{\calD} \ar[r] \ar[d] & T_{\calC} \ar[d] \\
P_{\ast}(\calD) \ar[r] \ar[d] & P_{\ast}(\calC) \ar[d] \\
\Fun( \Delta^1, \calD) \ar[r] & \Fun( \Delta^1, \calC). }$$
We wish to prove that the outer square is rectilinear. In view of Lemma \ref{lem1},
it will suffice to prove the upper and bottom squares are rectilinear. For the upper square, we
observe that Proposition \ref{sametan} gives a homotopy pullback diagram
$$ \xymatrix{ T_{\calD} \ar[r] \ar[d] & T_{\calC} \ar[d] \\
P_{\ast}(\calD) \ar[d] \ar[r] & P_{\ast}(\calC) \ar[d] \\
\calC \ar[r] & \calD. }$$
Lemma \ref{lem3} is therefore a consequence of the following pair of results:

\begin{lemma}\label{lem4}
Suppose given a commutative diagram
$$ \xymatrix{ \calD \ar[r] \ar[d] & \calC \ar[d]^{G} \\
\calD' \ar[r] \ar[d] & \calC'  \ar[d] \\
\calD'' \ar[r] & \calC'' }$$ 
of $\infty$-categories, where each square is homotopy Cartesian.
If $G$ admits a left adjoint relative to $\calC''$, then the upper square is
rectilinear.
\end{lemma}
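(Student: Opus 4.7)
The plan is to construct the left adjoint of the upper-left vertical $\widetilde{G}: \calD \to \calD'$ as the base change of the given relative left adjoint of $G: \calC \to \calC'$ along $\calD'' \to \calC''$, and then to observe that with this choice the Beck--Chevalley transformation is an equivalence (and hence, a fortiori, coCartesian).

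First, unpacking the hypothesis via Proposition \ref{reladjprop}: the assertion that $G$ admits a left adjoint relative to $\calC''$ entails that the structural maps $p: \calC \to \calC''$ and $q: \calC' \to \calC''$ are Cartesian fibrations, that $G$ carries $p$-Cartesian edges to $q$-Cartesian edges, that each fiber $G_{c''}$ admits a left adjoint, and that there is a global relative left adjoint $F: \calC' \to \calC$ together with a unit $u: \id_{\calC'} \to GF$ lying over $\id_{\calC''}$. Since both squares of the lemma are homotopy Cartesian, so is the outer rectangle; hence $\calD \simeq \calC \times_{\calC''} \calD''$ and $\calD' \simeq \calC' \times_{\calC''} \calD''$. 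Cartesian fibrations and Cartesian-edge-preserving functors are stable under base change, so the projections $\calD \to \calD''$ and $\calD' \to \calD''$ are Cartesian fibrations, $\widetilde{G}$ preserves Cartesian edges, and for each $d'' \in \calD''$ with image $c'' \in \calC''$ the fiber $\widetilde{G}_{d''}$ is canonically identified with $G_{c''}$.

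Next, let $F': \calD' \to \calD$ denote the base change of $F$ along $\calD'' \to \calC''$, and let $u': \id_{\calD'} \to \widetilde{G} F'$ be the base change of $u$. The claim is that $u'$ is the unit of a global adjunction $F' \dashv \widetilde{G}$. I would verify this by a direct mapping-space calculation: the homotopy-Cartesian identifications above imply that mapping spaces in $\calD$ and $\calD'$ decompose as homotopy pullbacks of mapping spaces in $\calC$, $\calC'$ and $\calD''$ over those in $\calC''$; composition with $u'$ factors as the adjunction equivalence for $u$ on the $\calC$-factor and the identity on the $\calD''$-factor, hence is an equivalence overall. In particular $\widetilde{G}$ admits a left adjoint.

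Finally, the Beck--Chevalley transformation $F H' \to H F'$ (where $H: \calD \to \calC$ and $H': \calD' \to \calC'$ are the horizontal projections) is an equivalence: by the explicit description of $F'$ as a base change, the equality $H F' = F H'$ holds on the nose, so the Beck--Chevalley map arising from this compatibility is an equivalence. Equivalences of natural transformations are automatically coCartesian, so the upper square is rectilinear. The main technical point is the unit calculation in the preceding paragraph; once one invokes the standard fact that homotopy-pullback squares of $\infty$-categories yield homotopy-pullback squares on mapping spaces, the rest of the argument is essentially formal bookkeeping about base change.
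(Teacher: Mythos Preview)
Your argument is correct and follows essentially the same strategy as the paper's: base change the relative left adjoint $F$ and its unit along $\calD'' \to \calC''$ to produce the left adjoint of $\widetilde{G}$, then note that the square of left adjoints commutes, so the Beck--Chevalley transformation is an equivalence and hence coCartesian. The paper streamlines both your mapping-space verification and your ``on the nose'' claim by first rectifying the diagram so that each square is a \emph{strict} pullback of simplicial sets with all maps categorical fibrations; then $F'$ is literally the simplicial-set pullback of $F$, the counit (the paper uses the counit rather than the unit, which is immaterial) pulls back to a counit without any further argument, and $HF' = FH'$ is an equality of simplicial maps rather than something requiring justification.
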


\begin{lemma}\label{lem5}
Let $\calC$ be a presentable $\infty$-category containing an object $A$, let
$\calD = \calC_{A/}$. Then the diagram
$$ \xymatrix{ P_{\ast}(\calD) \ar[r] \ar[d]^{G'} & P_{\ast}(\calC) \ar[d]^{G} \\
\Fun( \Delta^1, \calD) \ar[r] & \Fun( \Delta^1, \calC) }$$
is rectilinear.
\end{lemma}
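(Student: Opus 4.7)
The plan is to check the two conditions of rectilinearity explicitly. For condition $(1)$, both vertical functors are pointed envelopes, hence each admits a left adjoint; call these $L_{\calD}: \Fun(\Delta^1, \calD) \to P_{\ast}(\calD)$ and $L_{\calC}: \Fun(\Delta^1, \calC) \to P_{\ast}(\calC)$. I claim that $L_{\calC}$ sends a morphism $f: B \to C$ to the degenerate $2$-simplex
$$ \xymatrix{ & B \ar[dr]^{f} & \\ B \ar[ur]^{\id} \ar[rr] & & C, } $$
and that $L_{\calD}$ is given by the same formula. This can be verified directly from the universal property of the pointed envelope: for an object $A' \stackrel{\sim}{\to} B' \to C'$ of $P_{\ast}(\calC)$, a map from the above $2$-simplex to it is the data of morphisms $B \to B'$ and $C \to C'$ compatible with $f$ and $B' \to C'$, together with a lift $B \to A'$ of the map $B \to B'$; the space of such lifts is contractible because $A' \to B'$ is an equivalence.

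For condition $(2)$, let $H_{\Delta}: \Fun(\Delta^1, \calD) \to \Fun(\Delta^1, \calC)$ and $H_{P}: P_{\ast}(\calD) \to P_{\ast}(\calC)$ denote the horizontal forgetful functors. I would show that the canonical Beck--Chevalley transformation $\alpha: L_{\calC} H_{\Delta} \to H_{P} L_{\calD}$ is a natural equivalence. By the explicit description above, both functors send a morphism $\phi: B \to C$ of $\calD = \calC_{A/}$ (i.e., a commutative triangle $A \to B \to C$ in $\calC$) to the same degenerate $2$-simplex $B \stackrel{\id}{\to} B \to C$ in $P_{\ast}(\calC)$, and direct inspection shows that $\alpha_\phi$ is the identity on this object. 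Equivalently, one observes that $G L_{\calC}$ and $G' L_{\calD}$ are canonically isomorphic to the identity (so $L_{\calC}$ and $L_{\calD}$ are fully faithful), and factors $\alpha$ through $L_{\calC} H_{\Delta} \stackrel{\sim}{\to} L_{\calC} H_{\Delta} G L_{\calD} \simeq L_{\calC} G' H_{P} L_{\calD} \stackrel{\sim}{\to} H_{P} L_{\calD}$, where the last arrow is an equivalence because $H_{P} L_{\calD}$ lies in the essential image of $L_{\calC}$, on which the counit of $L_{\calC} \dashv G'$ is invertible.

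Since a natural equivalence is trivially coCartesian (any naturality square has two parallel equivalences, forcing it to be a pushout), this establishes rectilinearity. The main obstacle is to pin down the explicit formula for the left adjoints $L_{\calC}$ and $L_{\calD}$; once this is in hand, condition $(2)$ is essentially immediate from the observation that the ``degenerate pointing'' construction is strictly natural in the ambient $\infty$-category, so that applying it before or after the forgetful functor $\calD \to \calC$ yields the same $2$-simplex.
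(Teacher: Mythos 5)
Your proposed formula for the left adjoint is incorrect, and this error causes the rest of the argument to collapse. Recall that $P_{\ast}(\calC) \subseteq \Fun(\Delta^2,\calC)$ consists of diagrams $A \to B \to C$ in which the \emph{composite} (long edge) $f \colon A \to C$ is an equivalence; it is this condition that makes the fiber over $C \in \calC$ equivalent to the pointed $\infty$-category $(\calC_{/C})_{\ast}$. The degenerate $2$-simplex you propose,
$$ \xymatrix{ & B \ar[dr]^{f} & \\ B \ar[ur]^{\id} \ar[rr] & & C, } $$
has long edge $f\colon B \to C$, which is not an equivalence in general, so it is not even an object of $P_{\ast}(\calC)$. (Your verification of the universal property also misplaces the constraint: you write ``$A' \stackrel{\sim}{\to} B' \to C'$'', requiring $A' \to B'$ to be an equivalence rather than $A' \to C'$.) The correct left adjoint is the ``disjoint basepoint'' functor: over the fiber $\calC_{/C}$ it is $X \mapsto X \coprod C$, so globally
$$ F(B \to C) \;=\; \bigl(C \to B \coprod C \to C\bigr), $$
with long edge $\id_C$, and similarly $F'$ for $\calD = \calC_{A/}$ uses the fiberwise coproduct in $\calC_{A/}$, giving $B \coprod_A C$.

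Once the left adjoints are corrected, the comparison map $F' H' \to H F$ evaluated on $A \to B \to C$ is the canonical map $(C \to B\coprod C \to C) \to (C \to B\coprod_A C \to C)$, which is the quotient $B \coprod C \to B\coprod_A C$ in the middle slot. This is \emph{not} an equivalence, so your concluding appeal to ``a natural equivalence is trivially coCartesian'' does not apply. The genuine content of the lemma is to check that this non-invertible transformation is coCartesian, i.e.\ that for every map $(A \to B \to C) \to (A \to B' \to C')$ in $\Fun(\Delta^1,\calD)$ the resulting square in $P_{\ast}(\calC)$ is a pushout. Since pushouts in $P_{\ast}(\calC)$ are detected at the vertices of $\Delta^2$, this reduces (after discarding the two trivial corners) to showing that
$$ \xymatrix{ B \coprod C \ar[d] \ar[r] & B \coprod_{A} C \ar[d] \\ B' \coprod C' \ar[r] & B' \coprod_{A} C' } $$
is a pushout in $\calC$, an elementary fact that can be checked by repeated applications of the pasting law for pushouts. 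That pushout-checking step is what is missing from your argument.
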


\begin{proof}[Proof of Lemma \ref{lem4}]
Without loss of generality, we may assume that every map in the diagram
$$ \xymatrix{ \calD \ar[r] \ar[d]^{G'} & \calC \ar[d]^{G} \\
\calD' \ar[r] \ar[d] & \calC'  \ar[d] \\
\calD'' \ar[r] & \calC'' }$$ 
is a categorical fibration, and that each square is a pullback in the category of simplicial sets.
Let $F$ be a left adjoint to $G$ relative to $\calC''$, and choose a counit map
$v: F \circ G \rightarrow \id_{\calC}$ which is compatible with the projection
to $\calC''$ (so that $v$ can be identified with a morphism in the $\infty$-category
$\bHom_{ \calC''}( \calC, \calC)$ ). Let $F': \calD' \rightarrow \calD$
be the map induced by $F$, so that $v$ induces a natural transformation
$F' \circ G' \rightarrow \id_{\calD}$, which is easily verified to be the counit of an adjunction.
It follows that we have a strictly commutative diagram
$$ \xymatrix{ \calD \ar[r] & \calC \\
\calD' \ar[u]^{F'} \ar[r] & \calC' \ar[u]^{F}. }$$
To complete the proof it suffices to observe that any invertible natural transformation is automatically coCartesian.
\end{proof}

\begin{proof}[Proof of Lemma \ref{lem5}]
The forgetful functor $G: P_{\ast}(\calC) \rightarrow \Fun( \Delta^1, \calC)$ has a left adjoint $F$.
We can identify $F$ with the functor which carries a diagram $B \rightarrow C$
in $\calC$ to the induced diagram
$$ \xymatrix{ & B \coprod C \ar[dr] & \\
C \ar[ur] \ar[rr]^{\id} & & C, }$$
regarded as an object of $P_{\ast}(\calC)$. Similarly, $G'$ has a left adjoint $F'$, which carries
a diagram $A \rightarrow B \rightarrow C$ to the induced diagram
$$ \xymatrix{ & & B \coprod_{A} C \ar[dr] & \\
A \ar[r] & C \ar[rr]^{\id} \ar[ur] & & C.}$$
We observe that a diagram in $P_{\ast}(\calC)$ is a pushout square if and only if
it determines a pushout square in $\calC$ after evaluating at each vertex in $\Delta^2$.
Unwinding the definition, we see that the Lemma \ref{lem5} is equivalent to the following elementary assertion:
for every commutative diagram
$$ \xymatrix{ A \ar[dr] \ar[r] & B \ar[r] \ar[d] & C \ar[d] \\
& B' \ar[r] & C' }$$
in $\calC$, the induced diagram
$$ \xymatrix{ B \coprod C \ar[d] \ar[r] & B \coprod_{A} C \ar[d] \\
B' \coprod C' \ar[r] & B' \coprod_{A} C' }$$
is a pushout square.
\end{proof}

\subsection{The Tangent Bundle of the $\infty$-Category of $E_{\infty}$-Rings}\label{sec2}

Let $A$ be a commutative ring, and let $M$ be an $A$-module. Then the direct
sum $A \oplus M$ inherits the structure of a commutative ring, with multiplication described by the formula
$$ (a, m) (a', m') = (aa', am' + a' m).$$
We wish to describe an analogous construction in the case where
$A$ is an $E_{\infty}$-ring and $M$ is a module {\em spectrum} over $A$.
Of course, in this context we cannot define a ring structure on $A \oplus M$ simply by writing formulas: we must obtain $A \oplus M$ in some other way. We begin by listing some features which we expect of this construction:

\begin{itemize}
\item[$(a)$] The square-zero extension $A \oplus M$ admits a projection map
$A \oplus M \rightarrow A$.
\item[$(b)$] The square-zero extension $A \oplus M$ depends functorially on $M$.
In other words, it is given by a functor
$$ G: \Mod_{A} \rightarrow (\EInfty)_{/A}.$$
\item[$(c)$] The underlying spectrum of $A \oplus M$ can be identified (functorially) with
the usual coproduct of $A$ and $M$ in the $\infty$-category of $\Spectra$.
\end{itemize}

Condition $(c)$ automatically implies that the functor $G$ preserves limits. Since the
$\infty$-category $\Mod_{A}$ is stable, the functor $G$ would then be equivalent to a composition
$$ \Mod_{A} \stackrel{G'}{\rightarrow} \Stab( (\EInfty)_{/A} ) \stackrel{\Omega^{\infty}}{\rightarrow} (\EInfty)_{/A}.$$
In fact, we can say more: the functor $G'$ is an equivalence of $\infty$-categories. 
Let us describe a functor $F'$ which is homotopy inverse to $G'$.
Let $X$ be an object of $\Stab( (\EInfty)_{/A}$. Then the $0$th space of $X$ is
a {\em pointed} object of $(\EInfty)_{/A}$, which we can identify with an {\em augmented
$A$-algebra}: that is, an $E_{\infty}$-ring $B$ which fits into a commutative diagram
$$ \xymatrix{ & B \ar[dr]^{f} & \\
A \ar[ur] \ar[rr]^{\id} & & A. }$$
We now observe that in this situation, the kernel $\ker(f)$ inherits the structure of an
$A$-module. We can therefore define a functor $F': \Stab( ( \EInfty)_{/A} ) \rightarrow \Mod_A$
by setting $F'(X) = \ker(f)$. 

We now have an approach to defining the desired functor $G$. Namely, we first construct
the functor $F': \Stab( ( \EInfty)_{/A} ) \rightarrow \Mod_{A}$ described above. If we can prove that $F'$ is an equivalence of $\infty$-categories, then we can define $G'$ to be a homotopy inverse to $F'$, and
$G$ to be the composition of $G'$ with the $0$th space functor $\Omega^{\infty}: \Stab( (\EInfty)_{/A}) \rightarrow (\EInfty)_{/A}$. 

Our goal in this section is to flesh out the ideas sketched above. It will be convenient to work in a bit more generality: rather than only considering commutative algebras, we consider algebras over an arbitrary coherent $\infty$-operad. We begin with some generalities.


\begin{definition}
Let $\calO^{\otimes}$ be an $\infty$-operad. We will say that a map
$q: \calC^{\otimes} \rightarrow \calO^{\otimes}$ is a {\it stable $\calO$-monoidal
$\infty$-category} if the following conditions are satisfied:
\begin{itemize}
\item[$(1)$] The map $q$ is a coCartesian fibration of $\infty$-operads.
\item[$(2)$] For each object $X \in \calO$, the fiber
$\calC_{X}$ is a stable $\infty$-category.
\item[$(3)$] For every morphism $\alpha \in \Mul_{\calO}( \{ X_i \}, Y)$, the
associated functor
$\alpha_{!}: \prod_{i} \calC_{X_i} \rightarrow \calC_{Y}$ is exact separately
in each variable.
\end{itemize}
\end{definition}

\begin{remark}\label{underil}
Let $\calO^{\otimes}$ be an $\infty$-operad and let
$q: \calC^{\otimes} \rightarrow \calO^{\otimes}$ be a stable $\calO$-monoidal
$\infty$-category. Then the $\infty$-category $\Fun_{ \calO}( \calO, \calC)$ of
sections of the restricted map $q_0: \calC \rightarrow \calO$ is stable: this follows
immediately from Proposition \toposref{prestorkus}.
\end{remark}

\begin{definition}
Let $\calO^{\otimes}$ be a unital $\infty$-operad, and let
$q: \calC^{\otimes} \rightarrow \calO^{\otimes}$ be a coCartesian fibration of
$\infty$-operads. An {\it augmented $\calO$-algebra object of $\calC$} is
a morphism $f: A \rightarrow A_0$ in $\Alg_{\calO}(\calC)$ such that
$A_0$ is an initial object of $\Alg_{\calO}(\calC)$. (In view of Proposition \symmetricref{gargle1}, this is equivalent to the requirement that $A_0(0) \rightarrow A_0(X)$ is $q$-coCartesian whenever
$0 \rightarrow X$ is a morphism in $\calO^{\otimes}$ with $0 \in \calO^{\otimes}_{\seg{0}}$.)
We let $\Alg_{\calO}^{\aug}(\calC)$ denote the full subcategory of 
$\Fun( \Delta^1, \Alg_{\calO}(\calC))$ spanned by the augmented $\calO$-algebra objects in $\calC$.

Suppose further that $\calC^{\otimes}$ is a stable $\calO$-monoidal $\infty$-category, so that
$\Fun_{\calO}(\calO, \calC)$ is stable (Remark \ref{underil}). Let
$\theta: \Alg_{\calO}(\calC) \rightarrow \Fun_{\calO}(\calO,\calC)$ denote the restriction functor.
Given an augmented $\calO$-algebra object $A \rightarrow A_0$ of $\calC$, we define
the {\it augmentation ideal} to the the fiber of the induced morphism $\theta(A) \rightarrow \theta(A_0)$.
The formation of augmentation ideals determines a functor
$$ \Alg_{\calO}^{\aug}(\calC) \rightarrow \Fun_{\calO}(\calO, \calC).$$
\end{definition}

\begin{remark}\label{stabbe}
Let $\calO^{\otimes}$ be a small $\infty$-operad and let $q: \calC^{\otimes} \rightarrow \calO^{\otimes}$
be a presentable $\calO$-monoidal $\infty$-category. It follows from Proposition \toposref{prestorkus} that the $\infty$-category $\Fun_{\calO}(\calO, \calC)$ is presentable, and that for each
object $X \in \calO$ the evaluation functor $e_{X}: \Fun_{\calO}(\calO, \calC)$ preserves small limits and small colimits. It follows from Corollary \toposref{adjointfunctor} that $e_{X}$ admits both a left
and a right adjoint, which we will denote by $(e_{X})_{!}$ and $(e_{X})_{\ast}$.
\end{remark}

The following result characterizes the augmentation ideal functor by a universal property:

\begin{proposition}\label{hugger}
Let $\calO^{\otimes}$ be a small unital $\infty$-operad and let
$\calC^{\otimes} \rightarrow \calO^{\otimes}$ be a presentable stable $\calO$-monoidal $\infty$-category. Let $0_{\calC}$ denote a zero object of the stable $\infty$-category $\Fun_{\calO}(\calO, \calC)$,
and let $1_{\calC}$ denote an initial object of $\Alg_{\calO}(\calC)$. Then there
exists a pair of adjoint functors
$ \Adjoint{f}{ \Fun_{\calO}(\calO, \calC) }{ \Alg^{\aug}_{\calO}(\calC)}{g}$ with the following properties:
\begin{itemize}
\item[$(1)$] The functor $f$ is given by composition
$$\Fun_{\calO}(\calO, \calC) \simeq \Fun_{\calO}(\calO, \calC)^{/0_{\calC}} \rightarrow \Alg_{\calO}(\calC)^{/ 1_{\calC} }
\simeq \Alg^{\aug}_{\calO}(\calC),$$ where the middle map is induced by a left adjoint
$F$ to the forgetful functor $G: \Alg_{\calO}(\calC) \rightarrow \Fun_{\calO}(\calO, \calC)$.
Here we implicitly invoke the identification $1_{\calC} \simeq F(0_{\calC})$; note that
the existence of $F$ follows from Corollary \symmetricref{spaltwell}.

\item[$(2)$] The functor $g: \Alg^{\aug}_{\calO}(\calC) \rightarrow \Fun_{\calO}(\calO, \calC)$ is the augmentation ideal functor.

\item[$(3)$] Let $X$ and $Y$ be objects of $\calO^{\otimes}$, and let
$(e_{X})_{!}: \calC_{X} \rightarrow \Fun_{\calO}(\calO, \calC)$ and
$e_Y: \Fun_{\calO}(\calO, \calC)$ be as in Remark \ref{stabbe}. Then the composition
$$ \calC_{X} \stackrel{(e_{X})_{!}}{\rightarrow}
\Fun_{\calO}(\calO, \calC) \stackrel{f}{\rightarrow}
\Alg^{\aug}_{\calO}(\calC) \stackrel{g}{\rightarrow} \Fun_{\calO}(\calO, \calC)
\stackrel{ e_Y}{\rightarrow} \calC_{Y}$$
is equivalent to the functor
$C \mapsto \coprod_{ n > 0} \Sym^{n}_{\calO,Y}(C)$
(see Construction \symmetricref{aball}).
\end{itemize}
\end{proposition}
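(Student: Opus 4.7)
The strategy is to define $f$ directly via (1), define $g$ to be the augmentation ideal functor, and verify that they form an adjoint pair by computing mapping spaces; property (3) then follows by unwinding the explicit formula for free algebras. Since the forgetful functor $G$ admits a left adjoint $F$ by Corollary \symmetricref{spaltwell}, and $F$ (being a left adjoint) preserves initial objects, the zero object $0_{\calC} \in \Fun_{\calO}(\calO, \calC)$ maps to an initial object, giving a canonical equivalence $F(0_{\calC}) \simeq 1_{\calC}$. The functor $f$ sends $M$ to the augmented algebra $F(M) \to F(0_{\calC}) \simeq 1_{\calC}$ obtained by applying $F$ to the essentially unique map $M \to 0_{\calC}$.

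The heart of the argument is the adjunction $f \dashv g$. For $M \in \Fun_{\calO}(\calO, \calC)$ and $(A \to 1_{\calC}) \in \Alg_{\calO}^{\aug}(\calC)$, I would compute
$$ \Map_{\Alg^{\aug}}(f(M), A \to 1_{\calC}) = \fib\bigl( \Map_{\Alg}(F(M), A) \to \Map_{\Alg}(F(M), 1_{\calC}) \bigr), $$
with fiber taken over the augmentation of $f(M)$. Applying the adjunction $F \dashv G$, this becomes the fiber of $\Map(M, G(A)) \to \Map(M, G(1_{\calC}))$ over the composite $M \to 0_{\calC} \to G(1_{\calC})$ (the adjoint of the augmentation). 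Because this composite factors through the zero object $0_{\calC}$, it is null-homotopic, so the fiber is taken over the basepoint. Now stability of $\Fun_{\calO}(\calO, \calC)$ ensures that $\Map(M,-)$ carries the fiber sequence $g(A) \to G(A) \to G(1_{\calC})$ (the defining fiber sequence for the augmentation ideal) to a fiber sequence of spaces, identifying the above with $\Map(M, g(A))$. This establishes (1) and (2).

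For (3), I would evaluate each factor in the composition on $C \in \calC_X$. By Construction \symmetricref{aball}, $G(F(M))(Y) \simeq \coprod_{n \geq 0} \Sym^n_{\calO, Y}(M)$, and the augmentation $F(M) \to 1_{\calC}$ corresponds under $G$ to the projection onto the $n = 0$ summand $\Sym^0_{\calO, Y}(M) \simeq 1_{\calC}(Y)$. Since $\calC_Y$ is stable, the fiber of this split projection is the complementary summand $\coprod_{n > 0} \Sym^n_{\calO, Y}(M)$. Specializing to $M = (e_X)_! C$ and using the functoriality of $\Sym^n_{\calO, Y}$ together with the universal property of $(e_X)_!$ to identify $\Sym^n_{\calO, Y}((e_X)_! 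C) \simeq \Sym^n_{\calO, Y}(C)$ gives the asserted formula.

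The main technical obstacle is the bookkeeping with basepoints: one must check that the fiber computing the mapping space in $\Alg_{\calO}^{\aug}(\calC)$ is taken over the same basepoint (up to coherent homotopy) as the fiber appearing in the definition of the augmentation ideal. The key point is that the adjoint of the augmentation factors through $0_{\calC}$ and hence is null; without this, the right adjoint of $f$ would not be the fiber of $G(A) \to G(1_{\calC})$ but rather a twisted variant. Once this is verified, the remaining steps reduce to routine manipulations in stable $\infty$-categories and the explicit free algebra formula from \symmetricref{aball}.
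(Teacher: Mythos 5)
Your proposal is correct and reaches the same conclusion, but the route to the adjunction $f \dashv g$ differs from the paper's. The paper does not compute mapping spaces at all: it simply cites Proposition T.\texttt{curpse}, the general result that an adjunction $F \dashv G$ induces an adjunction between overcategories $\calC_{/C} \rightleftarrows \calD_{/FC}$ with right adjoint given by pulling back along the unit $C \to GFC$. Specializing to $C = 0_{\calC}$ and observing that the unit $0_\calC \to G(1_\calC)$ is the zero morphism immediately identifies the right adjoint with the augmentation ideal, establishing (1) and (2) in one stroke. What you do instead is to unwind that citation and reprove the relevant special case by hand: express $\Map_{\Alg^{\aug}}(f(M), A \to 1_\calC)$ as a fiber, pass across $F \dashv G$, check that the basepoint is null because the adjoint of the augmentation factors through $0_\calC$, and then pull the fiber inside $\Map(M,-)$. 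Your argument is more self-contained and makes the role of $0_\calC$ and the zero morphism completely explicit — this is exactly the ``bookkeeping with basepoints'' you flag at the end, and you handle it correctly. The paper's argument is more modular and avoids the basepoint bookkeeping by outsourcing it to the cited proposition. For part (3), both proofs are essentially identical: identify $e_Y \circ G \circ F \circ (e_X)_!$ with $\coprod_{n \geq 0}\Sym^n_{\calO,Y}$ via the free-algebra formula, then observe that the map to $G(1_\calC)$ is the split projection onto $\Sym^0$, so the fiber is the complementary summand $\coprod_{n > 0}\Sym^n_{\calO,Y}$. One small notational caution in your part (3): $\Sym^n_{\calO,Y}$ as defined in Construction C.\texttt{aball} takes input from a single color $\calC_X$, so writing $\Sym^n_{\calO,Y}(M)$ for $M \in \Fun_\calO(\calO,\calC)$ and then ``specializing'' to $M = (e_X)_! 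C$ is an abuse; the clean statement (as in the paper) is that the identification holds for the precomposed functor $e_Y \circ G \circ F \circ (e_X)_!$ directly. This is cosmetic and does not affect the correctness of the argument.
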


\begin{proof}
The existence of the functor $g$ and assertion $(2)$ follow from Proposition \toposref{curpse}, together with the definition of the augmentation ideal functor.
Invoking $(2)$, we deduce that there is a distinguished triangle
$$ g \circ f \rightarrow G \circ F \stackrel{h}{\rightarrow} \underline{G}(1_{\calC}) \rightarrow (g \circ f)[1]$$
in the stable $\infty$-category of functors from $\Fun_{\calO}(\calO, \calC)$ to itself,
where $\underline{ G}(1_{\calC})$ denotes the constant functor taking the value
$G(1_{\calC})$. Theorem 
The results of \S \symmetricref{comm33} guarantee that
$e_{Y} \circ G \circ F \circ (e_{X})_{!}$ can be idenitifed with the functor
$\coprod_{n \geq 0} \Sym^{n}_{\calO,Y}$. We observe that
the map $h$ is split by the inclusion
$\Sym^0_{\calO,Y} \rightarrow \coprod_{n \geq 0} \Sym^{n}_{\calO,Y}$, so that we obtain
an identification of $g \circ f$ with the complementary summand $\coprod_{ n > 0} \Sym^n_{\calO,Y}$.
\end{proof}

\begin{remark}\label{sabler}
Let $\Adjoint{f}{ \Fun_{\calO}(\calO, \calC) }{ \Alg^{\aug}_{\calO}(\calC)}{g}$ be as in 
Proposition \ref{hugger}, and let $X,Y \in \calO$. Unwinding the definitions, we see that the unit map
$\id \rightarrow g \circ f$ induces a functor
$e_{Y} \circ (e_X)_{!} \rightarrow e_{Y} \circ g \circ f \circ (e_{X})_{!}$. This can be identified
with the inclusion of the summand
$\Sym^{1}_{\calO,Y} \rightarrow \coprod_{ n > 0} \Sym^{n}_{\calO,Y}$.
\end{remark}

The main result of this section is the following:

\begin{theorem}\label{uckaboo1}
Let $\calO^{\otimes}$ be a unital $\infty$-operad and let
$\calC^{\otimes} \rightarrow \calO^{\otimes}$ be a stable $\calO$-monoidal $\infty$-category.
Then the augmentation ideal functor
$G: \Alg^{\aug}_{\calO}(\calC) \rightarrow \Fun_{\calO}(\calO,\calC)$ induces an equivalence of $\infty$-categories $\Stab( \Alg^{\aug}_{\calO}(\calC) ) \rightarrow \Stab( \Fun_{\calO}(\calO,\calC)) \simeq 
\Fun_{\calO}(\calO,\calC)$.
\end{theorem}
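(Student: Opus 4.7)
My plan is to exploit the adjunction $(f,g)$ of Proposition \ref{hugger}, in which $g = G$ is the augmentation ideal functor. Since $g$ is a right adjoint it preserves limits, and since its target $\Fun_\calO(\calO,\calC)$ is stable by Remark \ref{underil}, the universal property of stabilization produces a limit-preserving functor $\bar g\colon \Stab(\Alg^{\aug}_\calO(\calC)) \to \Fun_\calO(\calO,\calC)$ which agrees with the functor induced by $G$. I will show $\bar g$ is an equivalence by constructing a left adjoint $\bar f$ (via the adjoint functor theorem applied to the presentable stable source) and then verifying that the unit and counit of $(\bar f, \bar g)$ are both equivalences.

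The heart of the argument is the computation of the unit $\eta_X\colon X \to (\bar g \circ \bar f)(X)$. I will relate it to the unit of the original adjunction $(f,g)$, which by Proposition \ref{hugger}(3) and Remark \ref{sabler} has the explicit form, after projection via $e_Y$ and evaluation on $(e_X)_!(C)$, of the inclusion $\Sym^1_{\calO,Y}(C) \hookrightarrow \coprod_{n \geq 1} \Sym^n_{\calO,Y}(C)$. The key claim is that upon passage to stabilizations, the summands $\Sym^n$ with $n \geq 2$ contribute nothing: each such $\Sym^n$ is polynomial of degree $n$ in the sense of Goodwillie calculus, a consequence of the separate exactness of the $n$-ary tensor operations $\alpha_!$ guaranteed by the stability of the $\calO$-monoidal structure. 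The first Goodwillie derivative at zero of an $n$-homogeneous functor vanishes for $n \geq 2$, and stabilization of the adjunction extracts precisely this linearization. Hence only the $\Sym^1 = \id$ summand survives, and $\eta_X$ is an equivalence.

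With the unit invertible, $\bar f$ is fully faithful; the counit $\bar f \bar g \to \id$ follows either from a parallel analysis of the counit of the original adjunction $(f,g)$, or more efficiently from the fact that $\bar g$ is conservative (an $\Omega$-spectrum of augmented algebras whose augmentation ideals all vanish must itself be null, since the augmentation would admit a compatible splitting at every level). I expect the main obstacle to be making the Goodwillie-derivative step rigorous in the generality of an arbitrary unital $\infty$-operad $\calO^\otimes$ and stable $\calO$-monoidal $\calC^\otimes$: whereas the absolute case $\calO^\otimes = \CommOp$, $\calC = \Spectra$ is essentially classical, the relative operadic setting demands a careful treatment of the multilinearity of the operations $\alpha_!\colon \prod_i \calC_{X_i} \to \calC_Y$ across fibers and a coherent version of the $n$-homogeneity statement for the relative symmetric power functors $\Sym^n_{\calO,Y}$ of Construction \symmetricref{aball}.
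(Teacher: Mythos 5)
Your plan tracks the paper's proof very closely. Both arguments hinge on the same three ingredients: the explicit formula $e_Y \circ G \circ F \circ (e_X)_! \simeq \coprod_{n>0} \Sym^n_{\calO,Y}$ from Proposition \ref{hugger}; the identification of the unit as the inclusion of the $\Sym^1$ summand (Remark \ref{sabler}); and the vanishing of the first Goodwillie derivative of the higher symmetric powers. The paper packages the rest through Barr--Beck: it shows $G$ is conservative and preserves geometric realizations (via Lemma \ref{curpse2}), hence is monadic, and then appeals to a general criterion asserting that a monadic adjunction whose unit becomes an equivalence on derivatives stabilizes to an equivalence --- which is, in substance, exactly the unit-plus-conservativity argument you propose to run by hand. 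So this is essentially the same route, not a genuinely different one, and your observation that conservativity of $G$ propagates to the stabilized functor $\bar g$ is correct (because $G$ commutes with $\Omega$ and equivalences of spectrum objects are detected levelwise).

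There is one genuine gap: the theorem does not assume $\calC^\otimes$ is presentable, yet both your appeal to the adjoint functor theorem (to produce $\bar f$) and the colimit manipulations required to identify $G \Omega^\infty \Sigma^\infty F$ with the derivative $D(GF)$ rely on presentability. The paper handles this at the outset by replacing $\calC^\otimes$ with the $\calO$-monoidal $\infty$-category of $\Ind$-objects, observing that the resulting fully faithful embedding induces a homotopy pullback of augmented-algebra and section $\infty$-categories and hence of their stabilizations, and thereby reducing to the presentable case. Without some such reduction your argument does not get started. A smaller imprecision: $\Sym^n$ is $n$-excisive, not $n$-homogeneous, since the $\Sigma_n$-action on the $n$-fold tensor power is not free. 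Its first derivative still vanishes for $n\geq 2$, but the mechanism in the paper (Lemmas \ref{suksy} and \ref{linbin}) is that $\Sym^n_{\calO,Y}$ is a colimit of functors which factor as a multilinear operation $\gamma_!$ precomposed with a diagonal, each of which has nullhomotopic derivative, together with the fact that the derivative commutes with the relevant colimits.
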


The proof of Theorem \ref{uckaboo1} will use some basic ideas from Goodwillie's calculus of functors. We refer the reader to \cite{derivative} for a review of this theory (including explanations for some of  the terminology which appears below).

\begin{lemma}\label{linbin}
Let $K$ be a simplicial set. Let $\calC$ be a pointed $\infty$-category which admits finite colimits, and let $\calD$ be stable $\infty$-category which admits sequential colimits and $K$-indexed colimits. Then the derivative functor $D: \Fun_{\ast}( \calC, \calD) \rightarrow \Exc( \calC, \calD)$ preserves $K$-indexed colimits.
\end{lemma}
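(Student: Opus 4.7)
The strategy is to exploit the explicit sequential-colimit formula for the Goodwillie derivative. Recall that for a pointed functor $F: \calC \rightarrow \calD$ with $\calC$ pointed finite-cocomplete and $\calD$ stable (so that the loop functor $\Omega_{\calD}$ is an equivalence), the derivative is given pointwise by
\[
D(F)(X) \simeq \colim_n \Omega^n_{\calD} F( \Sigma_{\calC}^n X),
\]
where the colimit is formed along the canonical natural transformations $\Omega^n F \Sigma^n \rightarrow \Omega^{n+1} F \Sigma^{n+1}$. The assumption that $\calD$ admits sequential colimits guarantees that this expression makes sense inside $\Fun_{\ast}(\calC, \calD)$, and by the universal property of $P_1$ the result lands in $\Exc(\calC, \calD)$.

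First I would package the construction into a composite. For each $n \geq 0$, consider the endofunctor
\[
T^n: \Fun_{\ast}(\calC, \calD) \longrightarrow \Fun_{\ast}(\calC, \calD), \qquad T^n(F) = \Omega^n_{\calD} \circ F \circ \Sigma^n_{\calC}.
\]
Each $T^n$ preserves $K$-indexed colimits: colimits in $\Fun_{\ast}(\calC, \calD)$ are computed objectwise in $\calD$, precomposition with $\Sigma^n_{\calC}$ manifestly preserves objectwise colimits, and postcomposition with $\Omega^n_{\calD}$ preserves all colimits since $\Omega_{\calD}$ is an equivalence of stable $\infty$-categories. Then $D$ is the colimit of the sequence $\id \rightarrow T \rightarrow T^2 \rightarrow \cdots$ (formed in the $\infty$-category of endofunctors), evaluated at $F$; equivalently, objectwise at each $X \in \calC$ we apply $\colim_{n}$ in $\calD$, which exists by hypothesis.

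The key computation is then the interchange of two colimits. Given a diagram $F_\bullet: K \rightarrow \Fun_{\ast}(\calC, \calD)$, evaluating at $X \in \calC$ gives
\[
D(\colim_{k \in K} F_k)(X) \simeq \colim_n \Omega^n_{\calD} (\colim_{k \in K} F_k)(\Sigma^n_{\calC} X) \simeq \colim_n \colim_{k \in K} \Omega^n_{\calD} F_k(\Sigma^n_{\calC} X),
\]
using that $\Omega^n_{\calD}$ and precomposition preserve pointwise colimits. Swapping the two colimits (which is legitimate since both are colimits in $\calD$, which admits both $K$-indexed and sequential colimits) yields $\colim_{k \in K} D(F_k)(X)$. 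This shows that the pointwise colimit $\colim_{k \in K} D(F_k)$, computed in $\Fun_{\ast}(\calC, \calD)$, is equivalent to $D(\colim_k F_k)$, and in particular is excisive; hence it also represents the $K$-indexed colimit of $\{D(F_k)\}$ in $\Exc(\calC, \calD)$.

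The only genuine point to check is the interchange of $\colim_n$ and $\colim_{k \in K}$ in $\calD$, which is the standard fact that colimits commute with colimits in any $\infty$-category where both exist; this is where the hypothesis that $\calD$ admits \emph{both} sequential and $K$-indexed colimits is essential. With that in hand the lemma is immediate, and there is no substantive obstacle — the argument is essentially a bookkeeping exercise once the derivative is unpacked as $\colim_n T^n$.
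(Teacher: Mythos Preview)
Your argument is correct and follows essentially the same approach as the paper: both proofs reduce to the explicit sequential-colimit formula $D(F)(X) \simeq \colim_n \Omega^n_{\calD} F(\Sigma^n_{\calC} X)$, use that $\Omega_{\calD}$ is an equivalence (hence preserves $K$-indexed colimits), and then interchange the sequential and $K$-indexed colimits. The only cosmetic difference is that the paper verifies directly that $\Exc(\calC,\calD)$ is closed under pointwise $K$-indexed colimits, whereas you deduce this closure in the particular case at hand from the identification with $D(\colim_k F_k)$; both are fine.
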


\begin{proof}
Since $\calD$ is stable, the loop functor $\Omega_{\calD}$ is an equivalence of $\infty$-categories.
It follows that $\Omega_{\calD}$ preserves $K$-indexed colimits. We observe that $\Exc(\calC, \calD)$ is the full subcategory of $\Fun(\calC, \calD)$ spanned by those functors which are right exact; it follows that $\Exc(\calC, \calD)$ is stable under $K$-indexed colimits in $\Fun(\calC, \calD)$. Similarly,
$\Fun_{\ast}(\calC, \calD)$ is stable under $K$-indexed colimits in $\Fun(\calC, \calD)$; we therefore conclude that $K$-indexed colimits in $\Fun_{\ast}(\calC, \calD)$ and $\Exc(\calC, \calD)$ are computed pointwise. The desired result now follows from the formula for computing the derivative given in Remark \bicatref{derexi}.
\end{proof}

\begin{lemma}\label{suksy}
Let $n \geq 2$ be an integer, let $\calC_1, \ldots, \calC_n$ and $\calD$ be pointed $\infty$-categories
which admit finite colimits, and let $F: \calC_1 \times \ldots \times \calC_n \rightarrow \calD$
be a functor which preserves finite colimits separately in each variable.
Then:
\begin{itemize}
\item[$(1)$] For every object $C = (C_1, \ldots, C_n) \in \calC_1 \times \ldots \times \calC_n$, the
canonical map $\alpha: \Sigma_{\calD} F(C) \rightarrow F( \Sigma_{\calC_1 \times \ldots \times \calC_n} C)$
is nullhomotopic.
\item[$(2)$] Suppose that $\calD$ admits finite limits and sequential colimits and that the loop functor $\Omega_{\calD}$ preserves sequential colimits. Then the derivative
$DF: \calC_1 \times \ldots \times \calC_n \rightarrow \calD$ is nullhomotopic.
\end{itemize}
\end{lemma}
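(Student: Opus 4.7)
The plan is to prove part (1) by exhibiting $\alpha$ as factoring through the zero object of $\calD$, via an auxiliary pushout square in $\calC_1 \times \cdots \times \calC_n$ made available by the hypothesis $n \geq 2$; part (2) will then follow by feeding this nullhomotopy into the colimit formula for the derivative.

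For part (1), suspension in the product is computed componentwise, so $\Sigma C = (\Sigma C_1, \ldots, \Sigma C_n)$, and $\alpha$ is the canonical map induced by applying $F$ to the defining pushout square for $\Sigma C$ (with corners $C$, $(\ast,\ldots,\ast)$, $(\ast,\ldots,\ast)$, and $\Sigma C$). Splitting the $n$ variables into the nonempty subsets $\{1\}$ and $\{2,\ldots,n\}$, I form the auxiliary square
$$ \xymatrix{ C \ar[r] \ar[d] & (\ast, C_2, \ldots, C_n) \ar[d] \\ (C_1, \ast, \ldots, \ast) \ar[r] & (\ast, \ldots, \ast), }$$
which a componentwise check shows is a pushout in the product whose corner is the zero object (each coordinate's pushout collapses to $\ast$ because one of the spanning maps is an identity). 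The canonical collapses on the upper-right and lower-left corners, together with the unique map $(\ast,\ldots,\ast) \to \Sigma C$ out of the initial object, assemble into a map of squares from this auxiliary pushout to the defining pushout for $\Sigma C$; higher coherence is automatic because $\bHom(X, \ast)$ and $\bHom(\ast, Y)$ are contractible for every $X, Y$.

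Applying $F$ and using that $F$ vanishes whenever any argument is $\ast$, both $F$-spans become $\ast \leftarrow F(C) \to \ast$, so both pushouts are canonically identified with $\Sigma F(C)$ and the induced map on pushouts is the identity. The resulting naturality square then exhibits $\alpha$ as the composite
$$ \Sigma F(C) \longrightarrow \ast \longrightarrow F(\Sigma C), $$
which is nullhomotopic.

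For part (2), $DF(C) = \colim_k \Omega^k_\calD F(\Sigma^k C)$, where the transition map $\Omega^k F(\Sigma^k C) \to \Omega^{k+1} F(\Sigma^{k+1} C)$ is $\Omega^k$ applied to the adjoint (under $\Sigma \dashv \Omega$ in $\calD$) of $\alpha_{\Sigma^k C}$, and is therefore null naturally in $C$ by part (1). A sequential colimit in a pointed $\infty$-category with coherently null transition maps is equivalent to the zero object: refining $X_0 \to X_1 \to \cdots$ via the nullhomotopies to $X_0 \to 0 \to X_1 \to 0 \to \cdots$ exhibits the constant-zero subdiagram as cofinal, with colimit $0$. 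The hypotheses on $\calD$ ensure the construction lives within $\calD$. The main obstacle is the $\infty$-categorical bookkeeping for the map of squares in (1) and for assembling the coherent nullhomotopies in (2) into a refined diagram; both rely on the universal property of the zero object.
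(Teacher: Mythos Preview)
Your proof is correct, and part $(1)$ takes a genuinely different route from the paper's. The paper argues by reduction to a universal case: it enlarges the universe, passes to $\Ind$-completions to make the $\calC_i$ presentable and $F$ colimit-preserving, then uses that each $C_i$ arises as $f_i(S^0)$ for some colimit-preserving $f_i: \SSet_\ast \to \calC_i$. This reduces everything to $\calC_i = \SSet_\ast$ and $F$ the iterated smash product, where $\alpha$ becomes a map $S^1 \to S^n$, nullhomotopic since $S^n$ is simply connected for $n \geq 2$.

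Your argument is more direct and more elementary: the auxiliary pushout square with corners $C$, $(\ast, C_2,\ldots,C_n)$, $(C_1,\ast,\ldots,\ast)$, and $(\ast,\ldots,\ast)$ maps to the suspension square, and after applying $F$ (which kills anything with a zero object in some slot) the comparison map to $F$ of the pushout becomes $\Sigma F(C) \to F(\ast,\ldots,\ast) \simeq \ast$ in the first case and $\alpha$ in the second, with an equivalence between the sources. No universe change, no $\Ind$-completion, no appeal to homotopy groups of spheres. The paper's reduction has the pleasant payoff of identifying $\alpha$ concretely with a map of spheres, but your approach shows that the nullhomotopy is purely formal, coming only from the multi-variable colimit preservation and the presence of at least two slots.

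For part $(2)$ both arguments are the same: the transition maps in the sequence defining $DF(C)$ are null by part $(1)$, so the colimit vanishes. Your justification via the refined diagram $X_0 \to 0 \to X_1 \to 0 \to \cdots$ and cofinality of the odd-indexed constant-zero subdiagram is fine; the key point (which you implicitly use) is that $N(\mathbf{Z}_{\geq 0})$ is freely generated by its elementary edges, so choosing a factorization of each $f_k$ through $0$ suffices to produce the extended functor.
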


\begin{proof}
We first prove $(1)$. Enlarging the universe if necessary, we may assume that
$\calC_1, \ldots, \calC_n$ and $\calD$ are small. Passing to $\infty$-categories of $\Ind$-objects,
we can reduce to the case where $\calC_1, \ldots, \calC_n$ and $\calD$ are presentable, and
the functor $F$ preserves small colimits separately in each variable (since the construction
$\calE \mapsto \Ind(\calE)$ is a symmetric monoidal functor; see \S \symmetricref{comm7}).
Since the $\infty$-categories $\calC_i$ are pointed, evaluation on the (pointed) zero sphere
$S^0$ induces an equivalence of $\infty$-categories
$\Fun'( \SSet_{\ast}, \calC_i) \rightarrow \calC_i$; here $\SSet_{\ast}$ denotes the $\infty$-category of pointed spaces and $\Fun'( \SSet_{\ast}, \calC_i)$ denotes the full subcategory of
$\Fun( \SSet_{\ast}, \calC_i)$ spanned by those functors which preserve small colimits.
In particular, there exist functors $f_i: \SSet_{\ast} \rightarrow \calC_i$ such that
$f_i( S^0) \simeq C_i$. We may therefore replace $\calC_i$ by $\SSet_{\ast}$, and reduce to the case
where each of the objects $C_i$ can be identified with the zero sphere $S^0 \in \SSet_{\ast}$.

The functor $F: \SSet_{\ast} \times \ldots \times \SSet_{\ast} \rightarrow \calD$ preserves colimits separately in each factor, and therefore factors as a composition
$$ \SSet_{\ast} \times \ldots \times \SSet_{\ast} \stackrel{F'}{\rightarrow} \SSet_{\ast} \otimes
\ldots \otimes \SSet_{\ast} \stackrel{F''}{\rightarrow} \calD$$
where $F''$ preserves small colimits.
Here the tensor product is taken in the monoidal $\infty$-category $\LPress$ of presentable $\infty$-categories (see \S \monoidref{jurmit}) and is equivalent to $\SSet_{\ast}$, while the functor
$F': \SSet_{\ast} \times \ldots \times \SSet_{\ast} \rightarrow \SSet_{\ast}$ can be identified
with the classical smash product $( X_1, \ldots, X_n) \mapsto X_1 \wedge \ldots \wedge X_n$
of pointed spaces. We may therefore replace $\calD$ by $\SSet_{\ast}$, and thereby reduce to the case where $F$ is given by the iterated smash product. In this case, $\alpha$ can be identified with
a map of pointed spaces $S^1 \rightarrow S^1 \wedge \ldots \wedge S^1 \simeq S^n$, which
is nullhomotopic since the $n$-sphere $S^n$ is simply connected for $n > 1$.

We now prove $(2)$. According to Remark \bicatref{derexi}, the derivative
$DF( C_1, \ldots, C_n)$ can be computed as the colimit of the sequence of maps
$$ F( C_1, \ldots, C_n) \rightarrow \Omega_{\calD}( \Sigma_{\calC_1} C_1, \ldots,
\Sigma_{\calC_n} C_n) \rightarrow \ldots$$
Assertion $(1)$ implies that every map in this sequence is nullhomotopic, so the colimit
of the sequence is equivalent to the zero object $\ast \in \calD$. It follows that
$DF$ can be identified with the zero object of $\Fun(\calC, \calD)$.
\end{proof}

\begin{remark}\label{skabber}
Let $\calC \rightarrow \calO$ be a presentable fibration of $\infty$-categories,
where $\calO$ is small. For each $X \in \calO$, let $(e_X)_{!}$ denote 
a left adjoint to the evaluation functor $e_{X}: \Fun_{\calO}(\calO, \calC) \rightarrow \calC_{X}$.
Then the essential images of the functors $(e_{X})_{!}$ generate the $\infty$-category
$\calD = \Fun_{\calO}(\calO, \calC)$ under small colimits. To prove this, let $\calD_0$ denote the smallest
full subcategory of $\calD$ containing the essential image of
each $(e_X)_{!}$ and closed under small colimits in $\calD$. Since the essential image
of each $(e_{X})_{!}$ is generated under small colimits by a small collection of objects,
we deduce that $\calD_0 \subseteq \calD$ is presentable. Let $D$ be an object of
$\calD$; we wish to prove that $D \in \calD_0$. Let $\chi: \calD^{op} \rightarrow \SSet$ be the functor represented by $D$. The composite functor
$$ \chi| \calD_0: \calD_0^{op} \rightarrow \calD^{op} \rightarrow \SSet$$
preserves small limits, and is therefore representable by an object $D_0 \in \calD_0$
(Proposition \toposref{representable}). We therefore obtain a map $f: D_0 \rightarrow D$
which exhibits $D_0$ as a $\calD_0$-colocalization of $D$. In particular, for each
$X \in \calO$ and each $C \in \calC_{X}$, composition with $f$ induces a homotopy equivalence
$$ \bHom_{\calC_X}( C, D_0(X)) \simeq
\bHom_{\calD}( (e_X)_{!}(C), D_0) \rightarrow \bHom_{\calD}( (e_X)_{!}(C), D)
\simeq \bHom_{\calC_{X}}( C, D(X) ).$$
This proves that $e_X(f)$ is an equivalence for each $X \in \calO$, so that $f$ is an equivalence
and $D \in \calD_0$ as required.
\end{remark}

\begin{proposition}\label{sillfun}
Let $\calO^{\otimes}$ be a unital $\infty$-operad, and let
$\calC^{\otimes}$ be a presentable stable $\calO$-monoidal $\infty$-category.
Let $G: \Alg^{\aug}_{\calO}(\calC) \rightarrow \Fun_{\calO}(\calO,\calC)$ be the augmentation ideal functor, and let $F$ be a left adjoint to $G$. Then the unit map $\id \rightarrow GF$ induces an equivalence of derivatives $\alpha: D(\id) \rightarrow D(GF)$.
\end{proposition}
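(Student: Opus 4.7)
The plan is to show $\alpha$ is an equivalence by proving that the cofiber of the unit is ``negligible'' from the standpoint of the first derivative. Let $H = \operatorname{cofib}(\id \to GF)$, formed in the stable $\infty$-category of pointed endofunctors of $\calD := \Fun_{\calO}(\calO, \calC)$; this functor $\infty$-category is stable by Remark \ref{underil}. The derivative functor $D$ is exact between stable functor $\infty$-categories (it preserves colimits by Lemma \ref{linbin}, and cofibers of functors are colimits). Applying $D$ to the cofiber sequence $\id \to GF \to H$ yields a cofiber sequence $D(\id) \to D(GF) \to D(H)$, and since $\calD$ is stable the identity functor is itself excisive, so $D(\id) \simeq \id$. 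Hence $\alpha$ is an equivalence if and only if $D(H) \simeq 0$, and this is what I would prove.

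The key input is the fiberwise structure of $H$. Proposition \ref{hugger}(3) together with Remark \ref{sabler} identify, for each pair $X, Y \in \calO$, the composite $e_{Y} \circ H \circ (e_{X})_{!} : \calC_{X} \to \calC_{Y}$ with $\coprod_{n \geq 2} \Sym^{n}_{\calO, Y}$. Each summand with $n \geq 2$ is assembled from the $n$-ary operadic multiplication (an $n$-multilinear functor $\calC_{X}^{n} \to \calC_{Y}$), precomposed with the diagonal $\calC_{X} \to \calC_{X}^{n}$ and followed by an appropriate symmetric-group colimit. Lemma \ref{suksy}(2) shows the underlying $n$-multilinear functor has zero derivative. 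Moreover, the argument of Lemma \ref{suksy}(1) implies that $\Sigma \tilde{F}(C) \to \tilde{F}(\Sigma C)$ is null for the diagonal restriction $\tilde{F}(C) = F(C, \ldots, C)$ when $n \geq 2$, so the colimit defining $D(\tilde{F})$ vanishes. Since the remaining symmetry colimits commute with $D$ by Lemma \ref{linbin}, we conclude $D(\Sym^{n}_{\calO, Y}) \simeq 0$, and then $D(\coprod_{n \geq 2} \Sym^{n}_{\calO, Y}) \simeq 0$ by another application of Lemma \ref{linbin}.

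It remains to upgrade this fiberwise vanishing to the global statement $D(H) \simeq 0$, and this is the main obstacle. Every object of $\calD$ is a colimit of objects of the form $(e_{X})_{!}(C)$ by Remark \ref{skabber}, but $H$ itself does not preserve such colimits, so one cannot directly reduce pointwise evaluations $D(H)(M)$ to the fiberwise case. The cleanest route is to exhibit a global decomposition $H \simeq \coprod_{n \geq 2} H_{n}$ of pointed endofunctors of $\calD$, where $H_{n}$ is the ``global $n$-th symmetric power'' extending the fiberwise $\Sym^{n}$-identification, and then to apply the Goodwillie-vanishing argument (via Lemma \ref{suksy}) directly to each $H_{n}$. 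This decomposition reflects the weight grading of the free $\calO$-algebra construction; establishing it rigorously requires tracking through the construction of $F$ in Proposition \ref{hugger} together with the formula of Construction \symmetricref{aball}. Once in place, Lemma \ref{linbin} yields $D(H) \simeq \coprod_{n \geq 2} D(H_{n}) \simeq 0$, completing the proof.
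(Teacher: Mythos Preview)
Your approach uses exactly the paper's key inputs (Proposition \ref{hugger}, Remark \ref{sabler}, Lemmas \ref{linbin} and \ref{suksy}), and your cofiber reformulation $D(H) \simeq 0$ is a harmless repackaging of $D(\alpha)$ being an equivalence. The substantive difference lies in how you handle what you call the ``main obstacle'': passing from the fiberwise vanishing $D\bigl(e_Y \circ H \circ (e_X)_!\bigr) \simeq 0$ to the global statement $D(H) \simeq 0$.

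The paper avoids this obstacle entirely, and the maneuver is worth knowing. Rather than seek a global weight decomposition of $H$, one argues \emph{pointwise}. The condition that $\alpha_M: D(\id)(M) \to D(GF)(M)$ be an equivalence is closed under colimits in $M$: the left side is just $M$, and the right side preserves colimits because $D(GF)$ is exact (being excisive between stable $\infty$-categories) and preserves filtered colimits (since $GF$ does). By Remark \ref{skabber} it therefore suffices to check $\alpha_M$ for $M = (e_X)_!(C)$. Equivalences in $\calD$ are detected by the family $\{e_Y\}_{Y \in \calO}$, so one is reduced to showing that $e_Y(\alpha_M)$ is an equivalence. Now the crucial step: since $(e_X)_!$ and $e_Y$ are \emph{exact}, Proposition \bicatref{easychain} gives an identification
\[
e_Y \circ D(GF) \circ (e_X)_! \;\simeq\; D\bigl(e_Y \circ GF \circ (e_X)_!\bigr),
\]
which lands you directly in the fiberwise computation $\calC_X \to \calC_Y$ that you have already carried out. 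No global decomposition of $H$ is needed.

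Your proposed fix (a global splitting $H \simeq \coprod_{n \geq 2} H_n$) would also work in principle, but establishing it rigorously is more effort than the pointwise reduction via exact functors and \bicatref{easychain}.
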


\begin{proof}
We wish to show that for every object $M \in \Fun_{\calO}(\calO, \calC)$, the 
natural transformation $\alpha$ induces an equivalence
$$ \alpha_{M}: M \simeq D(\id)(M) \rightarrow D(GF)(M).$$
Since both sides are compatible with the formation of colimits in $M$, it will suffice
to prove this in the case where $M = (e_X)_{!}(C)$ for some $X \in \calO$ and
some $C \in \calC_{X}$ (Remark \ref{skabber}). Moreover, to prove that $\alpha_{M}$ is
an equivalence, it suffices to show that $e_{Y}(\alpha_M)$ is an equivalence in
$\calC_{Y}$, for each $Y \in \calO$. In other words, it suffices to show that
$\alpha$ induces an equivalence
$$ \beta: e_Y \circ (e_X)_{!} \rightarrow e_{Y} \circ D(GF) \circ (e_X)_{!}.$$
Since the functors $(e_{X})_{!}$ and $e_{Y}$ are exact, we can identify the latter composition
with $D( e_{Y} \circ G \circ F \circ (e_X))_{!}$ (Proposition \bicatref{easychain}).

According to Proposition \ref{hugger}, the functor 
$e_{Y} \circ G \circ F \circ (e_X)_{!}$ can be identified with the total symmetric power functor
$C \mapsto \coprod_{ n > 0 } \Sym^{n}_{\calO,Y}(C)$. According to Remark \ref{sabler},
we can express this as the coproduct of $e_{Y} \circ (e_X)_{!}$ with the functor
$T$ given by the formula 
$T(C) \simeq \coprod_{ n \geq 2} \Sym^{n}_{\calO,Y}(C)$. In view of Lemma \ref{linbin}, it will suffice
to show that each of the derivatives $D \Sym^{n}_{\calO,Y}$ is nullhomotopic for $n \geq 2$.
We observe that $\Sym^{n}_{\calO,Y}$ can be expressed as a colimit of functors of the form
$$ \calC_{X} \stackrel{\delta}{\rightarrow} \calC_{X}^{n} \stackrel{\gamma_{!}}{\rightarrow} \calC_{Y}
$$
where $\gamma_{!}$ denotes the functor associated to an operation
$\gamma \in \Mul_{\calO}( \{ X \}_{1 \leq i \leq n}$. In view of Lemma \ref{linbin}, it suffices to show that each constituent $D( \delta \circ \gamma_{!})$ is nullhomotopic, which follows from Lemma \ref{suksy}.
\end{proof}

\begin{lemma}\label{sidewise}
Let $\calC$ be a stable $\infty$-category, let $f: C \rightarrow D$ be a morphism in
$\calC$, and let $f^{\ast}: \calC^{/D} \rightarrow \calC^{/C}$ be the functor given by pullback along $f$. 
Then:
\begin{itemize}
\item[$(1)$] The functor $f^{\ast}$ is conservative.
\item[$(2)$] Let $K$ be a weakly contractible simplicial set, and assume that
$\calC$ admits $K$-indexed colimits. Then the functor $f^{\ast}$ preserves $K$-indexed colimits.
\end{itemize}
\end{lemma}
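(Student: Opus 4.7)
The plan rests on two basic facts about stable $\infty$-categories: a commutative square is a pullback iff it is a pushout, and in a pullback square the fibers of the two parallel arrows are canonically equivalent.

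For part $(1)$, let $\alpha: (X, g) \to (Y, h)$ be a morphism in $\calC^{/D}$, so $g \simeq h \alpha$. Consider the square
$$\xymatrix{ X \times_D C \ar[r]^{f^{\ast}\alpha} \ar[d] & Y \times_D C \ar[d] \\ X \ar[r]^{\alpha} & Y }$$
in $\calC$, whose vertical arrows are the canonical projections. I claim this is a pullback: pasting it on the right with the defining pullback of $Y \times_D C$ produces an outer rectangle which is the pullback of $X \xrightarrow{g} D \xleftarrow{f} C$, and this is precisely $X \times_D C$. Since in a pullback square the fibers of parallel arrows agree, $\mathrm{fib}(f^{\ast}\alpha) \simeq \mathrm{fib}(\alpha)$. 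Hence $f^{\ast}\alpha$ is an equivalence iff $\mathrm{fib}(\alpha) = 0$ iff $\alpha$ is an equivalence, and conservativity follows.

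For part $(2)$, I would first reduce to an assertion in $\calC$ itself. The forgetful functors $\calC^{/D} \to \calC$ and $\calC^{/C} \to \calC$ preserve colimits: a colimit in a slice is just the colimit in $\calC$ equipped with the induced structure map. For a $K$-diagram $(X_i, g_i)$ in $\calC^{/D}$ it therefore suffices to show that $\colim_K (X_i \times_D C) \simeq (\colim_K X_i) \times_D C$ in $\calC$. Each square
$$\xymatrix{ X_i \times_D C \ar[r] \ar[d] & X_i \ar[d]^{g_i} \\ C \ar[r]^{f} & D }$$
is a pullback, hence by stability also a pushout. Taking the colimit of this $K$-indexed diagram of squares, colimits commute with pushouts, and the weak contractibility of $K$ lets us identify the colimits of the constant $C$ and $D$ vertices with $C$ and $D$ themselves. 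The result is a pushout square with vertices $\colim(X_i \times_D C)$, $\colim(X_i)$, $C$, and $D$. Invoking stability a second time, this pushout is also a pullback, yielding the desired equivalence.

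The weak contractibility hypothesis enters only at the step of identifying colimits of constant diagrams in $\calC$. For a general simplicial set $K$ one would instead find $\colim_K \mathrm{const}_D \simeq D \otimes |K|$ (becoming $\Sigma D$ when $K = S^1$, for instance), and the argument would collapse; correspondingly, $f^{\ast}$ typically fails to commute with even the simplest non-contractible colimits.
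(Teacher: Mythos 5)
Your proof is correct and follows essentially the same route as the paper's: both arguments rest on the pullback/pushout duality in a stable $\infty$-category, on the fact that a morphism is an equivalence iff its fiber vanishes, and (for part $(2)$) on the observation that constant $K^{\triangleright}$-diagrams are colimit diagrams when $K$ is weakly contractible. The only differences are cosmetic: for $(1)$ you use the pasting lemma to identify $\mathrm{fib}(f^{*}\alpha)$ with $\mathrm{fib}(\alpha)$ where the paper takes the kernel of the entire square-valued morphism, and for $(2)$ you identify the colimit square directly where the paper passes through the cokernel of the comparison map.
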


\begin{proof}
Let $\calE$ denote the full subcategory of $\Fun( \Delta^1 \times \Delta^1, \calC)
\times_{ \Fun( \{1\} \times \Delta^1, \calC )} \{f\}$ spanned by the pullback diagrams
$$ \xymatrix{  C' \ar[d] \ar[r] & D' \ar[d] \\
C \ar[r]^{f} & D.}$$
Since $\calC$ admits pullbacks, Proposition \toposref{lklk} implies that
evaluation along $\Delta^1 \times \{1\}$ induces a trivial Kan fibration
$\calE \rightarrow \calC^{/D}$. Let $g$ denote a section of this trivial fibration. Then
the functor $f^{\ast}$ can be identified with the composition
$$ \calC^{/D} \stackrel{g}{\rightarrow} \calE \stackrel{g'}{\rightarrow} \calC^{/C},$$
where $g'$ is given by evaluation along $\Delta^1 \times \{0\}$. 

Let $u$ be a morphism in $\calC^{/D}$. Let $\sigma$ denote the kernel of the morphism $g(u)$, formed in the stable $\infty$-category $\Fun(\Delta^1 \times \Delta^1, \calC)$. Then $\sigma$ is a pullback diagram
$$ \xymatrix{ W \ar[r] \ar[d] & X \ar[d] \\
Y \ar[r] & Z }$$
in the $\infty$-category $\calC$. The objects $Y$ and $Z$ are both zero, so the bottom horizontal map is an equivalence. It follows that the upper horizontal map is an equivalence. If $f^{\ast}(u)$ is an equivalence, then $W \simeq 0$. It follows that $X \simeq 0$, so that $u$ is an equivalence
in $\calC^{/D}$. This completes the proof of $(1)$.

To prove $(2)$, let us choose a colimit diagram $\overline{p}: K^{\triangleright} \rightarrow \calC^{/D}$.
Let $\overline{q} = g \circ \overline{p}$. We wish to prove that $g' \circ \overline{q}$ is a colimit diagram in $\calC^{/C}$. In view of Proposition \toposref{needed17}, it will suffice to show that
$\overline{q}$ defines a colimit diagram in $\Fun( \Delta^1 \times \Delta^1, \calC)$. Let
$q = \overline{q} | K$, and let $\sigma \in \Fun( \Delta^1 \times \Delta^1, \calC)$ be a colimit of
$q$ in $\Fun(\Delta^1 \times \Delta^1, \calC)$. Since the class of pushout diagrams in
$\calC$ is stable under colimits, we conclude that $\sigma$ is a pushout diagram.
Let $\sigma'$ be the image under $\overline{q}$ of the cone point of $K^{\triangleright}$, let
$\alpha: \sigma \rightarrow \sigma'$ be the map determined by $\overline{q}$, and let
$\tau \in \Fun( \Delta^1 \times \Delta^1, \calC)$ be the cokernel of $\alpha$. We wish to prove
that $\alpha$ is an equivalence, which is equivalent to the assertion that $\tau \simeq 0$. 
We may view $\tau$ as a pushout diagram
$$ \xymatrix{ W \ar[r] \ar[d] & X \ar[d] \\
Y \ar[r] & Z }$$
in $\calC$. Since $\calC$ is stable, this diagram is also a pullback. Consequently, it will suffice to show that the objects $X, Y, Z \in \calC$ are equivalent to zero. For the object $X$, this follows from our assumption that $\overline{p}$ is a colimit diagram (and Proposition \toposref{needed17}). To show that $Y$ and $Z$ are zero, it suffices to observe that every constant map
$K^{\triangleright} \rightarrow \calC$ is a colimit diagram, because $K$ is weakly contractible
(Corollary \toposref{silt}).
\end{proof}

\begin{lemma}\label{curpse2}
Suppose given an adjunction of $\infty$-categories
$$ \Adjoint{F}{\calC}{\calD}{G}$$
where $\calC$ is stable. Let $C$ be an object
of $\calC$, and consider the induced adjunction
$$ \Adjoint{f}{\calC_{/C}}{\calD_{/FC}}{g}$$
(see Proposition \toposref{curpse}). Then:
\begin{itemize}
\item[$(1)$] If the functor $G$ is conservative, then $g$ is conservative.
\item[$(2)$] Let $K$ be a weakly contractible simplicial set. Assume that $\calC$ and $\calD$ admit $K$-indexed colimits, that the functor $G$ preserves $K$-indexed colimits, and that $\calC$ is stable.
Then the $\infty$-categories $\calD^{/FC}$ and $\calC^{/C}$ admit $K$-indexed colimits, and the functor $g$ preserves $K$-indexed colimits.
\end{itemize}
\end{lemma}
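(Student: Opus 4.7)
The plan is to factor the right adjoint $g$ through an intermediate slice. Unwinding the construction of $g$ from Proposition \toposref{curpse} (or computing directly via the adjunction $F \dashv G$), an object $D' \to FC$ of $\calD_{/FC}$ is sent by $g$ to the pullback $GD' \times_{GFC} C \to C$, where the map $C \to GFC$ is the unit $u$ of the adjunction. In other words, $g$ factors as
$$\calD_{/FC} \xrightarrow{\;G_{/FC}\;} \calC_{/GFC} \xrightarrow{\;u^{\ast}\;} \calC_{/C},$$
where $G_{/FC}$ is the functor induced by applying $G$ pointwise and $u^{\ast}$ is pullback along the morphism $u : C \to GFC$ in the stable $\infty$-category $\calC$. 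This factorization reduces each assertion to a statement about each of the two factors.

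For assertion $(1)$, I would argue that both factors are conservative. The functor $G_{/FC}$ is conservative whenever $G$ is: the forgetful functors $\calD_{/FC} \to \calD$ and $\calC_{/GFC} \to \calC$ are conservative (a morphism in a slice is an equivalence iff its image in the underlying $\infty$-category is), and the square formed by $G_{/FC}$, $G$, and the two forgetful functors commutes. The functor $u^{\ast}$ is conservative by Lemma \ref{sidewise}$(1)$, which applies since $\calC$ is stable. Therefore $g = u^{\ast} \circ G_{/FC}$ is conservative.

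For assertion $(2)$, I would first verify that $\calD_{/FC}$ and $\calC_{/C}$ admit $K$-indexed colimits. Since $K$ is weakly contractible, the forgetful functor from a slice $\infty$-category creates $K$-indexed colimits (cf.\ Proposition \toposref{needed17} together with Corollary \toposref{silt}, which ensure that the constant diagram on the base object is a colimit diagram), so $K$-indexed colimits exist in $\calD_{/FC}$ and $\calC_{/C}$ and are computed in $\calD$ and $\calC$ respectively; similarly for $\calC_{/GFC}$. Using this, the hypothesis that $G$ preserves $K$-indexed colimits immediately implies that $G_{/FC}$ preserves them. Finally, $u^{\ast}$ preserves $K$-indexed colimits by Lemma \ref{sidewise}$(2)$, using again that $\calC$ is stable and $K$ is weakly contractible. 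Composing, $g$ preserves $K$-indexed colimits.

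The only subtle point is the factorization $g \simeq u^{\ast} \circ G_{/FC}$, which should be extracted from the adjunction computation: a map $C' \to C$ and a map $FC' \to D'$ over $FC$ correspond, via $F \dashv G$, to maps $C' \to GD'$ whose composite with $GD' \to GFC$ agrees with $C' \to C \to GFC$, i.e.\ to maps $C' \to u^{\ast}(GD' \to GFC)$ in $\calC_{/C}$. Once this is verified, both parts follow formally from Lemma \ref{sidewise} and the standard behavior of weakly contractible colimits in slice $\infty$-categories.
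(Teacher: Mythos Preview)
Your proof is correct and follows essentially the same approach as the paper's own proof: factor $g$ as pullback along the unit $u$ composed with the functor induced by $G$, then handle each factor separately using Lemma \ref{sidewise} for the pullback and the commutative square with the forgetful functors for the induced functor. The only cosmetic difference is that the paper invokes Proposition \toposref{curpse} directly for the factorization rather than rederiving it, and cites Proposition \toposref{needed17} alone for the existence and computation of $K$-indexed colimits in the slice categories.
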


\begin{proof}
We first prove $(1)$. Proposition \toposref{curpse} shows that $g$ can be written as a composition
$$ \calD_{/FC} \stackrel{g'}{\rightarrow} \calC_{/GFC} \stackrel{g''}{\rightarrow} \calC_{/C},$$
where $g'$ is induced by $G$ and $g''$ is given by pullback along the unit map
$C \rightarrow GFC$. It will therefore suffice to show that $g'$ and $g''$ are conservative.
We have a commutative diagram of $\infty$-categories
$$ \xymatrix{ \calD^{/FC} \ar[d] \ar[r]^{g'} & \calC^{/GFC} \ar[d] \\
\calD \ar[r]^{G} & \calC. }$$
Since the vertical functors detect equivalences and $G$ is conservative, we deduce that $g'$ is conservative. It follows from Lemma \toposref{sidewise} that $g''$ is conservative as well.

We now prove $(2)$. Proposition \toposref{needed17} implies that the $\infty$-categories $\calC^{/C}$,
$\calC^{/GFC}$, and $\calD^{/FC}$ admit $K$-indexed colimits. Consequently, it will suffice to show that
$g'$ and $g''$ preserve $K$-indexed colimits. For the functor $g'$, this follows from
Proposition \toposref{needed17} and our assumption that $G$ preserves $K$-indexed colimits.
For the functor $g''$, we invoke Lemma \ref{sidewise}.
\end{proof}

\begin{proof}[Proof of Theorem \ref{uckaboo1}]
Enlarging the universe if necessary, we may suppose that $\calO^{\otimes}$ and
$\calC^{\otimes}$ are small. The coCartesian fibration $\calC^{\otimes} \rightarrow \calO^{\otimes}$
is classified by a map of $\infty$-operads $\chi: \calO^{\otimes} \rightarrow \Cat_{\infty}^{\times}$.
Let $\chi'$ denote the composition of $\chi$ with the $\infty$-operad map
$\Ind^{\otimes}: \Cat_{\infty} \rightarrow \widehat{\Cat_{\infty}}$ of ***, and let
${\calC'}^{\otimes} \rightarrow \calO^{\otimes}$ be the $\calO$-monoidal $\infty$-category
classified by $\chi'$. Then we have a fully faithful functor
$\calC^{\otimes} \rightarrow {\calC'}^{\otimes}$ which induces a homotopy pullback diagram
$$ \xymatrix{ \Alg^{\aug}_{\calO}(\calC) \ar[d] \ar[r] & \Alg^{\aug}_{\calO}( \calC' ) \ar[d] \\
\Fun_{\calO}(\calO, \calC) \ar[r] & \Fun_{\calO}(\calO, \calC') }$$
where the horizontal maps are fully faithful inclusions. Passing to stable envelopes, we get a homotopy pullback diagram
$$ \xymatrix{ \Stab( \Alg^{\aug}_{\calO}( \calC) ) \ar[d] \ar[r] & \Stab( \Alg^{\aug}_{\calO}( \calC' ) \ar[d] \\
\Stab( \Fun_{\calO}(\calO, \calC)) \ar[r] & \Stab(\Fun_{\calO}(\calO, \calC')). } $$
It will therefore suffice to show that the right vertical map is an equivalence. In other words, we may replace $\calC^{\otimes}$ by ${\calC'}^{\otimes}$ and thereby reduce to the case where $\calC^{\otimes} \rightarrow \calO^{\otimes}$ is a presentable stable $\calO$-monoidal $\infty$-category.

The forgetful functor $\Alg_{\calO}(\calC) \rightarrow \Fun_{\calO}(\calC)$ is conservative (Corollary \symmetricref{jumunj22}) and preserves geometric realizations of simplicial objects (Proposition\symmetricref{fillfemme}). It follows from Lemma \ref{curpse2} that $G$ has the same properties. Using Theorem \monoidref{barbeq}, we deduce that $G$ exhibits $\Alg^{\aug}_{\calO}(\calC)$ as monadic over $\Fun_{\calO}(\calO,\calC)$ (see \S \bicatref{linadj}). The desired result now follows by combining Proposition \ref{sillfun} with Corollary \bicatref{hurpek}.
\end{proof}

In the special case where the $\infty$-operad $\calO^{\otimes}$ is coherent, we can use
Theorem \ref{uckaboo1} to describe other fibers of the tangent bundle of $\Alg_{\calO}(\calC)$:

\begin{theorem}\label{subbe}
Let $\calO^{\otimes}$ be a coherent $\infty$-operad, let
$\calC^{\otimes} \rightarrow \calO^{\otimes}$ be a stable $\calO$-monoidal $\infty$-category, and
let $A \in \Alg_{\calO}(\calC)$ be a $\calO$-algebra object of $\calC$. Then the
stabilization $\Stab( \Alg_{\calO}(\calC)_{/A})$ is canonically equivalent to
the $\infty$-category $\Fun_{\calO}(\calO, \Mod^{\calO}_{A}(\calC) )$. 
\end{theorem}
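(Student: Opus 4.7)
The plan is to reduce the statement to Theorem \ref{uckaboo1} by identifying the stabilization of $\Alg_{\calO}(\calC)_{/A}$ with the stabilization of a suitable $\infty$-category of augmented algebras over $A$. The key input is that for coherent $\calO^{\otimes}$, the machinery of \cite{symmetric} produces a canonical $\calO$-monoidal structure on $\Mod^{\calO}_A(\calC)$, together with an equivalence $\Alg_{\calO}(\Mod^{\calO}_A(\calC)) \simeq \Alg_{\calO}(\calC)_{A/}$ under which the initial $\calO$-algebra in $\Mod^{\calO}_A(\calC)$ (namely the unit object $A$) corresponds to the identity morphism $\id_A$.

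The first step will be to verify that $\Mod^{\calO}_A(\calC)^{\otimes}$ is a \emph{stable} $\calO$-monoidal $\infty$-category in the sense of the earlier definition: each fiber $\Mod^{\calO}_A(\calC)_X$ is stable (the forgetful functor to $\calC_X$ is conservative, preserves limits and colimits, and inherits a stable structure from $\calC_X$), and the multilinear tensor operations associated to morphisms in $\calO^{\otimes}$ are exact separately in each variable, because they are computed from the corresponding operations in $\calC^{\otimes}$. Coherence is used twice: to produce the $\calO$-monoidal structure itself, and to ensure that coherent $\Rightarrow$ unital so that the initial algebra is identified with the unit via Proposition \symmetricref{gargle1}.

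The second step will identify pointed objects. A pointed object of $\Alg_{\calO}(\calC)_{/A}$ is by definition a section $A \to B$ of a morphism $B \to A$ in $\Alg_{\calO}(\calC)$, i.e.\ a factorization $A \to B \to A$ of $\id_A$. The datum of the first map $A \to B$ exhibits $B$ as an object of $\Alg_{\calO}(\calC)_{A/}$, hence, via the equivalence of Step 1, as an $\calO$-algebra in $\Mod^{\calO}_A(\calC)$; the second map $B \to A$ is then an augmentation toward the initial algebra $A$. This gives a canonical equivalence
\[ (\Alg_{\calO}(\calC)_{/A})_{\ast} \;\simeq\; \Alg^{\aug}_{\calO}(\Mod^{\calO}_A(\calC)). \]
Since stabilization factors through passage to pointed objects, we obtain
\[ \Stab( \Alg_{\calO}(\calC)_{/A} ) \;\simeq\; \Stab\bigl( \Alg^{\aug}_{\calO}( \Mod^{\calO}_A(\calC) ) \bigr), \]
and Theorem \ref{uckaboo1} applied to the stable $\calO$-monoidal $\infty$-category $\Mod^{\calO}_A(\calC)^{\otimes}$ identifies the right-hand side with $\Fun_{\calO}(\calO, \Mod^{\calO}_A(\calC))$, as desired.

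The main obstacle is Step 1 — specifically, assembling the $\calO$-monoidal structure on $\Mod^{\calO}_A(\calC)$ and the equivalence with $\Alg_{\calO}(\calC)_{A/}$ in a form that is both functorial and compatible with the stable structure. Once these foundational inputs are in place, Step 2 is essentially a formal manipulation of slice categories, and Step 3 is a direct application of Theorem \ref{uckaboo1}.
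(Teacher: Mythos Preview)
Your proposal is correct and follows essentially the same route as the paper: pass from $\Alg_{\calO}(\calC)_{/A}$ to its pointed objects, identify these with augmented $\calO$-algebras in $\Mod^{\calO}_A(\calC)$ via the equivalence $\Alg_{\calO}(\calC)_{A/} \simeq \Alg_{\calO}(\Mod^{\calO}_A(\calC))$ (the paper cites this as Corollary \symmetricref{skoke}), and then invoke Theorem~\ref{uckaboo1}. The paper simply records this as a five-line chain of equivalences $\Stab(\Alg_{\calO}(\calC)_{/A}) \simeq \Stab((\Alg_{\calO}(\calC)_{/A})_{A/}) \simeq \Stab((\Alg_{\calO}(\calC)_{A/})_{/A}) \simeq \Stab(\Alg_{\calO}(\Mod^{\calO}_A(\calC))_{/A}) \simeq \Stab(\Alg^{\aug}_{\calO}(\Mod^{\calO}_A(\calC))) \simeq \Fun_{\calO}(\calO,\Mod^{\calO}_A(\calC))$, which is exactly your Step~2 unpacked.
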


\begin{corollary}\label{subber}
Let $\EInfty$ denote the $\infty$-category of $E_{\infty}$-rings, and let
$A \in \EInfty$. Then the $\infty$-category $\Stab( (\EInfty)_{/A})$ is equivalent
to the $\infty$-category of $A$-module spectra.
\end{corollary}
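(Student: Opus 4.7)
The plan is to specialize Theorem \ref{subbe} to the case where $\calO^{\otimes} = \CommOp^{\otimes}$ is the commutative $\infty$-operad and $\calC^{\otimes} = \Spectra^{\otimes}$ is the $\infty$-category of spectra equipped with its smash product symmetric monoidal structure. For this to apply, I need to verify the two hypotheses of the theorem: that $\CommOp^{\otimes}$ is coherent, and that $\Spectra^{\otimes} \to \CommOp^{\otimes}$ is a stable $\calO$-monoidal $\infty$-category. The former is a known fact about the commutative $\infty$-operad, established in \cite{symmetric}. For the latter, each fiber is equivalent to $\Spectra$, which is stable; and since $\Spectra^{\otimes}$ is a presentable symmetric monoidal $\infty$-category, the tensor functors $\alpha_{!} \colon \prod_i \Spectra \to \Spectra$ associated to operations $\alpha$ of $\CommOp$ preserve small colimits separately in each variable, and in particular are exact separately in each variable.

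Next, I would identify $\Alg_{\CommOp}(\Spectra)$ with $\CAlg(\Spectra) = \EInfty$ (this is essentially definitional given the conventions recalled in the notation section). Applying Theorem \ref{subbe} with $A \in \EInfty$ then produces a canonical equivalence
$$ \Stab\bigl( (\EInfty)_{/A} \bigr) \simeq \Fun_{\CommOp}\bigl( \CommOp, \Mod^{\CommOp}_{A}(\Spectra) \bigr).$$

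Finally, I would identify the right-hand side with $A$-module spectra. The underlying $\infty$-category $\CommOp$ of the commutative $\infty$-operad is contractible (it has a single color up to equivalence), so the restriction $\Mod^{\CommOp}_{A}(\Spectra) \to \CommOp$ is determined up to equivalence by its fiber over the unique color, and sections of this presentable fibration are computed by evaluation at that color. This fiber is, by construction, the $\infty$-category $\Mod_{A}(\Spectra)$ of $A$-module spectra, so we obtain the desired equivalence $\Stab((\EInfty)_{/A}) \simeq \Mod_{A}(\Spectra)$.

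The main obstacle here is essentially bookkeeping rather than substance: all the real content lives in Theorem \ref{subbe}. The only genuinely nontrivial verifications are (i) the coherence of the commutative operad and (ii) checking that $\Spectra^{\otimes}$ satisfies the three conditions of being a stable $\calO$-monoidal $\infty$-category, both of which reduce to standard properties of the presentable symmetric monoidal structure on spectra. The final identification of $\Fun_{\CommOp}(\CommOp, -)$ with the fiber is then formal.
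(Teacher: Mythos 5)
Your proof is correct and matches the paper's (implicit) derivation: the corollary is obtained by specializing Theorem \ref{subbe} to $\calO^{\otimes} = \CommOp^{\otimes}$ and $\calC^{\otimes} = \Spectra^{\otimes}$, and the verifications you spell out---coherence of $\CommOp^{\otimes}$, stability and exactness of the smash product, and collapsing $\Fun_{\CommOp}(\CommOp, \Mod^{\CommOp}_A(\Spectra))$ to $\Mod_A$ because $\CommOp$ is a point---are exactly the bookkeeping the paper leaves to the reader.
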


\begin{remark}\label{suffik}
In the situation of Theorem \ref{subbe}, we have an evident functor
$$\Omega^{\infty}: \Fun_{\calO}(\calO, \Mod^{\calO}_{A}(\calC)) \simeq \Stab( \Alg_{\calO}(\calC)_{/A}) \rightarrow
\Alg_{\calO}(\calC)_{/A}.$$
This functor associates to each $M \in \Fun_{\calO}(\calO, \Mod^{\calO}_{A}(\calC))$ a commutative algebra object which we will denote by $A \oplus M$. The proof of
Theorem \ref{subbe} will justify this notation; that is, we will see that when regarded
as an object of $\Fun_{\calO}(\calO, \calC)$, $A \oplus M$ can be canonically identified with the 
coproduct of $A$ and $M$.
\end{remark}

\begin{proof}
The desired equivalence is given by the composition
\begin{eqnarray*} \Stab( \Alg_{\calO}(\calC)_{/A} ) & \simeq & \Stab( (\Alg_{\calO}(\calC)_{/A})_{A/}) \\
& \simeq & \Stab(( \Alg_{\calO}(\calC)_{A/} )_{/A}) \\
& \stackrel{\phi}{\simeq} & \Stab( \Alg_{\calO}( \Mod^{\calO}_{A}(\calC) )_{/A} \\
& \simeq & \Stab( \Alg^{\aug}_{\calO}( \Mod^{\calO}_{A}(\calC) )) \\
& \stackrel{\phi'}{\simeq} & \Fun_{\calO}(\calO, \Mod^{\calO}_{A}(\calC)).\\
\end{eqnarray*}
Here $\phi$ is the equivalence of Corollary \symmetricref{skoke}, 
$\phi'$ is given by Proposition \ref{uckaboo1}.
\end{proof}


\begin{remark}\label{toaster}
Let $\calC^{\otimes}$ be a stable symmetric monoidal $\infty$-category (such that the tensor
product on $\calC$ is exact in each variable) let $A$ be a commutative algebra object of $\calC$, let $M$ be an $A$-module, and let $A \oplus M$ denote the image
of $M$ under the composition
$$\Mod_A(\calC) \simeq \Stab( \CAlg(\calC)_{/A}) \stackrel{\Omega^{\infty}}{\rightarrow}
\CAlg(\calC)_{/A}.$$
We claim that the algebra structure on $A \oplus M$ is ``square-zero'' in the homotopy category
$\h{\calC}$. In other words:
\begin{itemize}
\item[$(1)$] The unit map $1_{\calC} \rightarrow A \oplus M$ is homotopic to the composition of
$1_{\calC} \rightarrow A$ with the inclusion $A \rightarrow A \oplus M$.
\item[$(2)$] The multiplication
$$ m: (A \otimes A) \oplus (A \otimes M) \oplus (M \otimes A) \oplus (M \otimes M) \simeq (A \oplus M) \otimes (A \oplus M) \rightarrow A \oplus M$$
is given as follows:
\begin{itemize}
\item[$(i)$] On the summand $A \otimes A$, the map $m$ is homotopic to the composition
of the multiplication map $A \otimes A \rightarrow A$ with the inclusion $A \rightarrow A \oplus M$.
\item[$(ii)$] On the summands $A \otimes M$ and $M \otimes A$, the map $m$ is given
by composing the action of $A$ on $M$ with the inclusion $M \rightarrow A \oplus M$.
\item[$(iii)$] On the summand $M \otimes M$, the map $m$ is nullhomotopic.
\end{itemize}
\end{itemize}
Only assertion $(iii)$ requires proof. For this, we will invoke the fact that the commutative algebra structure on $A \oplus M$ depends functorially on $M$. Consequently, for every $A$-module $N$ we obtain a map $\psi_N: N \otimes N \rightarrow N$, which we must show to be nullhomotopic.
Let $M'$ and $M''$ be copies of the $A$-module $M$, which we will distinguish notationally for clarity, and let $f: M' \oplus M'' \rightarrow M$ denote the ``fold'' map which is the identity on each factor.
Invoking the functoriality of $\psi$, we deduce that the map $\psi_M: M \otimes M \rightarrow M$
factors as a composition
$$ M \otimes M = M' \otimes M'' \rightarrow (M' \oplus M'') \otimes (M' \oplus M'')
\stackrel{\psi_{M' \oplus M''}}{\rightarrow} M' \oplus M'' \stackrel{f}{\rightarrow} M.$$
Consequently, to prove that $\psi_M$ is nullhomotopic, it will suffice to show that
$\phi = \psi_{M' \oplus M''}| (M' \otimes M'')$ is nullhomotopic. Let
$\pi_{M'}: M' \oplus M'' \rightarrow M'$ and $\pi_{M''}: M \oplus M'' \rightarrow M''$ denote the projections onto the first and second factor, respectively. To prove that $\psi_{M' \oplus M''}$
is nullhomotopic, it suffices to show that $\pi_{M'} \circ \phi$ and $\pi_{M''} \circ \phi$ are nullhomotopic. 
We now invoke functoriality once more to deduce that $\pi_{M'} \circ \phi$ is homotopic to the composition
$$ M' \otimes M'' \stackrel{ (\id, 0)}{\rightarrow} M' \otimes M' \stackrel{ \psi_{M'}}{\rightarrow} M'.$$
This composition is nullhomotopic, since the first map factors through $M' \otimes 0 \simeq 0$.
The same argument shows that $\pi_{M''} \circ \phi$ is nullhomotopic, as desired.
\end{remark}

\begin{remark}
Let $A$ be an $E_{\infty}$-ring, let $M$ be an $A$-module, and let $A \oplus M$ denote the corresponding square-zero extension. As a graded abelian group, we may identify
$\pi_{\ast}( A \oplus M)$ with the direct sum $(\pi_{\ast} A) \oplus (\pi_{\ast} M)$. It follows
from Remark \ref{toaster} that the multiplication on $\pi_{\ast}( A \oplus M)$ is given on homogeneous elements by the formula
$$ (a, m)(a', m') = (aa', am' + (-1)^{\deg(a') \deg(m)} a'm).$$
In particular, if $A$ is an ordinary commutative ring (viewed as a discrete $E_{\infty}$-ring) and $M$ is an ordinary $A$-module, then we can identify the discrete $E_{\infty}$-ring $A \oplus M$ with the classical square-zero extension discussed in the introduction to this section.
\end{remark}

We now prove a ``global'' version of Theorem \ref{subbe}:

\begin{theorem}\label{scummer2}
Let $\calO^{\otimes}$ be a coherent $\infty$-operad, and let
$\calC^{\otimes} \rightarrow \calO^{\otimes}$ be a presentable stable
$\calO$-monoidal $\infty$-category. Then there is a canonical equivalence
$$ \phi: T_{\Alg_{\calO}(\calC) } \rightarrow \Alg_{\calO}(\calC)
\times_{ \Fun(\calO, \Alg_{\calO}(\calC)} \Fun_{\calO}( \calO, \Mod^{\calO}(\calC))$$
of presentable fibrations over $\Alg_{\calO}(\calC)$.
\end{theorem}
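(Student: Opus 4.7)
My strategy is to exhibit the right-hand side
$$ \calC' := \Alg_{\calO}(\calC) \times_{ \Fun(\calO, \Alg_{\calO}(\calC))} \Fun_{\calO}( \calO, \Mod^{\calO}(\calC)) $$
as a stable envelope of the presentable fibration $q: \Fun(\Delta^1, \Alg_{\calO}(\calC)) \to \Fun(\{1\}, \Alg_{\calO}(\calC)) \simeq \Alg_{\calO}(\calC)$, and then invoke the uniqueness clause of Proposition \ref{psycher} to produce the claimed equivalence $\phi$. In view of the criterion in Definition \ref{defstabbb}, this reduces to three subtasks: (a) checking that the projection $p: \calC' \to \Alg_{\calO}(\calC)$ is itself a presentable fibration with stable fibers; (b) constructing a morphism of presentable fibrations $u: \calC' \to \Fun(\Delta^1, \Alg_{\calO}(\calC))$ over $\Alg_{\calO}(\calC)$ that carries $p$-Cartesian edges to $q$-Cartesian edges; and (c) verifying that for each $A \in \Alg_{\calO}(\calC)$ the induced functor on fibers $u_A: \Fun_{\calO}(\calO, \Mod^{\calO}_A(\calC)) \to \Alg_{\calO}(\calC)_{/A}$ is a stable envelope in the absolute sense.

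For (a), the projection $\Mod^{\calO}(\calC) \to \Alg_{\calO}(\calC)$ is a presentable fibration (with fiber $\Mod^{\calO}_A(\calC)$) by the general theory of modules over $\infty$-operads, so $\Fun_{\calO}(\calO, \Mod^{\calO}(\calC)) \to \Fun(\calO, \Alg_{\calO}(\calC))$ is a presentable fibration by standard arguments about sections, and base-change along the diagonal $\Alg_{\calO}(\calC) \to \Fun(\calO, \Alg_{\calO}(\calC))$ gives that $p$ is a presentable fibration. Stability of each fiber $\Fun_{\calO}(\calO, \Mod^{\calO}_A(\calC))$ follows from Remark \ref{underil}, since $\Mod^{\calO}_A(\calC)^{\otimes} \to \calO^{\otimes}$ is a stable $\calO$-monoidal $\infty$-category.

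For (b), I would construct $u$ by globalizing the chain of equivalences that produces Theorem \ref{subbe}. Concretely, apply Proposition \ref{reladjprop} (relative adjunction) to the forgetful functor $\Mod^{\calO}(\calC) \to \Alg_{\calO}(\calC) \times_{\calO} \calC^{\otimes}$ (whose fiber over $A$ is the module forgetful functor) in order to produce a relative version of the square-zero extension functor $(A,M) \mapsto (A \to A \oplus M)$; the composite with $\Omega^{\infty}$ appearing in Remark \ref{suffik} provides the desired functor $u$ landing in $\Fun(\Delta^1, \Alg_{\calO}(\calC))$. The compatibility with Cartesian edges amounts to the statement that, for a morphism $f: A \to B$ in $\Alg_{\calO}(\calC)$ and an $A$-module $M$, the square-zero extension $B \oplus (f_! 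M)$ is obtained from $A \oplus M$ by pushout along $f$; this is a formal consequence of the naturality of the constructions appearing in the proof of Theorem \ref{subbe}, combined with Proposition \ref{basechunge} (base change of relative cotangent complexes).

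For (c), the fiberwise statement is precisely Theorem \ref{subbe}: the functor $u_A$ factors through $\Stab( \Alg_{\calO}(\calC)_{/A})$ via the equivalence $\Fun_{\calO}(\calO, \Mod^{\calO}_A(\calC)) \simeq \Stab(\Alg_{\calO}(\calC)_{/A})$ followed by $\Omega^{\infty}$, and the latter functor exhibits $\Stab(\Alg_{\calO}(\calC)_{/A})$ as a stable envelope of $\Alg_{\calO}(\calC)_{/A}$ by construction. Having established (a)--(c), Proposition \ref{psycher} provides a fiber-preserving equivalence $\phi: T_{\Alg_{\calO}(\calC)} \to \calC'$, which is the content of the theorem.

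\textbf{Expected obstacle.} The routine conceptual content is straightforward, but the genuine technical difficulty is the \emph{functoriality} in $A$: each step in the fiberwise chain
$$ \Stab( \Alg_{\calO}(\calC)_{/A} ) \simeq \Stab( \Alg_{\calO}(\Mod^{\calO}_A(\calC))_{/A}) \simeq \Stab( \Alg^{\aug}_{\calO}(\Mod^{\calO}_A(\calC))) \simeq \Fun_{\calO}(\calO, \Mod^{\calO}_A(\calC)) $$
used in the proof of Theorem \ref{subbe} must be upgraded to an equivalence of presentable fibrations over $\Alg_{\calO}(\calC)$. Rather than attempt to make each intermediate equivalence globally natural (which would require substantial additional bookkeeping), the cleanest path is to use Proposition \ref{psycher} as the only tool needing the global structure: we construct the global functor $u$ directly (via the relative adjoint formalism of \S\ref{reladj}), and then invoke the uniqueness of stable envelopes to promote the fiberwise equivalence of Theorem \ref{subbe} to a global equivalence $\phi$ without needing to track the naturality through each intermediate step.
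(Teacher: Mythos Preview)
Your overall strategy (exhibit $\calC'$ as a stable envelope of $q$ and invoke Proposition \ref{psycher}) is a reasonable alternative to the paper's, but step (b) contains a genuine error. The stable envelope condition requires $u$ to carry $p$-Cartesian edges to $q$-Cartesian edges. A $p$-Cartesian edge in $\calC'$ over $f: A \to B$ goes from $(A, f^{\ast}N)$ to $(B, N)$ (restriction of scalars, not $f_!$), and a $q$-Cartesian edge in $\Fun(\Delta^1, \Alg_{\calO}(\calC))$ over $f$ is a \emph{pullback} square. So the condition you must verify is
\[
A \oplus f^{\ast}N \;\simeq\; A \times_{B} (B \oplus N),
\]
a statement about pullbacks and restriction. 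What you wrote instead is a pushout statement involving $f_!$, which is the coCartesian direction; moreover, the claim that $B \otimes_A (A \oplus M) \simeq B \oplus f_! M$ as square-zero extensions is not true in general in the $\infty$-categorical sense (being in the essential image of $\Omega^{\infty}$ is not preserved by arbitrary base change), and Proposition \ref{basechunge} concerns cotangent complexes, not square-zero extensions, so it is irrelevant here. The correct pullback identity does hold, but it needs a different justification: it follows because $\Omega^{\infty}$ is limit-preserving and compatible with the pullback functors $f^{\ast}$ on overcategories.

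By contrast, the paper does not attempt to show $\calC'$ is a stable envelope. It constructs $\phi$ in the \emph{other} direction, as the composite
\[
T_{\Alg_{\calO}(\calC)} \xrightarrow{\Omega^{\infty}_{\ast}} \calE \xrightarrow{\psi} \calC',
\]
where $\calE$ is the pointed envelope (augmented algebras) and $\psi$ is a relative augmentation ideal functor $(A \to B \to A) \mapsto (A, \ker(B \to A))$. The Cartesian compatibility of $\Omega^{\infty}_{\ast}$ is automatic (relative right adjoint), and for $\psi$ it reduces to the observation that pullback squares in $\Alg_{\calO}(\calC)$ are pullback squares in $\Fun_{\calO}(\calO, \calC)$, hence induce equivalences on kernels. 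Then Corollary \toposref{usefir} reduces to the fiberwise statement, which is exactly Theorem \ref{subbe}. Going through the augmentation ideal rather than the square-zero extension is what makes the Cartesian check transparent; your route can be made to work, but not with the justification you gave.
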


In other words, we may view $T_{ \Alg_{\calO}(\calC)}$ as the $\infty$-category whose objects are pairs
$(A,M)$, where $A$ is a $\calO$-algebra object of $\calC$ and $M$ is an $A$-module.
The idea of the proof is simple: we will define $\phi$ using a relative version of the augmentation ideal functor defined above. We will then show that $\phi$ is a map of Cartesian fibrations, so that the condition that $\phi$ be an equivalence can be checked fibrewise. We are then reduced to the situation of Theorem \ref{subbe}. 

\begin{proof}
We will denote objects of $\calM = \Alg_{\calO}(\calC)
\times_{ \Fun(\calO, \Alg_{\calO}(\calC)} \Fun_{\calO}( \calO, \Mod^{\calO}(\calC))$
by pairs $(A, M)$, where $A \in \Alg_{\calO}(\calC)$ and $M \in \Fun_{\calO}( \calO, \Mod^{\calO}(\calC)$
is a module over $A$.

Let $\calE = \Fun( \Delta^1 \times \Delta^1, \Alg_{\calO}(\calC) ) \times_{ \Fun( \Delta^{2}, \Alg_{\calO}(\calC) ) } \Alg_{\calO}(\calC)$ denote the $\infty$-category of diagrams of the form 
$$ \xymatrix{ A \ar[d]^{\id} \ar[r] \ar[dr]^{\id} & B \ar[d] \\
A \ar[r]^{\id} & A,}$$
of $\calO$-algebra objects of $\calC$. The canonical map
$\Alg_{\calO}(\calC) \rightarrow \Alg_{\calO}( \Mod^{\calO}(\calC))$ determines
a section $s$ of the projection
$$ p:  \calX \rightarrow
\Alg_{\calO}(\calC),$$
which we can think of informally as assigning to an algebra $A$ the pair $(A,A)$
where we regard $A$ as a module over itself.

Let $\calD$ denote the fiber product
$$ \Fun( \Delta^1 \times \Delta^1, \calM ) \times_{ \Fun( \Delta^1 \times \{1\}, \calM) }
\Fun( \Delta^1 \times \{1\}, \Alg_{\calO}(\calC) ),$$
so that we can identify objects of $\calD$ with commutative squares
$$ \xymatrix{ (A, M) \ar[d] \ar[r] & (B, B) \ar[d] \\
(A', M') \ar[r] & (B', B') }$$
in the $\infty$-category $\calM$. Let $\overline{\calE}$ denote the full subcategory of
$\calE \times_{ \Fun( \Delta^1 \times \Delta^1, \Alg_{\calO}(\calC) } \calD$
spanned by those squares
$$ \xymatrix{ (A, M) \ar[d] \ar[r] & (B,B) \ar[d] \\
(A, M') \ar[r] & (A,A) }$$
which are $p$-limit diagrams, and such that $M'$ is a zero object of 
$\Fun_{\calO}(\calO, \Mod_{A}^{\calO}(\calC))$. Invoking Proposition \toposref{lklk} twice (and Theorem \symmetricref{maincand}), we deduce that the projection map
$\overline{\calE} \rightarrow \calE$ is a trivial Kan fibration.
Let $r: \calE \rightarrow \overline{\calE}$ be a section of this projection, and let
$r': \overline{\calE} \rightarrow \calX$ be given by evaluation in the upper left hand corner.
Let $\psi$ denote the composition
$$ \psi: \calE \stackrel{r}{\rightarrow} \overline{\calE} \stackrel{r'}{\rightarrow}
\calM,$$
so that $\psi$ carries a diagram
$$ \xymatrix{ A \ar[d]^{\id} \ar[r] \ar[dr]^{\id} & B \ar[d]^{f} \\
A \ar[r]^{\id} & A,}$$
to the augmentation ideal $\ker(f)$, regarded as an $A$-module. 

We observe that
the restriction map
$ \calE \rightarrow \Fun( \Delta^1 \times \{1\}, \Alg_{\calO}(\calC) )$ can be regarded
as a pointed envelope of the presentable fibration
$$ \Fun( \Delta^1 \times \{1\}, \Alg_{\calO}(\calC) )
\rightarrow \Fun( \{1\} \times \{1\}, \Alg_{\calO}(\calC) ) \simeq \Alg_{\calO}(\calC).$$
Let $\Omega^{\infty}_{\ast}: T_{\Alg_{\calO}(\calC)} \rightarrow \calE$ exhibit $T_{\Alg_{\calO}(\calC)}$ as a tangent bundle to $\Alg_{\calO}(\calC)$. Let $\phi$ denote the composition
$$ T_{ \Alg_{\calO}(\calC)} \stackrel{\Omega^{\infty}}{\rightarrow} \calE \stackrel{\psi}{\rightarrow}
\calM.$$
To complete the proof, it will suffice to show that $\phi$ is an equivalence of $\infty$-categories.

By construction, we have a commutative diagram
$$ \xymatrix{ T_{ \Alg_{\calO}(\calC) } \ar[dr]^{q} \ar[r]^{\Omega^{\infty}_{\ast}} & \calE \ar[d]^{q'} \ar[r]^-{\phi_0} &  \calM \ar[dl]^{q''} \\
& \Alg_{\calO}(\calC), & }$$
with $\phi = \phi_0 \circ \Omega^{\infty}_{\ast}$, where $q$, $q'$, and $q''$ are presentable fibrations.
Since $\Omega^{\infty}_{\ast}$ is a right adjoint relative to $\Alg_{\calO}(\calC)$, it carries
$q$-Cartesian morphisms to $q'$-Cartesian morphisms. We observe that
$\phi_0$ carries $q'$-Cartesian morphisms to $q''$-Cartesian morphisms; in concrete terms, this
merely translates into the observation that every pullback diagram
$$ \xymatrix{ A \ar[r] \ar[d]^{f} & B \ar[d]^{f'} \\
A' \ar[r] & B' }$$
in $\Alg_{\calO}(\calC)$ is also a pullback diagram in $\Fun_{\calO}(\calO,\calC)$ (Corollary \symmetricref{slimycamp2}), and therefore induces an equivalence $\ker(f) \simeq \ker(f')$ in $\calM$. It follows that $\phi$ carries $q$-Cartesian morphisms to $q''$-Cartesian morphisms. 

We now invoke Corollary \toposref{usefir}: the map $\phi$ is an equivalence of $\infty$-categories if and only if, for every commutative algebra object $A \in \Alg_{\calO}(\calC)$, the induced map
$$ \phi_A: \Stab( \Alg_{\calO}(\calC)_{/A} ) \rightarrow \Fun_{\calO}(\calO, \Mod^{\calO}_{A}(\calC))$$
is an equivalence of $\infty$-categories. We now observe that $\phi_A$ can be identified with
the augmentation ideal functor which appears in the proof of Theorem \ref{subbe}, and therefore an equivalence as required.
\end{proof}

\section{Cotangent Complexes of $E_{\infty}$-Rings}\label{sec3}

In \S \ref{gentheory}, we studied the general theory of cotangent complexes. For every presentable $\infty$-category $\calC$, we defined the tangent bundle $T_{\calC}$ and a relative cotangent complex functor
$$ \Fun( \Delta^1, \calC) \rightarrow T_{\calC}$$ 
$$ (f: A \rightarrow B) \mapsto L_{B/A} \in \Stab( \calC^{/B} ).$$
We now wish to specialize to the situation where $\calC$ is the $\infty$-category $\EInfty$ of
$E_{\infty}$-rings. In this case, Theorem \ref{scummer2} allows us to identify
the tangent bundle $T_{\calC}$ with the $\infty$-category of pairs $(A, M)$, where
$A$ is an $E_{\infty}$-ring and $M$ is an $A$-module. We will henceforth use this identification to view the relative cotangent complex $L_{B/A}$ as taking its value in the $\infty$-category
$\Mod_{B}$ of $B$-module spectra.

Our goal in this section is to prove some results about the cotangent complexes of $E_{\infty}$-rings which are more quantitative in nature. We will begin in \S \ref{estawha} by studying the connectivity
properties of the relative cotangent complex functor $L$. For example, we will show that if
$f: A \rightarrow B$ is an $n$-connective morphism between connective $E_{\infty}$-rings, then
the induced map $L_{A} \rightarrow L_{B}$ is $n$-connective (Corollary \ref{swepu}). 
This is a simple consequence of our main result, Theorem \ref{tulbas}, which is considerably more precise.

In \S \ref{fictot}, we will study finiteness properties of the relative cotangent complex $L_{B/A}$ associated to a morphism $f: A \rightarrow B$ between connective $E_{\infty}$-rings. It is not difficult to show that finiteness properties of $f$ are inherited by the relative cotangent complex $L_{B/A}$. For example, if $f$ is of finite presentation, then the relative cotangent complex $L_{B/A}$ is a perfect $B$-module. Somewhat surprisingly, the converse holds under some mild additional assumptions
(Theorem \ref{sucker}). 

The final goal of this section is to introduce the definition of an {\it \etalenospace} map between $E_{\infty}$-rings. 
A morphism $f: A \rightarrow B$ is said to be \etale if $f$ is flat, and the induced map
$\pi_0 A \rightarrow \pi_0 B$ is an \etale map of ordinary commutative rings. Our main result concerning \etale morphisms is Proposition \ref{etrel}, which asserts that the relative cotangent complex
$L_{B/A}$ vanishes whenever $f: A \rightarrow B$ is \etale. 

\subsection{Connectivity Estimates}\label{estawha}

Let $f: A \rightarrow B$ be a morphism of $E_{\infty}$-rings. According to Remark \ref{blurn}, the 
relative cotangent complex $L_{B/A}$ vanishes whenever $f$ is an equivalence. We may therefore
regard $L_{B/A}$ as a measure of a failure of $f$ to be an equivalence. A more direct measure is the 
cokernel $\coker(f)$ of the map $f$. Our goal in this section is to prove Theorem \ref{tulbas}, which asserts that these invariants are related: namely, there is a canonical map
$$ \alpha: \coker(f) \rightarrow L_{B/A}.$$
Moreover, this map has good connectivity properties if $f$ does (we will formulate this statement more precisely below).

In order to prove Theorem \ref{tulbas}, we need a mechanism for computing the cotangent complex
$L_{B/A}$ in certain examples. We therefore begin with a simple calculation.

\begin{proposition}\label{puffle}
Let $M$ be a spectrum, and let $A = \Sym^{\ast} M$ denote the free $E_{\infty}$-ring generated by $M$. 
Then there is a canonical equivalence $L_A \simeq M \otimes A$
in the $\infty$-category of $A$-modules. 
\end{proposition}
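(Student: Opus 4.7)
The plan is to identify $L_A$ with $M \otimes A$ by checking that both objects corepresent the same functor on $\Mod_A$, then appealing to Yoneda. By Corollary \ref{subber}, the fiber of $T_{\EInfty}$ over $A$ is identified with $\Mod_A$, and under this identification the right adjoint $\Omega^{\infty}: \Mod_A \to (\EInfty)_{/A}$ sends $N$ to the square-zero extension $A \oplus N \to A$ of Remark \ref{toaster}. Consequently, the universal property of the cotangent complex (Definition \ref{urtime}) supplies a natural equivalence
\[
\bHom_{\Mod_A}(L_A, N) \simeq \bHom_{(\EInfty)_{/A}}(A, A \oplus N)
\]
for every $A$-module $N$; the right-hand side parametrizes derivations of $A$ into $N$.

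Next I would exploit the defining adjunction of the free algebra construction. Since $A = \Sym^{\ast} M$, for any $E_{\infty}$-ring $R$ one has $\bHom_{\EInfty}(A, R) \simeq \bHom_{\Spectra}(M, R)$, naturally in $R$. Applying this with $R = A \oplus N$ and passing to the fiber over $\id_A \in \bHom_{\EInfty}(A,A)$ (which corresponds under the adjunction to the tautological unit map $\eta : M \to A$), we obtain
\[
\bHom_{(\EInfty)_{/A}}(A, A \oplus N) \simeq \bHom_{\Spectra}(M, A \oplus N) \times_{\bHom_{\Spectra}(M, A)} \{\eta\}.
\]
By Remark \ref{toaster}, the underlying spectrum of $A \oplus N$ is canonically the direct sum $A \oplus N$ with its obvious projection to $A$, so this fiber product simplifies to $\bHom_{\Spectra}(M, N)$: giving a lift of $\eta$ to $A \oplus N$ is the same as giving its $N$-component.

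Finally, the free-module adjunction yields $\bHom_{\Spectra}(M, N) \simeq \bHom_{\Mod_A}(M \otimes A, N)$. Concatenating the equivalences above produces a natural equivalence
\[
\bHom_{\Mod_A}(L_A, N) \simeq \bHom_{\Mod_A}(M \otimes A, N)
\]
for all $N \in \Mod_A$, whence $L_A \simeq M \otimes A$ by Yoneda. The only real obstacle is bookkeeping: one must verify that each step in the chain is natural in $N$ so that Yoneda may actually be invoked. This is formal but requires tracking the identifications provided by the proof of Theorem \ref{subbe} (notably the fact that $\Omega^{\infty}$ corresponds to the square-zero extension functor, and that the forgetful functor from $\EInfty$-algebras over $A$ to spectra commutes with the relevant fiber products).
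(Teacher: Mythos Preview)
Your proposal is correct and follows essentially the same approach as the paper: both arguments establish the chain of equivalences $\bHom_{\Mod_A}(M \otimes A, N) \simeq \bHom_{\Spectra}(M, N) \simeq \bHom_{\Spectra_{/A}}(M, A \oplus N) \simeq \bHom_{(\EInfty)_{/A}}(A, A \oplus N) \simeq \bHom_{\Mod_A}(L_A, N)$ and conclude by Yoneda. You have simply unwound the middle step (the paper's $\bHom_{\Spectra_{/A}}(M, A \oplus N)$ is exactly your fiber product over $\eta$) and supplied more explicit references for each identification.
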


\begin{proof}
For every $A$-module $N$, we have a chain of homotopy equivalences
$$ \bHom_{ \Mod_A}( M \otimes A, N)
\simeq \bHom_{ \Spectra}( M, N)
\simeq \bHom_{ \Spectra_{/A} }( M, A \oplus N)
\simeq \bHom_{ (\EInfty)_{/A} }( A, A \oplus N)
\simeq \bHom_{ \Mod_{A} }( L_A, N).$$
It follows that $M \otimes A$ and $L_A$ corepresent the same functor in the homotopy category
$\h{ \Mod_{A} }$, and are therefore equivalent.
\end{proof}

According to Corollary \symmetricref{spaltwell} and Remark \symmetricref{tappus}, for every
spectrum $M$ we have a canonical equivalence $\Sym^{\ast} M \simeq \oplus_{i \geq 0} \Sym^{i} M$, where $\Sym^{i} M$ is obtained from the $i$th tensor power of $M$ by extracting the (homotopy-theoretic) coinvariants of the action of the symmetric group $\Sigma_{i}$. Our connectivity
estimates for the cotangent complex all hinge on the following basic observation:

\begin{remark}\label{spukkle}
Let $M$ and $N$ be spectra. Assume that $M$ is $m$-connective and $N$ is $n$-connective. Then
the tensor product $M \otimes N$ is $(m+n)$-connective. Iterating this observation, we deduce that
every tensor power $M^{\otimes k}$ is $(mk)$-connective. Using the stability of connective spectra under colimits, we conclude that any symmetric power $\Sym^{k}(M)$ is $(km)$-connective.
\end{remark}

\begin{lemma}\label{cake}
Let $\Sym^{\ast}: \Spectra \rightarrow \EInfty$ denote a left adjoint to the forgetful functor
$\EInfty \rightarrow \Spectra$ $($see \S \symmetricref{comm33}$)$. 
Let $f: A \rightarrow B$ be a map of connective $E_{\infty}$-rings, and assume that
$f$ is $n$-connective for some $n \geq -1$. Then
there exists an $n$-connective spectrum $M$ and a commutative diagram of
$E_{\infty}$-rings
$$ \xymatrix{ \Sym^{\ast} M \ar[r]^{\epsilon} \ar[d] & \Sphere \ar[d] \\
A \ar[r] \ar[dr]^{f} & A' \ar[d]^{f'} \\
& B, }$$
where the upper square is a pushout, the $E_{\infty}$-ring $A'$ is connective,
the map $f'$ is $(n+1)$-connective, and $\epsilon$ is adjoint to the zero map
$M \rightarrow \Sphere$ in the $\infty$-category of spectra. Here $\Sphere$ denotes the sphere spectrum.
\end{lemma}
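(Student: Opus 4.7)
The plan is to set $M = \ker(f)$, the fiber of $f$ as a map of spectra. Since $f$ is $n$-connective with $n \geq -1$, $M$ is $n$-connective; moreover, $\pi_0 f$ is surjective, so $\pi_{-1} M = 0$ and $M$ is in fact connective. The composite $M \to A \to B$ (where the second map is $f$) is nullhomotopic by construction of the fiber, and via the free/forgetful adjunction between spectra and $E_\infty$-rings this nullhomotopy is precisely the data of a commutative square in $\EInfty$ whose top edge is the augmentation $\epsilon\colon \Sym^{\ast} M \to \Sphere$ adjoint to the zero map. Defining $A'$ to be the pushout of the upper span and $f'\colon A' \to B$ the induced map yields the diagram in the statement. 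Connectivity of $A'$ is automatic, since $\EInfty^{\conn}$ is closed under small colimits in $\EInfty$ and $A$, $\Sphere$, $\Sym^{\ast} M$ are all connective.

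The substantive content is the $(n+1)$-connectivity of $f'$. Write $g\colon A \to A'$ for the pushout map, so that $f = f' \circ g$. The octahedral axiom in the stable $\infty$-category of spectra produces a fiber sequence
$$ \ker(g) \to \ker(f) = M \to \ker(f'), $$
and from the associated long exact sequence on homotopy, $\ker(f')$ will be $(n+1)$-connective provided $(a)$ $\ker(g)$ is $n$-connective and $(b)$ the natural map $\pi_n \ker(g) \to \pi_n M$ is surjective. The vanishing of $\pi_i \ker(f')$ for $i < n$ then follows from $(a)$ and the $n$-connectivity of $M$, and the vanishing at $i = n$ follows from $(b)$ together with $\pi_{n-1} \ker(g) = 0$.

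Both $(a)$ and $(b)$ will follow from the identification $\ker(g) \simeq A \otimes_{\Sym^{\ast} M} I$ as $A$-modules, where $I = \ker(\epsilon) \simeq \bigoplus_{i \geq 1} \Sym^{i} M$ is the augmentation ideal. The map $\epsilon$ is $n$-connective, since $I$ is $n$-connective (with $M$ as its $i = 1$ summand), and a Tor / bar-construction argument shows that cobase change of an $n$-connective map between connective $E_\infty$-rings remains $n$-connective; this handles $(a)$. For $(b)$, the Tor spectral sequence computing $\pi_{\ast}(A \otimes_{\Sym^{\ast} M} I)$ has an $E_{2}^{0,n}$-term containing $\pi_0 A \otimes_{\pi_0 \Sym^{\ast} M} \pi_n M$ (extracted from the summand $\pi_n M \subset \pi_n I$), and this term surjects onto $\pi_n M$ via the action of $\pi_0 A$ coming from the $A$-module structure on $M = \ker(f)$ (surjective because $1 \cdot m = m$). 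The main obstacle is the compatibility check: one must verify that this algebraic surjection coincides with the map $\pi_n \ker(g) \to \pi_n M$ coming from the octahedron, i.e., from the factorization $\ker(g) \to M \to A$ that exists because the composite $\ker(g) \to A \to B$ is nullhomotopic. This amounts to tracing the natural transformations between the octahedral construction and the symmetric-degree filtration on $A \otimes_{\Sym^{\ast} M} I$.
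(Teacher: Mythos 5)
Your reduction via the octahedral fiber sequence $\ker(g) \to M \to \ker(f')$ is correct, and the two conditions $(a)$, $(b)$ that you isolate are exactly what the long exact sequence requires. But there are two genuine problems.

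First, the assertion that $\pi_0 f$ is surjective, hence that $M$ is connective, does not follow from the hypotheses when $n = -1$: for a map of connective $E_{\infty}$-rings, $(-1)$-connectivity is automatic and imposes no constraint on $\pi_{-1}\ker(f) \simeq \coker(\pi_0 A \to \pi_0 B)$. With $M$ only $(-1)$-connective, $\Sym^{\ast} M$ need not be connective, so your argument that $A'$ is connective (closure of $\EInfty^{\conn}$ under colimits in $\EInfty$) does not apply, and the augmentation ideal $I = \bigoplus_{i \geq 1} \Sym^i M$ loses all connectivity control, since $\Sym^i M$ can have homotopy in degree $-i$. The paper sidesteps both issues: it observes that $A'$ can also be written as $A \otimes_{\Sym^{\ast} A} \Sym^{\ast} B$, manifestly a pushout of connective rings, and it handles $n = -1$ by a separate argument, noting that the counit $\Sym^{\ast} B \to B$ factors through $f'$ and is surjective on homotopy groups because its underlying map of spectra admits a section.

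Second, for $n \geq 0$ your verification of $(b)$ is, by your own account, unfinished: the compatibility between the Tor spectral sequence surjection and the octahedral map $\pi_n \ker(g) \to \pi_n M$ is described but not established, and it is not a free check. The paper's route dissolves the difficulty: the octahedral map $g\colon A \otimes_{\Sym^{\ast} M} I \to M$ admits an explicit section
$$M \simeq \Sym^1 M \hookrightarrow I \to A \otimes_{\Sym^{\ast} M} I,$$
so $\ker(g)$ is a \emph{direct summand} of $A \otimes_{\Sym^{\ast} M} I$, which is $n$-connective by Remark \ref{spukkle} since $I$ is $n$-connective and $A$, $\Sym^{\ast} M$ are connective. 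This yields $(a)$ and $(b)$ simultaneously, with no spectral-sequence bookkeeping and no compatibility check to trace.
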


\begin{proof}
We will abuse notation by not distinguishing between the $E_{\infty}$-rings
$A$ and $B$ and their underlying spectra. 
Let $M = \ker(f)$, so that we have a pushout diagram of spectra
$$ \xymatrix{ M \ar[d] \ar[r] & 0 \ar[d] \\
A \ar[r]^{f} & B. }$$
Invoking the universal property of $\Sym^{\ast}$, we obtain a commutative diagram
$$ \xymatrix{ \Sym^{\ast} M \ar[r] \ar[d] & \Sym^{\ast} 0 \ar[d] \\
\Sym^{\ast} A \ar[r] \ar[d] & \Sym^{\ast} B \ar[d]  \\
A \ar[r] & B }$$
in the $\infty$-category of $E_{\infty}$-rings, where the upper square is a pushout. We observe that $\Sym^{\ast} 0$ is equivalent to the sphere spectrum $\Sphere$. Let $A'$ denote the tensor product $A \otimes_{ \Sym^{\ast} M} \Sphere$ so that we obtain a commutative diagram
$$ \xymatrix{ \Sym^{\ast} M \ar[r]^{\epsilon} \ar[d] & \Sphere \ar[d] \\
A \ar[r] \ar[dr]^{f} & A' \ar[d]^{f'} \\
& B }$$
as above. Since $A'$ can also be identified with the tensor product
$$ A \otimes_{ \Sym^{\ast} A} \Sym^{\ast} B,$$
we conclude that $A'$ is connective.
The only nontrivial point is to verify that $\ker(f')$ is $(n+1)$-connective. Suppose first that
$n = -1$; in this case, we wish to show that $f'$ induces an epimorphism
$\pi_0 A' \rightarrow \pi_0 B$. To prove this, we observe that the counit map
$$ f'': \Sym^{\ast} B \rightarrow B$$
factors through $f'$. The map $f''$ induces an epimorphism on all homotopy groups, because
the underlying map of spectra admits a section.

We now treat the generic case $n \geq 0$.
Let
us define $I$ denote the kernel of the projection map $\Sym^{\ast} M \rightarrow \Sphere$, so that
we have a map of distinguished triangles
$$ \xymatrix{ A \otimes_{ \Sym^{\ast} M } I \ar[d]^{g} \ar[r] & A \ar[d]^{=} \ar[r] & A' \ar[d]^{f'} \ar[r] & 
A \otimes_{ \Sym^{\ast} M} I[1] \ar[d] \\
M \ar[r] & A \ar[r] & B \ar[r] & M[1] }$$
in the homotopy category of spectra.
Consequently, we obtain an equivalence of spectra $\ker(f') \simeq \ker(g)[1]$, so it will suffice
to show that $g$ is $n$-connective. Using Remark \symmetricref{tappus}, we can identify
$I$ with the coproduct $\oplus_{i > 0} \Sym^{i}(M)$.
The map $g$ admits a section, given by the composition
$$ M \simeq \Sym^{1}(M) \rightarrow I \rightarrow A \otimes_{ \Sym^{\ast} M} I.$$
We may therefore identify $\ker(g)$ with a summand of the tensor product
$A \otimes_{ \Sym^{\ast} M} I$. It will now suffice to show that this tensor product is
$n$-connective. Since $A$ and $\Sym^{\ast} M$ are connective, it will suffice to show that
$I$ is $n$-connective. This follows immediately from Remark \ref{spukkle}.
\end{proof}

\begin{theorem}\label{tulbas}
Let $f: A \rightarrow B$ be a morphism between $E_{\infty}$-rings, and consider the associated
diagram
$$ \xymatrix{ L_{A} \ar[r]^{\eta_0} \ar[d] & 0 \ar[d] \\
L_{B} \ar[r]^{\eta} & L_{B/A}. }$$
This diagram induces a map of derivations
$$(d_0: A \rightarrow 0) \rightarrow (d: B \rightarrow L_{B/A}).$$
Applying the functor $\Phi$ of Notation \ref{stubble}, we obtain a commutative diagram
$$ \xymatrix{ A \ar@{=}[r] \ar[d] \ar[dr]^{f} & A \ar[d] \\
B^{\eta} \ar[r]^{g} & B }$$
of $E_{\infty}$-rings (here we implicitly identify $A^{\eta_0}$ with $A$; see Example \ref{invu}).
This commutative diagram induces a map $\alpha_{f}: \ker(f) \rightarrow \ker{g} \simeq L_{B/A}[-1]$
in the $\infty$-category of $A$-modules. Let $\alpha'_{f}: \ker(f) \otimes_{A} B \rightarrow L_{B/A}[-1]$ be the adjoint morphism. Suppose that $A$ and $B$ are connective and that $f$ is $n$-connective, for some $n \geq -1$. Then $\alpha'_{f}$ is $(2n+1)$-connective.
\end{theorem}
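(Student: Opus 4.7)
The plan is to prove this by induction on $n$, using Lemma \ref{cake} to reduce to a ``free'' case that we can analyze directly. For the inductive step, given an $n$-connective map $f: A \rightarrow B$, I would apply Lemma \ref{cake} to obtain a factorization $A \xrightarrow{f_0} A' \xrightarrow{f_1} B$ with $A' = A \otimes_{\Sym^{\ast} M} \Sphere$ for some $n$-connective spectrum $M$, and with $f_1$ being $(n+1)$-connective. Using Corollary \ref{longtooth} together with the fiber sequence $\ker(f_0) \rightarrow \ker(f) \rightarrow \ker(f_1)$ of $A$-modules, and tensoring the latter with $B$ over $A$, one obtains a map of fiber sequences whose vertical arrows are $\alpha'_{f_0}$ base-changed to $B$, the map $\alpha'_f$, and $\alpha'_{f_1}$. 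The inductive hypothesis applied to $f_1$ yields that $\alpha'_{f_1}$ is $(2n+3)$-connective, so it suffices to show that $\alpha'_{f_0}$ is $(2n+1)$-connective.

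For $\alpha'_{f_0}$, use Proposition \ref{puffle} combined with the base-change formula for relative cotangent complexes to compute $L_{A'/A} \simeq M[1] \otimes_{\Sphere} A'$, hence $L_{A'/A}[-1] \simeq M \otimes A'$. Identify $\ker(f_0) \otimes_A A' \simeq I \otimes_{\Sym^{\ast} M} A'$, where $I \simeq \bigoplus_{i \geq 1} \Sym^i M$ is the augmentation ideal (with its natural $\Sym^{\ast} M$-module structure, not the split one). Exploit the $\Sym^{\ast} M$-module filtration $I \supset \Sym^{\geq 2} M \supset \Sym^{\geq 3} M \supset \cdots$, whose associated graded pieces $\Sym^k M$ carry the trivial $\Sym^{\ast} M$-action via the augmentation, together with Remark \ref{spukkle} to bound the connectivity of the fiber of $\alpha'_{f_0}$. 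The base case $n = -1$ is handled by a direct homotopy-group computation exploiting the connectivity of $A$ and $B$.

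The main obstacle will be obtaining the sharp $(2n+1)$-connectivity estimate for $\alpha'_{f_0}$: a na\"ive estimate from Remark \ref{spukkle} alone gives only that $\Sym^{\geq 2} M$ is $(2n)$-connective, falling one short of the desired bound. Extracting the extra unit of connectivity will require drawing on the Goodwillie-calculus input supplied by Proposition \ref{sillfun} and Lemma \ref{suksy}, which together guarantee that the symmetric powers $\Sym^k$ for $k \geq 2$ have vanishing derivative and so contribute sub-dominantly; equivalently, the comparison of $\Sphere \otimes_{\Sym^{\ast} M} \Sphere$ with $\Sphere$ is itself better connected than a term-by-term estimate on $I$ would predict. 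This slack furnishes the extra unit of connectivity needed to close the induction.
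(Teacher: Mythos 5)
Your overall reduction — factoring $f$ through $A' = A \otimes_{\Sym^{\ast}M} \Sphere$ via Lemma \ref{cake}, reducing by base change and transitivity to the single ``free'' case $\Sym^{\ast}M \to \Sphere$, and exploiting the ever-increasing connectivity to close the argument — is essentially the route the paper takes. Two things, however, need to be fixed.

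First, the framing as ``induction on $n$'' does not quite work: your inductive step applies the statement to a morphism ($f_1$) that is \emph{more} connective than the one you started with, which is not what an induction on $n$ supplies. The paper instead iterates Lemma \ref{cake} to produce a tower $A = A(n) \to A(n+1) \to \cdots$ converging to $B$ and deduces the result from the facts that $\alpha'_{(-)}$ respects composition, base change, and filtered colimits (observations $(a)$--$(c)$). You should recast your argument along those lines rather than as induction.

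Second, and more seriously, the proposed repair of the one-unit connectivity deficit is wrong. The extra unit has nothing to do with Proposition \ref{sillfun} or Lemma \ref{suksy}, which serve only to prove Theorem \ref{uckaboo1} (the identification of the stabilization) and play no role in the proof of Theorem \ref{tulbas}. The correct, and elementary, source of the extra shift is the identification
\[
\Sphere \otimes_{\Sym^{\ast}M} \Sphere \;\simeq\; \Sym^{\ast}\bigl(M[1]\bigr),
\]
which follows because $\Sym^{\ast}$ preserves colimits and the pushout of $0 \leftarrow M \to 0$ is $M[1]$. Using observation $(d)$, the source of $\alpha'_{\epsilon}$ (for $\epsilon: \Sym^{\ast}M \to \Sphere$) is $\ker\bigl(\Sphere \to \Sym^{\ast}(M[1])\bigr) \simeq \bigoplus_{i > 0}\Sym^{i}(M[1])[-1]$, not $I \otimes_{\Sym^{\ast}M}\Sphere$ analyzed via a filtration on $I$ itself. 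After checking that $\alpha'_{\epsilon}$ splits off the $\Sym^{1}$ summand as the identity, the kernel is $\bigoplus_{i \geq 2}\Sym^{i}(M[1])[-1]$; since each $\Sym^{i}(M[1])$ is $i(n+1)$-connective, the $i \geq 2$ terms are $(2n+2)$-connective, and the shift by $[-1]$ yields exactly the desired $(2n+1)$-connectivity. That degree shift coming from $M[1]$ is precisely the extra unit your estimate was missing; your proposed filtration of $I$ by $\Sym^{\geq k}M$ cannot see it, which is why that route stalls.
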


\begin{proof}
Let us say that a morphism $f: A \rightarrow B$ of $E_{\infty}$-rings is {\it $n$-good} if the
kernel $\alpha'_{f}$ is $(2n+1)$-connective. We make the following observations:
\begin{itemize}
\item[$(a)$] Suppose given a commutative triangle
$$ \xymatrix{ & B \ar[dr]^{g} & \\
A \ar[ur]^{f} \ar[rr]^{h} & & C }$$
of connective $E_{\infty}$-rings. If $f$ and $g$ are $n$-good, then $h$ is $n$-good.
This follows from the existence of a commutative diagram of exact triangles
$$ \xymatrix{ 
\ker(f) \otimes_{A} C \ar[d]^{\alpha'_{f}} \ar[r] & \ker(h) \otimes_{A} C \ar[d]^{\alpha'_{h}}
\ar[r] & \ker(g) \otimes_{B} C \ar[d]^{ \alpha'_{g} } \\
L_{B/A}[-1] \otimes_{B} C \ar[r] & L_{C/A}[-1] \ar[r] & L_{C/B}[-1] }$$
of $C$-modules. 

\item[$(b)$] Suppose given a pushout diagram
$$ \xymatrix{ A \ar[r]^{f} \ar[d] & B \ar[d] \\
A' \ar[r]^{f'} & B' }$$
of $E_{\infty}$-rings, where $B$ and $B'$ are connective. If $f$ is $n$-good, then so is $f'$. This
follows immediately from the equivalence
$\ker( \alpha'_{f'} ) \simeq B' \otimes_{B} \ker( \alpha'_{f} )$.

\item[$(c)$] The collection of $n$-good morphisms is closed under the formation of filtered colimits. This follows from the fact that the functor $f \mapsto \ker(\alpha'_{f})$ preserves filtered colimits, and the observation that a filtered colimit of $(2n+1)$-connective spectra is again $(2n+1)$-connective.

\item[$(d)$] Let $f: A \rightarrow B$ be an arbitrary morphism of $E_{\infty}$-rings. Then
the source $\ker(f) \otimes_{A} B$ of the morphism $\alpha'_{f}$ can be identified with the kernel of the induced map $B \rightarrow B \otimes_{A} B$, which identifies $B$ with one of the tensor factors.

\item[$(e)$] Assume that $M$ is an $n$-connective spectrum, and let
$f: \Sym^{\ast} M \rightarrow \Sphere$ be a map of $E_{\infty}$-rings which is adjoint to the
zero map $M \rightarrow \Sphere$ in the $\infty$-category of spectra. Then $f$ is
$n$-good. To prove this, we will explicitly compute both the source and target
of $\alpha'_{f}$. 

Using Corollary \ref{longtooth} we obtain a distinguished triangle
$$ L_{ \Sym^{\ast} M} \otimes_{ \Sym^{\ast} M} \Sphere
\rightarrow L_{\Sphere} \rightarrow L_{ \Sphere / \Sym^{\ast} M }
\rightarrow L_{ \Sym^{\ast} M } \otimes_{ \Sym^{\ast} M} \Sphere [1]$$
in the homotopy category of spectra. In view of Proposition \ref{puffle}, we may rewrite this distinguished triangle as
$$ M \rightarrow 0 \rightarrow L_{ \Sphere/ \Sym^{\ast} M} \rightarrow M[1], $$
so that the target $L_{ \Sphere / \Sym^{\ast} M}[-1]$ of the morphism $\alpha'_{f}$ is
canonically equivalent to $M$. 

We next observe that the pushout diagram of spectra
$$ \xymatrix{ M \ar[r] \ar[d] & 0 \ar[d] \\
0 \ar[r] & M[1] }$$
induces an equivalence of $E_{\infty}$-rings $\Sphere \otimes_{ \Sym^{\ast} M } \Sphere \simeq \Sym^{\ast} M[1]$.
Invoking $(d)$, we deduce that the source of the map $\alpha'_{f}$ can be identified with
the kernel of the unit map $\Sphere \rightarrow \Sym^{\ast} M[1]$. Using Remark \symmetricref{tappus}, we can identify this kernel with the direct sum $\oplus_{i > 0} \Sym^{i}( M[1] )[-1]$. 

We now observe that the composition
$$ M \simeq \Sym^{1}( M[1] )[-1]
\rightarrow \oplus_{ i > 0} \Sym^{i}( M[1] )[-1]
\stackrel{\alpha'_{f} }{\rightarrow} M$$
is homotopic to the identity. Consequently, the kernel of $\alpha'_{f}$ can be identified with the direct sum $\oplus_{i > 1} \Sym^{i}( M[1])[-1]$. To complete the proof that $\alpha'_{f}$ is
$(2n+1)$-connective, it will suffice to show that each symmetric power
$\Sym^{i}( M[1] )$ is $(2n+2)$-connective, which follows immediately from Remark
\ref{spukkle}. 
\end{itemize}

We are now ready to proceed with the proof of Theorem \ref{tulbas}. Let $f: A \rightarrow B$
be an $n$-connective map of connective $E_{\infty}$-rings; we wish to show that $f$ is $n$-good. Applying Lemma \ref{cake} repeatedly, we deduce the existence of a sequence of objects
$$ A_{n} \rightarrow A_{n+1} \rightarrow A_{n+2} \rightarrow \ldots$$
in $(\EInfty)_{/B}$, with the following properties:
\begin{itemize}
\item[$(i)$] The object $A_{n}$ can be identified with the original morphism $f: A \rightarrow B$.
\item[$(ii)$] For all $m \geq n$, let us identify $A_{m}$ with a morphsim of $E_{\infty}$-rings $f_{m}: A(m) \rightarrow B$. Then $f_m$ is $m$-connective, and $A(m)$ is connective.
\item[$(iii)$] For each $m \geq n$, there exists an $m$-connective spectrum $M$ and
a pushout diagram
$$ \xymatrix{ \Sym^{\ast} M \ar[r]^{\epsilon_{m}} \ar[d] & \Sphere \ar[d] \\
A(m) \ar[r]^{g_{m,m+1}} & A(m+1), }$$
where $g_{j,k}$ denotes the morphism of $E_{\infty}$-rings underlying the map
from $A_j$ to $A_k$ in our direct system and $\epsilon$ is adjoint to the zero map
$M \rightarrow \Sphere$ in the $\infty$-category of spectra.
\end{itemize}

This direct system induces a map of $E_{\infty}$-rings $f_{\infty}: \colim \{ A(n) \} \rightarrow B$.
We observe that $\ker(f_{\infty}) \simeq \colim \{ \ker(f_m) \}$. It follows that $f_{\infty}$ is $(k+1)$-connective for every integer $k$, so that $f_{\infty}$ is an equivalence. This implies that
$f$ can be identified with the direct limit of the sequence of morphisms
$\{ g_{n, m} \}_{m \geq n}$. In view of $(c)$, it will suffice to show that each
$g_{n,m}$ is $n$-good. Applying $(a)$ repeatedly, we can reduce to showing that
each of the morphisms $g_{m, m+1}$ is $n$-good. Using $(b)$, we are reduced to showing
that each of the morphisms $\epsilon_{m}$ is $n$-good, which follows immediately from $(e)$.
\end{proof}

\begin{corollary}\label{spiffle}
Let $f: A \rightarrow B$ be a map of connective $E_{\infty}$-rings. Assume that
$f$ is $n$-connective, for $n \geq -1$. Then the relative cotangent complex
$L_{B/A}$ is $(n+1)$-connective. The converse holds provided that
$f$ induces an isomorphism $\pi_0 A \rightarrow \pi_0 B$.
\end{corollary}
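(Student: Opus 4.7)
Both directions will be derived from Theorem \ref{tulbas}, which provides a natural map $\alpha'_f : \ker(f) \otimes_A B \to L_{B/A}[-1]$ whose fiber is $(2n+1)$-connective whenever $f$ is $n$-connective. The forward implication is almost immediate: if $f$ is $n$-connective then $\ker(f)$, and hence its base change $\ker(f) \otimes_A B$, is $n$-connective; the long exact sequence of the fiber sequence $\ker(\alpha'_f) \to \ker(f) \otimes_A B \to L_{B/A}[-1]$, combined with the inequality $2n+1 \geq n$, then shows that $L_{B/A}[-1]$ is $n$-connective, so $L_{B/A}$ is $(n+1)$-connective.

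For the converse, I will argue by induction. First I would observe that the hypothesis $\pi_0 A \simeq \pi_0 B$, together with the connectivity of $A$ and $B$, already forces $\ker(f)$ to be $1$-connective: the long exact sequence of $\ker(f) \to A \to B$ gives $\pi_{-1}\ker(f) \simeq \coker(\pi_0 A \to \pi_0 B) = 0$ and $\pi_0 \ker(f) \simeq \ker(\pi_0 A \to \pi_0 B) = 0$, while the groups in lower degrees vanish automatically. This settles the cases $n \leq 1$ without further work. For the inductive step, suppose $L_{B/A}$ is $(n+1)$-connective and that $f$ has already been shown to be $m$-connective for some $1 \leq m \leq n - 1$; the goal is to show $f$ is $(m+1)$-connective. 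By Theorem \ref{tulbas}, $\alpha'_f$ has $(2m+1)$-connective fiber, so the portion of the long exact sequence around degree $m$ collapses (both $\pi_m \ker(\alpha'_f)$ and $\pi_{m-1} \ker(\alpha'_f)$ vanish) to yield an isomorphism
$$ \pi_m(\ker(f) \otimes_A B) \;\simeq\; \pi_m L_{B/A}[-1] \;=\; \pi_{m+1} L_{B/A}, $$
and the right side vanishes since $m + 1 \leq n$.

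The main remaining step, and I expect the most delicate one, is to descend this vanishing from $\ker(f) \otimes_A B$ to $\ker(f)$ itself. For this I will invoke the standard fact that for any $m$-connective $A$-module $M$, the Tor spectral sequence computing $\pi_\ast(M \otimes_A B)$ degenerates at total degree $m$ to give an isomorphism $\pi_m(M \otimes_A B) \simeq \pi_m(M) \otimes_{\pi_0 A} \pi_0 B$, since only the $\Tor_0^{\pi_0 A}$ edge term can contribute in this degree when $M$ is $m$-connective and $B$ is connective. Applied to $M = \ker(f)$, together with the assumption that $\pi_0 A \to \pi_0 B$ is an isomorphism, this yields $\pi_m \ker(f) \simeq \pi_m(\ker(f) \otimes_A B) = 0$, so $\ker(f)$ is $(m+1)$-connective. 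Iterating the inductive step from $m = 1$ up to $m = n - 1$ completes the argument.
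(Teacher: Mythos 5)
Your proof is correct and is essentially the same as the paper's, relying in both directions on the connectivity estimate for the comparison map $\alpha'_{f}$ from Theorem \ref{tulbas} together with the Tor--spectral--sequence edge isomorphism $\pi_m(\ker(f)\otimes_A B) \simeq \pi_m \ker(f) \otimes_{\pi_0 A} \pi_0 B$. The only difference is organizational: for the converse, the paper argues by contrapositive, choosing $n$ minimal such that $f$ fails to be $n$-connective (so $\pi_{n-1}\ker(f)\neq 0$) and exhibiting a nonzero class in $\pi_n L_{B/A}$ in one step, whereas you run a forward induction, bootstrapping the connectivity of $\ker(f)$ one degree at a time starting from the $1$-connectivity forced by the hypothesis on $\pi_0$. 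These are the same computation seen from opposite ends; neither buys anything the other lacks, though the paper's phrasing avoids explicitly tracking the base case and the iteration.
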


\begin{proof}
Let $\alpha'_{f}: \ker(f) \otimes_{A} B \rightarrow L_{B/A}[-1]$ be the map described in Theorem \ref{tulbas}, so that we have a distinguished triangle of $B$-modules:
$$ \ker(f) \otimes_{A} B \rightarrow L_{B/A}[-1] \rightarrow \coker( \alpha'_{f} )
\rightarrow \ker(f)[1] \otimes_{A} B.$$
To prove that $L_{B/A}$ is $(n+1)$-connective, it suffices to show that
$\ker(f) \otimes_{A} B$ and $\coker(\alpha'_{f})$ are $n$-connective. The first assertion is obvious, and the second follows from Theorem \ref{tulbas} since $2n+1 \geq n$. 

To prove the converse, let us suppose that $f$ is {\em not} $n$-connective. 
We wish to show that $L_{B/A}$ is not $(n+1)$-connective. By assumption, $f$ induces an
isomorphism $\pi_0 A \rightarrow \pi_0 B$, so that $\ker(f)$ is connective; thus $n \geq 0$. 
Without loss of generality, we may suppose that $n$ is chosen as small as possible, so that $f$ is $(n-1)$-connective. Applying Theorem \ref{tulbas}, we conclude that $\alpha'_{f}$ is
$(2n-1)$-connective. Our assumption that $f$ induces an isomorphism $\pi_0 A \rightarrow \pi_0 B$
guarantees that $n > 0$, so that $\alpha'_{f}$ is $n$-connective.
Using the long exact sequence 
$$ \pi_{n} \coker( \alpha'_{f} ) \rightarrow \pi_{n-1}( \ker(f) \otimes_{A} B)
\rightarrow \pi_{n-1}(L_{B/A}[-1]) \rightarrow \pi_{n-1} \coker( \alpha'_{f} ),$$
we deduce that $\pi_{n} L_{B/A}$ is isomorphic to 
$$\pi_{n-1} ( \ker(f) \otimes_{A} B)
\simeq \pi_{n-1} \ker(f) \otimes_{ \pi_0 A} \pi_0 B \simeq \pi_{n-1} \ker(f),$$
so that $L_{B/A}$ is not $(n+1)$-connective.
\end{proof}

\begin{corollary}\label{spat}
Let $A$ be a connective $E_{\infty}$-ring. Then the absolute cotangent complex
$L_{A}$ is connective.
\end{corollary}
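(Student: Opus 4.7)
The plan is to reduce the assertion to the relative statement already established in Corollary \ref{spiffle}. The sphere spectrum $\Sphere$ is an initial object of $\EInfty = \CAlg(\Spectra)$ (it is the free $E_\infty$-ring on the zero spectrum, or equivalently the tensor unit), so every $E_\infty$-ring $A$ receives a canonical unit map $u: \Sphere \to A$. By Remark \ref{irk}, the induced morphism $L_A \to L_{A/\Sphere}$ is an equivalence in $\Mod_A$. It therefore suffices to show that $L_{A/\Sphere}$ is connective.

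To apply Corollary \ref{spiffle} to $u$ with $n = -1$, I need to check that $u: \Sphere \to A$ is $(-1)$-connective between connective $E_\infty$-rings. Both $\Sphere$ and $A$ are connective by hypothesis (and standard fact about $\Sphere$), so it only remains to verify $\ker(u)$ is $(-1)$-connective. From the fiber sequence $\ker(u) \to \Sphere \to A$ of spectra, the long exact sequence on homotopy groups reads
$$\pi_{i+1} A \to \pi_i \ker(u) \to \pi_i \Sphere \to \pi_i A.$$
For $i < -1$ we have $\pi_{i+1} A = 0$ (since $i+1 < 0$ and $A$ is connective) and $\pi_i \Sphere = 0$, whence $\pi_i \ker(u) = 0$. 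Thus $\ker(u)$ is $(-1)$-connective, as required.

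Applying Corollary \ref{spiffle} now yields that $L_{A/\Sphere}$ is $0$-connective, i.e.\ connective. Combining with the equivalence $L_A \simeq L_{A/\Sphere}$ noted above, we conclude $L_A$ is connective.

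No step here is genuinely difficult: the real content is already packaged in Corollary \ref{spiffle} (which in turn relies on Theorem \ref{tulbas}), so the only task is the bookkeeping of connectivity of the unit map. One could alternatively view this as the special case of Corollary \ref{spiffle} at the initial object, using Remark \ref{urn} to identify $L_\Sphere$ with the zero object of $\Stab(\EInfty^{/\Sphere})$; either framing gives the same short argument.
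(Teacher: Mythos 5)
Your proof is correct and follows exactly the same route as the paper, which simply says to apply Corollary \ref{spiffle} to the unit map $\Sphere \rightarrow A$ with $n = -1$. You have merely spelled out the (routine) verification that the unit map is $(-1)$-connective and that $L_A \simeq L_{A/\Sphere}$ via Remark \ref{irk}, both of which the paper leaves implicit.
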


\begin{proof}
Apply Corollary \ref{spiffle} to the unit map $\Sphere \rightarrow A$ in the case $n=-1$.
\end{proof}

\begin{corollary}\label{twain}
Let $f: A \rightarrow B$ be a map of connective $E_{\infty}$-rings. 
Then $f$ is an equivalence if and only if the following conditions are satisfied:
\begin{itemize}
\item[$(1)$] The map $f$ induces an isomorphism $\pi_0 A \rightarrow \pi_0 B$.
\item[$(2)$] The relative cotangent complex $L_{B/A}$ vanishes.
\end{itemize}
\end{corollary}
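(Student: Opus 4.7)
The ``only if'' direction is immediate: if $f$ is an equivalence, then (1) is obvious and (2) follows from Remark \ref{blurn}, which asserts that the relative cotangent complex of an equivalence vanishes.

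For the ``if'' direction, the plan is to apply the converse half of Corollary \ref{spiffle} iteratively to deduce that $f$ is $n$-connective for every integer $n$, and then conclude that $f$ is an equivalence. Concretely, suppose that (1) and (2) hold. Since $L_{B/A} \simeq 0$, the relative cotangent complex is $(n+1)$-connective for every integer $n \geq -1$. Combining this with hypothesis (1) that $f$ induces an isomorphism $\pi_0 A \to \pi_0 B$, Corollary \ref{spiffle} tells us that $f$ is $n$-connective for every $n \geq -1$.

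It remains to observe that a morphism of connective $E_\infty$-rings which is $n$-connective for all $n$ is an equivalence. Indeed, by definition $f$ being $n$-connective means that the kernel $\ker(f)$ (taken in spectra) satisfies $\pi_i \ker(f) = 0$ for $i < n$. Taking $n$ arbitrarily large forces $\ker(f) \simeq 0$, and the distinguished triangle $\ker(f) \to A \to B$ then shows that $f$ induces isomorphisms on all homotopy groups, hence is an equivalence. There is no real obstacle here — the entire content of the corollary is packaged into the converse direction of Corollary \ref{spiffle}, which was the main technical input established via Theorem \ref{tulbas}.
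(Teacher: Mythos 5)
Your proof is correct and follows exactly the route the paper intends: the paper places Corollary \ref{twain} immediately after Corollary \ref{spiffle} with no separate proof, signaling that it is a direct consequence of the converse half of \ref{spiffle} applied for all $n$, which is precisely what you do. The ``only if'' direction via Remark \ref{blurn} and the closing observation that a map of connective $E_\infty$-rings which is $\infty$-connective is an equivalence are both standard and correct.
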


\begin{corollary}\label{swepu}
Let $f: A \rightarrow B$ be a map of connective $E_{\infty}$-rings. Assume that
$f$ is $n$-connective for $n \geq -1$. Then the induced map
$L_{f}: L_{A} \rightarrow L_{B}$ is $n$-connective. In particular, the canonical map
$\pi_0 L_{A} \rightarrow \pi_0 L_{ \pi_0 A}$ is an isomorphism.
\end{corollary}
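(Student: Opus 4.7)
The plan is to combine the distinguished triangle of cotangent complexes from Remark \ref{irn} with the connectivity estimate for the relative cotangent complex from Corollary \ref{spiffle}, together with the fact that $L_A$ is itself connective (Corollary \ref{spat}). The underlying map of spectra associated to $L_f: L_A \to L_B$ factors canonically as
$$ L_A \to f_! L_A = B \otimes_{A} L_A \to L_B, $$
where the first map is the coCartesian lift of $f$ to $T_{\calC}$ and the second is a morphism in $\Mod_B$. Thus it suffices to show both factors are $n$-connective and use the fact that the class of $n$-connective maps of spectra is closed under composition (which follows from the evident distinguished triangle of kernels).

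For the second factor, I will apply Corollary \ref{spiffle} to conclude that the relative cotangent complex $L_{B/A}$ is $(n+1)$-connective. Feeding this into the distinguished triangle
$$ f_{!} L_A \to L_B \to L_{B/A} \to f_{!} L_A[1] $$
from Remark \ref{irn}, we see that the kernel of $f_! L_A \to L_B$ is $L_{B/A}[-1]$, which is $n$-connective; hence the map $f_! L_A \to L_B$ is $n$-connective as required.

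For the first factor, I will use the cofiber sequence $A \to B \to \coker(f)$ of $A$-modules. Since $f$ is $n$-connective, $\coker(f)$ is $(n+1)$-connective. Tensoring with $L_A$ over $A$ yields a cofiber sequence
$$ L_A \to B \otimes_{A} L_A \to \coker(f) \otimes_{A} L_A $$
of $A$-modules. Because $L_A$ is connective by Corollary \ref{spat}, the rightmost term is $(n+1)$-connective (tensor product of connective with $(n+1)$-connective), so the map $L_A \to B \otimes_A L_A = f_! L_A$ is $n$-connective. Composing with the previous step proves that $L_f$ is $n$-connective.

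For the final assertion, I will specialize to $B = \pi_0 A$ with the canonical truncation map $f: A \to \pi_0 A$. The kernel of this map has vanishing homotopy in degrees $\leq 0$, hence is $1$-connective, so $f$ is $1$-connective. Applying the first part of the corollary with $n=1$, we obtain that $L_A \to L_{\pi_0 A}$ is $1$-connective, which in particular induces an isomorphism on $\pi_0$. No real obstacle is expected here; the whole argument is a bookkeeping exercise in connectivity, and the only mildly delicate point is interpreting the ``connectivity'' of a morphism in $T_{\calC}$ that crosses fibers, which is resolved by passing to the underlying spectra via the factorization through $f_! L_A$.
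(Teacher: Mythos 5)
Your proposal is correct and takes essentially the same route as the paper: both arguments factor $L_f$ as $L_A \to B \otimes_A L_A \to L_B$ and establish that each factor is $n$-connective. The only cosmetic difference is that for the second factor you cite Corollary \ref{spiffle} directly (which already packages the needed connectivity of $L_{B/A}$), whereas the paper unwinds the argument one step further to Theorem \ref{tulbas}; and for the first factor you phrase the computation via the cofiber $\coker(f)$ rather than the paper's $\ker(g) \simeq \ker(f) \otimes_A L_A$, but these are the same estimate. Your verification of the ``in particular'' clause via $1$-connectivity of $A \to \pi_0 A$ is also sound.
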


\begin{proof}
The map $L_{f}$ factors as a composition
$$ L_{A} \stackrel{g}{\rightarrow} B \otimes_{A} L_{A} \stackrel{g'}{\rightarrow} L_{B}.$$
We observe that $\ker(g) \simeq \ker(f) \otimes_{A} L_A$. Since the cotangent complex $L_A$ is connective (Corollary \ref{spat}) and $f$ is $n$-connective, we conclude that $g$ is $n$-connective. It will therefore suffice to show that $g'$ is $n$-connective. The kernel of $g'$ can be identified
with $L_{B/A}[-1]$. Let $\alpha: B \otimes_{A} \ker(f) \rightarrow L_{B/A}[-1]$ be as
in Theorem \ref{tulbas}, so we have a distinguished triangle
$$ B \otimes_{A} \ker(f) \rightarrow L_{B/A}[-1] \rightarrow \coker(\alpha) \rightarrow
B \otimes_{A} \ker(f)[1].$$
It therefore suffices to show that $B \otimes_{A} \ker(f)$ and $\coker(\alpha)$ are
$n$-connective. The first assertion follows immediately from the $n$-connectivity
of $\ker(f)$, and the second from Theorem \ref{tulbas} since $2n+2 \geq n$.
\end{proof}

We conclude this section by discussing the connection between the classical theory of \Kahler differentials and the cotangent complexes of $E_{\infty}$-rings. If $R$ is a commutative ring, then the module of (absolute) \Kahler differentials is the free $R$-module
generated by the symbols $\{ dr \}_{r \in R}$, subject to the relations
$$ d( rr') = r dr' + r' dr$$
$$ d( r + r') = dr + dr'.$$
We denote this $R$-module by $\Omega_{R}$. Given a map of commutative rings $\eta: R' \rightarrow R$, we let $\Omega_{R/R'}$ denote the quotient of $\Omega_{R}$ by the submodule generated by
the elements $\{ d \eta(r') \}_{r' \in R'}$.

\begin{remark}\label{summa}
Let $\eta: R' \rightarrow R$ be a homomorphism of commutative rings. Then we have a canonical short
exact sequence
$$ \Omega_{R'} \otimes_{R'} R \rightarrow \Omega_{R} \rightarrow \Omega_{R/R'} \rightarrow 0$$
in the category of $R$-modules.
\end{remark}

\begin{lemma}\label{postat}
Let $A$ be a discrete $E_{\infty}$-ring. Then
there is a canonical isomorphism
$$ \pi_0 L_A \simeq \Omega_{ \pi_0 A}$$
in the category of $\pi_0 A$-modules.
\end{lemma}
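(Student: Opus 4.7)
The plan is to identify $\pi_0 L_A$ by appealing to the defining adjunction of the absolute cotangent complex together with the compatibility of square-zero extensions with the classical construction. By Corollary \ref{spat}, $L_A$ is connective, so for any discrete $A$-module $M$ the mapping spectrum $\bHom_{\Mod_A}(L_A, M)$ is discrete and satisfies
\[ \pi_0 \bHom_{\Mod_A}(L_A, M) \simeq \Hom_{\pi_0 A}(\pi_0 L_A, M). \]
It therefore suffices, by Yoneda applied to the ordinary category of $\pi_0 A$-modules, to produce a natural bijection between $\pi_0 \bHom_{\Mod_A}(L_A, M)$ and $\Hom_{\pi_0 A}(\Omega_{\pi_0 A}, M)$ for every such $M$.

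Next, I would invoke the universal property of the cotangent complex: a map $L_A \rightarrow M$ in $\Mod_A$ is the same datum as a derivation, i.e.\ a section of $A \oplus M \rightarrow A$ in $(\EInfty)_{/A}$. By Remark \ref{toaster}, when both $A$ and $M$ are discrete the $E_\infty$-ring $A \oplus M$ is again discrete, and its ring structure on $\pi_0 \simeq (\pi_0 A) \oplus M$ is the classical square-zero one. Since a map of connective $E_\infty$-rings into a discrete $E_\infty$-ring factors through $\pi_0$ of the source and is determined on $\pi_0$ by an ordinary ring homomorphism (discrete $E_\infty$-rings form a full subcategory of $\EInfty$ equivalent to ordinary commutative rings), we obtain
\[ \pi_0 \bHom_{(\EInfty)_{/A}}(A, A \oplus M) \simeq \Der(\pi_0 A, M), \]
where the right-hand side is the set of ordinary derivations of $\pi_0 A$ into $M$.

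Finally, the classical universal property of $\Omega_{\pi_0 A}$ identifies $\Der(\pi_0 A, M)$ with $\Hom_{\pi_0 A}(\Omega_{\pi_0 A}, M)$. Chaining the natural bijections together yields
\[ \Hom_{\pi_0 A}(\pi_0 L_A, M) \simeq \Hom_{\pi_0 A}(\Omega_{\pi_0 A}, M), \]
naturally in the discrete $\pi_0 A$-module $M$, and Yoneda produces the desired canonical isomorphism $\pi_0 L_A \simeq \Omega_{\pi_0 A}$.

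The step that requires the most care is the middle one: verifying that the space of sections of $A \oplus M \rightarrow A$ in $(\EInfty)_{/A}$, computed at the level of $\pi_0$, really recovers the naive set of ring-theoretic sections. This rests on two inputs that are already available: Remark \ref{toaster}, which pins down the ring structure on $A \oplus M$ to be the classical square-zero extension whenever $A$ and $M$ are discrete, and the fact that the functor $\pi_0 \colon \EInfty^{\conn} \rightarrow \Ring$ is left adjoint to the inclusion of discrete $E_\infty$-rings, so that mapping spaces between discrete $E_\infty$-rings are discrete and compute ordinary ring homomorphisms.
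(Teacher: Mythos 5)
Your proof is correct and takes essentially the same route as the paper: both argue by corepresentability, identifying maps $L_A \to M$ with sections of $A \oplus M \to A$, then using discreteness of $A$ and $M$ (and the classical square-zero structure on $A \oplus M$) to reduce to ordinary derivations of $\pi_0 A$ into $M$, which are classified by $\Omega_{\pi_0 A}$. You supply a bit more justification at the steps the paper leaves implicit (connectivity of $L_A$ to ensure the mapping space is discrete, the explicit appeal to Remark \ref{toaster}, the truncation adjunction), but the logical skeleton is identical.
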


\begin{proof}


It will suffice to show that $\pi_0 L_A$ and
$\Omega_{ \pi_0 A}$ corepresent the same functor on the ordinary category
of modules over the commutative ring $\pi_0 A$. Let $M$ be a $\pi_0 A$-module, which we will identify with the corresponding discrete $A$-module (see Proposition \monoidref{tmod}). We have
homotopy equivalences
$$ \bHom_{ \Mod_A}( \pi_0 L_A, M) \simeq \bHom_{ \Mod_A}( L_A, M) \simeq \bHom_{/A}( A, A \oplus M ).$$
Since $A$ and $M$ are both discrete, the space on the right is homotopy equivalent to the discrete set of ring homomorphisms from $\pi_0 A$ to $\pi_0(A \oplus M)$ which reduce to the identity on
$\pi_0 A$. These are simply derivations from $\pi_0 A$ into $M$ in the classical sense, which
are classified by maps from $\Omega_{\pi_0 A}$ into $M$.
\end{proof}

\begin{proposition}\label{spate}
Let $f: A \rightarrow B$ be a morphism of connective $E_{\infty}$-rings. Then:
\begin{itemize}
\item[$(1)$] The relative cotangent $L_{B/A}$ is connective.
\item[$(2)$] As a $\pi_0 B$-module, $\pi_0 L_{B/A}$ is canonically isomorphic to
the module of relative \Kahler differentials $\Omega_{ \pi_0 B/ \pi_0 A}$.
\end{itemize}
\end{proposition}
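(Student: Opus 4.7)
The plan is to deduce both statements from the fundamental distinguished triangle of Remark \ref{irn}, together with the input provided by Corollary \ref{swepu} and Lemma \ref{postat}.

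Using Theorem \ref{scummer2} to identify $T_{\EInfty}$ with pairs $(A,M)$, the base change functor $f_{!}: \Stab((\EInfty)_{/A}) \to \Stab((\EInfty)_{/B})$ becomes the usual relative tensor product $M \mapsto B \otimes_{A} M$. Remark \ref{irn} then supplies a distinguished triangle of $B$-modules
$$ B \otimes_{A} L_{A} \to L_{B} \to L_{B/A} \to (B \otimes_{A} L_{A})[1]. $$
By Corollary \ref{spat}, both $L_{A}$ and $L_{B}$ are connective. Since the relative tensor product of connective modules over a connective $E_{\infty}$-ring is connective (the connective $B$-modules form a full subcategory closed under colimits and contain $B$), the module $B \otimes_{A} L_{A}$ is connective, and hence so is $L_{B/A}$ as the cofiber of a map between connective objects. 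This proves $(1)$.

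For $(2)$, apply $\pi_{0}$ to the triangle above. The connectivity established in $(1)$ ensures $\pi_{-1}(B \otimes_{A} L_{A}) = 0$, so we obtain a right exact sequence of $\pi_{0} B$-modules
$$ \pi_{0} B \otimes_{\pi_{0} A} \pi_{0} L_{A} \to \pi_{0} L_{B} \to \pi_{0} L_{B/A} \to 0, $$
using the Tor spectral sequence (or directly the formula $\pi_{0}(B \otimes_{A} M) \cong \pi_{0} B \otimes_{\pi_{0} A} \pi_{0} M$ for connective $M$). Apply Corollary \ref{swepu} to the canonical maps $A \to \pi_{0} A$ and $B \to \pi_{0} B$ to rewrite $\pi_{0} L_{A} \cong \pi_{0} L_{\pi_{0} A}$ and $\pi_{0} L_{B} \cong \pi_{0} L_{\pi_{0} B}$, and then Lemma \ref{postat} to identify these further with $\Omega_{\pi_{0} A}$ and $\Omega_{\pi_{0} B}$ respectively. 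Combining with Remark \ref{summa}, we will obtain the desired identification $\pi_{0} L_{B/A} \cong \Omega_{\pi_{0} B / \pi_{0} A}$.

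The main obstacle is a naturality check: one must verify that the map $\pi_{0} B \otimes_{\pi_{0} A} \Omega_{\pi_{0} A} \to \Omega_{\pi_{0} B}$ produced from the cotangent triangle agrees, under the identifications of Corollary \ref{swepu} and Lemma \ref{postat}, with the classical map $b \otimes d\alpha \mapsto b \cdot d(\eta(\alpha))$ appearing in Remark \ref{summa}, where $\eta: \pi_{0} A \to \pi_{0} B$ is the induced ring homomorphism. This is a matter of tracking the functoriality of the absolute cotangent complex functor $L: \calC \to T_{\calC}$ along the commutative square formed by $A \to B$ and $\pi_{0} A \to \pi_{0} B$, together with the universal characterizations of $\Omega_{\pi_{0}A}$ and $\Omega_{\pi_{0}B}$ in terms of derivations into discrete modules used in the proof of Lemma \ref{postat}. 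Once this compatibility is established, the five-lemma applied to the right exact sequences gives the required isomorphism of $\pi_{0} B$-modules, which is manifestly natural in the morphism $f$.
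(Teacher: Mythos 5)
Your proof is correct and follows essentially the same route as the paper: the distinguished triangle $L_{A}\otimes_{A}B \to L_{B} \to L_{B/A}$, connectivity from Corollary \ref{spat}, identification of $\pi_{0}(L_{A}\otimes_{A}B)$ with $\Omega_{\pi_{0}A}\otimes_{\pi_{0}A}\pi_{0}B$ via Corollary \ref{swepu}, Lemma \ref{postat}, and the degree-zero Tor formula, then comparison with Remark \ref{summa}. The only difference is that you explicitly flag the naturality check identifying the boundary map with the classical map $b\otimes d\alpha \mapsto b\,d(\eta(\alpha))$, a compatibility the paper leaves implicit; that is a reasonable point to raise, and tracing it through the corepresentability description of derivations used in Lemma \ref{postat} discharges it.
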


\begin{proof}
Assertion $(1)$ follows from Corollary \ref{spat} and the existence of a distinguished triangle
$$ L_{A} \otimes_{A} B \rightarrow L_{B} \rightarrow L_{B/A} \rightarrow (L_{A} \otimes_{A} B)[1].$$
Associated to this triangle we have an exact sequence
$$ \pi_{0} (L_{A} \otimes_{A} B) \stackrel{g}{\rightarrow} \pi_0 L_B \rightarrow
\pi_0 L_{B/A} \rightarrow \pi_{-1} (L_A \otimes_{A} B) \simeq 0$$
of discrete $\pi_0 B$-modules. Consequently, we may identify
$\pi_0 L_{B/A}$ with the cokernel of the map $g$. 

Using Corollary \ref{swepu} and Lemma \ref{postat}, we can identify
$\pi_0 L_{A}$ and $\pi_0 L_B$ with the modules $\Omega_{\pi_0 A}$ and $\Omega_{\pi_0 B}$, respectively. Using Corollary \monoidref{huty}, we can identify $\pi_0 ( L_{A} \otimes_{A} B)$ with the discrete $\pi_0 B$-module
$\Omega_{ \pi_0 A} \otimes_{ \pi_0 A} \pi_0 B$. The desired result now follows from the short exact sequence of Remark \ref{summa}.
\end{proof}

\subsection{Finiteness Properties of the Cotangent Complex}\label{fictot}

Our goal in this section is to prove the following result:

\begin{theorem}\label{sucker}
Let $A$ be a connective $E_{\infty}$-ring, and let 
$B$ be a connective commutative $A$-algebra. Then:
\begin{itemize}
\item[$(1)$] If $B$ is of finite presentation as a commutative $A$-algebra, then
$L_{B/A}$ is perfect as a $B$-module. The converse holds provided that
$\pi_0 B$ is finitely presented as a $\pi_0 A$-algebra.
\item[$(2)$] If $B$ is almost of finite presentation as a commutative $A$-algebra, then
$L_{B/A}$ is almost perfect as a $B$-module. The converse holds provided that $\pi_0 B$ is finitely presented as a $\pi_0 A$-algebra.
\end{itemize}
\end{theorem}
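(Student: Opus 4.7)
Forward direction. For any $B$-module $M$, the universal property of the cotangent complex identifies $\bHom_{\Mod_B}(L_{B/A}, M)$ with the fiber over $\id_B$ of the restriction map $\bHom_{\EInfty_{A/}}(B, B \oplus M) \to \bHom_{\EInfty_{A/}}(B, B)$. By Remark \ref{toaster}, the functor $M \mapsto B \oplus M$ preserves filtered colimits (its underlying spectrum is the direct sum). Consequently, if $B$ is compact (respectively almost compact) in $\EInfty_{A/}$, then $L_{B/A}$ is compact (respectively almost compact) in $\Mod_B$. This establishes the forward implication in both parts.

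Converse direction. Assume $L_{B/A}$ is almost perfect and $\pi_0 B$ is finitely presented over $\pi_0 A$. The plan is to construct a sequence
\[ A = A_{-1} \to A_0 \to A_1 \to \cdots \to B \]
in $\EInfty_{A/}$ such that each $A_n \to A_{n+1}$ attaches finitely many $(n+1)$-cells---i.e., is a pushout of the form $\Sym^{\ast}(A_n^{\oplus m_n}[n]) \to A_n$ (augmentation to zero) along a chosen map $\Sym^{\ast}(A_n^{\oplus m_n}[n]) \to A_n$---and the map $A_n \to B$ is $(n+1)$-connective. The base case is handled by choosing generators $x_1, \ldots, x_k$ of $\pi_0 B$ over $\pi_0 A$ to build a free $E_{\infty}$-algebra surjecting onto $B$ on $\pi_0$, then killing finitely many generating relations (which exist since $\pi_0 B$ is finitely presented over $\pi_0 A$) to obtain $A_0$ with $\pi_0 A_0 \simeq \pi_0 B$.

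For the inductive step, let $f_n \colon A_n \to B$ be $(n+1)$-connective. By the cellular construction together with repeated application of Proposition \ref{puffle} and Proposition \ref{basechunge}, $L_{A_n/A}$ is perfect. The cofiber sequence of Corollary \ref{longtooth},
\[ L_{A_n/A} \otimes_{A_n} B \to L_{B/A} \to L_{B/A_n}, \]
then implies $L_{B/A_n}$ is almost perfect, and by Corollary \ref{spiffle} it is $(n+2)$-connective; hence $\pi_{n+2} L_{B/A_n}$ is finitely generated over $\pi_0 B$. By Theorem \ref{tulbas}, the map $\ker(f_n) \otimes_{A_n} B \to L_{B/A_n}[-1]$ is $(2n+3)$-connective, which (since $n+1 \leq 2n+2$ and $\pi_0 A_n = \pi_0 B$) furnishes an isomorphism $\pi_{n+1} \ker(f_n) \simeq \pi_{n+2} L_{B/A_n}$. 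Choosing finitely many lifts of generators produces a map $A_n^{\oplus m_{n+1}}[n+1] \to \ker(f_n) \subseteq A_n$; the pushout along the augmentation to zero defines $A_{n+1}$, and a direct application of Theorem \ref{tulbas} to $f_{n+1}$ confirms that $A_{n+1} \to B$ is $(n+2)$-connective. The filtered colimit $A_{\infty} = \colim_n A_n$ then maps to $B$ by an $\infty$-connective morphism, which is an equivalence since both sides are connective; this exhibits $B$ as almost of finite presentation.

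For part (1), assume $L_{B/A}$ is perfect; then it has Tor-amplitude bounded above by some integer $N$. The cofiber sequence above shows $L_{B/A_n}$ is perfect with Tor-amplitude bounded independently of $n$, while by Corollary \ref{spiffle} its connectivity grows with $n$. A standard Nakayama-type argument (applied to $L_{B/A_n} \otimes_B \pi_0 B$, which is forced to vanish once the connectivity exceeds $N$) shows $L_{B/A_n} \simeq 0$ for $n$ sufficiently large, whence $A_n \to B$ is an equivalence by Corollary \ref{twain}, so $B$ is of finite presentation. The main obstacle lies in the inductive step of the converse direction---specifically the careful verification, via Theorem \ref{tulbas}, that attaching cells along generators of $\pi_{n+1} \ker(f_n)$ improves connectivity by exactly one, together with the bookkeeping required to track perfectness of $L_{A_n/A}$ through the cellular filtration; the termination argument for part (1) depends on the finer Nakayama-type input beyond the formal theory developed so far.
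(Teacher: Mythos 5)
Your forward direction and the converse of $(2)$ are sound and track the paper's proof in spirit. The forward direction takes a genuinely different route: you argue abstractly that compactness of $B$ in $\EInfty_{A/}$ transfers to compactness of $L_{B/A}$ in $\Mod_B$ via the square-zero functor, whereas the paper reduces to the free case $B = \Sym^{\ast}_A M$ by colimit-preservation of $\sigma \mapsto L_{B/A} \otimes_B C$ and then applies Proposition \ref{puffle}. Your approach is cleaner in principle but requires care about whether compactness relative to connective modules propagates to compactness in the whole of $\Mod_B$; the paper's reduction sidesteps that issue entirely. The converse of $(2)$ -- cellular approximation, identification of $\pi_{n+1}\ker(f_n)$ with $\pi_{n+2} L_{B/A_n}$ via Theorem \ref{tulbas}, attaching finitely many cells -- is essentially the paper's argument, including the base case built from a finite presentation of $\pi_0 B$.

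The converse of $(1)$, however, has a genuine gap. You claim that ``the cofiber sequence above shows $L_{B/A_n}$ is perfect with Tor-amplitude bounded independently of $n$,'' and from this deduce vanishing of $L_{B/A_n}$ once the connectivity outgrows that bound. But the Tor-amplitude is \emph{not} bounded independently of $n$. In the fiber sequence $L_{A_n/A} \otimes_{A_n} B \to L_{B/A} \to L_{B/A_n}$, the first term $L_{A_n/A} \otimes_{A_n} B$ is built from shifted projectives up to degree $n+2$ and hence has Tor-amplitude roughly $n+2$; the cofiber $L_{B/A_n}$ therefore only inherits a bound of the shape $\max(N, n+3)$, which grows. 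Equivalently, the other triangle $L_{B/A_{n-1}} \to L_{B/A_n} \to P[n+2]\otimes B$ only gives Tor-amplitude $\leq k+2$ in the window $n \leq k$, exactly as the paper records. At the critical stage $n=k$, the cotangent complex $L_{B/A_k}$ is $(k+2)$-connective of Tor-amplitude $\leq k+2$, hence flat after shifting by $-k-2$ -- but it is \emph{not} zero, and there is no Nakayama-style miracle that makes it zero. The missing idea is structural, not a vanishing argument: the paper invokes Lemma \ref{corpus} to lift the finitely generated projective $B$-module $L_{B/A_k}[-k-2]$ to a finitely generated projective $A_k$-module $M$, and then attaches cells along this \emph{exact} $M$ so that $L_{B/A_{k+1}}$ vanishes by construction. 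Without that lift and that specific choice of generators, the inductive process need not terminate.
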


As an immediate consequence, we deduce the following analogue of
Remark \symmetricref{unkid}:

\begin{corollary}
Suppose given a commutative diagram
$$ \xymatrix{ & B \ar[dr] & \\
A \ar[ur] \ar[rr] & & C }$$
of connective $E_{\infty}$-rings. Assume furthermore that $B$ is of almost of finite presentation
over $A$. Then $C$ is almost of finite presentation
over $A$ if and only if $C$ is almost of finite presentation over $B$.
\end{corollary}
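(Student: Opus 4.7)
The plan is to apply Theorem \ref{sucker}(2) in both directions, using the distinguished triangle of relative cotangent complexes from Corollary \ref{longtooth} together with standard closure properties of almost perfect modules and a classical two-out-of-three fact for finite presentation of ordinary commutative rings.

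First I would record the main ingredient: for the commutative diagram $A \to B \xrightarrow{f} C$ there is a distinguished triangle
$$ f_{!} L_{B/A} \to L_{C/A} \to L_{C/B} \to f_{!} L_{B/A}[1] $$
in $\h{\Mod_{C}}$, where $f_{!} L_{B/A} \simeq L_{B/A} \otimes_{B} C$. Since the class of almost perfect $C$-modules is a stable subcategory closed under extensions, any two of the three terms being almost perfect forces the third to be almost perfect as well. Note also that base change along $B \to C$ preserves almost perfectness, so the hypothesis that $B$ is almost of finite presentation over $A$ gives (via Theorem \ref{sucker}(2)) that $L_{B/A}$ is almost perfect over $B$, hence $f_{!} L_{B/A}$ is almost perfect over $C$.

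Now suppose $C$ is almost of finite presentation over $A$. Then $L_{C/A}$ is almost perfect over $C$ by Theorem \ref{sucker}(2), so by the triangle $L_{C/B}$ is almost perfect over $C$. To invoke the converse direction of Theorem \ref{sucker}(2) and conclude that $C$ is almost of finite presentation over $B$, we must check that $\pi_{0} C$ is finitely presented as a $\pi_{0} B$-algebra. But the hypothesis that $B$ is almost of finite presentation over $A$ forces $\pi_{0} B$ to be a finitely presented $\pi_{0} A$-algebra, and similarly $\pi_{0} C$ is a finitely presented $\pi_{0} A$-algebra; the classical two-out-of-three property for finite presentation of ordinary commutative algebras then gives that $\pi_{0} C$ is finitely presented over $\pi_{0} B$. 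Conversely, if $C$ is almost of finite presentation over $B$, then $L_{C/B}$ is almost perfect over $C$, and combining with the fact that $f_{!} L_{B/A}$ is almost perfect over $C$, the same triangle shows $L_{C/A}$ is almost perfect over $C$; the $\pi_{0}$-level condition $\pi_{0} C$ finitely presented over $\pi_{0} A$ is immediate from composing the two finite presentation hypotheses on $\pi_{0}$.

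The main obstacle is small but worth naming: it is the verification that ``almost of finite presentation'' for connective $E_{\infty}$-rings really entails finite presentation of the underlying $\pi_{0}$-level map of ordinary rings, and conversely that once the relative cotangent complex is almost perfect and the $\pi_{0}$ condition is in place, the converse in Theorem \ref{sucker}(2) applies. Everything else is formal manipulation of the cofiber sequence and the classical transitivity of finite presentation for commutative rings.
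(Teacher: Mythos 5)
Your proof is correct, and it makes explicit exactly what the paper leaves implicit: the paper states this corollary as an ``immediate consequence'' of Theorem \ref{sucker} without giving an argument, and the natural way to unpack that is precisely what you do -- translate ``almost of finite presentation'' into ``$L$ almost perfect plus $\pi_0$-level finite presentation'' via Theorem \ref{sucker}(2), run the transitivity triangle of Corollary \ref{longtooth} using the fact that almost perfect $C$-modules form a stable subcategory and are stable under base change, and supply the $\pi_0$-level condition via classical transitivity and two-out-of-three for finitely presented commutative $\pi_0 A$-algebras.
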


To prove Theorem \ref{sucker}, we will need an easy lemma about the structure of projective modules over $A_{\infty}$-rings. First, let us introduce a bit of notation.
For every connective $A_{\infty}$-ring $R$, we let $\Proj(R)$ denote the full subcategory of
$\Mod_{R}$ spanned by the projective (left) $R$-modules.

\begin{lemma}\label{corpus}
Let $f: R \rightarrow R'$ be a map of connective $A_{\infty}$-rings. Suppose that
$f$ induces an isomorphism $\pi_0 R \rightarrow \pi_0 R'$. Then the base change functor
$M \mapsto R' \otimes_{R} M$ induces an equivalence of homotopy categories
$$\phi: \h{ \Proj(R) } \rightarrow \h{ \Proj(R') }.$$
\end{lemma}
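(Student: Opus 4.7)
The plan is to verify that the base change functor $\phi: \h{\Proj(R)} \to \h{\Proj(R')}$ is well-defined and then show it is fully faithful and essentially surjective. The unifying computational input will be the identification $\bHom_{\Mod_R}(\bigoplus_I R, N) \simeq \prod_I N$ for any set $I$ and any $R$-module $N$, which follows from the universal property of coproducts.

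First, since $\phi$ is a left adjoint sending $R$ to $R'$, it preserves direct sums and retracts, and hence carries projective $R$-modules to projective $R'$-modules. The hypothesis that $\pi_0 R \to \pi_0 R'$ is an isomorphism then gives, for any free $R$-module $\bigoplus_J R$, an isomorphism $\pi_0 \bigoplus_J R \to \pi_0 \phi(\bigoplus_J R)$, both sides identifying with $\bigoplus_J \pi_0 R$; passing to retracts, the comparison $\pi_0 N \to \pi_0 \phi(N)$ is an isomorphism for every projective $R$-module $N$.

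For full faithfulness, given projective $R$-modules $M$ and $N$, writing $M$ as a retract of some $\bigoplus_I R$ and using additivity in $M$ reduces the question to the case $M = \bigoplus_I R$. There the comparison map takes the form
$$ \prod_I \pi_0 N \simeq \pi_0 \bHom_{\Mod_R}(\textstyle\bigoplus_I R, N) \to \pi_0 \bHom_{\Mod_{R'}}(\textstyle\bigoplus_I R', \phi(N)) \simeq \prod_I \pi_0 \phi(N), $$
which is a bijection by the preceding paragraph. For essential surjectivity, present a given $N' \in \Proj(R')$ as the splitting of an idempotent $e' \in \pi_0 \End_{R'}(\bigoplus_I R')$. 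Full faithfulness applied with $M = N = \bigoplus_I R$ shows that $\phi$ induces a ring isomorphism $\pi_0 \End_R(\bigoplus_I R) \to \pi_0 \End_{R'}(\bigoplus_I R')$, so $e'$ lifts uniquely to an idempotent $e$. Since the homotopy category of the presentable stable $\infty$-category $\Mod_R$ is idempotent-complete, $e$ splits to yield a projective $R$-module $N$; by uniqueness of splittings of idempotents in a $1$-category, $\phi(N)$ is naturally identified with $N'$.

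The main obstacle is the essential surjectivity step, since one must coherently transfer retract data between the two homotopy categories. Once the endomorphism-ring isomorphism is in place, however, the rest is formal, reducing to the idempotent-completeness of $\h{\Mod_R}$.
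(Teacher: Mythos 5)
Your proof is correct, and the full-faithfulness half is essentially the paper's argument: reduce to the free case, identify $\pi_0$ of the mapping space with products of $\pi_0 N$, and invoke the isomorphism on $\pi_0$. The difference is in how you compare $\pi_0 N$ with $\pi_0 (R'\otimes_R N)$ — you deduce it from the free case by passing to retracts, while the paper appeals to the identification $\pi_0(R'\otimes_R Q)\simeq \Tor_0^{\pi_0 R}(\pi_0 R', \pi_0 Q)$. Both are valid.

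For essential surjectivity the two arguments diverge more noticeably. The paper lifts $\overline{e}$ to a map $e: F \to F$ (which it explicitly flags as ``not necessarily idempotent''), forms the telescope $P = \colim(F \xrightarrow{e} F \xrightarrow{e}\cdots)$, observes that $\phi(P)\simeq\overline{P}$ because $\phi$ preserves colimits, and then verifies that $P$ is projective directly by checking that $\pi_0 P$ is projective over $\pi_0 R$ and that $P$ is flat, invoking a Lazard-type criterion. You instead observe that full faithfulness gives a ring isomorphism on $\pi_0\End$, so $e'$ lifts to an honest idempotent $e$ in the homotopy category, which splits because $\h{\Mod_R}$ is idempotent-complete (B\"okstedt--Neeman, since $\Mod_R$ is a triangulated category with countable coproducts). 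Your route is shorter and conceptually cleaner — it makes explicit that the lifted map is idempotent, a fact the paper declines to use — but it outsources the work to the idempotent-completeness theorem, which the paper in effect reproves in this special case via the telescope and the flatness criterion. Your observation that the uniqueness of idempotent splittings in a $1$-category gives $\phi(N)\simeq N'$ is the correct way to close the loop; the paper gets the same identification from the preservation of telescopes under $\phi$.

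One small point worth being explicit about: when you write that ``$\pi_0\bigoplus_J R$ and $\pi_0\phi(\bigoplus_J R)$ both identify with $\bigoplus_J\pi_0 R$,'' what you mean is that the right side identifies with $\bigoplus_J\pi_0 R'$ and the map $\pi_0 R\to\pi_0 R'$ is an isomorphism. This is fine, but the phrasing glosses over the fact that the two identifications are a priori with different rings.
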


\begin{proof}
We first show that the functor $\phi$ is fully faithful. For this, we must show that if
$P$ and $Q$ are projective left $R$-modules, then the canonical map
$$ \Ext^{0}_{R}( P, Q) \rightarrow \Ext^{0}_{R'}( R' \otimes_{R} P, R' \otimes_{R} Q)$$
is bijective. Without loss of generality, we may suppose that $P$ is free. In this case,
the left hand side can be identified with a product of copies of
$\pi_0 Q$, while the right hand side can be identified with a product of copies of
$\pi_0 (R' \otimes_{R} Q)$. Since $Q$ is connective, the latter module can be identified with
$\Tor_{0}^{\pi_0 R}( \pi_0 R', \pi_0 Q)$ (Corollary \monoidref{huty}), which is isomorphic
to $\pi_0 Q$ in view of our assumption that $f$ induces an isomorphism
$\pi_0 R \rightarrow \pi_0 R'$.

We now prove that $\phi$ is essentially surjective. Let $\overline{P}$ be a projective $R'$-module.
Then there exists a free $R'$-module $\overline{F}$ and an idempotent map
$\overline{e}: \overline{F} \rightarrow \overline{F}$, so that $\overline{P}$ can be identified
with the colimit of the sequence
$$ \overline{F} \stackrel{ \overline{e} }{\rightarrow} \overline{F}
\stackrel{\overline{e}}{\rightarrow} \ldots. $$
Choose a free left $R$-module $F$ and an equivalence $\phi(F) \simeq \overline{F}$.
Using the first part of the proof, we deduce the existence of a map
$e: F \rightarrow F$ (not necessarily idempotent) such that the diagram
$$ \xymatrix{ \phi(F) \ar[r]^{ \phi(e) } \ar[d] & \phi(F) \ar[d] \\
\overline{F} \ar[r]^{\overline{e} } & \overline{F} }$$
commutes up to homotopy. Since the functor $M \mapsto R' \otimes_{R} M$ preserves colimits, we deduce that $\overline{P}$ is equivalent to $\phi(P)$, where $P$ denotes the colimit of the sequence
$$ F \stackrel{e}{\rightarrow} F \stackrel{e}{\rightarrow} \ldots.$$
To complete the proof, it will suffice to show that $P$ is projective. 
In view of Proposition \monoidref{redline}, it will suffice to show that $\pi_0 P$ is a projective module
over the ordinary associative ring $\pi_0 R$, and that $P$ is a flat $R$-module.
The first assertion follows from the isomorphism
$$\pi_0 P \simeq \pi_0 (R' \otimes_{R} P) \simeq \pi_0 \overline{P},$$
and the second from the observation that the collection of flat left $R$-modules is stable under filtered colimits (Lemma \monoidref{sudsy}).
\end{proof}

\begin{remark}
Let $A$ be an $A_{\infty}$-ring, and let $P$ be a projective left $A$-module. Then $P$ is a finitely generated projective $A$-module if and only if $\pi_0 P$ is finitely generated as a (discrete) left module
over $\pi_0 A$. The ``only if'' direction is obvious. For the converse, suppose that
$\pi_0 P$ is generated by a finite set of elements $\{ x_i \}_{i \in I}$. Let $M$ be the (finitely generated) free module on a set of generators $\{ X_i\}_{i \in I}$, so that we have a canonical map
$\phi: M \rightarrow P$. Since $P$ is projective and $\phi$ induces a surjection
$\pi_0 M \rightarrow \pi_0 P$, the map $\phi$ splits (Proposition \stableref{charprojjj}), so that $P$ is a direct summand of $M$.
\end{remark}

\begin{proof}[Proof of Theorem \ref{sucker}]
We first prove the forward implications. It will be convenient to phrase these results in a slightly more general form. Suppose given a commutative diagram $\sigma$:
$$ \xymatrix{ & B \ar[dr] & \\
A \ar[ur] \ar[rr] & & C}$$
of connective $E_{\infty}$-rings, and let $F(\sigma) = L_{B/A} \otimes_{B} C$.  We will show:
\begin{itemize}
\item[$(1')$] If $B$ is of finite presentation as a commutative $A$-algebra, then
$F(\sigma)$ is perfect as a $C$-module.
\item[$(2')$] if $B$ is almost of finite presentation as a commutative $A$-algebra, then
$F(\sigma)$ is almost perfect as a $C$-module.
\end{itemize}
We will obtain the forward implications of $(1)$ and $(2)$ by applying
these results in the case $B = C$. 

We first observe that the construction $\sigma \mapsto F(\sigma)$ defines a functor
from $(\EInfty)_{A/ \, /C}$ into $\Mod_{C}$. Using Remark \ref{stuck} and
Proposition \toposref{relcolfibtest}, we deduce that this functor preserves colimits.
Since the collection of finitely presented $C$-modules is closed under finite colimits and retracts,
it will suffice to prove $(1')$ in the case where $B$ is finitely generated and free. In this case,
$B = \Sym^{\ast}_{A} M$ for some finitely generated free $A$-module $M$. Using Proposition
\ref{puffle}, we deduce that $F(\sigma) \simeq M \otimes_{A} C$ is a finitely generated free $C$-module, as desired. 

We now prove $(2')$. It will suffice to show that for each $n \geq 0$, there exists a
commutative diagram
$$ \xymatrix{ & B' \ar[r]^{f} & B \ar[dr] & \\
A \ar[ur] \ar[rrr] & & & C }$$
such that $L_{B'/A} \otimes_{B'} C$ is perfect, and the induced map
$$ \tau_{ \leq n} ( L_{B'/A} \otimes_{B'} C) \rightarrow \tau_{ \leq n}( L_{B/A} \otimes_{B} C)$$
is an equivalence. To guarantee the latter condition, it suffices to choose $B'$ so that
the relative cotangent complex $L_{B/B'}$ is $n$-connective. Using
Corollary \ref{spiffle}, it suffices to guarantee that $f$ is $(n+1)$-connective.
Moreover, assertion $(1')$ implies that $L_{B'/A} \otimes_{B'} C$ will be finitely generated
so long as $B'$ is finitely presented as an $A$-algebra. The existence of a commutative
$A$-algebra with the desired properties now follows from Proposition \symmetricref{baseprops}.

We now prove the reverse implication of $(2)$. Assume that
$L_{B/A}$ is almost perfect, and that $\pi_0 B$ is a finitely presented as a (discrete) $\pi_0 A$-algebra.
To prove $(2)$, it will suffice to construct a sequence of maps
$$ A \rightarrow B(-1) \rightarrow B(0) \rightarrow B(1) \rightarrow \ldots \rightarrow B$$
such that each $B(n)$ is of finite presentation as an $A$-algebra, and each map $f_n: B(n) \rightarrow B$ is $(n+1)$-connective.
We begin by constructing $B(-1)$ with an even stronger property:
the map $f_{-1}$ induces an isomorphism $\pi_0 B(-1) \rightarrow \pi_0 B$. 
Choose a finite presentation
$$ \pi_0 B \simeq (\pi_0 A)[ x_1, \ldots, x_k ] / ( g_1, \ldots, g_m )$$
for the ordinary commutative ring $\pi_0 B$. Let $M$ denote the free
$A$-module generated by symbols $\{ X_i \}_{ 1 \leq i \leq k }$, so that the elements 
$\{ x_i \} \subseteq \pi_0 B$ determine a map of $A$-modules $M \rightarrow B$. Let
$h: \Sym^{\ast}_A(M) \rightarrow B$ be the adjoint map. We observe that there is a canonical isomorphism $\pi_0 ( \Sym^{\ast}_{A} (M) ) \simeq (\pi_0 A)[x_1, \ldots, x_k]$.
It follows that the image of the induced map
$$ \pi_0 \ker(h) \rightarrow \pi_0 \Sym^{\ast}_{A}(M)$$
can be identified with the ideal in $( \pi_0 A)[x_1, \ldots, x_k]$ generated by the elements
$\{ g_j \}_{1 \leq j \leq m}$. Choose elements $\{ \overline{g}_j \}_{1 \leq j \leq m}$
in $\pi_0 \ker(h)$ lifting $\{ g_j \}_{1 \leq j \leq m}$. Let $N$ be the free $A$-module
generated by symbols $\{ G_j \}_{1 \leq j \leq m}$, so that the elements $\{ \overline{g}_j \}_{1 \leq j \leq m}$ determine a map of $A$-modules $N \rightarrow \ker(h)$. This map classifies a commutative
diagram of $A$-modules
$$ \xymatrix{ N \ar[d] \ar[r] & 0 \ar[d] \\
\Sym^{\ast}_{A}(M) \ar[r]^-{h} & B. }$$
Adjoint to this, we obtain a commutative diagram of commutative $A$-algebras
$$ \xymatrix{ \Sym^{\ast}_{A} N \ar[d] \ar[r] & A \ar[d] \\
\Sym^{\ast}_{A}(M) \ar[r] & B.}$$
Let $B(-1)$ denote the tensor product
$$ A \otimes_{ \Sym^{\ast}_{A} N } \Sym^{\ast}_{A} M.$$
Then the above diagram classifies a map of commutative
$A$-algebras $f_{-1}: B(-1) \rightarrow B$. By construction, $B(-1)$ is of finite presentation as a commutative $A$-algebra, and $f_{-1}$ induces an isomorphism
$$\pi_0 B(-1) \simeq (\pi_0 A)[ x_1, \ldots, x_k]/(g_1, \ldots, g_m) \simeq \pi_0 B.$$

We now proceed in an inductive fashion. Assume that we have already constructed a connective $A$-algebra $B(n)$ which is of finite presentation over $A$, and an $(n+1)$-connective morphism $f_{n}: B(n) \rightarrow B$ of commutative $A$-algebras. Moreover,
we assume that the induced map $\pi_0 B(n) \rightarrow \pi_0 B$ is an isomorphism
(if $n \geq 0$ this is automatic; for $n = -1$ it follows from the specific construction given above).
We have a distinguished triangle of $B$-modules
$$ L_{B(n)/A} \otimes_{ B(n) } B \rightarrow L_{B/A} \rightarrow L_{B/ B(n) }
\rightarrow ( L_{ B(n)/A} \otimes_{ B(n) } B)[1].$$
By assumption, $L_{B/A}$ is almost perfect. Assertion $(2')$ implies that
$L_{B(n)/A} \otimes_{ B(n) } B$ is perfect. Using Proposition \monoidref{almor}, we deduce that
the relative cotangent complex $L_{B/B(n)}$ is almost perfect. Moreover, Corollary \ref{spiffle} ensures that $L_{B/B(n)}$ is $(n+2)$-connective. It follows that
$\pi_{n+2} L_{ B/ B(n) }$ is a finitely generated as a (discrete) module over
$\pi_0 B$. Using Theorem \ref{tulbas} and the bijectivity of the map $\pi_0 B(n) \rightarrow \pi_0 B$, we
deduce that the canonical map
$$ \pi_{n+1} \ker(f_{n}) \rightarrow \pi_{n+2} L_{B / B(n) }$$ is bijective.
Choose a finitely generated projective $B(n)$-module $M$ and a map
$M[n+1] \rightarrow \ker( f_{n} )$ such that the composition
$$ \pi_{0} M \simeq \pi_{n+1} M[n+1]
\rightarrow \pi_{n+1} \ker(f) \simeq \pi_{n+2} L_{B/B(n)}$$
is surjective (for example, we can take $M$ to be a free $B(n)$-module indexed by a set of
generators for the $\pi_0 B$-module $L_{B/B(n)}$). 
By construction, we have a commutative diagram of
$B(n)$-modules
$$ \xymatrix{ M[n+1] \ar[r] \ar[d] & 0 \ar[d] \\
B(n) \ar[r] & B. }$$
Adjoint to this, we obtain a diagram
$$ \xymatrix{ \Sym^{\ast}_{B(n)} ( M[n+1] ) \ar[r] \ar[d] & B(n) \ar[d] \\
B(n) \ar[r] & B. }$$
in the $\infty$-category of commutative $A$-algebras.
We now define $B(n+1)$ to be the pushout
$$ A \otimes_{ \Sym^{\ast}_{A} M[n+1] } B(n),$$ and $f_{n+1}: B(n+1) \rightarrow B$ to be the
induced map. It is clear that $B(n+1)$ is of finite presentation over $B(n)$, and therefore of finite presentation over $A$ (Remark \symmetricref{unkid}). To complete the proof of $(3)$, it will suffice to show that $\ker( f_{n+1})$ is $(n+2)$-connective. 

By construction, we have a commutative diagram
$$ \xymatrix{ & \pi_0 B(n+1) \ar[dr]^{e''} & \\
\pi_0 B(n) \ar[ur]^{e'} \ar[rr]^{e} & & \pi_0 B}$$
where the map $e'$ is surjective and $e$ is bijective. It follows that $e'$ and $e''$ are also bijective.
In view of Corollary \ref{spiffle}, it will now suffice to show $L_{B/B(n+1)}$ is $(n+3)$-connective.
We have a distinguished triangle of $B$-modules
$$ L_{B(n+1)/B(n)} \otimes_{ B(n+1)} B \rightarrow
L_{B/B(n)} \rightarrow L_{B/B(n+1)} \rightarrow
L_{ B(n+1)/B(n) }[1] \otimes_{ B(n+1)} B.$$
Using Proposition \ref{puffle} and Proposition \ref{basechunge}, we conclude that $L_{B(n+1)/B(n)}$ is canonically equivalent to $M[n+2] \otimes_{ B(n)} B(n+1)$. 
We may therefore rewrite our distinguished triangle
as $$ M[n+2] \otimes_{ B(n) } B \rightarrow
L_{B/B(n)} \rightarrow L_{B/B(n+1)} \rightarrow M[n+3] \otimes_{ B(n) }B.$$
Our inductive hypothesis and Corollary \ref{spiffle} guarantee that
$L_{B/B(n)}$ is $(n+2)$-connective. The $(n+3)$-connectiveness of 
$L_{B/B(n+1)}$ is therefore equivalent to the surjectivity of the map
$$ \pi_0 M \simeq \pi_{n+2} ( M[n+2] \otimes_{B(n)} B)
\rightarrow \pi_{n+2} L_{B/B(n)},$$
which is evident from our construction. This completes the proof of $(3)$.

To complete the proof of $(1)$, we use the same strategy but make a more careful choice of $M$. 
Let us assume that $L_{B/A}$ is perfect. It follows from the above construction that each cotangent complex $L_{B/B(n)}$ is likewise perfect. Using Proposition \monoidref{lastone}, we may assume 
$L_{B/B(-1)}$ is of $\Tor$-amplitude $\leq k+2$ for some $k \geq 0$. Moreover, for each $n \geq 0$
we have a distinguished triangle of $B$-modules
$$ L_{B/B(n-1)} \rightarrow L_{B/B(n)} \rightarrow P[n+2] \otimes_{B(n)} B
\rightarrow L_{B/B(n-1)}[-n-1],$$
where $P$ is finitely generated and projective, and therefore of $\Tor$-amplitude
$\leq 0$. Using Proposition \monoidref{lastone} and induction on $n$, we deduce that 
the $\Tor$-amplitude of $L_{B/B(n)}$ is $\leq k+2$ for $n \leq k$. In particular,  
the $B$-module $\overline{M} = L_{B/B(k)}[-k-2]$ is connective and has $\Tor$-amplitude $\leq 0$. 
It follows from Remark \monoidref{lazardapp} that $\overline{M}$ is a flat $B$-module. 
Invoking Proposition \monoidref{projj}, we conclude that $\overline{M}$ is
a finitely generated projective $B$-module. Using Lemma \ref{corpus}, we can choose a finitely
generated projective $B(k)$-module $M$ and an equivalance
$M[n+2] \otimes_{B(k)} B \simeq L_{B/B(k)}.$
Using this map in the construction outlined above, we guarantee that
the relative cotangent complex $L_{B/B(k+1)}$ vanishes. It follows from
Corollary \ref{twain} that the map $f_{k+1}: B(k+1) \rightarrow B$ is an equivalence,
so that $B$ is of finite presentation as a commutative $A$-algebra as desired.
\end{proof}

\subsection{\Etale Algebras}\label{etalg}

In this section, we will introduce and study the class of {\it \etalenospace} morphisms between $E_{\infty}$-rings.

\begin{definition}\label{defet}
Let $f: A \rightarrow B$ be a morphism of $E_{\infty}$-rings. We will say that $f$ is {\it \etalenospace} if the following conditions are satisfied:
\begin{itemize}
\item[$(1)$] The induced map $\pi_0 A \rightarrow \pi_0 B$ is an \etale homomorphism of commutative rings.
\item[$(2)$] For every integer $n \in \Z$, the associated map
$\pi_{n} A \otimes_{ \pi_0 A} \pi_0 B \rightarrow \pi_{n} B$
is an isomorphism of abelian groups.
\end{itemize}
\end{definition}

In other words, a map of $E_{\infty}$-rings $f: A \rightarrow B$ is \etale if and only if it
is flat and the underlying map $\pi_0 A \rightarrow \pi_0 B$ is \etalenospace.

\begin{remark}
Let $A$ be an ordinary commutative ring, regarded as a discrete $E_{\infty}$-ring.
A morphism of $E_{\infty}$-rings $f: A \rightarrow B$ is \etale (in the sense of Definition \ref{defet}) if and only if $B$ is discrete, and \etale over $A$ when regarded as an ordinary commutative ring.
\end{remark}

\begin{remark}\label{swiide}
Suppose given a commutative diagram
$$ \xymatrix{ & B \ar[dr]^{g} & \\
A \ar[ur]^{f} \ar[rr]^{h} & & C. }$$
If $f$ is \etalenospace, then $g$ is \etale if and only if $h$ is \etale (in other words, any map between
\etale commutative $A$-algebras is automatically \etale). The ``only if'' direction is obvious.
For the converse, let us suppose that $f$ and $h$ are both \etalenospace. The induced maps
$\pi_0 A \rightarrow \pi_0 B$ and $\pi_0 A \rightarrow \pi_0 C$ are both \etale map of ordinary commutative rings, so that $g$ also induces an \etale map $\pi_0 B \rightarrow \pi_0 C$. 
We now observe that for $n \in \Z$, we have a commutative diagram
$$ \xymatrix{ (\pi_{n} A \otimes_{ \pi_0 A} \pi_0 B) \otimes_{ \pi_{0} B } \pi_0 C
\ar[r] \ar[d] & \pi_{n} A \otimes_{ \pi_0 A} \pi_0 C \ar[d] \\
\pi_{n} B \otimes_{ \pi_0 B} \pi_0 C \ar[r] & \pi_{n} C. }$$
Since $f$ and $h$ are flat, the vertical maps are isomorphisms. The upper horizontal map is obviously an isomorphism, so the lower horizontal map is an isomorphism as well.
\end{remark}

\begin{remark}\label{etalepush}
Suppose given a pushout diagram of $E_{\infty}$-rings
$$ \xymatrix{ A \ar[r] \ar[d]^{f} & A' \ar[d]^{f'} \\
B \ar[r] & B'. }$$
If $f$ is \etalenospace, then so is $f'$. The flatness of $f$ follows from Proposition \monoidref{urwise}.
Moreover, Corollary \monoidref{siwe} ensures that the induced diagram
$$ \xymatrix{ \pi_0 A \ar[r] \ar[d] & \pi_0 A' \ar[d] \\
\pi_0 B \ar[r] & \pi_0 B' }$$
is a pushout in the category of ordinary commutative rings. Since the left vertical map is \etalenospace, it follows that the right vertical map is \etalenospace, so that $f'$ is likewise \etalenospace.
\end{remark}

\begin{remark}\label{unter}
Let $f: A \rightarrow A'$ be a morphism of $E_{\infty}$-rings which induces an isomorphism
$\pi_{i} A \rightarrow \pi_{i} A'$ for $i \geq 0$. According to Proposition \symmetricref{flatner},
the tensor product $\otimes_{A} A'$ induces an equivalence from the $\infty$-category of
flat commutative $A$-algebras to the $\infty$-category of flat commutative $A'$-algebras. Moreover,
if $B$ is a flat commutative $A$-algebra, then the canonical map
$\pi_0 B \rightarrow \pi_0 (B \otimes_A A')$ is an isomorphism (Corollary \monoidref{siwe}), so that
$B$ is \etale over $A$ if and only if $(B \otimes_{A} A'$ is \etale over $A'$. It follows that
$\otimes_{A} A'$ induces an equivalence from the $\infty$-category of
\etale commutative $A$-algebras to the $\infty$-category of \etale commutative $A'$-algebras.
\end{remark}

The main result of section asserts that if $f: A \rightarrow B$ is an \etale map of $E_{\infty}$-rings, then the relative cotangent complex $L_{B/A}$ vanishes. We first treat the case where $A$ and $B$ are discrete.

\begin{lemma}\label{ablepoot}
Let $f: A \rightarrow B$ be an \etale homomorphism of commutative rings (which we regard as discrete $E_{\infty}$-rings). Then the relative cotangent complex $L_{B/A}$ vanishes.
\end{lemma}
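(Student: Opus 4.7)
The plan is to construct a distinguished triangle $L_{B/A} \xrightarrow{\alpha} L_{B/A} \to L_{B/A}$ in the stable $\infty$-category $\Mod_B$ with $\alpha = \id_{L_{B/A}}$; since the cofiber of an equivalence vanishes, this forces $L_{B/A} \simeq 0$. The argument exploits the multiplication map $m: B \otimes_A B \to B$ together with the two canonical inclusions $i_1, i_2: B \to B \otimes_A B$ defined by $i_1(b) = b \otimes 1$ and $i_2(b) = 1 \otimes b$, which satisfy $m \circ i_1 = m \circ i_2 = \id_B$.

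First I establish that $L_{B/(B \otimes_A B)} \simeq 0$, where $B$ is regarded as a $(B \otimes_A B)$-algebra via $m$. Since $A \to B$ is classically étale, $\Omega_{B/A} = 0$, and so the ideal $I = \ker(m)$ satisfies $I = I^2$; combined with finite generation of $I$ (from finite presentation of $B$ over $A$), a standard Nakayama argument yields an idempotent $e \in B \otimes_A B$ with $I = (e)$. Thus $B \otimes_A B \simeq B \times J$ as $A$-algebras, with $m$ the projection onto the first factor, so $m$ is a flat ring epimorphism. Consequently the pushout $B \otimes_{B \otimes_A B} B$ (computed in $\EInfty$) is just $B$, and applying Proposition \ref{basechunge} to the pushout square whose every edge is $m$ or $\id_B$ gives the vanishing $L_{B/(B \otimes_A B)} \simeq 0$.

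Now I apply Corollary \ref{longtooth} to two factorizations of $f$. For $A \to B \otimes_A B \xrightarrow{m} B$ (with composite $f$), the vanishing just proven turns the distinguished triangle into an equivalence $m_! L_{(B \otimes_A B)/A} \xrightarrow{\sim} L_{B/A}$. For $A \xrightarrow{f} B \xrightarrow{i_1} B \otimes_A B$, it yields $(i_1)_! L_{B/A} \to L_{(B \otimes_A B)/A} \to L_{(B \otimes_A B)/B}$, where the last term uses $i_1$ as structure map; Proposition \ref{basechunge} applied to the pushout square with rows $f, i_2$ and columns $f, i_1$ identifies this last term with $(i_2)_! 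L_{B/A}$. Applying $m_!$ to the resulting triangle and using $m \circ i_j = \id_B$ to simplify $m_!(i_j)_! L_{B/A} = L_{B/A}$ for $j = 1,2$, we arrive at the desired distinguished triangle $L_{B/A} \xrightarrow{\alpha} L_{B/A} \to L_{B/A}$ in $\Mod_B$.

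By construction, $\alpha$ is the composition of the functorial base-change maps on cotangent complexes associated to $B \xrightarrow{i_1} B \otimes_A B \xrightarrow{m} B$; by naturality of Corollary \ref{longtooth} under composition, $\alpha$ coincides with the functorial map for the composite $m \circ i_1 = \id_B$, which is $\id_{L_{B/A}}$. It follows that $L_{B/A} \simeq \mathrm{cofib}(\alpha) = 0$. The main obstacle is the preliminary algebraic step, namely that $\ker(m)$ is generated by an idempotent for any étale map, which is a classical consequence of unramifiedness together with finite presentation. A secondary subtlety is the identification of $\alpha$ with the identity, which rests on the functoriality of Corollary \ref{longtooth} under iterated composition of algebra maps.
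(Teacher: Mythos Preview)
Your proof is correct and takes a genuinely different route from the paper's. Both arguments begin from the same classical input—that for an \'etale map the kernel of $m: B\otimes_A B \to B$ is generated by an idempotent, so $B\otimes_A B \simeq B \times J$—but then diverge. The paper observes that $L_{B/A}$ vanishes if and only if $L_{(B\otimes_A B)/B}$ does (by faithfully flat base change along one inclusion), and then proves the latter by induction on the maximal fiber degree of $f$: the product decomposition $B\otimes_A B \simeq B \times B'_0$ reduces one to the factors, where the degree has dropped. Your argument is more direct: you use the idempotent splitting once to get $L_{B/(B\otimes_A B)} = 0$, then combine the two transitivity triangles (for $A \to B \to B\otimes_A B$ and $A \to B\otimes_A B \to B$) with the retraction $m \circ i_1 = \id_B$ to exhibit $L_{B/A}$ as the cofiber of the identity on itself. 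This avoids the induction entirely. The price is having to verify that the composite $\alpha$ really is the identity; as you note, this is a functoriality statement for the first map in the transitivity triangle, and it does follow from the construction of the relative cotangent complex as a functor $\Fun(\Delta^1,\calC) \to T_{\calC}$, since the map $f_! L_{B/A} \to L_{C/A}$ is just the fiberwise component of the functorial map $L_{B/A} \to L_{C/A}$ in the tangent bundle.
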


\begin{proof}
For every residue field $k$ of $A$, the tensor product $k \otimes_{A} B$ is a $k$-algebra of some finite dimension $d(k)$. Let $n$ be the maximum of all these dimensions (in other words, the maximal cardinality of any geometric fiber of the associated map $\Spec B \rightarrow \Spec A$). We will prove our result by induction on $n$. If $n = 0$, then $B$ is the zero ring and there is nothing to prove.

Let $B' = B \otimes_{A} B$. We observe that, since $B$ is flat over $A$, this tensor product is again
discrete (and may therefore be identified with the classical tensor product in the setting of commuttive algebra). According to Proposition \ref{basechunge}, we have a canonical equivalence
$L_{B/A} \otimes_{B} B' \simeq L_{B'/B}$. Since $B'$ is faithfully flat over $B$, 
we deduce that $L_{B/A}$ vanishes if and only if $L_{B'/B}$ vanishes.

If $n = 1$, then $B' \simeq B$ so that $L_{B'/B}$ vanishes as desired. Let us assume therefore that $n > 2$. Since $f$ is \etalenospace, the commutative ring $B'$ factors as a product $B'_0 \times B'_1$, where
$B'_0$ is the kernel of the multiplication map $B \otimes_{A} B \rightarrow B$, and $B'_1 \simeq B$.
To complete the proof, we will show that $L_{B'/B} \otimes_{B'} B'_i$ vanishes for $0 \leq i \leq 1$.
We have a distinguished triangle
$$ L_{B'/B} \otimes_{B'} B'_i \rightarrow L_{B'_i/B} \rightarrow L_{B'_i/B'} \rightarrow (L_{B'/B} \otimes_{B'} B'_i)[1].$$
It therefore suffices to show that the relative cotangent complexes $L_{B'_i/B}$ and
$L_{B'_i/B'}$ vanish. Both of these results follow from our inductive hypothesis.
\end{proof}

\begin{proposition}\label{etrel}
Let $f: A \rightarrow B$ be an \etale homomorphism of $E_{\infty}$-rings. Then the relative cotangent complex $L_{B/A}$ vanishes.
\end{proposition}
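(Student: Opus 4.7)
The plan is to reduce the statement to the already-established discrete case (Lemma \ref{ablepoot}) in two stages: first from an arbitrary $E_{\infty}$-ring $A$ to a connective $A$, and then from a connective $A$ to the ordinary ring $\pi_0 A$.

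First I would reduce to the case where $A$ is connective. Given any \etale morphism $f: A \rightarrow B$, the canonical map $\tau_{\geq 0} A \rightarrow A$ induces an isomorphism on $\pi_i$ for $i \geq 0$, so Remark \ref{unter} furnishes an \etale $\tau_{\geq 0} A$-algebra $B_0$ with $B \simeq B_0 \otimes_{\tau_{\geq 0} A} A$. Flatness of $B_0$ over the connective ring $\tau_{\geq 0} A$ forces $B_0$ to be connective. Proposition \ref{basechunge} applied to the resulting pushout square yields an equivalence $L_{B/A} \simeq L_{B_0 / \tau_{\geq 0} A} \otimes_{B_0} B$, so vanishing in the connective case implies vanishing in general.

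Assume now that $A$ (and hence $B$) is connective. I would form the pushout
$$ \xymatrix{ A \ar[r] \ar[d] & B \ar[d] \\ \pi_0 A \ar[r] & X } $$
of $E_{\infty}$-rings. Flatness of $B$ over $A$ gives $\pi_n X \simeq \pi_n(\pi_0 A) \otimes_{\pi_0 A} \pi_0 B$, which is $\pi_0 B$ for $n=0$ and zero otherwise; hence $X \simeq \pi_0 B$. Proposition \ref{basechunge} then gives $L_{B/A} \otimes_B \pi_0 B \simeq L_{\pi_0 B / \pi_0 A}$, which vanishes by Lemma \ref{ablepoot}. Since $L_{B/A}$ is connective by Proposition \ref{spate}, it remains only to show that a connective $B$-module $M$ with $M \otimes_B \pi_0 B \simeq 0$ must itself be zero.

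To see this, let $F$ denote the fiber of the truncation map $B \rightarrow \pi_0 B$, which is a $1$-connective $B$-module. The cofiber sequence $M \otimes_B F \rightarrow M \rightarrow M \otimes_B \pi_0 B$ shows $M \simeq M \otimes_B F$. Since $F$ is $1$-connective and $B$ is connective, if $M$ is $k$-connective then $M \otimes_B F$ is $(k+1)$-connective; the self-equivalence $M \simeq M \otimes_B F$ therefore bootstraps $M$ from being $0$-connective to being $n$-connective for every $n \geq 0$, whence $M \simeq 0$. The bulk of the work is already invested in Lemma \ref{ablepoot}; the only step where there is any room for subtlety is this last faithful-flatness-style argument, but it is purely formal once connectivity of $L_{B/A}$ is in hand.
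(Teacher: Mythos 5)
Your proposal is correct and follows essentially the same route as the paper: reduce to the connective case via Remark \ref{unter} and base change (Proposition \ref{basechunge}), then reduce to $\pi_0$ via the same pair of results together with Lemma \ref{ablepoot}, using connectivity of $L_{B/A}$ (Proposition \ref{spate}) to close the argument. The only cosmetic difference is that the paper phrases the final step as ``take the smallest $n$ with $\pi_n L_{B/A} \neq 0$ and derive a contradiction'' (citing Corollary M.presiwe for the relevant isomorphism on $\pi_n$), whereas you spell out the same connectivity bootstrap explicitly with the fiber sequence $M \otimes_B F \to M \to M \otimes_B \pi_0 B$; these are the same argument.
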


\begin{proof}
Choose a connective cover $A' \rightarrow A$. It follows from Remark \ref{unter} that there exists
an \etale $A'$-algebra $B'$ and an equivalence $B \simeq B' \otimes_{A'} A$. According to
Proposition \ref{basechunge}, we have an equivalence $L_{B'/A'} \otimes_{B'} B \simeq L_{B/A}$. It will therefore suffice to show that the relative cotangent complex $L_{B'/A'}$ vanishes. In other words, we may reduce to the case where $A$ is connective. Since $B$ is flat over $A$, $B$ is also connective.
According to Proposition \ref{spate}, the relative cotangent complex $L_{B/A}$ is connective.
If $L_{B/A}$ does not vanish, then there exists a smallest integer $n$ (automatically nonnegative) such that $\pi_{n} L_{B/A} \neq 0$.

Let us regard the ordinary commutative ring $\pi_0 A$ as a discrete $E_{\infty}$-ring, so that we have
a morphism of $E_{\infty}$-rings $A \rightarrow \pi_0 A$. Since $B$ is flat over $A$, we have an equivalence of discrete $E_{\infty}$-rings $\pi_0 B \simeq \pi_0 A \otimes_{A} B$ (Corollary \monoidref{siwe}). 
Using Corollary \monoidref{presiwe} and Proposition \ref{basechunge}, we deduce the existence of isomorphisms
$$\pi_n L_{\pi_0 B/ \pi_0 A} \simeq \pi_n ( L_{B/A} \otimes_{B} \pi_0 B)
\simeq \pi_n L_{B/A}.$$
This leads to a contradiction, since $\pi_{n} L_{\pi_0 B/ \pi_0 A}$ vanishes by
Lemma \ref{ablepoot}.
\end{proof}

\begin{remark}\label{spunk}
In view of the distinguished triangle
$$ L_{A} \otimes_{A} B \rightarrow L_B \rightarrow L_{B/A} \rightarrow (L_{A} \otimes_{A} B)[1]$$
associated to a morphism $f: A \rightarrow B$ of $E_{\infty}$-rings, Proposition 
\ref{etrel} is equivalent to the assertion that if $f$ is \etalenospace, then $f$ induces an equivalence
$L_{A} \otimes_{A} B \rightarrow L_{B}$. In other words, if we regard the cotangent complex
functor $L$ as a section of the projection map $p: T_{\EInfty} \rightarrow \EInfty$, then
$L$ carries \etale morphisms of $E_{\infty}$-rings to $p$-coCartesian morphisms in
$\CMod(\Spectra)$.
\end{remark}

\section{Deformation Theory}\label{sec4}

Let $R$ be a commutative ring. A {\it square-zero extension} of $R$ is a commutative ring
$\widetilde{R}$ equipped with a surjection $\phi: \widetilde{R} \rightarrow R$, with the property that
the product of any two elements in $\ker(\phi)$ is zero. In this case, the kernel $M = \ker(\phi)$ inherits the structure on $R$-module.

Let $\widetilde{R}$ be a square-zero extension of a commutative ring $R$ by an $R$-module $M$.
There exists a ring homomorphism
$$ ( R \oplus M) \times_{R} \widetilde{R} \rightarrow \widetilde{R},$$
given by the formula
$$ (r, m, \widetilde{r} ) \mapsto \widetilde{r} + m. $$
This map exhibits $\widetilde{R}$ as endowed with an {\em action} of $R \oplus M$ in the category
of commutative rings with a map to $R$ (we observe that $R \oplus M$ has the structure of an abelian group object in this category). Consequently, in some sense square-zero extensions of $R$ by $M$ can be viewed as {\em torsors} for the trivial square-zero extension $R \oplus M$.  

In general, if $\phi: \widetilde{R} \rightarrow R$ is a square-zero extension of $R$ by $M \simeq \ker(\phi)$, we say that $\widetilde{R}$ is {\em trivial} if there exists an isomorphism of commutative rings
$\widetilde{R} \simeq R \oplus M$. Equivalently, $\widetilde{R}$ is a trivial square-zero extension of $R$ if and only if the surjection $\phi: \widetilde{R} \rightarrow R$ admits a section. In fact, more is true: giving a section of $\phi$ is equivalent to giving an isomorphism $\widetilde{R} \simeq R \oplus M$, which is the identity on $M$ and compatible with the projection to $R$. Such an isomorphism need not exist (for example, we could take $R = \Z / p \Z$ and $\widetilde{R} = \Z / p^2 \Z$), and need not be unique.
However, any two sections of $\phi$ differ by some map $d: R \rightarrow M$. In this case, it is easy to see that $d$ is a derivation of $R$ into $M$, and therefore classified by a map from
the module of \Kahler differentials $\Omega_{R}$ into $M$. Conversely, any derivation of $R$ into $M$ determines an automorphism of $\widetilde{R}$ (whether $\widetilde{R}$ is trivial or not), which permutes the set of sections of $\phi$. Consequently, we deduce that the {\em automorphism group}
of the trivial square zero extension of $R$ by $M$ can be identified with the group of $R$-module homomorphisms $\Ext^{0}_{R}( \Omega_{R}, M)$. 

It is tempting to try to pursue this analogy further, and to try identify the {\em isomorphism classes}
of square-zero extensions of $R$ by $M$ with the higher $\Ext$-group $\Ext^{1}_{R}( \Omega_{R}, M)$. Given an extension class $\eta \in \Ext^1_{R}( \Omega_{R}, M)$, we can indeed construct a square-zero extension $\widetilde{R}$ of $R$ by $M$. Indeed, let us view $\eta$ as defining an exact sequence
$$ 0 \rightarrow M \rightarrow \widetilde{M} \stackrel{f}{\rightarrow} \Omega_{R} \rightarrow 0$$
in the category of $R$-modules. We now form a pullback diagram
$$ \xymatrix{ \widetilde{R} \ar[r] \ar[d] & R \ar[d]^{d} \\
\widetilde{M} \ar[r] & \Omega_{R} }$$
in the category of abelian groups. We can identify elements of $\widetilde{R}$ with pairs
$(r, \widetilde{m})$, where $r \in R$ and $\widetilde{m} \in \widetilde{M}$ satisfy the equation
$f( \widetilde{m} ) = dr$. The abelian group $\widetilde{R}$ admits a ring structure, given by the formula
$$( r, \widetilde{m} ) ( r', \widetilde{m}') = (rr', r' \widetilde{m} + r \widetilde{m}' ).$$
It is easy to check that $\widetilde{R}$ is a square-zero extension of $R$ by $M$.
However, not every square-zero extension of $R$ by $M$ can be obtained from this construction. 
In order to obtain {\em all} square-zero extensions of $R$, it is necessary to replace the module of \Kahler differentials $\Omega_{R}$ by a more refined invariant, such as the $E_{\infty}$ cotangent complex $L_{R}$. 

Our goal in this section is to study analogues of all of the ideas sketched above in the setting of 
$E_{\infty}$-rings. In \S \ref{sqzero} we will introduce the class of {\em square-zero extensions} in the $\infty$-category of $E_{\infty}$-rings. Roughly speaking, we will mimic the above construction to produce a functor $\Phi: \CDer \Rightarrow \Fun( \Delta^1, \EInfty)$. Here
$\CDer$ denotes an $\infty$-category of triples $(A, M, \eta)$, where $A$ is an $E_{\infty}$-ring,
$M$ is an $A$-module, and $\eta: A \rightarrow M$ is a derivation (which we can identify
with an $A$-linear map from $L_{A}$ into $M$). The functor $\Phi$ carries
$(A, M, \eta)$ to a map $A^{\eta} \rightarrow A$; here we will refer to $A^{\eta}$ as 
{\it the square-zero extension of $A$ classified by $\eta$}. 

Using this definition, it follows more or less tautologically that square-zero extensions of an $E_{\infty}$-ring $A$ are ``controlled'' by the absolute cotangent complex of $L_A$. For example, if $L_{A}$ vanishes, then every square-zero extension of $A$ by an $A$-module $M$ is equivalent to the
trivial extension $A \oplus M$ constructed in \S \ref{sec2}. The trouble with this approach is that it is not obvious how to give an {\em intrinsic} characterization of the class of square-zero extensions.
For example, suppose that $f: \widetilde{A} \rightarrow A$ is a square-zero extension of 
$A$ by an $A$-module $M$. We then have a canonical identification $M \simeq \ker(f)$ in
the $\infty$-category of $\widetilde{A}$-modules. However, in general there is no way to recover
the $A$-module structure on $\ker(f)$ from the morphism $f$ alone. In other words, the functor
$\Phi$ described above fails to be fully faithful. In \S \ref{smallext}, we will attempt to remedy the situation by studying a restricted class of square-zero extensions, which we will call {\it $n$-small extensions}. 
This class of morphisms has two important features:
\begin{itemize}
\item[$(i)$] Given a map $f: \widetilde{A} \rightarrow A$, it is easy to decide whether or not
$f$ is an $n$-small extension. Namely, one must check that $\ker(f)$ has certain connectivity
properties, and that a certain multiplication map $\pi_{n} \ker(f) \otimes \pi_{n} \ker(f) \rightarrow \pi_{2n} \ker(f)$ vanishes.

\item[$(ii)$] On the class of $n$-small extensions of $E_{\infty}$-rings, one can construct an inverse to the functor $\Phi$ (Theorem \ref{exsqze}). In particular, every $n$-small extension is a square-zero extension.
\end{itemize}
In conjunction, $(i)$ and $(ii)$ imply that square-zero extensions exist in abundance. For
example, if $A$ is a connective $E_{\infty}$-ring, then the Postnikov tower
$$ \ldots \rightarrow \tau_{\leq 2} A \rightarrow \tau_{\leq 1} A \rightarrow \tau_{\leq 0} A$$
is a sequence of square-zero extensions. 

In \S \ref{defthe}, we will study the deformation theory of $E_{\infty}$-rings. A typical problem
is the following: let $f: A \rightarrow B$ be a map of connective $E_{\infty}$-rings, and suppose that
$\widetilde{A}$ is a square-zero extension of $A$ by a connective $A$-module $M$. Under what circumstances can we ``lift'' $f$ to obtain a map $\widetilde{f}: \widetilde{A} \rightarrow \widetilde{B}$, such that the diagram
$$ \xymatrix{ \widetilde{A} \ar[r] \ar[d]^{ \widetilde{f}} & A \ar[d]^{f} \\
\widetilde{B} \ar[r] & B }$$
is a pushout square? Our main result, Theorem \ref{h2h2}, asserts that in this case $\widetilde{B}$
can automatically be identified with a square-zero extension of $B$ by $M \otimes_{A} B$. 
The problem of constructing $\widetilde{B}$ is therefore ``linear'' in nature: if $\widetilde{A}$ is classified by a map $\eta: L_A \rightarrow M$ in the $\infty$-category of $A$-modules, then
$\widetilde{B}$ exists if and only if the induced map
$L_{A} \otimes_{A} B \rightarrow M \otimes_{A} B$ factors through the cotangent complex $L_{B}$.
This foundational result will play an important role in our study of moduli problems in derived algebraic geometry. 

In \S \ref{classet}, we will apply the ideas sketched above to a more concrete problem: the classification of commutative algebras which are \etale over a given $E_{\infty}$-ring $A$. Our main result, Theorem \ref{turncoat}, asserts that the $\infty$-category of \etale $A$-algebras is equivalent to the ordinary
category of \etale $\pi_0 A$-algebras. This result will play an important role in the foundations of derived algebraic geometry.

\subsection{Square-Zero Extensions}\label{sqzero}

In this section, we will introduce the theory of {\em square-zero extensions}. Although
we are ultimately interested in applying these ideas in the setting of $E_{\infty}$-rings, we will
begin by working in an arbitrary presentable $\infty$-category $\calC$. The theory of square-zero extensions presented here has many applications even in ``nonalgebraic'' situations. For example, when $\calC$ is the $\infty$-category of spaces, it is closely related to classical obstruction theory.

\begin{definition}\label{tancor}
Let $\calC$ be a presentable $\infty$-category, and let 
$p: \calM^{T}(\calC) \rightarrow \Delta^1 \times \calC$ denote a tangent correspondence to
$\calC$ (see Definition \ref{extcor}). A {\it derivation} in $\calC$ is a map $f: \Delta^1 \rightarrow \calM^{T}(\calC)$ such that $p \circ f$ coincides with the inclusion $\Delta^1 \times \{A\} \subseteq \Delta^1 \times \calC$, for some $A \in \calC$. In this case, we will identify $f$ with a morphism $\eta: A \rightarrow M$
in $\calM^{T}(\calC)$, where $M \in T_{\calC} \times_{\calC} \{A\} \simeq \Stab( \calC^{/A})$.
We will also say that $\eta: A \rightarrow M$ is a {\it derivation of $A$ into $M$}.

We let $\CDer(\calC)$ denote the fiber product
$\Fun( \Delta^1, \calM^{T}(\calC) ) \times_{ \Fun( \Delta^1, \Delta^1 \times \calC) } \calC$.
We will refer to $\CDer(\calC)$ as {\it the $\infty$-category of derivations in $\calC$}.
\end{definition}

\begin{remark}
In the situation of Definition \ref{tancor}, let $L: \calC \rightarrow T_{\calC}$ be a cotangent
complex functor. A derivation $\eta: A \rightarrow M$ can be identified with a map
$d: L_{A} \rightarrow M$ in the fiber $T_{\calC} \times_{\calC} \{A\} \simeq \Stab( \calC^{/A})$. 
We will often abuse terminology by identifying $\eta$ with $d$, and referring to $d$
as a {\it derivation of $A$ into $M$}.
\end{remark}

\begin{definition}\label{exder}
Let $\calC$ be a presentable $\infty$-category, and
let $p: \calM^{T}(\calC) \rightarrow \Delta^1 \times \calC$ be a tangent correspondence for $\calC$.
An {\it extended derivation} is a diagram $\sigma$
$$ \xymatrix{ \widetilde{A} \ar[r]^{f} \ar[d] & A \ar[d]^{\eta} \\
0 \ar[r] & M }$$
in $\calM^{T}(\calC)$ with the following properties:
\begin{itemize}

\item[$(1)$] The diagram $\sigma$ is a pullback square.

\item[$(2)$] The objects $\widetilde{A}$ and $A$ belong to $\calC \subseteq \calM^{T}(\calC)$, while
$0$ and $M$ belong to $T_{\calC} \subseteq \calM^{T}(\calC)$.

\item[$(3)$] Let $\overline{f}: \Delta^1 \rightarrow \calC$ be the map which classifies
the morphism $f$ appearing in the diagram above, and let 
$e: \Delta^1 \times \Delta^1 \rightarrow \Delta^1$ be the unique map such that
$e^{-1} \{0\} = \{0\} \times \{0\}$. Then the diagram
$$ \xymatrix{ \Delta^1 \times \Delta^1 \ar[r]^{\sigma} \ar[d]^{e} & \calM^{T}(\calC) \ar[r]^{p} & \Delta^1 \times \calC \ar[d] \\
\Delta^1 \ar[rr]^{ \overline{f} } & & \calC }$$
is commutative.

\item[$(4)$] The object $0 \in T_{\calC}$ is a zero object of $\Stab( \calC^{/A})$. Equivalently,
$0$ is a $p$-initial vertex of $\calM^{T}(\calC)$. 
\end{itemize}

We let $\widetilde{\CDer}( \calC )$ denote the full subcategory of
$$\Fun( \Delta^1 \times \Delta^1, \calM^{T}(\calC) ) \times_{ \Fun( \Delta^1 \times \Delta^1, \Delta^1 \times \calC) } \Fun( \Delta^1, \calC)$$
spanned by the extended derivations. 

If $\sigma$ is an extended derivation in $\calC$, then $\eta$ is a derivation in $\calC$. We
therefore obtain a restriction functor
$$ \widetilde{\CDer}(\calC) \rightarrow \CDer(\calC).$$
\end{definition}

Let $\calC$ and $\calM^{T}(\calC)$ be above, and let
$$\sigma \in \Fun( \Delta^1 \times \Delta^1, \calM^{T}(\calC) ) \times_{ \Fun( \Delta^1 \times \Delta^1, \Delta^1 \times \calC) } \Fun( \Delta^1, \calC).$$
Then $\sigma$ automatically satisfies conditions $(2)$ and $(3)$ of Definition \ref{exder}.
Moreover, $\sigma$ satisfies condition $(4)$ if and only if $\sigma$ is a $p$-left Kan extension
of $\sigma | \{1\} \times \Delta^1$ at the object $\{0\} \times \{1\}$. Invoking Proposition \toposref{lklk} twice, we deduce the following:

\begin{lemma}
Let $\calC$ be a presentable $\infty$-category. Then the forgetful 
$\psi: \widetilde{\CDer}(\calC) \rightarrow \CDer(\calC)$ is a trivial Kan fibration.
\end{lemma}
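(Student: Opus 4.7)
The plan is to apply Proposition \toposref{lklk} twice, reflecting the two Kan-extension conditions hidden in the definition of an extended derivation. Concretely, factor $\psi$ as
$$\widetilde{\CDer}(\calC) \xrightarrow{\psi_1} \calE \xrightarrow{\psi_2} \CDer(\calC),$$
where $\calE$ is an intermediate $\infty$-category of cospan diagrams $A \to M \leftarrow 0$ in $\calM^{T}(\calC)$, lying over the appropriate diagram in $\Delta^1 \times \calC$, with $0$ a zero object of $\Stab(\calC^{/A})$. The map $\psi_2$ restricts such a cospan to its edge $A \to M$, while $\psi_1$ restricts an extended derivation $\sigma$ to the cospan obtained by forgetting the upper-left corner $\widetilde{A}$ and its outgoing edges.

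For $\psi_2$, the observation recorded immediately before the lemma identifies condition $(4)$ with the requirement that $\sigma$ be a $p$-left Kan extension of its restriction to $\{1\} \times \Delta^1$ at the vertex $\{0\} \times \{1\}$: a $p$-initial vertex in the fiber over $(1,A) \in \Delta^1 \times \calC$ is precisely a zero object of $\Stab(\calC^{/A})$. Since each such fiber is a stable (hence pointed) $\infty$-category, the required $p$-colimits exist, and the first invocation of Proposition \toposref{lklk} shows that $\psi_2$ is a trivial Kan fibration.

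For $\psi_1$, condition $(1)$ asserts that $\sigma$ is a pullback square. Once condition $(4)$ has pinned down the bottom-left vertex as a zero object, this is equivalent to saying $\sigma$ is a $p$-right Kan extension of its restriction to the cospan. The required $p$-limits exist in $\calM^{T}(\calC)$ (it is a presentable fibration over $\calC$ whose fibers are either points or stable $\infty$-categories, so the relevant $p$-limits may be formed). A second invocation of Proposition \toposref{lklk} then shows that $\psi_1$ is a trivial Kan fibration, and hence so is the composite $\psi = \psi_2 \circ \psi_1$. I do not anticipate any serious obstacle beyond carefully bookkeeping which fiber of $p$ each vertex of the square lies in and confirming the existence of the relevant $p$-(co)limits at each stage.
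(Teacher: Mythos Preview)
Your proposal is correct and matches the paper's approach exactly: the paper records, immediately before the lemma, that condition $(4)$ is a $p$-left Kan extension condition and then says ``Invoking Proposition \toposref{lklk} twice, we deduce the following,'' which is precisely your factorization through the intermediate cospan category with one application for the zero-object condition and one for the pullback condition. Your write-up is in fact more explicit than the paper's; the only minor imprecision is calling $\calM^{T}(\calC) \to \Delta^1 \times \calC$ a presentable fibration (it is a correspondence, but the needed $p$-limits exist because it is a Cartesian fibration with fibers admitting finite limits).
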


\begin{notation}\label{stubble}
Let $\calC$ be a presentable $\infty$-category. We let $\Phi: \CDer(\calC) \rightarrow \Fun( \Delta^1, \calC)$ denote the composition
$$ \CDer(\calC) \rightarrow \widetilde{\CDer}(\calC)
\rightarrow \Fun(\Delta^1, \calC),$$
where the first map is a section of the trivial fibration $\widetilde{\CDer}(\calC)
\rightarrow \CDer(\calC)$, and the second map is induced by the inclusion
$\Delta^1 \times \{0\} \subseteq \Delta^1 \times \Delta^1$. In other words, 
$\Phi$ associates to every derivation $\eta: A \rightarrow M$ a map
$f: \widetilde{A} \rightarrow A$ which fits into a pullback diagram
$$ \xymatrix{ A^{\eta} \ar[r]^{f} \ar[d] & A \ar[d]^{\eta} \\
0 \ar[r] & M }$$
in the $\infty$-category $\calM^{T}(\calC)$.
\end{notation}

\begin{definition}
Let $\calC$ be a presentable $\infty$-category, and let
$\Phi: \CDer(\calC) \rightarrow \Fun(\Delta^1, \calC)$ be the functor described in
Notation \ref{stubble}. We will denote the image of a derivation
$(\eta: A \rightarrow M) \in \CDer(\calC)$ under the functor $\Phi$ by
$(A^{\eta} \rightarrow A)$.

Let $f: \widetilde{A} \rightarrow A$ be a morphism in $\calC$. We will say that $f$ is a {\it square-zero extension} if there exists a derivation
$\eta: A \rightarrow M$ in $\calC$ and an equivalence
$B \simeq A^{\eta}$ in the $\infty$-category $\calC^{/A}$. In this case, we will also say that $\widetilde{A}$ is a {\it square-zero extension of $A$ by $M[-1]$}.
\end{definition}

\begin{remark}\label{intertine}
Let $\eta: A \rightarrow M$ be a derivation in a presentable $\infty$-category $\calC$, and let
$A \oplus M$ denote the image of $M$ under the functor $\Omega^{\infty}: \Stab( \calC^{/A} ) \rightarrow \calC$. Using Proposition \toposref{chocolatelast}, we conclude that there is a pullback diagram
$$ \xymatrix{ A^{\eta} \ar[r] \ar[d] & A \ar[d]^{d_{\eta}} \\
A \ar[r]^{d_0} & A \oplus M }$$
in the $\infty$-category $\calC$. Here we identify $d_0$ with the map associated to the zero derivation $L_A \rightarrow M$.
\end{remark}

\begin{example}\label{invu}
Let $\calC$ be a presentable $\infty$-category containing an object $A$. Let
$M \in \Stab( \calC^{/A})$, and let $\eta: A \rightarrow M$ be the derivation classified by the zero map
$L_A \rightarrow M$ in $\Stab( \calC^{/A})$. Since the functor
$\Omega^{\infty}: \Stab( \calC^{/A} ) \rightarrow \calC^{/A}$ preserves small limits, we conclude
from Remark \ref{intertine} that the square-zero extension $A^{\eta}$ can be identified with
$\Omega^{\infty} M[-1]$. In particular, if $M = 0$, then the canonical map $A^{\eta} \rightarrow A$ is an equivalence, so we can identify $A^{\eta}$ with $A$. 
\end{example}

\begin{warning}
Let $\calC$ be a presentable $\infty$-category, and let $f: \widetilde{A} \rightarrow A$ be a morphism in $\calC$. Suppose $f$ is a square-zero extension, so that there exists a map
$\eta: L_A \rightarrow M$ in $\Stab(\calC_{/A})$ and an equivalence $\widetilde{A} \simeq A^{\eta}$. In this situation, the object $M$ and the map $\eta$ need not be uniquely determined, even up to equivalence. However, this is true in some favorable situations; see Theorem \ref{exsqze}.
\end{warning}

\begin{example}
Suppose we are given a fibration of simply connected spaces
$$ F \rightarrow E \stackrel{f}{\rightarrow} B,$$
such that $\pi_{k} F \simeq \ast$ for all $k \neq n$. In this case, the fibration $f$ is
{\em classified} by a map $\eta$ from $B$ into an Eilenberg-MacLane space $K(A, n+1)$, where
$A = \pi_{n} F$. It follows that we have a homotopy pullback diagram
$$ \xymatrix{ E \ar[r]^{f} \ar[d] & B \ar[d]^{(\id, \eta)} \\
B \ar[r]^-{(\id, 0)} & B \times K(A, n+1). } $$
The space $B \times K(A, n+1)$ is an infinite loop object of the $\infty$-category
of spaces over $B$: it has deloopings given by $K(A, n+m)$ for $m \geq 1$.
Consequently, the above diagram exhibits $E$ as a square-zero extension of $B$ in the
$\infty$-category of spaces.

In fact, using a slightly more sophisticated version of the same construction, one can show that the same result holds without any assumptions of simple-connectedness; moreover it is sufficient that the homotopy groups of $F$ be confined to a small range, rather than a single degree. We will prove an algebraic analogue of this statement in the next section.
\end{example}

\subsection{Small Extensions}\label{smallext}

In \S \ref{sqzero} we introduced the notion of a square-zero extension in an arbitrary presentable $\infty$-category $\calC$. However, it is not so easy to recognize square-zero extensions directly from the definition. Our goal in this section is to produce a large class of easily recognized examples in the case where $\calC$ is the $\infty$-category of $E_{\infty}$-rings.

\begin{notation}\label{exxy}
Throughout this section, we will be exclusively concerned with square-zero extensions in
the setting of $E_{\infty}$-rings. To simplify the exposition, we will employ the following conventions:
\begin{itemize}
\item[$(a)$] We let $\calM^{T}$ denote a tangent correspondence to the $\infty$-category
$\EInfty$ of $E_{\infty}$-rings, and $p: \calM^{T} \rightarrow \Delta^1 \times \EInfty$ the corresponding projection map (see Definition \ref{extcor}).
\item[$(b)$] We let $\CDer$ denote the $\infty$-category $\CDer( \EInfty)$ of derivations
in $\EInfty$.
\item[$(c)$] We let $\widetilde{\CDer}$ denote the $\infty$-category $\widetilde{\CDer}(\EInfty)$ of extended derivations in $\EInfty$.
\end{itemize}
\end{notation}

Ideally, we would like to assert that a map $f: \widetilde{A} \rightarrow A$ is a square-zero extension if and only if the multiplication on $\widetilde{A}$ is trivial on $\ker(f)$. Our goal in this section is to prove Theorem \ref{exsqze}, which asserts that this description is correct provided that $A$ is connective and the homotopy groups of $\ker(f)$ are confined to a narrow range. To make a more precise statement, we need to introduce a bit of terminology.

\begin{definition}\label{spazzy}
Let $f: \widetilde{A} \rightarrow A$ be a map of $E_{\infty}$-rings, and let $n$ be a nonnegative integer.
We will say that $f$ is an {\it $n$-small extension} if the following conditions are satisfied:
\begin{itemize}
\item[$(1)$] The $E_{\infty}$-ring $A$ is connective.
\item[$(2)$] The homotopy groups $\pi_{i} \ker(f)$ vanish unless
$n \leq i \leq 2n$.
\item[$(3)$] The multiplication map
$$ \ker(f) \otimes_{ \widetilde{A} } \ker(f)
\rightarrow \widetilde{A} \otimes_{ \widetilde{A} } \ker(f) \simeq \ker(f)$$
is nullhomotopic.
\end{itemize}
We let $\Fun^{(n)}( \Delta^1, \EInfty)$ denote the full subcategory of
$\Fun( \Delta^1, \EInfty)$ spanned by the $n$-small extensions.
\end{definition}

\begin{remark}
In the situation of Definition \ref{spazzy}, we will also say that {\it $\widetilde{A}$ is an $n$-small extension of $A$}. Note that in this case $\widetilde{A}$ is automatically connective.
\end{remark}

\begin{remark}
Let $f: \widetilde{A} \rightarrow A$ be as in Definition \ref{spazzy}. The map
$\ker(f) \otimes_{\widetilde{A}} \ker(f) \rightarrow \ker(f)$ appearing in condition $(3)$ is defined assymetrically, and is not evidently invariant under interchange of the two factors in the source.
\end{remark}

\begin{remark}\label{spuzzy}
Let $f: \widetilde{A} \rightarrow A$ be a map of $E_{\infty}$-rings, and suppose that $f$ satisfies
conditions $(1)$ and $(2)$ of Definition \ref{spazzy}. It follows that
$\ker(f) \otimes_{ \widetilde{A} } \ker(f)$ is $(2n)$-connective, and that $\ker(f)$ is $(2n)$-truncated.
Consequently, the multiplication map $\ker(f) \otimes_{ \widetilde{A} } \ker(f)$ is determined by
the induced map of abelian groups
$$ \pi_{n} \ker(f) \otimes_{ \pi_0 \widetilde{A} } \pi_{n} \ker(f) 
\simeq \pi_{2n} ( \ker(f) \otimes_{ \widetilde{A} } \ker(f) ) \rightarrow \pi_{2n} \ker(f). $$
We can identify this map with a bilinear multiplication
$$\phi: \pi_{n} \ker(f) \times \pi_{n} \ker(f) \rightarrow \pi_{2n} \ker(f),$$
and condition $(3)$ is equivalent to the vanishing of $\phi$.
\end{remark}

\begin{remark}
Let $A$ be a commutative ring, which we regard as a discrete $E_{\infty}$-ring. A map
$f: \widetilde{A} \rightarrow A$ is a $0$-small extension if and only if the following conditions are satisfied:
\begin{itemize}
\item[$(a)$] The $E_{\infty}$-ring $\widetilde{A}$ is also discrete, so we can identify $f$ with a map of ordinary commutative rings.
\item[$(b)$] As a homomorphism of commutative rings, $f$ is surjective.
\item[$(c)$] The kernel of $f$ is square-zero, in the sense of classical commutative algebra.
\end{itemize}
In other words, the theory of $0$-small extensions of discrete $E_{\infty}$-rings is equivalent to the classical theory of square-zero extensions between ordinary commutative rings.
\end{remark}

We now come to our main result:

\begin{theorem}\label{exsqze}
Fix an integer $n \geq 0$. Let $\CDer^{(n)}$ denote the full subcategory
of $\CDer$ spanned by those derivations $(\eta: A \rightarrow M)$ such that
$A$ is connective, and $\pi_{i} M$ vanishes unless $n+1 \leq i \leq 2n+1$.
Then the functor
$ \Phi: \CDer( \EInfty) \rightarrow \Fun( \Delta^1, \EInfty)$
of Notation \ref{stubble} induces an equivalence of $\infty$-categories
$$ \Phi^{(n)}: \CDer^{(n)} \rightarrow \Fun^{(n)}( \Delta^1, \EInfty).$$
\end{theorem}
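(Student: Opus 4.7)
The plan is to verify $\Phi^{(n)}$ is an equivalence in three stages: show $\Phi$ lands in $\Fun^{(n)}$, construct a candidate inverse $\Psi^{(n)}$, and verify they are mutually inverse. For the first stage, given $\eta : L_A \to M$ in $\CDer^{(n)}$, the pullback square defining $A^\eta$ (Remark \ref{intertine}) identifies $\ker(A^\eta \to A)$ with the fiber of a section of $A \oplus M \to A$, hence with $\Omega M \simeq M[-1]$, which lies in degrees $[n,2n]$. Connectivity of $A^\eta$ is automatic since $A$ is connective and $n \geq 0$. For condition (3) of $n$-smallness, the multiplication $\ker(f) \otimes_{A^\eta} \ker(f) \to \ker(f)$ can be traced through the pullback to the multiplication on $A \oplus M$, which is null on $M \otimes M$ by Remark \ref{toaster}.

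For the second stage, I would proceed as follows. Given an $n$-small $f : \widetilde{A} \to A$, set $M := \ker(f)[1]$, which lies in $[n+1, 2n+1]$. The crucial computation: tensoring the cofiber sequence $\ker(f) \to \widetilde{A} \to A$ of $\widetilde{A}$-modules over $\widetilde{A}$ with $\ker(f)$ and invoking the vanishing of the connecting multiplication (condition (3)) yields
\[ \ker(f) \otimes_{\widetilde{A}} A \simeq \ker(f) \,\oplus\, \bigl(\ker(f) \otimes_{\widetilde{A}} \ker(f)\bigr)[1], \]
whose second summand is $(2n+1)$-connective. Combined with the estimate from Theorem \ref{tulbas} that $\alpha'_f : \ker(f) \otimes_{\widetilde{A}} A \to L_{A/\widetilde{A}}[-1]$ has $(2n+1)$-connective cofiber, this yields a canonical identification $\tau_{\leq 2n+1} L_{A/\widetilde{A}} \simeq M$. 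The candidate inverse $\Psi^{(n)}$ then sends $f$ to the derivation $\eta : L_A \to L_{A/\widetilde{A}} \twoheadrightarrow \tau_{\leq 2n+1} L_{A/\widetilde{A}} \simeq M$.

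For the third stage, I would verify $\Phi^{(n)} \Psi^{(n)} \simeq \id$ by constructing a natural comparison map $\widetilde{A} \to A^\eta$. The cofiber sequence $L_{\widetilde{A}} \otimes_{\widetilde{A}} A \to L_A \to L_{A/\widetilde{A}}$ forces the pullback of $\eta$ to $\widetilde{A}$ to be null, whence the two maps $\widetilde{A} \to A \rightrightarrows A \oplus M$ via $d_0$ and $d_\eta$ are canonically homotopic, producing a map into the pullback $A^\eta$. Both sides have kernel $\ker(f)$ over $A$, and by construction the induced map on kernels is the identity, giving the equivalence. The composition $\Psi^{(n)} \Phi^{(n)} \simeq \id$ follows analogously: applying the truncation-of-relative-cotangent procedure to $A^\eta \to A$ recovers $\eta$, since the first stage implies $L_{A/A^\eta} \simeq M$ in degrees $\leq 2n+1$ and the composite $L_A \to L_{A/A^\eta} \simeq M$ recovers the original derivation.

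The main obstacle will be promoting these pointwise constructions into coherent functors of $\infty$-categories: $\Psi^{(n)}$ must be specified not only on objects but on all higher morphism data, and the natural equivalences verifying $\Phi^{(n)}\Psi^{(n)} \simeq \id$ and $\Psi^{(n)}\Phi^{(n)} \simeq \id$ must themselves be assembled coherently. I would handle this by working entirely within the tangent correspondence $\calM^T$, exploiting the trivial Kan fibration $\widetilde{\CDer} \to \CDer$ from Definition \ref{exder}, and applying a relative Postnikov truncation functor to families of relative cotangent complexes. The quantitative connectivity estimates of Theorem \ref{tulbas} are essential throughout, since they ensure that truncating $L_{A/\widetilde{A}}$ at level $2n+1$ captures exactly the data needed to encode the $n$-small extension $f$.
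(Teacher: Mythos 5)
Your proposal mirrors the paper's proof: the candidate inverse $\Psi^{(n)}$ sending $f : \widetilde{A} \to A$ to $\eta : L_A \to \tau_{\leq 2n+1} L_{A/\widetilde{A}}$ is precisely the left adjoint $F$ constructed in the paper by factoring $G = \Phi|_{\CDer''}$ through a chain of right-adjointable functors $G_0, \ldots, G_4$, and the key computation—factoring $g$ as $\ker(f) \to \ker(f)\otimes_{\widetilde{A}}A \to L_{A/\widetilde{A}}[-1] \to \tau_{\leq 2n}L_{A/\widetilde{A}}[-1]$, with $n$-smallness killing the obstruction on the first stage and Theorem \ref{tulbas} controlling the second—is exactly the paper's argument. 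The paper handles your "coherence" concern by organizing the whole construction as an adjunction (so unit and counit come for free and one only checks them objectwise), which is a cleaner implementation of the same idea you describe with the tangent correspondence and trivial Kan fibration $\widetilde{\CDer}\to\CDer$.
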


\begin{corollary}
Every $n$-small extension of $E_{\infty}$-rings is a square-zero extension.
\end{corollary}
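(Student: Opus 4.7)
The deduction is essentially formal given Theorem \ref{exsqze}. The plan is to observe that an $n$-small extension is, by definition, an object of the $\infty$-category $\Fun^{(n)}(\Delta^1, \EInfty)$, and then use essential surjectivity of the equivalence $\Phi^{(n)}$ to produce a derivation realizing it.

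In detail, suppose $f : \widetilde{A} \to A$ is an $n$-small extension, so that $f$ defines an object of $\Fun^{(n)}(\Delta^1, \EInfty)$. By Theorem \ref{exsqze}, the restricted functor $\Phi^{(n)} : \CDer^{(n)} \to \Fun^{(n)}(\Delta^1, \EInfty)$ is an equivalence of $\infty$-categories; in particular it is essentially surjective. Hence there exists an object $(\eta : A' \to M) \in \CDer^{(n)}$ together with an equivalence $\Phi^{(n)}(\eta) \simeq f$ in $\Fun^{(n)}(\Delta^1, \EInfty) \subseteq \Fun(\Delta^1, \EInfty)$. Unwinding the definition of $\Phi$ from Notation \ref{stubble}, we see that $\Phi^{(n)}(\eta)$ is the canonical map $(A')^{\eta} \to A'$, so the equivalence $\Phi^{(n)}(\eta) \simeq f$ in the arrow category produces in particular an equivalence $\widetilde{A} \simeq (A')^{\eta}$ sitting over an equivalence $A \simeq A'$ of targets.

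Transporting $\eta$ along the equivalence $A \simeq A'$, we obtain a derivation $\eta' : A \to M'$ (with $M'$ the corresponding object of $\Stab(\EInfty_{/A})$) and an equivalence $\widetilde{A} \simeq A^{\eta'}$ in $\EInfty_{/A}$. This is precisely the assertion that $f$ is a square-zero extension of $A$ by $M'[-1]$, completing the argument. No further work is required; the only nontrivial input is Theorem \ref{exsqze} itself, and no step here is an obstacle.
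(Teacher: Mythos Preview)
Your proposal is correct and matches the paper's approach: the paper states this corollary immediately after Theorem \ref{exsqze} with no separate proof, treating it as the obvious consequence of essential surjectivity of $\Phi^{(n)}$, exactly as you do. The extra care you take in transporting the derivation along the equivalence $A \simeq A'$ is a reasonable clarification but not strictly necessary.
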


\begin{corollary}\label{subte}
Let $A$ be a connective $E_{\infty}$-ring. Then every map
in the Postnikov tower
$$ \ldots \rightarrow \tau_{\leq 3} A \rightarrow \tau_{\leq 2} A \rightarrow \tau_{\leq 1} A \rightarrow
\tau_{\leq 0} A$$
is a square-zero extension.
\end{corollary}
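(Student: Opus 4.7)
The plan is to deduce Corollary \ref{subte} directly from Theorem \ref{exsqze}, by showing that each truncation map $f_m : \tau_{\leq m+1} A \to \tau_{\leq m} A$ is an $n$-small extension in the sense of Definition \ref{spazzy}, for a suitable choice of $n$. Since Theorem \ref{exsqze} asserts that the functor $\Phi^{(n)} : \CDer^{(n)} \to \Fun^{(n)}(\Delta^1, \EInfty)$ is an equivalence of $\infty$-categories, every $n$-small extension lies in the essential image of $\Phi$, and hence is a square-zero extension.

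Concretely, I would take $n = m+1$ and verify the three conditions of Definition \ref{spazzy} for $f_m$. Condition $(1)$ is immediate: $\tau_{\leq m} A$ is connective because $A$ is. For condition $(2)$, the fiber sequence $\ker(f_m) \to \tau_{\leq m+1} A \to \tau_{\leq m} A$ of spectra yields a long exact sequence on homotopy groups; since the two targets have identical $\pi_i$ for $i \leq m$ (both equal to $\pi_i A$), and since $\pi_i \tau_{\leq m+1} A = 0$ for $i > m+1$, we conclude that $\pi_i \ker(f_m) = 0$ except when $i = m+1$, in which case it is isomorphic to $\pi_{m+1} A$. In particular, the homotopy groups of $\ker(f_m)$ are concentrated in the range $m+1 \leq i \leq 2m+2 = 2n$, as required.

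For condition $(3)$, I invoke Remark \ref{spuzzy}, which identifies the multiplication map $\ker(f_m) \otimes_{\tau_{\leq m+1} A} \ker(f_m) \to \ker(f_m)$ with the bilinear pairing $\pi_n \ker(f_m) \otimes_{\pi_0} \pi_n \ker(f_m) \to \pi_{2n} \ker(f_m)$ on homotopy groups. But with $n = m+1 \geq 1$, we have $2n = 2m+2 > m+1$, so $\pi_{2n} \ker(f_m) = 0$ by the computation above. The pairing therefore vanishes trivially, and condition $(3)$ is automatic.

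Having verified that each $f_m$ is an $(m+1)$-small extension, Theorem \ref{exsqze} produces a derivation $\eta_m : \tau_{\leq m} A \to M_m$ with $M_m \in \Stab((\EInfty)_{/ \tau_{\leq m} A})$ and an equivalence $\tau_{\leq m+1} A \simeq (\tau_{\leq m} A)^{\eta_m}$ over $\tau_{\leq m} A$, so $f_m$ is a square-zero extension. There is no substantive obstacle here: the argument is essentially bookkeeping, with the nontriviality of the multiplication condition evaded by a degree count. The only mild point to keep in mind is that the kernel of the Postnikov map is computed as a spectrum (using the underlying-spectrum forgetful functor, which preserves fibers), so the identifications of $\pi_\ast \ker(f_m)$ above are unambiguous.
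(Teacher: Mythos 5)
Your proof is correct and follows the path the paper clearly intends (the paper gives no explicit proof of Corollary \ref{subte}, but places it immediately after the corollary that $n$-small extensions are square-zero extensions, so "verify $n$-smallness and cite the preceding result" is the expected argument). The key observations are exactly right: $\ker(f_m)$ is an Eilenberg--MacLane spectrum concentrated in degree $m+1$, so taking $n = m+1$ places its homotopy in the window $[n, 2n]$ with room to spare, and condition $(3)$ holds vacuously because the relevant target group $\pi_{2n}\ker(f_m) = \pi_{2m+2}\ker(f_m)$ vanishes (one could equally say that any map from a $(2m+2)$-connective spectrum to an $(m+1)$-truncated one is null, without even invoking Remark \ref{spuzzy}). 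Choosing $n = m+1$ rather than the smallest admissible value is the right move, since a smaller $n$ with $m$ odd would force $2n = m+1$ and the degree count would no longer make condition $(3)$ automatic.
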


Corollary \ref{subte} underlines the importance of the cotangent complex in the theory of $E_{\infty}$-rings. For example, suppose we wish to understand the space of maps $\bHom_{\EInfty}( A, B)$ between two connective $E_{\infty}$-rings $A$ and $B$. This space can be realized as the homotopy inverse limit of the mapping spaces $\bHom_{\EInfty}( A, \tau_{\leq n} B)$. In the case $n= 0$, this is simply the discrete set of ring homomorphisms from $\pi_0 A$ to $\pi_0 B$. For $n > 0$, Corollary
\ref{subte} implies the existence of a pullback diagram
$$ \xymatrix{ \tau_{\leq n} B \ar[r] \ar[d] & \tau_{\leq n-1} B \ar[d] \\
\tau_{\leq n-1} B \ar[r] & \tau_{\leq n-1} B \oplus ( \pi_n B)[n+1]. }$$
This reduces us to the study of $\bHom_{\EInfty}(A, \tau_{\leq n-1} B)$ and the
``linear'' problem of understanding derivations from $A$ into $(\pi_n B)[n+1]$. This linear problem is controlled by the cotangent complex of $A$. We will apply this observation in \S \ref{classet}.

\begin{proof}[Proof of Theorem \ref{exsqze}]
We let $\CDer' \subseteq \CDer$ be the full subcategory of $\CDer$ spanned by those
derivations $\eta: A \rightarrow M$ where $A$ is connective, $\CDer'' \subseteq \CDer'$ the full subcategory spanned by those derivations where $A$ is connective and $M$ is $(2n+1)$-truncated, and
$\Fun'( \Delta^1, \EInfty)$ the the full subcategory of $\Fun( \Delta^1, \EInfty)$ spanned by
those morphisms $\widetilde{A} \rightarrow A$ such that $A$ is connective.

Let $G: \CDer'' \rightarrow \Fun'( \Delta^1, \EInfty)$ denote the restriction of $\Phi$ to
$\CDer'' \subseteq \CDer$. The functor $G$ factors as a composition
$$ \CDer'' \stackrel{G_0}{\rightarrow} \CDer' \stackrel{G_1}{\rightarrow} \calE_1
\stackrel{G_2}{\rightarrow} \calE
\stackrel{G_3}{\rightarrow} \calE_0 \stackrel{G_4}{\rightarrow} \Fun'( \Delta^1, \EInfty),$$
where the functors $G_i$ and their targets can be described as follows:

\begin{itemize}
\item[$(G_0)$] The functor $G_0$ is simply the inclusion of $\CDer''$ into
$\CDer'$. This functor admits a left adjoint $F_0$, which carries a derivation
$\eta: A \rightarrow M$ to the induced derivation $\eta': A \rightarrow \tau_{\leq 2n+1} M$.

\item[$(G_1)$] Let $\calE_1$ denote the full subcategory of 
$$ \Fun( \Lambda^2_2, \calM^{T}) \times_{ \Fun( \Lambda^2_2, \Delta^1 \times \EInfty) } \EInfty$$
spanned by those diagrams
$$ \xymatrix{ & A \ar[d] \\
0 \ar[r] & M}$$
with the following properties:
\begin{itemize}
\item[$(i)$] The $E_{\infty}$-ring $A$ is connective.
\item[$(ii)$] The object $0$ belongs $T_{\EInfty} \subseteq \calM^{T}$, and
is a zero object of the fiber $T_{\EInfty} \times_{\EInfty} \{A\} \simeq \Mod_{A}$.
\item[$(iii)$] The object $M$ belongs to $T_{\EInfty} \times_{ \EInfty} \{A\} \simeq \Mod_{A}$.
\end{itemize}
An object
$$ \sigma \in \Fun( \Lambda^2_2, \calM^{T}) \times_{ \Fun( \Lambda^2_2, \Delta^1 \times \EInfty) } \EInfty$$
belongs to $\calE_1$ if and only if the restriction $\sigma | \Delta^1$ belongs to $\CDer'$, and
$\sigma$ is a $p$-left Kan extension of $\sigma | \Delta^1$. It follows from Proposition
\toposref{lklk} that the restriction map $F_1: \calE \rightarrow \CDer'$ is a trivial Kan fibration.
We let $G_1$ denote any section to $F_1$. 

\item[$(G_2)$]
We let $\calE$ denote the full subcategory of
$$\Fun( \Delta^1 \times \Delta^1, \calM^{T}) \times_{ \Fun( \Delta^1 \times \Delta^1), \Delta^1 \times \EInfty} \Fun(\Delta^1 \times \EInfty)$$
spanned by those diagrams
$$ \xymatrix{ \widetilde{A} \ar[r]^{f} \ar[d] & A \ar[d]^{\eta} \\
0 \ar[r] & M }$$
which satisfy conditions $(i)$, $(ii)$, and $(iii)$ above (so that $f: \widetilde{A} \rightarrow A$
is a map of $E_{\infty}$-rings). Let $F_2: \calE \rightarrow \calE_1$ denote the restriction map. Then $F_2$ admits a section $G_2$ which is right adjoint to $F_2$, given by the formation of pullback diagrams.

\item[$(G_3)$] Let $\calE_0$ denote the full subcategory of
$$ \Fun( \Lambda^2_0, \calM^{T}) \times_{ \Fun( \Lambda^2_0, \Delta^1 \times \EInfty)} \Fun( \Delta^1, \EInfty)$$
spanned by those diagrams
$$ \xymatrix{ \widetilde{A} \ar[r] \ar[d] & A \\
0 & }$$
satisfying conditions $(i)$ and $(ii)$. 
We let $G_3$ denote the evident restriction map $\calE \rightarrow \calE_0$.
This functor has a left adjoint $F_3$, which carries the above diagram to
$$ \xymatrix{ \widetilde{A} \ar[r] \ar[d] & A \ar[d] \\
0 \ar[r] & L_{A / \widetilde{A} }. }$$

\item[$(G_4)$] Using Proposition \toposref{lklk}, we deduce that
the forgetful functor $G_4: \calE_1 \rightarrow \Fun'( \Delta^1, \EInfty)$ 
is a trivial Kan fibration. Let $F_4$ denote any section to $G_4$.
\end{itemize}

The functor $G$ admits a left adjoint $F$, given by composing the functors
$\{ F_i \}_{0 \leq i \leq 4}$. Using the descriptions above, we can describe $F$ as follows: it carries a morphism $\widetilde{A} \rightarrow A$ to the derivation
$d: A \rightarrow \tau_{\leq 2n+1} L_{ A / \widetilde{A} }$ classified by the composite map
$$ L_A \rightarrow L_{A / \widetilde{A} } \rightarrow \tau_{\leq 2n+1} L_{A / \widetilde{A}}.$$

We wish to show that the adjunction between $F$ and $G$ restricts to an equivalence
between $\CDer^{(n)}$ and $\Fun^{(n)}( \Delta^1, \EInfty)$. For this, we must show four things:
\begin{itemize}
\item[$(a)$] The functor $G$ carries $\CDer^{(n)}$ into $\Fun^{(n)}( \Delta^1, \EInfty)$. 

\item[$(b)$] The functor $F$ carries $\Fun^{(n)}( \Delta^1, \EInfty)$ into
$\CDer^{(n)}$.

\item[$(c)$] For every object $f \in \Fun^{(n)}(\Delta^1, \EInfty)$, 
the unit map $$u_{f}: f \rightarrow (G \circ F)(f)$$
is an equivalence.

\item[$(d)$] For every object $\eta \in \CDer^{(n)}$, the counit map
$v_{\eta}: (F \circ G)(\eta) \rightarrow \eta$ is an equivalence.
\end{itemize}

We first prove $(a)$.
Let $\eta: A \rightarrow M$ be an object of $\CDer^{(n)}$, and $f: A^{\eta} \rightarrow A$ the
corresponding square-zero extension. The kernel $\ker(f)$ can be identified with
$M[-1]$ as an $\widetilde{A}$-module. Invoking $(ii)$, we deduce that
$\pi_{i} \ker(f)$ vanishes unless $n \leq i \leq 2n$. Moreover, the multiplication map
$$ \widetilde{A} \otimes \ker(f) \rightarrow \ker(f)$$
factors through $A \otimes \ker(f)$, and is therefore nullhomotopic when restricted to
$\ker(f) \otimes \ker(f)$. This implies the vanishing of the multiplication map
$\pi_{n} \ker(f) \times \pi_{n} \ker(f) \rightarrow \pi_{2n} \ker(f)$, which guarantees that
$f$ is an $n$-small extension (see Remark \ref{spuzzy}).

We now prove $(b)$ and $(c)$. Let $f: \widetilde{A} \rightarrow A$ be an $n$-small extension,
$\eta: A \rightarrow \tau_{\leq 2n+1} L_{A/ \widetilde{A}}$ the image of
$f$ under the functor $F$. The unit map $u$ determines a commutative
diagram
$$ \xymatrix{ \ker(f) \ar[r] \ar[d]^{g} & \widetilde{A} \ar[r]^{f} \ar[d]^{u_{f }} & A \ar[d]^{\id} \\
\tau_{\leq 2n} ( L_{A/ \widetilde{A}} [-1] ) \ar[r] & A^{\eta} \ar[r] & A }$$
in the $\infty$-category of spectra. To prove $(c)$, we must show that
$u_{f}$ is an equivalence. To prove $(b)$, we must show that
$\tau_{\leq 2n} ( L_{A / \widetilde{A}}[-1])$ is $n$-connective and
$(2n)$-truncated. In either case, it will suffice to show that $g$ is an equivalence.
Since $f$ is an $n$-small extension, $\pi_{i} \ker(f)$ vanishes for $i > 2n$. It will therefore suffice to show that the map $g$ induces an isomorphism 
$$\pi_{i} \ker(f) \rightarrow \pi_{i} \tau_{\leq 2n} ( L_{A/ \widetilde{A}} [-1]) $$
for $i \leq 2n$. 

We observe that the map $g$ factors as a composition
$$ \ker(f) \stackrel{g'}{\rightarrow} \ker(f) \otimes_{ \widetilde{A} } A
\stackrel{g''}{\rightarrow} L_{A/ \widetilde{A} }[-1] \stackrel{g'''}{\rightarrow}
\tau_{\leq 2n} (L_{ A/ \widetilde{A} } [-1]),$$
where $g''$ is the map described in Theorem \ref{tulbas}. 
It will therefore suffice to show that each of the induced maps
$$ \pi_{i} \ker(f) \stackrel{ g'_i }{\rightarrow}
\pi_{i} ( \ker(f) \otimes_{ \widetilde{A}} A) \stackrel{ g''_i }{\rightarrow}
\pi_{i} (L_{A / \widetilde{A} }[-1]) \stackrel{ g'''_i }{\rightarrow}
\pi_{i} \tau_{\leq 2n} ( L_{A/ \widetilde{A} }[-1] )$$
is an isomorphism for $i \leq 2n$. For $g'''_i$, this is clear, and
for $g''_i$ it follows from Theorem \ref{tulbas}. To analyze the map $g'_i$ we use the long
exact sequence
$$ \pi_{i} (\ker(f) \otimes_{\widetilde{A} } \ker(f) )
\rightarrow \pi_{i} \ker(f) \stackrel{g'_i}{\rightarrow} \pi_{i} (\ker(f) \otimes_{ \widetilde{A} } A)
\rightarrow \pi_{i-1} ( \ker(f) \otimes_{ \widetilde{A} } \ker(f) )$$
Since $\ker(f)$ is $n$-connective, the spectrum
$\ker(f) \otimes_{ \widetilde{A} } \ker(f)$ is $(2n)$-connective, so that the outer terms
vanish for $i < 2n$. If $i = 2n$, we conclude that $g'_i$ is surjective, and that the
kernel of $g'_i$ is generated by the image of the multiplication map
$$ \pi_{n} \ker(f) \times \pi_{n} \ker(f) \rightarrow \pi_{2n} \ker(f).$$
Since $f$ is an $n$-small extension, this multiplication map is trivial so that
$g'_i$ is an isomorphism as desired. This completes the proof of $(b)$ and $(c)$.

We now prove $(d)$. The map $G( v_{\eta} )$ admits a right homotopy inverse, given by the unit map
$u_{ G(\eta)}$. Assertion $(a)$ shows that $G(\eta)$ is an $n$-small extension, so that
$u_{ G(\eta)}$ is an equivalence by $(c)$. It follows that $G( v_{\eta})$ is an equivalence.
To complete the proof, it will suffice to show that $G$ is conservative. Suppose given
a morphism $\alpha: (\eta: A \rightarrow M) \rightarrow (\eta': B \rightarrow N)$ in $\CDer'$ such that
$G(\alpha)$ is an equivalence in $\Fun'( \Delta^1, \EInfty)$. 
We obtain a map of distinguished triangles
$$ \xymatrix{ M[-1] \ar[r] \ar[d] & A^{\eta} \ar[r] \ar[d]^{h} & A \ar[d]^{h'} \ar[r] & M^{h''} \ar[d] \\
N[-1] \ar[r] & B^{\eta'} \ar[r] & B \ar[r] & N }$$
in the homotopy category of spectra. 
Since $G(\alpha)$ is an equivalence, the maps $h$ and $h'$ are equivalences. It follows that $h''$ is an equivalence as well, so that $\alpha$ is an equivalence as desired.
\end{proof}

\subsection{Deformation Theory of $E_{\infty}$-Rings}\label{defthe}

Let $f: \widetilde{A} \rightarrow A$ be a square-zero extension between connective $E_{\infty}$-rings.
Our goal in this section is to show that we can use deformation theory to recover the $\infty$-category of connective $\widetilde{A}$-algebras from the $\infty$-category of connective $A$-algebras.
To be more precise, we need to introduce a bit of terminology.

\begin{notation}
In this section, we continue to follow the conventions of Notation \ref{exxy}:
we let $p: \calM^{T} \rightarrow \Delta^1 \times \EInfty$ denote a tangent correspondence
to the $\infty$-category of $E_{\infty}$-rings, $\CDer$ the $\infty$-category of derivations
in $\EInfty$, and $\widetilde{\CDer}$ the $\infty$-category of extended derivations in $\EInfty$. 
\end{notation}

\begin{notation}
We define a subcategory $\CDer^{+} \subseteq \CDer$ as follows:
\begin{itemize}
\item[$(i)$] An object $\eta: A \rightarrow M$ of $\CDer$ belongs
to $\CDer^{+}$ if and only if $A$ and $M[-1]$ are connective.

\item[$(ii)$] Let $f: (\eta: A \rightarrow M) \rightarrow (\eta': B \rightarrow N)$ be a morphism
in $\CDer$ between objects which belong to $\CDer^{+}$. Then $f$ belongs to
$\CDer^{+}$ if and only if the induced map $M \otimes_{A} B \rightarrow N$ is an equivalence
of $B$-modules.
\end{itemize}
\end{notation}

\begin{proposition}\label{swum}
Let $\eta: A \rightarrow M$ be an object of $\CDer^{+}$. Then the functor $\Phi$ of Notation \ref{stubble}
induces an equivalence of $\infty$-categories
$$ \CDer^{+}_{\eta/} \rightarrow (\EInfty)^{\conn}_{ A^{\eta}/}.$$
\end{proposition}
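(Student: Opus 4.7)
The plan is to construct an explicit quasi-inverse $\Psi$ to the functor $\Phi$ and verify the equivalence directly. First I would unwind $\CDer^+_{\eta/}$: an object amounts to a connective $A$-algebra $B$ equipped with a derivation $\eta': B \to M \otimes_A B$ compatible with $\eta$ (the defining condition of $\CDer^+$ forces the target to be $M \otimes_A B$ up to equivalence), and $\Phi$ sends such data to the morphism $A^\eta \to B^{\eta'}$ in $(\EInfty)^{\conn}_{A^\eta/}$. Given $A^\eta \to \widetilde B$ with $\widetilde B$ connective, I define $\Psi$ by setting $B := A \otimes_{A^\eta} \widetilde B$ (connective and equipped with a canonical $A$-algebra structure) and taking $\eta'$ to be the base change of $\eta$ along $A \to B$, via the functoriality of $T_{\EInfty} \to \EInfty$; the resulting pair $(B, \eta')$ defines an object of $\CDer^+_{\eta/}$.

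The heart of the argument, and the main obstacle, is showing that the natural map $\widetilde B \to B^{\eta'}$ is an equivalence. By Remark \ref{intertine}, one has $A^\eta \simeq A \times_{A \oplus M} A$ in $\EInfty$. Forming the pushout of this pullback square along $A^\eta \to \widetilde B$, together with the computation
$$ \widetilde B \otimes_{A^\eta} (A \oplus M) \simeq B \otimes_A (A \oplus M) \simeq B \oplus N, $$
where $N := M \otimes_A B$, yields a commutative square
$$ \xymatrix{ \widetilde B \ar[r] \ar[d] & B \ar[d]^{d_{\eta'}} \\ B \ar[r]^-{d_0} & B \oplus N. } $$
The claim is that this square is a pullback in $\EInfty$; since $\EInfty$ is not stable, this is not formal. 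To verify it, I would pass to the tangent correspondence: the original pullback defining $A^\eta$ arises, by construction, from an extended derivation in $\widetilde{\CDer}$, and is in particular a $p$-limit in $\calM^T$. The base change along $A \to B$ preserves this structure by the compatibility of relative cotangent complexes with base change (Proposition \ref{basechunge}). Concretely, the kernel of $\widetilde B \to B$ regarded as a $B$-module is obtained by base-changing $\ker(A^\eta \to A) \simeq M[-1]$ along $A \to B$, giving $N[-1]$, which matches the kernel of $B^{\eta'} \to B$; the connectivity hypotheses ($M[-1]$ connective) ensure this comparison is a genuine equivalence rather than a partial one.

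Finally, the counit $\Phi \circ \Psi \to \id$ is the equivalence $B^{\eta'} \simeq \widetilde B$ just established, and the unit $\id \to \Psi \circ \Phi$ is essentially tautological: for $(B, \eta') \in \CDer^+_{\eta/}$, one has $A \otimes_{A^\eta} B^{\eta'} \simeq B$ by the same pullback--pushout compatibility, and the base-changed derivation is $\eta'$ itself by the $\CDer^+$ condition. This shows that $\Phi$ induces the claimed equivalence $\CDer^+_{\eta/} \to (\EInfty)^{\conn}_{A^\eta/}$.
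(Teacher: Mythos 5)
Your approach is genuinely different from the paper's. The paper proves the general Theorem~\ref{h2h2} first (which says $\Phi^{+}$ factors as an equivalence followed by a left fibration), and then Proposition~\ref{swum} is a two-line consequence: $\Phi$ induces an equivalence $\CDer^{+}_{\eta/} \rightarrow \Fun^{+}(\Delta^1, \EInfty)_{f/}$, and evaluation at $\{0\}$ is a trivial Kan fibration. You instead build an explicit quasi-inverse $\Psi$ and check unit and counit directly. This is a legitimate alternative, and your counit argument is essentially correct: the key point that the base-changed square is again a pullback follows because $\otimes_{A^{\eta}} \widetilde{B} : \Mod_{A^{\eta}} \rightarrow \Mod_{\widetilde{B}}$ is exact (a colimit-preserving functor between stable $\infty$-categories preserves finite limits), so it carries the pullback square of $A^{\eta}$-modules underlying Remark~\ref{intertine} to a pullback square; since limits of $E_{\infty}$-rings are computed in spectra, this is a pullback of $E_{\infty}$-rings. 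Contrary to what you suggest, this step does \emph{not} use the connectivity of $M[-1]$.

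The real gap is in the unit. You assert that $A \otimes_{A^{\eta}} B^{\eta'} \simeq B$ ``by the same pullback--pushout compatibility,'' but this is precisely the statement that the square
$$ \xymatrix{ A^{\eta} \ar[r] \ar[d] & A \ar[d] \\ B^{\eta'} \ar[r] & B } $$
is a \emph{pushout}, which does not follow from the counit argument (the counit goes pushout-then-pullback; here you need pullback-then-pushout), and it is not tautological. In fact this is one of the two substantive assertions of Theorem~\ref{h2h2}. One correct way to supply it, matching the logic the paper uses in Lemma~\ref{susp}, is a connectivity bootstrap: let $Q = A \otimes_{A^{\eta}} B^{\eta'}$ and $\theta : Q \rightarrow B$ the comparison map. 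Since the cofiber of $A^{\eta} \rightarrow A$ is $M$, the cofiber of $B^{\eta'} \rightarrow Q$ is $M \otimes_{A^{\eta}} B^{\eta'} \simeq M \otimes_{A} Q$, while the cofiber of $B^{\eta'} \rightarrow B$ is $N \simeq M \otimes_{A} B$. The octahedral axiom then gives a cofiber sequence $M \otimes_{A} Q \rightarrow M \otimes_{A} B \rightarrow \coker(\theta)$, i.e. $\coker(\theta) \simeq M \otimes_{A} \coker(\theta)$. Since $M$ is $1$-connective and $\coker(\theta)$ is connective, this forces $\coker(\theta) \simeq 0$ by induction on connectivity. \emph{This} is where the hypothesis $M[-1]$ connective genuinely enters, and it is missing from your argument. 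Once the unit is established (or alternatively, once you observe that $\Phi$ restricted to $\CDer^{+}$ is conservative, as in Lemma~\ref{susp}, and deduce the unit from the counit via a triangle identity), your approach does go through; but you should be aware that you are then essentially re-proving the part of Theorem~\ref{h2h2} that the paper factors out for reuse elsewhere (e.g.\ in Proposition~\ref{wisert}).
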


\begin{remark}
Let $\eta: A \rightarrow M$ be an object of $\CDer^{+}$. According to Proposition \ref{swum}, every
connective $A^{\eta}$-algebras has the form $B^{\eta'}$, where $B$ is an $A$-algebra and $\eta'$ is a derivation which fits into a commutative diagram
$$ \xymatrix{ A \ar[r]^{\eta} \ar[d] & M \ar[d] \\
B \ar[r]^{\eta'} & M \otimes_{A} B. }$$
We interpret this result as follows:
suppose we are given a connective $A$-algebra $B$, and we wish to lift $B$ to a commutative algebra defined over the square-zero extension $A^{\eta}$. In this case, it is necessary and sufficient to produce a commutative diagram as indicated:
$$ \xymatrix{ L_{A} \ar[r]^{\eta} \ar[d] & M \ar[d] \\
L_{B} \ar[r]^{\eta'} & B \otimes_{A} M. }$$
Here we encounter an obstruction to the existence of $\eta'$ lying in the abelian group
$\Ext^2( L_{B/A}, B \otimes_{A} M)$. Provided that this obstruction vanishes, the
collection of equivalence classes of extensions is naturally a torsor for the abelian group
$\Ext^1( L_{B/A}, B \otimes_{A} M)$. This is a precise analogue (and, as we will see later, a generalization) of the situation in the classical deformation theory of algebraic varieties. Suppose given a smooth morphism of smooth, projective varieties $X \rightarrow Y$. Given a first-order deformation
$\widetilde{Y}$ of $Y$, we encounter an obstruction in $\HH^2( X; T_{X/Y})$ to extending
$\widetilde{Y}$ to a first-order deformation of $X$. If this obstruction vanishes, then the set of
isomorphism classes of extensions is naturally a torsor for the cohomology group
$\HH^1( X; T_{X/Y})$.
\end{remark}

Proposition \ref{swum} is an immediate consequence of a more general result which we will formulate below (Theorem \ref{h2h2}). First, we need a bit more notation.

\begin{notation}\label{stablecamp}
We define a subcategory $\Fun^{+}( \Delta^1, \EInfty)$ as follows:
\begin{itemize}
\item[$(i)$] An object $f: \widetilde{A} \rightarrow A$ of $\Fun( \Delta^1, \EInfty)$ belongs to
$\Fun^{+}( \Delta^1, \EInfty)$ if and only if both $A$ and $\widetilde{A}$ are connective, and $f$
induces a surjection $\pi_0 \widetilde{A} \rightarrow \pi_0 A$.

\item[$(ii)$] Let $f,g \in \Fun^{+}( \Delta^1, \EInfty)$, and let
$\alpha: f \rightarrow g$ be a morphism in $\Fun( \Delta^1, \EInfty)$. Then $\alpha$
belongs to $\Fun^{+}( \Delta^1, \EInfty)$ if and only if it classifies
a pushout square in the $\infty$-category $\EInfty$.
\end{itemize}
\end{notation}

\begin{theorem}\label{h2h2}
Let
$\Phi: \CDer(\EInfty) \rightarrow \Fun( \Delta^1, \EInfty)$ be the functor
defined in Notation \ref{stubble}. Then $\Phi$ induces a functor
$\Phi^{+}: \CDer^{+} \rightarrow \Fun^{+}( \Delta^1, \EInfty)$. Moreover, the
functor $\Phi^{+}$ factors as a composition
$$ \CDer^{+} \stackrel{\Phi^{+}_0}{\rightarrow} \overline{\CDer}^{+} \stackrel{\Phi^{+}_1}{\rightarrow} \Fun^{+}( \Delta^1, \EInfty),$$
where $\Phi^{+}_0$ is an equivalence of $\infty$-categories and $\Phi^{+}_1$ is a left fibration.
\end{theorem}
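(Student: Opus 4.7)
The plan is to verify Theorem \ref{h2h2} in two stages: first checking that $\Phi$ restricts to a functor $\Phi^+ \colon \CDer^+ \to \Fun^+(\Delta^1, \EInfty)$, and then constructing the required factorization through an intermediate $\infty$-category $\overline{\CDer}^+$. For the object-level restriction, given $\eta \colon A \to M$ in $\CDer^+$, the pullback square of Remark \ref{intertine} yields a fibre sequence of spectra $\Omega^\infty M[-1] \to A^\eta \to A$; since $A$ and $M[-1]$ are both connective by definition of $\CDer^+$, we conclude that $A^\eta$ is connective and $\pi_0 A^\eta \to \pi_0 A$ is an isomorphism, hence surjective. For a morphism $\alpha \colon (\eta \colon A \to M) \to (\eta' \colon B \to N)$ in $\CDer^+$, I must show the induced square with top edge $A^\eta \to A$ and bottom edge $B^{\eta'} \to B$ is a pushout in $\EInfty$. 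The defining condition $M \otimes_A B \simeq N$ for $\CDer^+$-morphisms is equivalent to the underlying morphism $\eta \to \eta'$ being $p$-coCartesian in the tangent bundle $p \colon T_{\EInfty} \to \EInfty$; tracing through the construction of $\Phi$ as an iterated pullback in the tangent correspondence $\calM^T$, this coCartesianness (via Proposition \ref{basechunge}) implies that the pullback square defining $A^\eta$ base-changes correctly to the one defining $B^{\eta'}$, yielding the pushout property. Equivalently, one checks that the natural map $A \otimes_{A^\eta} B^{\eta'} \to B$ induces an isomorphism on $\pi_0$ and has vanishing relative cotangent complex, and invokes Corollary \ref{twain}.

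For the factorization, I set $\overline{\CDer}^+ := \widetilde{\CDer} \times_{\CDer} \CDer^+$, where $\widetilde{\CDer}$ is the $\infty$-category of extended derivations from Definition \ref{exder}. By the lemma preceding Notation \ref{stubble}, the forgetful map $\widetilde{\CDer} \to \CDer$ is a trivial Kan fibration, so its base change $\overline{\CDer}^+ \to \CDer^+$ is also a trivial fibration; I take $\Phi_0^+$ to be any section, which is automatically a categorical equivalence. The functor $\Phi_1^+ \colon \overline{\CDer}^+ \to \Fun^+(\Delta^1, \EInfty)$ is defined by restricting an extended derivation to its top edge $\widetilde{A} \to A$; by the first stage this lands in $\Fun^+(\Delta^1, \EInfty)$, and $\Phi_1^+ \circ \Phi_0^+ \simeq \Phi^+$ by construction.

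The main obstacle, and the delicate step, is verifying that $\Phi_1^+$ is a left fibration. My strategy is to produce $\Phi_1^+$-coCartesian lifts explicitly: given $\sigma \in \overline{\CDer}^+$ encoding an extended derivation with underlying derivation $\eta \colon A \to M$ (so the top edge of $\sigma$ is $A^\eta \to A$), and a morphism $g$ in $\Fun^+(\Delta^1, \EInfty)$ from $(A^\eta \to A)$ to some $(\widetilde{B} \to B)$ --- that is, a pushout square --- the candidate lift is the extended derivation associated to the base-changed derivation $\eta' \colon B \to M \otimes_A B$, whose existence and functoriality is ensured by Proposition \ref{basechunge}. The coCartesian property then reduces to a universal statement: for any further morphism in $\Fun^+(\Delta^1, \EInfty)$, lifts through $\overline{\CDer}^+$ correspond bijectively, up to contractible choice, to compatible base-change data for the derivations involved. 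The essential content is that formation of square-zero extensions commutes with base change in the tangent bundle, which is in effect a repackaging of Proposition \ref{basechunge}.
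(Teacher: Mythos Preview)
Your proposal has a genuine gap in each of its two stages, and the two are related.

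\textbf{The pushout property.} You claim that a morphism $(\eta\colon A\to M)\to(\eta'\colon B\to N)$ in $\CDer^{+}$ induces a pushout square with rows $A^{\eta}\to A$ and $B^{\eta'}\to B$, citing Proposition~\ref{basechunge}. But Proposition~\ref{basechunge} concerns the \emph{relative cotangent complex}: it says $L_{B/A}$ is $p$-coCartesian under a pushout of rings. It says nothing about the square-zero extension functor $\Phi$, and the argument does not transfer: you are trying to show that a certain \emph{pullback} in $\EInfty$ base-changes to another pullback under a pushout, and pullbacks of $E_\infty$-rings do not commute with pushouts in general. Your alternative via Corollary~\ref{twain} is asserted, not proved; carrying it out would require controlling both $\pi_0(A\otimes_{A^{\eta}}B^{\eta'})$ and $L_{B/(A\otimes_{A^{\eta}}B^{\eta'})}$, neither of which is immediate. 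In the paper this pushout property is in fact the crux of the theorem: it is established via a factorization trick together with Lemma~\ref{susp}, a connectivity argument showing that any morphism in $\CDer^{+}$ inducing an equivalence on associated square-zero extensions is itself an equivalence. No analogue of Lemma~\ref{susp} appears in your argument.

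\textbf{The left fibration.} Producing candidate coCartesian lifts is not sufficient: you must show $\Phi^{+}_{1}$ is an inner fibration and that \emph{every} morphism in the source is $\Phi^{+}_{1}$-coCartesian (equivalently, that the fibers are $\infty$-groupoids). Your sketch addresses neither point. The paper's route is structurally different: it enlarges to an auxiliary $\infty$-category $\calD$ on which a genuine coCartesian fibration is obtained from a general lemma about presentable fibrations (Lemma~\ref{dwwem}); passing to the subcategory $\calD_{1}$ of coCartesian morphisms over $\Fun^{+}(\Delta^{1},\EInfty)$ then yields a left fibration. The remaining work is to identify $\overline{\CDer}^{+}$ with a full subcategory $\calD_{0}\subseteq\calD_{1}$ that is a \emph{cosieve}, so that the restricted map is still a left fibration. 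The proof that $\calD_{0}=\overline{\CDer}^{+}$ is precisely where the pushout property above enters, and is where Lemmas~\ref{susk} and~\ref{susp} are used.

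In short, both stages of your argument rely on an unproved assertion --- that $\Phi$ takes $\CDer^{+}$-morphisms to pushout squares --- and your left-fibration sketch omits the structural verifications that the paper handles via the $\calD_0\subseteq\calD_1\subseteq\calD$ filtration.
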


\begin{proof}[Proof of Proposition \ref{swum}]
Let $\eta: A \rightarrow M$ be an object of $\CDer^{+}$, and let
$f: A^{\eta} \rightarrow A$ be the image of $\eta$ under the functor $\Phi$.
Theorem \ref{h2h2} implies that $\Phi$ induces an equivalence
$\CDer^{+}_{\eta/} \rightarrow \Fun^{+}( \Delta^1, \EInfty)_{f/}$. 
It now suffices to observe that the evaluation map
$\Fun^{+}( \Delta^1, \EInfty)_{f/} \rightarrow (\EInfty)_{\widetilde{A}/}^{\conn}$
is a trivial Kan fibration.
\end{proof}

The proof of Theorem \ref{h2h2} will require a few lemmas.

\begin{lemma}\label{susk}
Let
$$ \xymatrix{ & Y \ar[dr]^{f''} & \\
X \ar[ur]^{f'} \ar[rr]^{f} & & Z }$$
be a commutative diagram in the $\infty$-category $\CDer$. If
$f$ and $f'$ belong to $\CDer^{+}$, then so does $f''$.
\end{lemma}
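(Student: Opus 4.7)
The plan is to verify the two conditions defining membership in $\CDer^{+}$ for the morphism $f''$. First, the conditions on objects: since $f$ lies in $\CDer^{+}$ and its source $X=(\eta_X\colon A\to M)$ lies in $\CDer^{+}$, its target $Z=(\eta_Z\colon C\to P)$ automatically satisfies that $C$ and $P[-1]$ are connective. So only the equivalence condition on $f''$ needs to be checked.

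Write $f\colon X\to Z$, $f'\colon X\to Y$, $f''\colon Y\to Z$ on objects
$$X=(\eta_X\colon A\to M),\quad Y=(\eta_Y\colon B\to N),\quad Z=(\eta_Z\colon C\to P).$$
By construction of $\CDer$ as the $\infty$-category of arrows in the tangent correspondence, $f'$ supplies a morphism $A\to B$ of $E_\infty$-rings together with a map $M\to N$ in $T_{\EInfty}$ compatible with it; under the identification of $T_{\EInfty}$ with pairs $(R,\text{$R$-module})$ (Theorem \ref{scummer2}), this map is adjoint to a canonical map of $B$-modules $M\otimes_A B\to N$. The hypothesis $f'\in\CDer^{+}$ says this is an equivalence. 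Similarly $f\in\CDer^{+}$ gives an equivalence $M\otimes_A C\xrightarrow{\sim}P$ of $C$-modules.

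The content of the lemma is then that the map $N\otimes_B C\to P$ induced by $f''$ is an equivalence. By functoriality of the tangent correspondence applied to the commutative triangle, this map factors (up to equivalence) as the composition
$$N\otimes_B C\;\xleftarrow{\sim}\;(M\otimes_A B)\otimes_B C\;\xrightarrow{\sim}\;M\otimes_A C\;\xrightarrow{\sim}\;P,$$
where the first map is base changed from the equivalence $M\otimes_A B\simeq N$ coming from $f'$, the second is the canonical associativity equivalence for the relative tensor product (see \S\symmetricref{comm33} for the formal properties of $\otimes_A$), and the third comes from $f$. Each of these maps is an equivalence, so the composite is as well.

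I don't foresee a main obstacle: once the defining conditions are unwound and the description of morphisms in $\CDer$ via adjoint base-change maps is recalled, the argument is a formal two-out-of-three for the relative tensor product. The only bookkeeping point that requires care is checking that the composite displayed above really does agree, up to the appropriate homotopy, with the $B$-linear map $N\otimes_B C\to P$ determined by $f''$; this is a direct consequence of the compatibility of the morphism $M\to P$ in $T_{\EInfty}$ associated to $f$ with the factorization $M\to N\to P$ prescribed by the triangle.
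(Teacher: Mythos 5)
Your proof is correct, and it is at bottom the same argument as the paper's. The paper disposes of this in one line by citing Proposition \toposref{protohermes}, which is precisely the two-out-of-three property for ($p$-)coCartesian morphisms: the condition "the induced map $M\otimes_A B\to N$ is an equivalence" in the definition of $\CDer^{+}$ is exactly the condition that the underlying edge in $T_{\EInfty}$ is coCartesian over $\EInfty$, and that general proposition says that if $M\to N$ is coCartesian then $N\to P$ is coCartesian iff $M\to P$ is. Your base-change chain $N\otimes_B C\simeq (M\otimes_A B)\otimes_B C\simeq M\otimes_A C\simeq P$ is precisely the concrete incarnation of this fact for the presentable fibration $T_{\EInfty}\to\EInfty$; so you are proving the cited general result by hand in the case needed rather than invoking it. The compatibility you flag at the end — that the displayed composite agrees with the structure map coming from $f''$ — is the content of the cited proposition and is fine, though a referee might want you to spell out the commutativity of the relevant square rather than waving at "functoriality of the tangent correspondence."
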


\begin{proof}
This follows immediately from Proposition \toposref{protohermes}.
\end{proof}

\begin{lemma}\label{susp}
Let $f: (\eta: A \rightarrow M) \rightarrow (\eta': B \rightarrow N)$ be a morphism
in $\CDer^{+}$. If the induced map $A^{\eta} \rightarrow B^{\eta}$ is an equivalence of
$E_{\infty}$-rings, then $f$ is an equivalence.
\end{lemma}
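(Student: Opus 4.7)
The plan is to apply Corollary \ref{twain} to the map $h: A \to B$. This requires verifying two things: that $h$ induces an isomorphism on $\pi_0$, and that $L_{B/A} \simeq 0$. Once $h$ is an equivalence, the hypothesis $f \in \CDer^{+}$ (namely $M \otimes_A B \simeq N$) supplies $M \simeq N$---since $M \otimes_A B \simeq M$ when $A \simeq B$---so $f$ is an equivalence in $\CDer$.

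For the $\pi_0$-isomorphism: since $M[-1]$ is connective, $M$ is $1$-connective and $\pi_0 M = 0$. From the defining pullback $A^\eta \simeq A \times_{A \oplus M} A$ of Remark \ref{intertine}, the long exact sequence on homotopy groups immediately yields $\pi_0 A^\eta \simeq \pi_0 A$; likewise $\pi_0 B^{\eta'} \simeq \pi_0 B$. The hypothesis $A^\eta \simeq B^{\eta'}$ then transports these into $\pi_0 A \simeq \pi_0 B$.

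For the vanishing $L_{B/A} \simeq 0$, I would combine two applications of Corollary \ref{longtooth} to the factorizations $A^\eta \to B^{\eta'} \to B$ and $A^\eta \to A \to B$ of the same composite map. The first factorization produces a distinguished triangle $L_{B^{\eta'}/A^\eta} \otimes_{B^{\eta'}} B \to L_{B/A^\eta} \to L_{B/B^{\eta'}}$; since $A^\eta \simeq B^{\eta'}$ forces $L_{B^{\eta'}/A^\eta} \simeq 0$, I obtain $L_{B/A^\eta} \simeq L_{B/B^{\eta'}}$. The second factorization yields $L_{A/A^\eta} \otimes_A B \to L_{B/A^\eta} \to L_{B/A}$. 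Using the identifications $L_{A/A^\eta} \simeq M$ and $L_{B/B^{\eta'}} \simeq N$, which express the universal property of the square-zero extension functor $\Phi$ at the level of cotangent complexes, this rewrites as $M \otimes_A B \to N \to L_{B/A}$. By naturality the leading map is exactly the $\CDer^{+}$ equivalence provided by $f$, so $L_{B/A} \simeq 0$.

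The main obstacle is establishing and exploiting the identification $L_{A/A^\eta} \simeq M$. This statement is the counterpart, for general derivations, of what Theorem \ref{exsqze} proves in the $n$-small range (where the comparison holds after truncation); conceptually, it records that $\eta: L_A \to M$ is the classifying derivation of the square-zero extension. To justify it here I would work from the description of $A^\eta$ via the pullback square of Definition \ref{exder}: the derivation $\eta$ becomes nullhomotopic after pullback along $A^\eta \to A$, so it factors canonically through $L_{A/A^\eta}$, and checking that this factorization is an equivalence amounts to unwinding the universal property of the pullback defining $\Phi$ in Notation \ref{stubble}.
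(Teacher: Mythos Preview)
Your approach has a genuine gap: the identification $L_{A/A^\eta}\simeq M$ that you rely on is \emph{false} in general, and your sketch of a justification does not establish it.

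Concretely, take $A=\mathbb{Q}$, $M=\mathbb{Q}[1]$, and $\eta=0$, so that $A^\eta=\mathbb{Q}[\epsilon]/(\epsilon^2)$. Using the transitivity triangle for $\mathbb{Q}\to\mathbb{Q}[\epsilon]\to\mathbb{Q}$ one finds $L_{\mathbb{Q}/\mathbb{Q}[\epsilon]}\simeq\bigl(L_{\mathbb{Q}[\epsilon]/\mathbb{Q}}\otimes_{\mathbb{Q}[\epsilon]}\mathbb{Q}\bigr)[1]$, and a direct computation (resolve $\mathbb{Q}[\epsilon]$ as $\mathbb{Q}[x]/(x^2)$) gives $L_{\mathbb{Q}/\mathbb{Q}[\epsilon]}\simeq\mathbb{Q}[1]\oplus\mathbb{Q}[2]$, not $\mathbb{Q}[1]$. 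What \emph{is} true, and what Theorem~\ref{exsqze} actually proves, is only the truncated statement $\tau_{\leq 2n+1}L_{A/A^\eta}\simeq M$ in the $n$-small range; the universal property of the pullback in Notation~\ref{stubble} produces a canonical map $L_{A/A^\eta}\to M$, but nothing forces it to be an equivalence. Without this identification your triangle $M\otimes_A B\to N\to L_{B/A}$ collapses, and the argument for $L_{B/A}\simeq 0$ does not go through.

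There is a second, smaller gap: your claim that $\pi_0 A^\eta\simeq\pi_0 A$ is also incorrect. From the fiber sequence $M[-1]\to A^\eta\to A$ one gets only a surjection $\pi_0 A^\eta\twoheadrightarrow\pi_0 A$ with kernel the image of $\pi_1 M=\pi_0(M[-1])$; the hypothesis that $M[-1]$ is connective says nothing about $\pi_1 M$. So you cannot directly read off $\pi_0 A\simeq\pi_0 B$ from $A^\eta\simeq B^{\eta'}$ either.

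The paper sidesteps both issues with a short connectivity bootstrap at the level of spectra. From the map of fiber sequences
\[
\xymatrix{ A^{\eta} \ar[r] \ar[d]^{\sim} & A \ar[r] \ar[d]^{f_0} & M \ar[d]^{f_1} \\
B^{\eta'} \ar[r] & B \ar[r] & N }
\]
one obtains $\coker(f_0)\simeq\coker(f_1)$. The $\CDer^+$ condition $N\simeq M\otimes_A B$ identifies $\coker(f_1)$ with $\coker(f_0)\otimes_A M$; since $M$ is $1$-connective, if $\coker(f_0)$ were $n$-connective but nonzero then $\coker(f_1)$ would be $(n+1)$-connective, contradicting $\coker(f_0)\simeq\coker(f_1)$. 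Hence $\coker(f_0)=0$, so $A\to B$ is an equivalence, and then $M\simeq M\otimes_A B\simeq N$. No cotangent-complex computation is needed.
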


\begin{proof}
The morphism
$f$ determines a map of distinguished triangles
$$ \xymatrix{ A^{\eta} \ar[r] \ar[d] & A \ar[r] \ar[d]^{f_0} & M \ar[d]^{f_1} \ar[r] & A^{\eta}[1] \ar[d] \\
B^{\eta'} \ar[r] & B \ar[r] & N \ar[r] & B^{\eta'}[1] }$$
in the homotopy category of spectra. Since the outer vertical maps are equivalences, we obtain
an equivalence $\alpha: \coker(f_0) \simeq \coker(f_1)$. To complete the proof, it will suffice to show that $\coker(f_0)$ vanishes. Suppose otherwise. Since $\coker(f_0)$ is connective, there exists some smallest integer $n$ such that $\pi_{n} \coker(f_0) \neq 0$. In particular, $\coker(f_0)$ is $n$-connective.

Since $f$ induces an equivalence $B \otimes_{A} M \rightarrow N$, the
cokernel $\coker(f_1)$ can be identified with $\coker(f_0) \otimes_{A} M$.
Since $M$ is $1$-connective, we deduce that $\coker(f_1)$ is $(n+1)$-connective.
Using the equivalence $\alpha$, we conclude that $\coker(f_0)$ is $(n+1)$-connective, which contradicts our assumption that $\pi_{n} \coker(f_0) \neq 0$.
\end{proof}

\begin{lemma}\label{dwwem}
Let $\calD_0 \subseteq \calD$ be small $\infty$-categories, and let $p: \calM \rightarrow \calC$ be a presentable fibration. Then:

Then:
\begin{itemize}
\item[$(1)$] The induced map 
$$q: \Fun( \calD, \calM) \rightarrow \Fun( \calD, \calC) \times_{ \Fun( \calD_0, \calC) }
\Fun( \calD_0, \calM) $$
is a coCartesian fibration.

\item[$(2)$] A morphism in $\Fun( \calD, \calM)$ is $q$-coCartesian if and only if the
induced functor $f: \calD \times \Delta^1 \rightarrow \calM$ is a 
$p$-left Kan extension of its restriction to $(\calD \times \{0\}) \coprod_{ \calD_0 \times \{0\} }
( \calD \times \Delta^1)$. 

\end{itemize}
\end{lemma}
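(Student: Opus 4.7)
The plan is to reduce the lemma to the theory of $p$-left Kan extensions developed in HTT \S 4.3.2. First I would unwind the pullback defining the codomain of $q$: a morphism in $\Fun(\calD, \calM)$ is by definition a functor $f: \calD \times \Delta^1 \to \calM$, and specifying its source in $\Fun(\calD, \calM)$ together with its image in $\Fun(\calD, \calC) \times_{\Fun(\calD_0, \calC)} \Fun(\calD_0, \calM)$ amounts to specifying $p \circ f$ in full together with the restriction of $f$ to the simplicial subset
$$ K := (\calD \times \{0\}) \coprod_{\calD_0 \times \{0\}} (\calD_0 \times \Delta^1) \subseteq \calD \times \Delta^1. $$
A $q$-lift of a given morphism in the codomain of $q$ is therefore the same datum as an extension of a prescribed map $K \to \calM$ to a map $\calD \times \Delta^1 \to \calM$, lifting a prescribed map $\calD \times \Delta^1 \to \calC$.

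For the existence of coCartesian lifts I would invoke Proposition T.4.3.2.15 (together with its pointwise variant T.4.3.2.14). Because $p$ is a presentable fibration, every fiber $\calM_C$ is a presentable $\infty$-category and each coCartesian pushforward $\calM_C \to \calM_{C'}$ preserves small colimits; these are precisely the hypotheses under which every diagram $K \to \calM$ over $\calC$ admits a pointwise $p$-left Kan extension to $\calD \times \Delta^1$. This produces the required lift and yields half of assertion (1).

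To identify $p$-left Kan extensions with $q$-coCartesian morphisms I would use the universal property T.4.3.2.7 (or rather its parametrized form): mapping out of a $p$-left Kan extension $f$ into any other extension $f'$ over $\calC$ is computed by mapping out of the common restriction to $K$. Combined with the standard description of mapping spaces in $\Fun(\calD, \calM)$ via $\Fun(\calD \times \Delta^1, \calM)$, this is exactly the defining lifting property of a $q$-coCartesian edge, proving the forward direction of (2). The converse, and hence the rest of (1), follows from uniqueness of coCartesian lifts up to equivalence: any $q$-coCartesian morphism with the same source and base image as the Kan-extension lift must be equivalent to it and is therefore itself a $p$-left Kan extension.

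The main obstacle is the bookkeeping needed to align the slice-categorical formula for pointwise $p$-left Kan extensions with the lifting criterion for $q$-coCartesian edges. At each object $(d, 1) \in \calD \times \{1\}$ one must analyse the slice $K \times_{\calD \times \Delta^1} (\calD \times \Delta^1)_{/(d,1)}$: when $d \in \calD_0$ this slice has the final object $(d,1) \in K$ itself, so the Kan-extension condition is vacuous there; when $d \notin \calD_0$ a cofinality check identifies the slice with an indexing category built from $(\calD_0)_{/d}$ and $\calD_{/d}$, and presentability of the fiber of $p$ guarantees that the required $p$-colimit exists. Once this pointwise picture is in place, both (1) and (2) follow formally.
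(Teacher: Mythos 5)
Your argument is essentially the paper's own: both proofs identify a $q$-lifting problem with the problem of forming a $p$-left Kan extension from the simplicial subset $K = (\calD \times \{0\}) \coprod_{\calD_0 \times \{0\}} (\calD_0 \times \Delta^1)$ to $\calD \times \Delta^1$, deduce the "if" direction of $(2)$ from the universal property of relative Kan extensions, observe that presentability of $p$ guarantees such extensions always exist (giving $(1)$), and then obtain the "only if" direction of $(2)$ from the essential uniqueness of $q$-coCartesian lifts. You spell out the cofinality bookkeeping and the split between $d \in \calD_0$ and $d \notin \calD_0$ that the paper leaves implicit, and you tacitly correct what appears to be a typo in the statement (the subset should indeed be $\calD_0 \times \Delta^1$ in the second leg of the pushout, not $\calD \times \Delta^1$ as printed, which would make $K$ equal to all of $\calD \times \Delta^1$). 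Otherwise the reasoning is the same.
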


\begin{proof}
The ``if'' direction of $(2)$ follows immediately from Lemma \toposref{kan1}. 
Since every diagram
$$ \xymatrix{ (\calD \times \{0\}) \coprod_{ \calD_0 \times \{0\} }
( \calD \times \Delta^1) \ar[r] \ar@{^{(}->}[d] & \calM \ar[d]^{p} \\
\calD \times \Delta^1 \ar[r] \ar@{-->}[ur] & \calC }$$
admits an extension as indicated, which is a $p$-left Kan extension, assertion
$(1)$ follows immediately. The ``only if'' direction of $(2)$ then follows from the
uniqueness properties of $q$-coCartesian morphisms.
\end{proof}

\begin{proof}[Proof of Theorem \ref{h2h2}]
Form a pullback diagram
$$ \xymatrix{ \widetilde{\CDer}^{+} \ar[r] \ar[d]^{u^{+}} & \widetilde{\CDer} \ar[d]^{u} \\
\CDer^{+} \ar[r] & \CDer(\EInfty). }$$
Since the map $u$ is a trivial Kan fibration, $u^{+}$ is also a trivial Kan fibration.

Let $\calX$ denote the full subcategory of $\Fun( \Delta^1 \times \Delta^1, \EInfty)$ spanned by those diagrams 
$$ \xymatrix{ \widetilde{A} \ar[r] \ar[d] & A \ar[d]^{\alpha} \\
A' \ar[r]^{\beta} & A''. }$$
such that $\alpha$ and $\beta$ are equivalences. The diagonal inclusion
$\Delta^1 \subseteq \Delta^1 \times \Delta^1$ induces a map
$\epsilon: \calX \rightarrow \Fun( \Delta^1, \EInfty)$. Using Proposition \toposref{lklk}, we deduce that
this map is a trivial Kan fibration. The map $\epsilon$ has a section $\upsilon$, which carries a morphism $\widetilde{A} \rightarrow A$ to the commutative diagram
$$ \xymatrix{ \widetilde{A} \ar[r] \ar[d] & A \ar[d]^{\id} \\
A \ar[r]^{\id} & A. }$$
It follows that $\upsilon$ is also an equivalence.

Let $\overline{\CDer}$ denote the full subcategory of $\Fun( \Delta^1 \times \Delta^1, \calM^{T} )$
spanned by those pullback diagrams
$$ \xymatrix{ \widetilde{A} \ar[r] \ar[d] & A \ar[d]^{\eta} \\
0 \ar[r]^{\gamma} & M }$$
such that the objects $\widetilde{A}$ and $A$ belong to $\EInfty \subseteq \calM^{T}$, the
objects $0$ and $M$ belong to $T_{\EInfty}$, the maps $\eta$ and $\gamma$ induce equivalences in $\EInfty$, and $0$ is a $p$-initial object of $T_{\EInfty}$. We have a homotopy pullback diagram
$$ \xymatrix{ \widetilde{\CDer} \ar[r]^{\upsilon'} \ar[d] & \overline{\CDer} \ar[d] \\
\Fun( \Delta^1, \EInfty) \ar[r]^{\upsilon} & \calX. }$$
Since $\upsilon$ is a categorical equivalence, we conclude that $\upsilon'$ is also a categorical equivalence.

The functor $\Phi$ is defined to be a composition
$$ \CDer( \EInfty) \stackrel{s}{\rightarrow} \widetilde{ \CDer}
\stackrel{\upsilon'}{\rightarrow} \overline{\CDer} \stackrel{s'}{\rightarrow} \calX
\stackrel{s''}{\rightarrow} \Fun( \Delta^1, \EInfty),$$
where $s$ is a section to $u$ and $s''$ is the map which carries
a diagram
$$ \xymatrix{ \widetilde{A} \ar[r] \ar[d]^{\alpha} & A \ar[d] \\
A' \ar[r] & A'' }$$
to the map $\alpha$. 
We define
$\Phi^{+}_0$ and $\Phi^{+}_1$ to be the restrictions of $\upsilon' \circ s$ and $s'' \circ s'$, respectively.
To complete the proof, it will suffice to show that $s'$ induces a left fibration of simplicial sets
$\overline{\CDer}^{+} \rightarrow \Fun^{+}( \Delta^1, \EInfty).$
To prove this, we will describe the $\infty$-category $\overline{\CDer}^{+}$ in another way.


Let $\overline{\calD}$ denote the full subcategory of
$\Fun( \Delta^1, \calM^{T} )$ spanned by morphisms of the form $\eta_0: \widetilde{A} \rightarrow 0$, 
satisfying the following conditions:
\begin{itemize}
\item[$(i)$] The object $\widetilde{A}$ belongs to $\EInfty \subseteq \calM^{T}$. 
\item[$(ii)$] Let $f: \widetilde{A} \rightarrow A'$ be the image of $\eta_0$ under the map
$\calM^{T} \rightarrow \EInfty$. Then $\widetilde{A}$ and $A'$ are connective, and
$f$ induces a surjection $\pi_0 \widetilde{A} \rightarrow \pi_0 A'$.
\item[$(iii)$] The object $0$ belongs to $T_{\EInfty} \subseteq \calM^{T}$.
Moreover, $0$ is a zero object of $T_{\EInfty} \times_{ \EInfty} \{A'\} \simeq \Mod_{A'}$.
\end{itemize}

Using Proposition \toposref{lklk}, we deduce that projection map
$\psi_0: \overline{\calD} \rightarrow \Fun'( \Delta^1, \EInfty)$ is a trivial Kan fibration, where
$\Fun'( \Delta^1, \EInfty)$ denotes the full subcategory of $\Fun( \Delta^1, \EInfty)$ spanned by those morphisms $\widetilde{A} \rightarrow A$ which satisfy condition $(ii)$.

Let $\calD$ denote the full subcategory of
$\Fun( \Delta^1 \times \Delta^1, \calM )$
spanned by those diagrams
$$ \xymatrix{ \widetilde{A} \ar[r]^{e} \ar[d]^{\eta_0} & A \ar[d]^{\eta} \\
0 \ar[r]^{\gamma} & M }$$
satisfying properties $(i)$, $(ii)$, and $(iii)$ above, where $A \in \EInfty \subseteq \calM^{T}$ and
$M \in T_{\EInfty} \subseteq \calM^{T}$. 
Restriction to the left half of the diagram yields a forgetful functor
$\psi_1: \calD \rightarrow \overline{\calD}$, which fits into a pullback square
$$ \xymatrix{ \calD \ar[d]^{\psi_1} \ar[r] & \Fun( \Delta^1 \times \Delta^1, \calM )
\ar[d]^{\psi'_1} \\
\overline{\calD} \ar[r] & \Fun( \Delta^1, \calM^{T} ) \times_{ \Fun( \Delta^1, \Delta^1) }
\Fun( \Delta^1 \times \Delta^1, \Delta^1). }$$
Applying Lemma \ref{dwwem} to the presentable fibration $\calM^T \rightarrow \Delta^1$, we conclude that $\psi'_{1}$ is a coCartesian fibration.
It follows that $\psi_1$ is also a coCartesian fibration, and that a morphism in
$\calD$ is $\psi_1$-coCartesian if and only if it satisfies criterion $(2)$ in the statement of
Lemma \ref{dwwem}.

We define subcategories
$\calD_0 \subseteq \calD_1 \subseteq \calD$ as follows:
\begin{itemize}
\item Every object of $\calD$ belongs to $\calD_1$.

\item A morphism $f$ in $\calD$ belongs to $\calD_1$ if and only if 
$(\psi_0 \circ \psi_1)(f)$ belongs to $\Fun^{+}(\Delta^1, \EInfty)$, and $f$ is $\psi_1$-coCartesian.
Since $\psi_0$ is a trivial Kan fibration, this is equivalent to the requirement that 
$f$ is $\psi_1 \circ \psi_0$-coCartesian. 

\item We define $\calD_0$ to be the full subcategory of is the full subcategory of $\calD_1$ spanned by those diagrams
$$ \xymatrix{ \widetilde{A} \ar[r] \ar[d] & A \ar[d]^{\eta} \\
0 \ar[r]^{\gamma} & M }$$
which are pullback diagrams in $\calM^{T}$, such that
$\eta$ and $\gamma$ induce equivalences in $E_{\infty}$.
\end{itemize}

Using Corollary \toposref{relativeKan}, we deduce immediately that $\psi_0 \circ \psi_1$
induces a left fibration $\psi: \calD_1 \rightarrow \Fun^{+}(\Delta^1, \calC).$
To complete the proof, it will suffice to verify the following:

\begin{itemize}
\item[$(1)$] The subcategory $\calD_0 \subseteq \calD_1$ is a {\it cosieve} in
$\calD_1$. That is, if $f: X \rightarrow Y$ is a morphism in $\calD_1$ and $X$ belongs
to $\calD_0$, then $Y$ also belongs to $\calD_0$. It follows immediately that
$\psi$ restricts to a left fibration $\calD_0 \rightarrow \Fun^{+}( \Delta^1, \calC)$.

\item[$(2)$] We have an equality
$\calD_0 = \overline{\CDer}^{+}$ of subcategories of
$\overline{\CDer}$.
\end{itemize}

In order to prove these results, we will need to analyze the structure of a morphism
$f: X \rightarrow Y$ in the $\infty$-category $\calD$ in more detail.
Let us suppose that $X,Y \in \calD$ classify diagrams
$$ \xymatrix{ \widetilde{A} \ar[r] \ar[d] & A \ar[d] & & \widetilde{B} \ar[r] \ar[d] & B \ar[d] \\
0 \ar[r] & M & & 0' \ar[r] & N}$$
in $\calM^{T}$, lying over diagrams
$$ \xymatrix{ \widetilde{A} \ar[r] \ar[d] & A \ar[d] & & \widetilde{B} \ar[r] \ar[d] & B \ar[d] \\
A' \ar[r] & A'' & & B' \ar[r] & B''}$$
in $E_{\infty}$. 
Unwinding the definitions, we see that the morphism $f$ belongs to $\calD_1$ if and only if the following conditions are satisfied:
\begin{itemize}
\item[$(a)$] The morphism $\psi(f)$ belongs to $\Fun^{+}( \Delta^1, \EInfty)$. In other words, the diagram
$$ \xymatrix{ \widetilde{A} \ar[d] \ar[r] & \widetilde{B} \ar[d] \\
A' \ar[r] & B' }$$
is a pushout square of $E_{\infty}$-rings.
\item[$(b)$] The diagram
$$ \xymatrix{ \widetilde{A} \ar[d] \ar[r] & \widetilde{B} \ar[d] \\
A \ar[r] & B }$$
is a pushout square of $E_{\infty}$-rings.
\item[$(c)$] The diagram
$$ \xymatrix{ 0 \ar[r] \ar[d] & M \ar[d]^{j} \\
0' \ar[r] & N }$$
is a pushout square in $T_{\EInfty}$. Unwinding the definitions, this is equivalent to the
requirement that the diagram
$$ \xymatrix{ A \ar[r] \ar[d] & B \ar[d] \\
A'' \ar[r] & B'' }$$
is a pushout square of $E_{\infty}$-rings, and that the induced map
$M \otimes_{A''} B'' \rightarrow N$ is an equivalence of $B''$-modules.
\end{itemize}
We observe that $(b)$ and $(c)$ are simply a translation of the requirement that
$f$ satisfies criterion $(2)$ of Lemma \ref{dwwem}.

We now prove $(1)$. Suppose that $X \in \calD_0$; we wish to prove that $Y \in \calD_0$.
It follows from $(c)$ that the map $B \rightarrow B''$ is an equivalence.
To prove that the map $B' \rightarrow B''$ is an equivalence, we consider 
the commutative diagram
$$ \xymatrix{ \widetilde{A} \ar[r] \ar[d] & A' \ar[r] \ar[d] & A'' \ar[d] \\
\widetilde{B} \ar[r] & B' \ar[r] & B''. }$$
From $(a)$ we deduce that the left square is a pushout, and from $(b)$ and $(c)$ together
we deduce that the large rectangle is a pushout. It follows that the right square is a pushout as well.
Since the map $A' \rightarrow A''$ is an equivalence (in virtue of our assumption that
$X \in \calD_0$), we conclude that $B' \rightarrow B''$ is an equivalence as desired.

To complete the proof that $Y \in \calD_0$, it will suffice to show that $Y$ is a pullback diagram. 
This is equivalent to the assertion that the induced diagram $Y'$:
$$ \xymatrix{ \widetilde{B} \ar[r] \ar[d] & B' \ar[d] \\
B \ar[r] & B'' \oplus N }$$
is a pullback diagram of commutative $\widetilde{B}$-algebras. Since the forgetful functor
$\CAlg( \Mod_{ \widetilde{B}} ) \rightarrow \Mod_{ \widetilde{B} }$ preserves limits, it will suffice to show that $Y'$ is a pullback diagram in the $\infty$-category of $\widetilde{B}$-modules. 

Let $X'$ denote the diagram
$$ \xymatrix{ \widetilde{A} \ar[r] \ar[d] & A' \ar[d] \\
A \ar[r] & A'' \oplus M}$$
determined by $X$. Since $X \in \calD_0$, $X'$ is a pullback diagram of
$E_{\infty}$-rings, and therefore a pullback diagram of $\widetilde{A}$-modules. 
Since the relative tensor product functor $\otimes_{ \widetilde{A}} \widetilde{B}$ is exact, it will suffice to show that the map $f: X \rightarrow Y$ induces an equivalence
$X' \otimes_{ \widetilde{A} } \widetilde{B} \rightarrow Y'$. In other words, it suffices to show that
each of the induced diagrams
$$ \xymatrix{ \widetilde{A} \ar[r]\ar[d] & A \ar[d] & \widetilde{A} \ar[r] \ar[d] & A' \ar[d]  & \widetilde{A} \ar[d] \ar[r] & A'' \oplus M \ar[d] \\
\widetilde{B} \ar[r] & B & \widetilde{B} \ar[r] & B' & \widetilde{B} \ar[r] & B'' \oplus N }$$
is a pushout square of $E_{\infty}$-rings. For the left and middle squares, this follows from
$(a)$ and $(b)$.
The rightmost square fits into a commutative diagram
$$ \xymatrix{ \widetilde{A} \ar[r] \ar[d] & A \ar[r] \ar[d] & A'' \oplus M \ar[d] \\
\widetilde{B} \ar[r] & B \ar[r] & B'' \oplus N }$$
where the left part of the diagram is a pushout square by $(b)$ and the right square
is a pushout by $(c)$. This completes the proof that $Y \in \calD_0$, so that $\calD_0 \subseteq \calD_1$ is a cosieve as desired.

We now prove $(2)$. We first show that the subcategories
$$ \calD_0, \overline{\CDer}^{+} \subseteq \overline{\CDer}$$
consist of the same objects. Let $X \in \overline{\CDer}$ be given by a diagram
$$ \xymatrix{ \widetilde{A} \ar[r] \ar[d] & A' \ar[d] \\
0 \ar[r] & M,  }$$
projecting to a diagram
$$ \xymatrix{ \widetilde{A} \ar[r] \ar[d] & A' \ar[d] \\
A \ar[r] & A'' }$$
in the $\infty$-category $\EInfty$. Then $X$ belongs to $\calD_0$ if and only if
the both $A$ and $\widetilde{A}$ are connective, and the map $\pi_0 \widetilde{A} \rightarrow \pi_0 A$
is surjective. On the other hand, $X$ belongs to $\widetilde{\CDer}^{+}$ if and only if both
$A'$ and $M[-1]$ are connective. The equivalence of these conditions follows immediately from the
observation that $A$ and $A'$ are equivalent, and the long exact sequence of homotopy groups associated to the exact triangle of spectra
$$ M[-1] \rightarrow \widetilde{A} \rightarrow A \rightarrow M.$$

Now let us suppose that $f: X \rightarrow Y$ is a morphism in $\overline{\CDer}$, where
both $X$ and $Y$ belong to $\calD_0$. We wish to show that $f$ belongs to
$\calD_0$ if and only if $f$ belongs to $\overline{\CDer}^{+}$. We observe that
$f$ belongs to $\calD_0$ if and only if $f$ satisfies the conditions $(a)$, $(b)$, and $(c)$ described above. On the other hand, $f$ belongs to $\overline{\CDer}^{+}$ if and only if
the induced map $M \otimes_{A''} B'' \rightarrow N$ is an equivalence. Since this follows immediately from condition $(c)$, we conclude that we have inclusions
$$ \calD_0 \subseteq \widetilde{\CDer}^{+} \subseteq \widetilde{\CDer}(\EInfty).$$

To prove the reverse inclusion, let $f: X \rightarrow Y$ be a morphism in
$\overline{\CDer}^{+}$. We wish to show that $f$ belongs to $\calD_0$. In other words, we must show that $f$ satisfies conditions $(a)$, $(b)$, and $(c)$. Since the maps
$$ A \rightarrow A'' \leftarrow A'$$
$$ B \rightarrow B'' \leftarrow B'$$
are equivalences, condition $(c)$ is automatic and conditions $(a)$ and $(b)$ are equivalent to one another. We are therefore reduced to the problem of showing that the diagram
$$ \xymatrix{ \widetilde{A} \ar[r] \ar[d] & \widetilde{B} \ar[d] \\
A \ar[r] & B }$$
is a pushout square.

The image $\Phi(f)$ can be factored as a composition $g' \circ g''$, corresponding to a diagram
$$ \xymatrix{ \widetilde{A} \ar[d] \ar[r] & \widetilde{B} \ar[r]^{\id} \ar[d] & \widetilde{B} \ar[d] \\
A \ar[r] & C \ar[r] & B,}$$
where the left square is a pushout. Let $f': X \rightarrow Z$ be a $\psi$-coCartesian
lift of $g'$, so that $f$ is homotopic to some composition
$X \stackrel{f'}{\rightarrow} Z \stackrel{f''}{\rightarrow} Y.$
We observe that $f'$ belongs to $\calD_0$. It will therefore suffice to show that
$f''$ belongs to $\calD_0$ as well. Lemma \ref{susk} implies that
$f''$ belongs to $\overline{\CDer}^{+}$. We may therefore replace $f$ by $f''$ and thereby reduce to the situation where $f$ induces an equivalence $\widetilde{A} \rightarrow \widetilde{B}$. In this case,
condition $(a)$ is equivalent to the assertion that $f$ induces an equivalence $A \rightarrow B$, which follows from Lemma \ref{susp}.
\end{proof}

\subsection{Classification of \Etale Algebras}\label{classet}

Our goal in this section is to prove the following result:

\begin{theorem}\label{turncoat}
Let $A$ be an $E_{\infty}$-ring, and let $(\EInfty)^{\mathet}_{A/}$ denote the full subcategory of
$( \EInfty)_{A/}$ spanned by the \etale morphisms $A \rightarrow B$. Then
the functor $B \mapsto \pi_0 B$ induces an equivalence of $( \EInfty)_{A/}^{\mathet}$ with
$($the nerve of$)$ the ordinary category of \etale $\pi_0 A$-algebras.
\end{theorem}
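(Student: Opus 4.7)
My strategy is to reduce to the case where $A$ is connective and then climb the Postnikov tower of $A$, with the deformation theory of square-zero extensions handling each stage via the vanishing of the cotangent complex of an \etale morphism. First I would reduce to $A$ connective: let $A' = \tau_{\geq 0} A$ be the connective cover, so the structure map $A' \to A$ induces isomorphisms on $\pi_i$ for $i \geq 0$. By Remark \ref{unter}, base change yields an equivalence $(\EInfty)^{\mathet}_{A'/} \simeq (\EInfty)^{\mathet}_{A/}$, and since $\pi_0 A' = \pi_0 A$, the theorem for $A'$ implies that for $A$.

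The central step is the following claim: if $A$ is connective and $\widetilde{A} = A^{\eta}$ is a square-zero extension associated with a derivation $\eta \colon L_A \to M$ for $M$ at least $2$-connective, then the base change $\otimes_{\widetilde{A}} A$ is an equivalence $(\EInfty)^{\mathet}_{\widetilde{A}/} \to (\EInfty)^{\mathet}_{A/}$. To construct an inverse, given an \etale morphism $A \to B$, I would invoke Proposition \ref{swum}: giving a connective $\widetilde{A}$-algebra whose underlying $A$-algebra is $B$ is the same as giving a derivation $\eta' \colon L_B \to M \otimes_A B$ whose restriction to $L_A \otimes_A B$ recovers $\eta \otimes_A \id_B$. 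Since $A \to B$ is \etalenospace, $L_{B/A} = 0$ by Proposition \ref{etrel}, so the triangle $L_A \otimes_A B \to L_B \to L_{B/A}$ forces $L_A \otimes_A B \to L_B$ to be an equivalence, whence $\eta'$ exists and is unique up to contractible choice, giving a canonical lift $\widetilde{B} = B^{\eta'}$. To verify that $\widetilde{B}$ is \etale over $\widetilde{A}$, I would compare the fiber sequences $M[-1] \to \widetilde{A} \to A$ and $(M \otimes_A B)[-1] \to \widetilde{B} \to B$: the $2$-connectivity of $M$ gives $\pi_0 \widetilde{A} \simeq \pi_0 A$ and $\pi_0 \widetilde{B} \simeq \pi_0 B$, so the condition on $\pi_0$ is inherited from $\pi_0 A \to \pi_0 B$; for flatness I tensor the long exact sequence of the first fiber sequence with the flat $\pi_0 A$-module $\pi_0 B$ and apply the five lemma, using $\pi_k M \otimes_{\pi_0 A} \pi_0 B \simeq \pi_k(M \otimes_A B)$ from Corollary \monoidref{huty}. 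Full faithfulness on \etale objects is dual: a morphism $\widetilde{B}_1 \to \widetilde{B}_2$ corresponds under Proposition \ref{swum} to a map $B_1 \to B_2$ together with compatibility data for the derivations $\eta_i'$, and that data is contractible because both derivations are canonical pullbacks of $\eta$.

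Finally I iterate along the Postnikov tower of $A$. By Corollary \ref{subte} each truncation $\tau_{\leq n} A \to \tau_{\leq n-1} A$ is a square-zero extension with $M = (\pi_n A)[n+1]$, which is at least $2$-connective, so the key claim yields equivalences $(\EInfty)^{\mathet}_{\tau_{\leq n} A/} \simeq (\EInfty)^{\mathet}_{\pi_0 A/}$ for each $n \geq 0$; the latter $\infty$-category is the nerve of the ordinary category of \etale $\pi_0 A$-algebras, since flatness over a discrete ring forces the algebra to be discrete. To pass from the tower to $A$ itself: for full faithfulness, given $B, C$ \etale over $A$, write $C \simeq \varprojlim \tau_{\leq n} C$ so that $\bHom_A(B, C) \simeq \varprojlim \bHom_A(B, \tau_{\leq n} C)$, and show each transition is an equivalence by the same deformation-theoretic argument ($L_{B/A} = 0$ kills the obstructions to lifting through each square-zero extension $\tau_{\leq n} C \to \tau_{\leq n-1} C$); for essential surjectivity, given an \etale $\pi_0 A$-algebra $\overline{B}$, inductively construct compatible \etale lifts $B^{(n)}$ over $\tau_{\leq n} A$ via the key claim and set $B = \varprojlim B^{(n)}$, where flatness of $B$ over $A$ follows from the stabilization $\pi_k B^{(n)} \simeq \pi_k A \otimes_{\pi_0 A} \overline{B}$ for $n \geq k$. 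I expect the main obstacle to be the verification that the lift $\widetilde{B}$ is \etale over $\widetilde{A}$: the flatness check requires a careful identification of the base-changed fiber sequence and a five-lemma argument, and it is precisely the $2$-connectivity hypothesis on $M$ — which ensures $\pi_0$ is preserved — that allows the \etale condition to propagate cleanly at each stage of the tower.
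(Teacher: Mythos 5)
Your proposal is correct and follows essentially the same route as the paper: reduce to $A$ connective via Remark \ref{unter}, climb the Postnikov tower using Corollary \ref{subte} and the convergence Proposition \ref{sumptine}, and at each square-zero step use Proposition \ref{swum} together with the vanishing $L_{B/A} = 0$ (Proposition \ref{etrel}) to produce a canonical lift of \etale algebras. The paper packages this as Propositions \ref{wiser} and \ref{wisert}, proved by exhibiting a subcategory $\CDer^{\mathet}$ of derivations from which both projection functors (to \etale $\widetilde{A}$-algebras and to \etale $A$-algebras) are fully faithful with the correct essential image (Lemmas \ref{trupe}, \ref{swiper}), so the equivalence falls out without explicitly constructing an inverse. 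Your version constructs the inverse by hand and checks \etaleness of the lift $\widetilde{B}$ via a five-lemma comparison of long exact sequences, where the paper's Lemma \ref{preturm} instead invokes Lazard's criterion by filtering discrete $\pi_0 A^{\eta}$-modules; both checks are valid, and yours is somewhat more elementary. The only small deviation is that you require $M$ to be $2$-connective (which is all the Postnikov tower provides, and which keeps $\pi_0$ fixed), whereas the paper's Proposition \ref{wisert} applies to any square-zero extension of connective $E_{\infty}$-rings, allowing $\pi_0 \widetilde{A} \to \pi_0 A$ to be a genuine surjection; this extra generality isn't needed here but is what makes the paper's statement a cleaner standalone result.
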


The proof will occupy the remainder of this section. 
Recall that if $\calC$ is a presentable $\infty$-category, then we say that
{\it Postnikov towers in $\calC$ are convergent} if the canonical functor
$$ \calC \rightarrow \varprojlim \{ \tau_{\leq n} \calC \}_{n \geq 0}$$
is an equivalence of $\infty$-categories (we refer the reader to \S \toposref{truncintro} for a more detailed discussion of this condition).

\begin{example}\label{ape2}
Let $\calC$ be a presentable $\infty$-category equipped with an accessible t-structure, and
let $\calC_{\geq 0}$ be the full subcategory of $\calC$ spanned by the connective objects. Then
Postnikov towers in $\calC_{\geq 0}$ are convergent if and only if $\calC$ is {\it left complete}
(see \S \stableref{stable6.5}).
\end{example}

\begin{lemma}\label{ape1}
Let $\calC$ be a symmetric monoidal $\infty$-category. Assume that $\calC$ is presentable, and that the tensor product $\otimes: \calC \times \calC \rightarrow \calC$ preserves small colimits in each variable.
If Postnikov towers in $\calC$ are convergent, then Postnikov towers in $\CAlg( \calC)$ are convergent.
\end{lemma}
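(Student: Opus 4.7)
The plan is to transfer Postnikov convergence from $\calC$ to $\CAlg(\calC)$ via the forgetful functor $U: \CAlg(\calC) \to \calC$. Recall that $U$ admits a left adjoint $F$ (the free commutative algebra functor), that $U$ preserves small limits, and that $U$ is conservative (Corollary \symmetricref{jumunj22}). The adjunction $F \dashv U$ implies immediately that $U$ carries $n$-truncated objects to $n$-truncated objects: if $A \in \CAlg(\calC)$ is $n$-truncated, then for any $X \in \calC$ the equivalence $\bHom_{\calC}(X, U(A)) \simeq \bHom_{\CAlg(\calC)}(F(X), A)$ exhibits $U(A)$ as $n$-truncated in $\calC$. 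In particular, the Postnikov tower $\{\tau_{\leq n}^{\CAlg} A\}_{n \geq 0}$ of any $A \in \CAlg(\calC)$ maps under $U$ to a tower of $n$-truncated objects of $\calC$ lying under $U(A)$.

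The crux is to identify $U(\tau_{\leq n}^{\CAlg} A)$ with $\tau_{\leq n}^{\calC} U(A)$. I would do this by verifying that the localization $\tau_{\leq n}^{\calC}: \calC \to \calC_{\leq n}$ is compatible with the symmetric monoidal structure, i.e.\ that the class of $\tau_{\leq n}$-equivalences is closed under tensoring with an arbitrary object of $\calC$. Granting this, the localization $\tau_{\leq n}^{\calC}$ is a symmetric monoidal functor, so $\calC_{\leq n}$ inherits a symmetric monoidal structure for which the inclusion $\calC_{\leq n} \hookrightarrow \calC$ is lax symmetric monoidal. Passing to commutative algebra objects yields an adjunction between $\CAlg(\calC)$ and $\CAlg(\calC_{\leq n})$ that identifies the latter with $\CAlg(\calC)_{\leq n}$; the resulting left adjoint is precisely $\tau_{\leq n}^{\CAlg}$, and its construction makes clear that $U \circ \tau_{\leq n}^{\CAlg} \simeq \tau_{\leq n}^{\calC} \circ U$.

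With this compatibility in hand, the proof concludes quickly: for $A \in \CAlg(\calC)$, the canonical map $A \to \varprojlim_n \tau_{\leq n}^{\CAlg} A$ is sent by $U$ to the map $U(A) \to \varprojlim_n U(\tau_{\leq n}^{\CAlg} A) \simeq \varprojlim_n \tau_{\leq n}^{\calC} U(A)$, which is an equivalence by the hypothesis that Postnikov towers in $\calC$ are convergent. Since $U$ preserves limits and is conservative, the original map is an equivalence in $\CAlg(\calC)$, so Postnikov towers in $\CAlg(\calC)$ are convergent as well. The main obstacle will be the symmetric-monoidal-localization step, since no such compatibility is explicit in the hypotheses; one must extract it either from the hypothesis that Postnikov towers in $\calC$ converge (by writing $X \otimes Y \simeq \varprojlim_n \tau_{\leq n}(X \otimes Y)$ and reducing the question to truncated factors where the tensor product behaves well) or by invoking this as the intended standing hypothesis in the applications of the lemma.
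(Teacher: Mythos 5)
Your argument runs parallel to the paper's. Both reduce the claim to the assertion that the forgetful functor $U\colon \CAlg(\calC)\to\calC$ commutes with the truncations $\tau_{\leq n}$, and both then conclude using the facts that $U$ preserves small limits and is conservative. The paper packages this by working with the $\infty$-categories $\Post(\calC)$, $\Post^{+}(\calC)$ of Postnikov towers (which it asserts inherit symmetric monoidal structures) and citing the crucial identification $\Post(\CAlg(\calC)) \simeq \CAlg(\Post(\calC))$ to Proposition \symmetricref{superjumpp} rather than rederiving it.

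The concern you raise in your final sentence is the genuine gap, and your first proposed fix (deducing the monoidal compatibility from Postnikov convergence in $\calC$) cannot succeed. Take $\calC = \Mod_{\Q}$, the $\infty$-category of $\Q$-module spectra, with the $\Q$-linear tensor product: this is presentably symmetric monoidal, $\otimes$ preserves colimits separately in each variable, and Postnikov towers in $\calC$ converge. But take $A \in \CAlg(\calC)$ with $\pi_{\ast}A \simeq \Q[t^{\pm 1}]$, $|t| = 2$. For every finite $n$, any map of commutative algebras $A \to B$ with $B$ $n$-truncated must kill $t^{k}$ once $2k > n$; since $t^{k}t^{-k} = 1$, this forces $B \simeq 0$. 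Thus $\tau_{\leq n}^{\CAlg(\calC)} A \simeq 0$ for all $n$, so $A$ is not the limit of its Postnikov tower in $\CAlg(\calC)$ (the conclusion of the lemma fails for this $\calC$), and correspondingly $U$ does not commute with truncation, since $\tau_{\leq 0}^{\calC}U(A) \simeq \Q[t^{-1}] \neq 0$. So the compatibility of $\otimes$ with truncation is a genuine additional hypothesis, not a consequence of the stated ones; it should be regarded as a standing assumption folded into the cited Proposition \symmetricref{superjumpp}, which is your second alternative. In the intended application (Proposition \ref{sumptine}, where $\calC$ is the $\infty$-category of connective spectra) the compatibility holds automatically: the fiber of a $\tau_{\leq n}$-equivalence is $(n+1)$-connective, and tensoring an $(n+1)$-connective object with a connective object remains $(n+1)$-connective.
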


\begin{proof}
Using Remark \symmetricref{funtime} and Proposition \symmetricref{localjerk2}, we see that
$\Post^{+}(\calC)$ and $\Post(\calC)$ inherit a symmetric monoidal structures. Moreover, we have a 
commutative diagram
$$ \xymatrix{ \Post^{+}( \CAlg(\calC) ) \ar@{=}[r] \ar[d] & \CAlg( \Post^{+}(\calC) ) \ar[d] \\
\Post( \CAlg(\calC) ) \ar@{=}[r] & \CAlg( \Post(\calC) ) }$$
where the horizontal equivalences result from the observation that a map 
$f: A \rightarrow B$ in $\CAlg(\calC)$ exhibits $B$ as an $n$-truncation of $A$ in
$\CAlg(\calC)$ if and only if it exhibits $B$ as an $n$-truncation of $A$ in $\calC$ (see Proposition \symmetricref{superjumpp}).
Because Postnikov towers in $\calC$ are convergent, the right vertical map is an equivalence of $\infty$-categories. It follows that the left vertical map is an equivalence of $\infty$-categories as well.
\end{proof}

\begin{proposition}\label{sumptine}
Postnikov towers are convergent in the $\infty$-category of connective $E_{\infty}$-rings.
\end{proposition}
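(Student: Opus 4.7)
The plan is to reduce the statement to Lemma \ref{ape1} by identifying $\EInfty^{\conn}$ with the $\infty$-category of commutative algebra objects in a well-behaved symmetric monoidal $\infty$-category, namely the $\infty$-category $\Spectra^{\conn}$ of connective spectra. First I would observe that an $E_\infty$-ring is connective if and only if its underlying spectrum is connective, and that the smash product of two connective spectra is again connective; this gives $\Spectra^{\conn}$ the structure of a presentable symmetric monoidal $\infty$-category whose tensor product preserves small colimits separately in each variable (since the sphere spectrum is connective and the smash product of spectra has these properties). Under this identification one has a canonical equivalence $\EInfty^{\conn} \simeq \CAlg( \Spectra^{\conn} )$.

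Next I would verify the hypothesis of Lemma \ref{ape1} for $\calC = \Spectra^{\conn}$, i.e.\ that Postnikov towers in $\Spectra^{\conn}$ are convergent. By Example \ref{ape2}, this is equivalent to the assertion that the standard t-structure on $\Spectra$ is left complete. This is the well-known Whitehead-style fact that a connective spectrum $X$ is recovered as the homotopy limit of its Postnikov tower $\{\tau_{\leq n} X\}$: the fiber of $X \to \tau_{\leq n} X$ is $(n+1)$-connective, and since the inverse limit of a tower of spectra whose connectivity tends to infinity vanishes, the induced map $X \to \varprojlim \tau_{\leq n} X$ is an equivalence. (Formally, one invokes the results of \S \stableref{stable6.5}.)

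Finally, applying Lemma \ref{ape1} to $\calC = \Spectra^{\conn}$ yields that Postnikov towers in $\CAlg(\Spectra^{\conn}) \simeq \EInfty^{\conn}$ are convergent, which is the desired conclusion. The only conceptually nontrivial input here is the left completeness of the t-structure on $\Spectra$; everything else is formal manipulation using the identifications above and the fact that truncations in $\CAlg$ are computed on underlying objects (which was already used in the proof of Lemma \ref{ape1}).
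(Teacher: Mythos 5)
Your proof is correct and follows exactly the same route as the paper: identify $\EInfty^{\conn}$ with $\CAlg(\Spectra^{\conn})$, verify the hypotheses of Lemma \ref{ape1}, reduce via Example \ref{ape2} to the left completeness of the t-structure on $\Spectra$, and invoke that fact (the paper cites it as Proposition \stableref{specster1}). The paper simply states "Combine Lemma \ref{ape1}, Example \ref{ape2}, and Proposition \stableref{specster1}"; you have spelled out precisely that combination.
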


\begin{proof}
Combine Lemma \ref{ape1}, Example \ref{ape2}, and Proposition \stableref{specster1}.
\end{proof}

\begin{remark}\label{urtur}
Let $f: A \rightarrow B$ be a map of connective $E_{\infty}$-rings. Then
$f$ is \etale if and only if each of the induced maps $\tau_{\leq n} A \rightarrow \tau_{\leq n} B$
is \etalenospace.
\end{remark}

We now sketch the proof of Theorem \ref{turncoat}. First, using Remark \ref{unter}, we may reduce to the case where $A$ is connective. For each $0 \leq n \leq \infty$, let $\calC_{n}$ denote the full subcategory of $\Fun( \Delta^1, \EInfty)$ spanned by those morphisms $f: B \rightarrow B'$ such that
$B$ and $B'$ are connective and $n$-truncated, and let $\calC_{n}^{\mathet}$ denote the full subcategory of $\calC_{n}$ spanned by those morphisms which are also \etalenospace.
Using Proposition \ref{sumptine} and Remark \toposref{sumptime}, we deduce that
$\calC_{\infty}$ is the homotopy inverse limit of the tower
$$ \ldots \rightarrow \calC_{2} \stackrel{ \tau_{\leq 1}}{\rightarrow}
\calC_{1} \stackrel{\tau_{\leq 0}}{\rightarrow} \calC_0.$$
Using Remark \ref{urtur}, we deduce that $\calC_{\infty}^{\mathet}$ is the homotopy inverse limit of the restricted tower
$$ \ldots \rightarrow \calC^{\mathet}_{2} \rightarrow \calC^{\mathet}_{1} \rightarrow \calC^{\mathet}_{0}.$$
Choose a Postnikov tower
$$ A \rightarrow \ldots \rightarrow \tau_{\leq 2} A \rightarrow \tau_{\leq 1} A \rightarrow \tau_{\leq 0} A.$$
For $0 \leq n \leq \infty$, let $\calD_{n}$ denote the fiber product
$\calC_{n}^{\mathet} \times_{ \EInfty} \{ \tau_{\leq n} A \},$ so that we can identify
$\calD_{n}$ with the full subcategory of $\EInfty^{\tau_{\leq n} A/}$ spanned by the
\etale morphisms $f: \tau_{\leq n} A \rightarrow B$. It follows from the above analysis that
$\calD_{\infty}$ is the homotopy inverse limit of the tower
$$ \ldots \rightarrow \calD_{2} \stackrel{g_1}{\rightarrow} \calD_{1} \stackrel{g_0}{\rightarrow} \calD_0. $$
We wish to prove that the truncation functor induces an equivalence $\calD_{\infty} \rightarrow \calD_0$.
For this, it will suffice to show that each of the functors $g_i$ is an equivalence. Consequently, 
Theorem \ref{turncoat} follows from the following slightly weaker result:

\begin{proposition}\label{wiser}
Let $A$ be a connective $E_{\infty}$-ring. Suppose that $A$ is $(n+1)$-truncated for some
$n \geq 0$. Then the truncation functor
$\tau_{\leq n}: \EInfty^{A/} \rightarrow \EInfty^{ \tau_{\leq n} A/}$
restricts to an equivalence from the $\infty$-category of \etale $A$-algebras to the $\infty$-category of \etale $\tau_{\leq n} A$-algebras.
\end{proposition}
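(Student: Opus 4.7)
The plan is to recognize $A \to A_0 := \tau_{\leq n} A$ as a square-zero extension and then combine Proposition \ref{swum} with the vanishing result of Proposition \ref{etrel}. Since $A$ is $(n+1)$-truncated, we have $A \simeq \tau_{\leq n+1} A$, so Corollary \ref{subte} realizes $A \to A_0$ as a square-zero extension: there exists a derivation $\eta \colon A_0 \to M$ with $M \simeq (\pi_{n+1} A)[n+2]$ such that $A \simeq A_0^{\eta}$. Because $A_0$ is connective and $M[-1] \simeq (\pi_{n+1} A)[n+1]$ is connective (using $n \geq 0$), the derivation $\eta$ lies in $\CDer^{+}$.

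Proposition \ref{swum} therefore gives an equivalence
$$ \Phi_{\eta}\colon \CDer^{+}_{\eta/} \xrightarrow{\sim} (\EInfty)^{\conn}_{A/}. $$
An object of $\CDer^{+}_{\eta/}$ consists of a connective $A_0$-algebra $B$ together with a derivation $\eta'\colon L_{B} \to M \otimes_{A_0} B$ extending $\eta$ in the sense of Definition \ref{uppa}; the corresponding $A$-algebra is $B' := B^{\eta'}$, which fits into an exact triangle $(M \otimes_{A_0} B)[-1] \to B' \to B$. Unwinding the construction of the inverse via Theorem \ref{h2h2}, one has $B \simeq B' \otimes_{A} A_0$, so $\Phi_{\eta}$ carries the forgetful functor $(B, \eta') \mapsto B$ to the base change $B' \mapsto B' \otimes_{A} A_0$. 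For any flat $A$-algebra $B'$, a homotopy-group check using $\pi_{k} A_0 = \pi_{k} A$ for $k \leq n$ identifies $B' \otimes_{A} A_0$ with $\tau_{\leq n} B'$; thus, on flat (in particular étale) $A$-algebras the equivalence $\Phi_{\eta}$ intertwines the forgetful functor with the truncation $\tau_{\leq n}$.

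The essential input is Proposition \ref{etrel}: when $B$ is étale over $A_0$, the relative cotangent complex $L_{B/A_0}$ vanishes, so $L_{A_0} \otimes_{A_0} B \to L_{B}$ is an equivalence and the space of extensions $\eta'$ of $\eta$ is contractible. Consequently the forgetful functor restricts to an equivalence between the full subcategory of $\CDer^{+}_{\eta/}$ spanned by those $(B, \eta')$ with $B$ étale over $A_0$ and the $\infty$-category of étale $A_0$-algebras. In view of the previous paragraph, it remains only to identify the essential image under $\Phi_{\eta}$ of these étale pairs with the étale $A$-algebras inside $(\EInfty)^{\conn}_{A/}$.

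One direction is straightforward: if $B'$ is étale over $A$, then flatness gives $\pi_{k}(B' \otimes_{A} A_0) = \pi_{k} A_0 \otimes_{\pi_{0} A_0} \pi_{0} B'$, which together with $\pi_{0} A = \pi_{0} A_0$ shows $B' \otimes_{A} A_0$ is flat over $A_0$ with the same étale map on $\pi_0$. The main technical step is the converse: given $B$ étale over $A_0$ with its unique extension $\eta'$, we must verify that $B' := B^{\eta'}$ is étale over $A$. Since $M \simeq (\pi_{n+1} A)[n+2]$ and $B$ is flat over $A_0$, the module $M \otimes_{A_0} B$ has only $\pi_{n+2}(M \otimes_{A_0} B) \cong \pi_{n+1} A \otimes_{\pi_{0} A_0} \pi_{0} B$ nonzero. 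The exact triangle for $B'$ then yields $\pi_{k} B' \cong \pi_{k} B$ for $k \leq n$, $\pi_{n+1} B' \cong \pi_{n+1} A \otimes_{\pi_{0} A_0} \pi_{0} B$, and $\pi_{k} B' = 0$ for $k > n+1$. Comparing with the flatness formula $\pi_{k} A \otimes_{\pi_{0} A} \pi_{0} B'$ (using $\pi_{0} B' = \pi_{0} B$ and $\pi_{k} A = \pi_{k} A_0$ for $k \leq n$) shows $B'$ is flat over $A$, and its reduction on $\pi_0$ is the étale map $\pi_{0} A_0 \to \pi_{0} B$. Hence $B'$ is étale over $A$, and composing with the previous paragraph the truncation functor $\tau_{\leq n}$ restricts to the claimed equivalence.
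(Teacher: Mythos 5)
Your argument is correct and follows essentially the same route as the paper: it recognizes $A \to \tau_{\leq n} A$ as a square-zero extension (Corollary \ref{subte}), invokes Proposition \ref{swum} to pass to the overcategory $\CDer^{+}_{\eta/}$, uses Proposition \ref{etrel} to show the extension of derivations is unique along \etale maps, and finishes with a flatness/homotopy-group comparison. The only organizational difference is that the paper first isolates the more general Proposition \ref{wisert} (about arbitrary square-zero extensions) and packages the flatness and \etale-compatibility checks into Lemmas \ref{preturm}, \ref{swiper}, \ref{swide}, and \ref{trupe}, whereas you specialize to this particular extension and carry out those computations inline; the minor citation of Definition \ref{uppa} should instead point to the definition of $\CDer^{+}$.
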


Let $A$ be as in the statement of Proposition \ref{wiser}. 
The natural transformation $\id \rightarrow \tau_{\leq n}$ induces, for every $A$-algebra
$B$, a commutative diagram
$$ \xymatrix{ A \ar[r] \ar[d] & \tau_{\leq n} A \ar[d]  \\
B \ar[r] & \tau_{\leq n} B, }$$
which determines a map $B \otimes_{A} \tau_{\leq n} A \rightarrow \tau_{\leq n} B$. If
$B$ is flat over $A$, then this map is an equivalence (Corollary \monoidref{siwe}). Consequently,
when restricted to {\em flat} commutative $A$-algebras, the truncation functor
$\tau_{\leq n}: \EInfty^{A/} \rightarrow \EInfty^{ \tau_{\leq n} A/}$
can be identified with the base change functor $M \mapsto M \otimes_{A} (\tau_{\leq n} A)$. 
We observe that, since $A$ is assumed to be $(n+1)$-truncated, the map
$A \rightarrow \tau_{\leq n} A$ is a square-zero extension (Corollary \ref{subte}). Proposition \ref{wiser} is therefore an immediate consequence of the following result:

\begin{proposition}\label{wisert}
Let $f: \widetilde{A} \rightarrow A$ be a square-zero extension of connective $E_{\infty}$-rings.
Then the relative tensor product functor
$$ \bigdot \otimes_{ \widetilde{A} } A: \EInfty^{\widetilde{A}/} \rightarrow \EInfty^{A/}$$
induces an equivalence from the $\infty$-category of \etale
$\widetilde{A}$-algebras to the $\infty$-category of \etale $A$-algebras.
\end{proposition}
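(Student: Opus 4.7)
The approach is to apply Proposition \ref{swum} to recast the base-change functor in terms of derivations, and then exploit the vanishing of the relative cotangent complex of an \etale morphism (Proposition \ref{etrel}) to produce unique lifts. Write $\widetilde{A} \simeq A^{\eta}$ for a derivation $\eta : A \to M$ with $M$ a $1$-connective $A$-module (forced by the connectivity of both $\widetilde{A}$ and $A$ via analysis of the fiber of $\widetilde{A} \to A$). Then $(\eta: A \to M)$ lies in $\CDer^{+}$, so Proposition \ref{swum} supplies an equivalence
$$ \CDer^{+}_{\eta/} \;\xrightarrow{\;\simeq\;}\; (\EInfty)^{\conn}_{\widetilde{A}/}, $$
under which the base-change functor $\widetilde{B} \mapsto \widetilde{B} \otimes_{\widetilde{A}} A$ corresponds to the forgetful map $(B, \eta') \mapsto B$. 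Here an object of the source is a pair $(B, \eta')$ consisting of a connective $A$-algebra $B$ together with a derivation $\eta': L_B \to M \otimes_A B$ whose restriction along $L_A \otimes_A B \to L_B$ recovers $\eta \otimes_A B$.

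If $B$ is \etale over $A$, then $L_{B/A} = 0$ by Proposition \ref{etrel}, so the natural map $L_A \otimes_A B \to L_B$ is an equivalence; the space of extensions $\eta'$ of $\eta \otimes_A B$ is therefore contractible, with canonical choice $\eta' = \eta \otimes_A B$. Setting $\widetilde{B} := B^{\eta'}$ produces a connective $\widetilde{A}$-algebra with $\widetilde{B} \otimes_{\widetilde{A}} A \simeq B$. Via the above equivalence, the assignment $B \mapsto \widetilde{B}$ is fully faithful into $(\EInfty)^{\conn}_{\widetilde{A}/}$, and its essential image consists of the connective $\widetilde{A}$-algebras whose base change to $A$ is \etale. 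By Remark \ref{etalepush}, this essential image contains every \etale $\widetilde{A}$-algebra, so the proposition reduces to showing that the lift $\widetilde{B}$ is itself \etale over $\widetilde{A}$ whenever $B$ is \etale over $A$.

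To verify this final assertion, Proposition \ref{basechunge} applied to the evident pushout square gives $L_{\widetilde{B}/\widetilde{A}} \otimes_{\widetilde{B}} B \simeq L_{B/A} = 0$; since $L_{\widetilde{B}/\widetilde{A}}$ is connective (Proposition \ref{spate}) and $\pi_0 \widetilde{B} \to \pi_0 B$ is surjective with square-zero kernel $\pi_1 M \otimes_{\pi_0 A} \pi_0 B$, an iterated Nakayama argument on Postnikov layers forces $L_{\widetilde{B}/\widetilde{A}} = 0$. To upgrade this vanishing to \etaleness, I argue at two levels. On $\pi_0$: the surjection $\pi_0 \widetilde{A} \to \pi_0 A$ is a classical square-zero thickening, and by topological invariance of the \etale site combined with the vanishing of $\Omega_{\pi_0 B/\pi_0 A}$, the ring $\pi_0 \widetilde{B}$ is identified with the unique \etale lift of $\pi_0 B$ over $\pi_0 \widetilde{A}$. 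Flatness of $\widetilde{B}$ over $\widetilde{A}$ then follows from the five-lemma applied to the long exact sequences coming from the fiber sequence $M[-1] \otimes_A B \to \widetilde{B} \to B$ and from the fiber sequence $M[-1] \to \widetilde{A} \to A$ tensored over $\pi_0 \widetilde{A}$ with the now-flat $\pi_0 \widetilde{B}$. This last step is the main obstacle: while the derivation-level formalism cleanly yields the vanishing of $L_{\widetilde{B}/\widetilde{A}}$, extracting honest \etaleness requires a careful combination of classical nilpotent deformation theory on $\pi_0$ with a five-lemma comparison on higher homotopy groups.
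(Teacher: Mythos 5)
Your overall architecture matches the paper's: reduce to the derivation category $\CDer^{+}_{\eta/}$ via Proposition \ref{swum}, use Proposition \ref{etrel} to see that the datum of the lifted derivation $\eta'$ over an \etale $A$-algebra $B$ is essentially unique, obtain the lift $\widetilde{B}$, and then the whole proposition hinges on showing $\widetilde{B}$ is \etale over $\widetilde{A}$. (The paper packages the first half via the left fibration $\CDer^{\mathet} \to \EInfty^{\mathet}$ of Lemma \ref{swide} and Lemma \ref{trupe}, but this amounts to the same use of $L_{B/A} \simeq 0$.)

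Where you diverge from the paper --- and where the gap sits --- is in that final step. The paper isolates it as Lemma \ref{swiper}, which in turn rests on Lemma \ref{preturm}: flatness of $A^{\eta} \to B^{\eta'}$ is verified by Lazard's criterion, reducing discreteness of $X \otimes_{A^{\eta}} B^{\eta'}$ to the case where $X$ is an $A$-module by filtering along the square-zero ideal. This is a complete, short argument. Your replacement has two problems. First, the computation $L_{\widetilde{B}/\widetilde{A}} \simeq 0$ (Proposition \ref{basechunge} plus Nakayama) is a dead end: vanishing of the relative cotangent complex does not imply \etaleness --- the converse of Proposition \ref{etrel} fails --- and indeed you never use this vanishing in what follows. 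Second, the step ``by topological invariance of the \etale site combined with the vanishing of $\Omega_{\pi_0 B/\pi_0 A}$, the ring $\pi_0 \widetilde{B}$ is identified with the unique \etale lift'' is not yet an argument: a priori $\pi_0 \widetilde{B}$ is just some square-zero extension of $\pi_0 B$ sitting over $\pi_0 \widetilde{A}$, and to identify it with the canonical (flat, \etale) deformation one must produce a $\pi_0 \widetilde{A}$-algebra map from the \etale lift into $\pi_0 \widetilde{B}$ by the infinitesimal lifting criterion and then verify it induces an isomorphism on the kernels $\pi_1 M \otimes_{\pi_0 A} \pi_0 B$. You acknowledge this is ``the main obstacle'' but do not carry it out; the five-lemma comparison on higher homotopy groups also presupposes the $\pi_0$-flatness you have not yet established. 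In short, the honest content of Lemma \ref{preturm} is exactly what is missing from your proof, and the Lazard-theorem route is both cleaner and avoids the circularity risk inherent in invoking classical deformation-theoretic lifting results that are themselves close relatives of the statement being proved.
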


To prove Proposition \ref{wisert}, we first need to establish some facts about the $\infty$-category $\CDer$ introduced in Notation \ref{stablecamp}.

\begin{lemma}\label{preturm}
Suppose given a morphism $\phi: (\eta: A \rightarrow M )
\rightarrow (\eta': B \rightarrow N)$ between derivations in $\EInfty$. Assume that
$A$, $B$, and $M[-1]$ are connective, and that $\phi$ induces an equivalence $M \otimes_{A} B \rightarrow N$ (so that $N[-1]$ is also connective). The following conditions are equivalent:
\begin{itemize}
\item[$(1)$] The map $\phi$ induces a flat map $f: A \rightarrow B$.
\item[$(2)$] The map $\phi$ induces a flat map $f': A^{\eta} \rightarrow B^{\eta'}$.
\end{itemize}
\end{lemma}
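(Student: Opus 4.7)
The plan is to reformulate the problem as a statement about a pushout square of $E_{\infty}$-rings, then handle the two directions separately. Under the hypotheses ($A, B$ connective, $M[-1]$ connective, and $\phi$ inducing $M \otimes_A B \simeq N$, from which $N[-1]$ is connective), the morphism $\phi$ lies in the subcategory $\CDer^+ \subseteq \CDer$. Invoking Theorem \ref{h2h2}, $\phi$ corresponds to a pushout square
\[
\xymatrix{A^{\eta} \ar[r]^{f'} \ar[d]_{g} & B^{\eta'} \ar[d]^{g'} \\ A \ar[r]^{f} & B}
\]
in $\EInfty$; in particular, $f$ is the base change of $f'$ along the square-zero extension $g: A^{\eta} \to A$.

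The implication $(2) \Rightarrow (1)$ is then an immediate consequence of the fact that flatness is preserved under base change (Proposition \monoidref{urwise}).

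For the converse $(1) \Rightarrow (2)$, suppose $f$ is flat. I will verify the flatness criterion for $f'$ at the level of homotopy groups: $\pi_0 A^{\eta} \to \pi_0 B^{\eta'}$ is flat, and for every integer $n$ the natural map $\pi_n A^{\eta} \otimes_{\pi_0 A^{\eta}} \pi_0 B^{\eta'} \to \pi_n B^{\eta'}$ is an isomorphism. The defining fiber sequences $M[-1] \to A^{\eta} \to A \stackrel{\eta}{\to} M$ and $N[-1] \to B^{\eta'} \to B \stackrel{\eta'}{\to} N$ yield long exact sequences of homotopy groups; combining these with the identification $N \simeq M \otimes_A B$ and the flatness of $f$ (which gives $\pi_n B \simeq \pi_n A \otimes_{\pi_0 A} \pi_0 B$ and $\pi_n N \simeq \pi_n M \otimes_{\pi_0 A} \pi_0 B$) displays the homotopy groups of $B^{\eta'}$ as built from those of $A^{\eta}$ and $\pi_0 B$ in a way compatible with the desired tensor product identification, yielding the second condition.

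The main obstacle is checking that $\pi_0 B^{\eta'}$ is flat over $\pi_0 A^{\eta}$. By Remark \ref{toaster}, the induced multiplication on $\pi_0 A^{\eta}$ makes the surjection $\pi_0 A^{\eta} \twoheadrightarrow \pi_0 A$ into an honest square-zero extension of ordinary commutative rings, and similarly for $\pi_0 B^{\eta'} \twoheadrightarrow \pi_0 B$; moreover, the pushout identifies these structures compatibly, so that the kernel of $\pi_0 A^{\eta} \to \pi_0 B^{\eta'}$ agrees with the base change of the kernel of $\pi_0 A^{\eta} \to \pi_0 A$ along $\pi_0 A \to \pi_0 B$. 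The classical local criterion for flatness along square-zero thickenings of ordinary rings then yields flatness of $\pi_0 A^{\eta} \to \pi_0 B^{\eta'}$ from flatness of $\pi_0 A \to \pi_0 B$, completing the argument.
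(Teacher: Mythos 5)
Your $(2) \Rightarrow (1)$ direction matches the paper exactly: both invoke Theorem \ref{h2h2} to realize $\phi$ as a pushout square and appeal to Proposition \monoidref{urwise}. For $(1) \Rightarrow (2)$, however, you take a genuinely different route. The paper's argument is indirect: it invokes Lazard's theorem (Theorem \monoidref{lazard}), reducing flatness of $f'$ to showing that $X \otimes_{A^{\eta}} B^{\eta'}$ is discrete for every discrete $A^{\eta}$-module $X$; then, since $J_A = \ker(\pi_0 A^{\eta} \to \pi_0 A)$ is square-zero, $X$ admits a two-step filtration with subquotients that are $\pi_0 A$-modules, hence discrete $A$-modules, and the pushout square collapses $X \otimes_{A^{\eta}} B^{\eta'}$ to $X \otimes_A B$. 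You instead verify the homotopy-group flatness criterion directly. This works, and the ingredients you cite are the right ones: $\pi_0$ is a left adjoint so it carries the pushout square to a pushout of ordinary rings, Remark \ref{toaster} gives $J_A^2 = 0$, the long exact sequences of the two fiber sequences $M[-1] \to A^{\eta} \to A$ and $N[-1] \to B^{\eta'} \to B$ compare term-by-term via $\otimes_{\pi_0 A}\pi_0 B$ (using $\pi_n B \cong \pi_n A \otimes_{\pi_0 A}\pi_0 B$ and $\pi_n N \cong \pi_n M\otimes_{\pi_0 A}\pi_0 B$), and the local criterion of flatness over a square-zero thickening then yields $\pi_0$-flatness. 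Two details that deserve more than a wave in your write-up: first, to conclude $\pi_n B^{\eta'} \cong \pi_n A^{\eta}\otimes_{\pi_0 A^{\eta}}\pi_0 B^{\eta'}$ you need a five-lemma comparison of the LES of $B^{\eta'}$ with the LES of $A^{\eta}$ tensored (over $\pi_0 A^{\eta}$, not $\pi_0 A$) with the already-established-flat $\pi_0 B^{\eta'}$, using that $\pi_n A$ and $\pi_n M$ are $\pi_0 A$-modules so that $\otimes_{\pi_0 A^{\eta}}\pi_0 B^{\eta'}$ agrees there with $\otimes_{\pi_0 A}\pi_0 B$; second, the isomorphism $J_B \cong J_A\otimes_{\pi_0 A}\pi_0 B$ you extract from the LES has to be matched with the multiplication map appearing in the hypothesis of the local flatness criterion. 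Both are fillable, but the paper's Lazard argument neatly sidesteps them, at the cost of being less transparent about what is happening on homotopy groups.
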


\begin{proof}
Theorem \ref{h2h2} implies that the diagram
$$ \xymatrix{ A^{\eta} \ar[d]^{f'} \ar[r] & A \ar[d]^{f} \\
B^{\eta'} \ar[r] & B, }$$
is a pushout square, so the implication $(2) \Rightarrow (1)$ follows from Proposition \monoidref{urwise}. Conversely, suppose that $f$ is flat. We wish to prove that $f'$ is flat. According to Theorem \monoidref{lazard}, it will suffice to show that for every {\em discrete} $A^{\eta}$-module $X$, the
tensor product $X \otimes_{ A^{\eta} } B^{\eta'}$ is discrete. We can identify $X$ with
a (discrete) module over the ordinary commutative ring $\pi_0 A^{\eta}$. Since $\pi_0 A^{\eta}$ is a square-zero extension of $\pi_0 A$, we deduce that $X$ admits a filtration
$$ 0 \rightarrow X' \rightarrow X \rightarrow X'' \rightarrow 0$$
where the kernel of the map $\pi_0 A^{\eta} \rightarrow \pi_0 A$ acts trivially on $X'$ and $X''$.
It will therefore suffice to prove that $X' \otimes_{ A^{\eta} } B^{\eta'}$ and
$X'' \otimes_{ A^{\eta} } B^{\eta'}$ are discrete. In other words, we can reduce to the case where
$X$ admits the structure of an $A$-module. But in this case, we have a canonical equivalence
$$ X \otimes_{ A^{\eta} } B^{\eta} \simeq (X \otimes_{A} A) \otimes_{ A^{\eta}}  B^{\eta' }
\simeq X \otimes_{A} B,$$
and $X \otimes_{A} B$ is discrete in virtue of our assumption that $f$ is flat (Theorem \monoidref{lazard}).
\end{proof}

\begin{lemma}\label{swiper}
Let $\phi: (\eta: A \rightarrow M )
\rightarrow (\eta': B \rightarrow N)$ be as in Lemma \ref{preturm}.
The following conditions are equivalent:
\begin{itemize}
\item[$(1)$] The map $\phi$ induces an \etale map $f: A \rightarrow B$.
\item[$(2)$] The map $\phi$ induces an \etale map $f': A^{\eta} \rightarrow B^{\eta'}$.
\end{itemize}
\end{lemma}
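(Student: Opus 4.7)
The strategy is to reduce to Lemma \ref{preturm} plus a classical fact about étale morphisms of ordinary commutative rings. Recall that a morphism of $E_{\infty}$-rings is étale if and only if it is flat and induces an étale map on $\pi_{0}$. By Lemma \ref{preturm}, the flatness of $f$ is equivalent to the flatness of $f'$, so it suffices to show that under the (already equivalent) hypothesis of flatness, $\pi_{0}f$ is étale if and only if $\pi_{0}f'$ is étale.

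The key input is that $\pi_{0}A^{\eta} \to \pi_{0}A$ is an honest square-zero extension in the classical sense (and similarly $\pi_{0}B^{\eta'} \to \pi_{0}B$). Indeed, the fiber sequence $M[-1] \to A^{\eta} \to A$ shows that $\pi_{0}A^{\eta} \to \pi_{0}A$ is surjective with kernel a quotient of $\pi_{1}M$, and the fact that $A^{\eta}$ is a square-zero extension in the sense of Definition \ref{sqzero} (together with the computation of multiplication on $A \oplus M$ given in Remark \ref{toaster}) implies that this kernel squares to zero as an ideal in the ordinary ring $\pi_{0}A^{\eta}$. The morphism $\phi$ then gives a commutative square
$$\xymatrix{\pi_{0}A^{\eta} \ar[r]^-{\pi_{0}f'} \ar[d] & \pi_{0}B^{\eta'} \ar[d] \\ \pi_{0}A \ar[r]^-{\pi_{0}f} & \pi_{0}B}$$
in ordinary commutative rings, in which the vertical maps are classical square-zero extensions. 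Moreover, because $f'$ is flat, Corollary \monoidref{siwe} supplies the identification $\pi_{0}B \simeq \pi_{0}B^{\eta'} \otimes_{\pi_{0}A^{\eta}} \pi_{0}A$, so that the square is cocartesian in ordinary rings.

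It now remains to invoke the classical fact: for a square-zero extension $\widetilde{R} \to R$ of ordinary commutative rings and a flat ring homomorphism $\widetilde{R} \to \widetilde{S}$, the map $\widetilde{R} \to \widetilde{S}$ is étale if and only if the induced map $R \to \widetilde{S} \otimes_{\widetilde{R}} R$ is étale. This is a standard result (the étale site is invariant under nilpotent thickenings), and applying it to our situation yields the equivalence of $(1)$ and $(2)$. The only step requiring any care is verifying that the kernel of $\pi_{0}A^{\eta} \to \pi_{0}A$ really is a square-zero ideal in the classical sense; this is the one place where we use the specific structure of $A^{\eta}$ rather than merely the abstract pullback diagram defining it, but it follows immediately from Remark \ref{toaster} once one passes to $\pi_{0}$.
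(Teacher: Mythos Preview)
Your proof is correct and follows essentially the same route as the paper's: reduce the flatness question to Lemma \ref{preturm}, observe that (under flatness of $f'$) the square on $\pi_0$ is a pushout of ordinary rings with the top horizontal map a classical square-zero extension, and then invoke the invariance of étaleness under nilpotent thickenings. The paper's version is terser (it simply asserts the pushout square and the square-zero property without justification), whereas you spell out the square-zero claim via Remark \ref{toaster}; you might also note explicitly that the underlying $E_\infty$-square is a pushout by Theorem \ref{h2h2}, which is what makes Corollary \monoidref{siwe} applicable.
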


\begin{proof}
In view of Lemma \ref{preturm}, either hypothesis guarantees that the map $f'$ is flat.
It follows that we have a pushout diagram
$$ \xymatrix{ \pi_0 A^{\eta} \ar[d]^{f'_0} \ar[r] & \pi_0 A \ar[d]^{f_0} \\
\pi_0 B^{\eta'} \ar[r] & \pi_0 B }$$
in the category of ordinary commutative rings, where $f'_0$ is flat. Since the upper horizontal map
exhibits $\pi_0 A^{\eta}$ as a square-zero extension of $\pi_0 A$, the map $f'_0$ is \etale if and only if $f_0$ is \etalenospace.
\end{proof}

\begin{notation}
We define a subcategory $\CDer^{\mathet} \subseteq \CDer(\EInfty)$ as follows:
\begin{itemize}
\item[$(1)$] A derivation $\eta: A \rightarrow M$ belongs to
$\CDer^{\mathet}$ if and only if $A$ and $M[-1]$ are connective.
\item[$(2)$] Let $\phi: (\eta: A \rightarrow M) \rightarrow
(\eta': B \rightarrow N)$ be a morphism between derivations belonging to
$\CDer^{\mathet}$. Then $\phi$ belongs to $\CDer^{\mathet}$ if and only 
the map $A \rightarrow B$ is \etalenospace, and $\phi$ induces an equivalence
$M \otimes_{A} B \rightarrow N$.
\end{itemize}

We define a full subcategory $\EInfty^{\mathet} \subseteq \EInfty$ as follows:
\begin{itemize}
\item[$(1)$] An object $A \in \EInfty$ belongs to $\EInfty^{\mathet}$ if and only if $A$ is
connective.
\item[$(2)$] A morphism $f: A \rightarrow B$ of connective $E_{\infty}$-rings belongs to
$\EInfty^{\mathet}$ if and only if $f$ is \etalenospace.
\end{itemize}
\end{notation}

\begin{lemma}\label{swide} 
Let $f: \CDer(\EInfty) \rightarrow \EInfty$ denote the forgetful functor
$(\eta: A \rightarrow M) \mapsto A.$ Then $f$ induces a left fibration
$\CDer^{\mathet} \rightarrow \EInfty^{\mathet}$.
\end{lemma}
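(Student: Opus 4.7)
The plan is to show that $\pi\colon \CDer^{\mathet} \to \EInfty^{\mathet}$ is an inner fibration with Kan-complex fibers in which every edge of the base admits a (coCartesian) lift; this is equivalent to being a left fibration.

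First I would unpack the ambient forgetful functor $\CDer(\EInfty) \to \EInfty$, sending $(\eta\colon A \to M) \mapsto A$. Using Theorem \ref{scummer2} to identify $T_{\EInfty}$ with the $\infty$-category of pairs $(A, M)$ with $M \in \Mod_A$, a derivation becomes a triple $(A, M, \eta\colon L_A \to M)$, and the fiber of $\pi$ over $A$ is equivalent to the slice $(\Mod_A)_{L_A/}$. This functor is an inner fibration by construction (being built from fiber products involving the tangent correspondence $\calM^T$, which is a categorical fibration), and the property descends to the restriction on subcategories because every $1$-simplex of an inner horn $\Lambda^n_i$ with $0 < i < n$ already appears in the horn. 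Moreover, a morphism of $\CDer^{\mathet}$ covering the identity of $A$ is required by definition to induce an equivalence $M \otimes_A A \to N$, hence is itself an equivalence in $(\Mod_A)_{L_A/}$; so the fibers of $\pi$ are automatically $\infty$-groupoids.

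The crux is constructing lifts over étale morphisms. Given $\eta\colon A \to M$ in $\CDer^{\mathet}$ and an étale $g\colon A \to B$ in $\EInfty^{\mathet}$, Proposition \ref{etrel} gives $L_{B/A} \simeq 0$, and combining this with the distinguished triangle of Corollary \ref{longtooth} yields an equivalence $L_A \otimes_A B \xrightarrow{\sim} L_B$. I would define the pushforward $g_!\eta\colon L_B \xrightarrow{\sim} L_A \otimes_A B \xrightarrow{\eta \otimes_A B} M \otimes_A B$ and the corresponding derivation $\eta' = (B \to M \otimes_A B)$; the connectivity of $B$ and of $(M \otimes_A B)[-1]$ follows from flatness of $g$, so $\eta' \in \CDer^{\mathet}$. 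The natural morphism $\eta \to \eta'$ in $\CDer(\EInfty)$ — induced by the coCartesian edge of $T_{\EInfty} \to \EInfty$ lifting $g$ starting at $(A, M)$ — lies in $\CDer^{\mathet}$, since its underlying ring map is $g$ and it induces the identity on $M \otimes_A B$.

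To complete the verification, I would show that $\eta \to \eta'$ is $\pi$-coCartesian by checking that the mapping spaces out of $\eta'$ in $\CDer(\EInfty)$ realize the expected universal property: a morphism $\eta' \to \widetilde\eta$ covering some $B \to C$ corresponds to the $C$-algebra structure together with a compatible derivation $L_C \to N$, and the identification $L_B \simeq L_A \otimes_A B$ ensures that this data is equivalent to a morphism $\eta \to \widetilde\eta$ covering the composite $A \to C$. This upgrades the lifts to full outer left horn fillings for $n \geq 2$, since missing edges can be reconstructed from coCartesian factorization. The main obstacle is this universal-property verification; it requires combining the explicit description of $\CDer(\EInfty)$ as a fiber product built from $\calM^T$ with the vanishing of $L_{B/A}$ to translate between $A$-linear and $B$-linear data.
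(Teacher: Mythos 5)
Your strategy---showing $\pi\colon \CDer^{\mathet} \to \EInfty^{\mathet}$ is a coCartesian fibration with $\infty$-groupoid fibers and then invoking the characterization of left fibrations---is genuinely different from the paper's. The paper does not decompose the problem at all: it unwinds the $\Lambda^n_i$-filling problem directly into a lifting problem in the category of marked simplicial sets involving the tangent correspondence $\calM^T(\EInfty)$, marking the $p$-coCartesian edges, and reduces to showing a certain inclusion of marked simplicial sets is marked anodyne. The substantive geometric input is the same in both cases: an \etale $g\colon A\to B$ gives a $p$-coCartesian edge of $T_{\EInfty}$ (equivalently $L_A\otimes_A B \xrightarrow{\sim} L_B$); the paper packages this as Remark \ref{spunk}, you invoke Proposition \ref{etrel}. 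Your route is more explicit about what the coCartesian lift actually is; the paper's is shorter but leans on the marked simplicial set machinery. Both are viable.

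That said, there are two gaps to close. First, your claim that inner-fibration status for $\pi$ descends from the ambient projection ``because every $1$-simplex of an inner horn $\Lambda^n_i$ ($0<i<n$) already appears in the horn'' fails at $n=2$: the horn $\Lambda^2_1$ is missing precisely the edge $\{0,2\}$, which is what the horn filling produces. To repair this you must check that $\CDer^{\mathet}$ is stable under composition; this is true (the ring condition via Remark \ref{swiide}, the module condition by composability of coCartesian edges in $T_{\EInfty}$), but has to be stated.

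Second, the step you flag as ``the main obstacle''---checking that the lift $\eta \to \eta'$ is $\pi$-coCartesian for $\pi$ restricted to the subcategories, rather than merely coCartesian for the ambient projection---is indeed where the work lives. Ambient coCartesianness does not automatically transfer, because the mapping spaces in a non-full subcategory are smaller. What makes it work here is a compatibility between the subcategory conditions and composition: for $g\colon\eta'\to\widetilde\eta$ covering $B\to C$, membership of $g$ in $\CDer^{\mathet}$ coincides with membership of $g\circ(\eta\to\eta')$ together with $B\to C$ lying in $\EInfty^{\mathet}$. The module half is automatic from $N\otimes_B C \simeq (M\otimes_A B)\otimes_B C \simeq M\otimes_A C$; the \etale half is exactly Remark \ref{swiide} (any map between \etale $A$-algebras is \etalenospace). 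You do not cite that Remark, but without it the universal-property argument does not close. With these two points supplied, your alternative route is sound.
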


\begin{proof}
Fix $0 < i \leq n$; we must show that every lifting problem of the form
$$ \xymatrix{ \Lambda^{n}_{i} \ar@{^{(}->}[d] \ar[r] & \CDer^{\mathet} \ar[d] \\
\Delta^{n} \ar@{-->}[ur] \ar[r] & \EInfty^{\mathet} }$$
admits a solution. Unwinding the definitions, we can identify this with a mapping problem
$$ \xymatrix{ ( (\Lambda^{n}_{i})^{\sharp} \times (\Delta^1)^{\flat} ) \coprod_{ (\Lambda^{n}_{i})^{\sharp} \times \{0\}^{\flat} }
( (\Delta^n)^{\sharp} \times \{0\}^{\flat} ) \ar[r]^-{g} \ar@{^{(}->}[d]^{i} & \calM( \EInfty) \ar[d]^{p} \\
(\Delta^n)^{\sharp} \times (\Delta^1)^{\flat} \ar[r] \ar@{-->}[ur] & (\EInfty \times \Delta^2)^{\sharp} }$$
in the category of marked simplicial sets (see \S \toposref{twuf}); here $\calM(\EInfty)^{\natural}$ denotes the marked simplicial set $( \calM(\EInfty), \calE)$, where $\calM(\EInfty)$ is a tangent correspondence to $\EInfty$ and $\calE$ is the class of
$p$-coCartesian morphisms in $\CMod(\Spectra)$. Here the fact that $g$ preserves
marked edges follows from Remark \ref{spunk}.
Using the dual of Proposition \toposref{dubudu}, we are reduced to showing that $i^{op}$ is a marked anodyne map. In view of Proposition \toposref{markanodprod}, it will suffice to show that the inclusion $(\Lambda^{n}_{n-i})^{\sharp} \subseteq
(\Delta^{n})^{\sharp}$ is marked anodyne, which follows easily from Definition \toposref{markanod}.
\end{proof}

\begin{lemma}\label{trupe}
Let $f: \CDer(\EInfty) \rightarrow \EInfty$ be as in Lemma \ref{swide}, and let
$\eta: A \rightarrow M$ be an object of $\CDer^{\mathet}$.
Then $f$ induces a trivial Kan fibration from
$\CDer^{\mathet}_{\eta/}$ to the $\infty$-category of \etale commutative $A$-algebras.
\end{lemma}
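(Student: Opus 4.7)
The plan is to use that $f$ restricts to a left fibration $p: \CDer^{\mathet} \to \EInfty^{\mathet}$ (Lemma \ref{swide}) together with the vanishing of $L_{B/A}$ for \etale morphisms (Proposition \ref{etrel}). Let $q: \CDer^{\mathet}_{\eta/} \to (\EInfty^{\mathet})_{A/}$ be the map induced by $p$ on slices. Since $p$ is a left fibration, so is $q$, by general facts about slicing left fibrations. A left fibration is a trivial Kan fibration if and only if each of its fibers is a contractible Kan complex, so it suffices to fix an \etale morphism $g: A \to B$ and prove that the fiber $q^{-1}(g)$ is contractible.

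The key step will be to identify $q^{-1}(g)$ with a cotangent-complex mapping space. Unwinding the definitions of $\CDer(\EInfty)$ and of $\CDer^{\mathet}$, a vertex of $q^{-1}(g)$ consists of a derivation $\eta': B \to N$ together with a morphism $\eta \to \eta'$ in $\CDer(\EInfty)$ lying over $g$ and inducing an equivalence $M \otimes_A B \xrightarrow{\sim} N$. Using that morphisms in $T_{\EInfty}$ over $g$ correspond via the coCartesian structure to $B$-linear maps out of $M \otimes_A B$, we may contract out the equivalence and identify $q^{-1}(g)$ with the space of $B$-linear maps $\eta': L_B \to M \otimes_A B$ whose composition with the canonical map $L_A \otimes_A B \to L_B$ agrees with the base change $\eta \otimes_A \id_B$ of $\eta$. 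Equivalently, $q^{-1}(g)$ is the homotopy fiber of
$$ \bHom_B(L_B, M \otimes_A B) \longrightarrow \bHom_B(L_A \otimes_A B, M \otimes_A B) $$
over the point $\eta \otimes_A \id_B$.

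Finally, the cofiber sequence $L_A \otimes_A B \to L_B \to L_{B/A}$ in $\Mod_B$ identifies this homotopy fiber with $\bHom_B(L_{B/A}, M \otimes_A B)$. Since $g$ is \etalenospace, Proposition \ref{etrel} gives $L_{B/A} \simeq 0$, so this mapping space is contractible, as desired. The main obstacle is the identification of $q^{-1}(g)$ with the cotangent-complex fiber above, which requires careful bookkeeping with morphisms in the tangent correspondence $\calM^T$ and the coCartesian structure of $T_{\EInfty} \to \EInfty$; once that identification is in place, the conclusion is immediate from the vanishing of the relative cotangent complex for \etale maps.
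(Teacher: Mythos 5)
Your argument is correct but longer than necessary, and it omits one ingredient. The paper's proof is a one-liner: if $p\colon X \to S$ is a left fibration and $x$ is a vertex of $X$, then the induced map $X_{x/} \to S_{p(x)/}$ is \emph{automatically} a trivial Kan fibration, since the lifting problems against $\partial \Delta^n \hookrightarrow \Delta^n$ translate (after forming cones) into lifting problems against $\{0\}\ast\partial\Delta^n = \Lambda^{n+1}_0 \hookrightarrow \Delta^{n+1}$, which any left fibration solves. Thus Lemma \ref{swide} already yields the trivial Kan fibration property of your map $q$ with no need to examine its fibers. Your identification of $q^{-1}(g)$ with $\bHom_B(L_{B/A}, M\otimes_A B)$ is valid and conceptually illuminating, but note that it invokes Proposition \ref{etrel} a second time: that proposition is already what drives Lemma \ref{swide} (via Remark \ref{spunk}, which says the cotangent complex functor carries \etale morphisms to $p$-coCartesian edges). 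The formal slicing fact lets one avoid re-proving the fiber contractibility.

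The omission is in the codomain. You construct a trivial Kan fibration $q \colon \CDer^{\mathet}_{\eta/} \to (\EInfty^{\mathet})_{A/}$, but the statement requires a trivial Kan fibration onto the $\infty$-category of \etale commutative $A$-algebras, i.e.\ the \emph{full} subcategory of $(\EInfty)_{A/}$ spanned by the \etale $A$-algebras. The two targets have the same objects, but $(\EInfty^{\mathet})_{A/}$ is a priori a non-full subcategory: $\EInfty^{\mathet}$ keeps only the \etale morphisms. To complete the proof one must also cite Remark \ref{swiide}, which says any morphism between \etale $A$-algebras is automatically \etale, so that the two candidate codomains coincide. This is precisely the second ingredient the paper invokes; without it, the conclusion does not match the statement of the lemma.
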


\begin{proof}
Combine Lemma \ref{swide} with Remark \ref{swiide}.
\end{proof}

\begin{proof}[Proof of Proposition \ref{wisert}]
Any square-zero extension $\widetilde{A} \rightarrow A$ is associated to some
derivation $(\eta: A \rightarrow M) \in \CDer^{\mathet}$. Let
$\Phi: \CDer(\EInfty) \rightarrow \Fun( \Delta^1, \EInfty)$ be the functor defined
in Notation \ref{stubble}. 
Let $\Phi_0, \Phi_1: \CDer(\EInfty) \rightarrow \EInfty$ denote the composition of $\Phi$ with evaluation
at the vertices $\{0\}, \{1\} \in \Delta^1$. The functors $\Phi_0$ and $\Phi_1$ induce maps
$$ (\EInfty)_{\widetilde{A}/} \stackrel{\Phi'_0}{\leftarrow} \CDer^{\mathet}_{\eta/}
\stackrel{\Phi'_1}{\rightarrow} (\EInfty)_{A/}.$$
Moreover, the functor $\Phi$ exhibits $\Phi'_1$ as equivalent to the composition of
$\Phi'_0$ with the relative tensor product $\otimes_{ \widetilde{A} } A$. Consequently, it will suffice to prove the following:
\begin{itemize}
\item[$(1)$] The functor $\Phi'_0$ is fully faithful, and its essential image consists precisely of the
\etale commutative $\widetilde{A}$-algebras.
\item[$(2)$] The functor $\Phi'_1$ is fully faithful, and its essential image consists precisely of the \etale commutative $A$-algebras.
\end{itemize}
Assertion $(2)$ follows from Lemma \ref{trupe}, and assertion $(1)$ follows by combining
Proposition \ref{swum}, Lemma \ref{swiper}, and Remark \ref{swiide}.
\end{proof}

Let $A$ be an $E_{\infty}$-ring, let $B$ and $C$ be commutative $A$-algebras, and let
$\phi$ denote the canonical map
$ \bHom_{ (\EInfty)_{A/} }( B, C) \rightarrow \bHom_{ (\EInfty)_{ \pi_0 A/ }}( \pi_0 B, \pi_0 C)$.
Theorem \ref{turncoat} implies that $\phi$ is a homotopy equivalence if $B$ and $C$ are \etale over $A$. In fact, the assumption that $C$ is \etale over $A$ is superfluous:

\begin{proposition}\label{urvine}
Let $f: A \rightarrow B$ be an \etale map of $E_{\infty}$-rings, and let $C$ be an
\etale commutative $A$-algebra. Then the canonical map
$$ \bHom_{(\EInfty)_{A/}}(B, C) \rightarrow \bHom_{ (\EInfty)_{\pi_0 A/} }( \pi_0 B, \pi_0 C)$$
is a homotopy equivalence. In particular, 
$\bHom_{ (\EInfty)_{A/} }(B,C)$ is homotopy equivalent to a discrete space.
\end{proposition}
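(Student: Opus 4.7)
The plan is to combine the vanishing of the relative cotangent complex $L_{B/A}$ (Proposition \ref{etrel}) with the convergence of the Postnikov tower of $C$ and the fact that each step in the tower is a square-zero extension (Corollary \ref{subte}). Intuitively, $L_{B/A} = 0$ forces the functor $\bHom_{(\EInfty)_{A/}}(B, -)$ to be rigid across square-zero extensions of the target, so only the discrete truncation $\pi_0 C$ can contribute.

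First I would reduce to the case where $A$, $B$, and $C$ are all connective. If $A$ is not connective, I replace it by its connective cover $A' = \tau_{\geq 0} A$ and use Remark \ref{unter} to find a unique \'etale $A'$-algebra $B'$ with $B \simeq B' \otimes_{A'} A$; since $\pi_0 A' = \pi_0 A$ and $\pi_0 B' = \pi_0 B$, both sides of the statement are unchanged upon passing from $(A,B)$ to $(A',B')$ and viewing $C$ as an $A'$-algebra. Once $A$ is connective, $B$ is connective by flatness, so $\bHom_{(\EInfty)_{A/}}(B, -)$ sends the connective cover map $\tau_{\geq 0} C \to C$ to an equivalence (using that $\tau_{\geq 0}$ is right adjoint to the inclusion of connective $E_{\infty}$-rings, together with the connectivity of both $A$ and $B$), allowing me also to assume $C$ is connective.

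With $C$ connective, Proposition \ref{sumptine} gives $C \simeq \lim_n \tau_{\leq n} C$, and as a representable functor $\bHom_{(\EInfty)_{A/}}(B, -)$ preserves this limit. It then suffices to establish two things: (a) each Postnikov step $\tau_{\leq n} C \to \tau_{\leq n-1} C$ induces an equivalence on mapping spaces out of $B$, and (b) $\bHom_{(\EInfty)_{A/}}(B, \pi_0 C) \simeq \bHom_{(\EInfty)_{\pi_0 A/}}(\pi_0 B, \pi_0 C)$. For (a), Corollary \ref{subte} realizes each step as a square-zero extension $\widetilde{D} = D^{\eta} \to D$, which by construction is a pullback $D \times_{D \oplus M} D$ in $(\EInfty)_{A/}$ for a suitable $A$-module $M$; after applying $\bHom_{(\EInfty)_{A/}}(B, -)$, the homotopy fiber of the projection $\bHom_{(\EInfty)_{A/}}(B, D \oplus M) \to \bHom_{(\EInfty)_{A/}}(B, D)$ over any $g : B \to D$ is the space of relative derivations $\bHom_{\Mod_B}(L_{B/A}, M)$ (with $M$ viewed as a $B$-module via $g$, using Theorem \ref{subbe}), which is contractible since $L_{B/A} = 0$. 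Hence both projections in the pullback become equivalences and the pullback collapses to $\bHom_{(\EInfty)_{A/}}(B, D)$, giving (a). For (b), the discreteness of $\pi_0 C$ forces its $A$-algebra structure to factor through $\pi_0 A$; combined with $B \otimes_A \pi_0 A \simeq \pi_0 B$ (from flatness, via Corollary \monoidref{siwe}), the identification follows by adjunction, and since the target is a set of ring homomorphisms between discrete rings the mapping space is discrete as claimed.

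The main obstacle I anticipate is checking the square-zero step carefully: specifically, that the two natural sections $D \rightrightarrows D \oplus M$ (the split inclusion and the one induced by $\eta$) become identified after applying $\bHom_{(\EInfty)_{A/}}(B, -)$, so that the pullback really does collapse to a single copy of $\bHom_{(\EInfty)_{A/}}(B, D)$. This holds formally because once the projection $\bHom(B, D \oplus M) \to \bHom(B, D)$ is known to be an equivalence, any two sections must agree up to canonical homotopy; concretely, the two sections differ by a relative derivation classified by a map $L_{B/A} \otimes_B D \to M$, which vanishes by Proposition \ref{etrel}.
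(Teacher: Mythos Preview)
Your proposal is correct and follows essentially the same approach as the paper: reduce to the connective case via connective covers, then use convergence of the Postnikov tower of $C$ together with the vanishing of $L_{B/A}$ to show that each square-zero stage is transparent to $\bHom_{(\EInfty)_{A/}}(B,-)$, and finally identify the bottom stage with the discrete set of $\pi_0 A$-algebra maps. The paper organizes the middle step slightly differently, abstracting to a class of ``good'' morphisms (closed under pullback and sequential limits) rather than arguing directly with the pullback square $D^\eta \simeq D \times_{D\oplus M} D$, but the content is the same and your worry about the two sections collapsing is resolved exactly as you indicate.
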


\begin{remark}
Let $A$ be an $E_{\infty}$-ring, and suppose we are given a map $f: \pi_0 A \rightarrow B'$ in the category of ordinary commutative rings. We can then consider the problem of trying to find
a commutative $A$-algebra $B$ such that $\pi_0 B$ is isomorphic to $B'$ (as a $\pi_0 A$-algebra).
In general, there exist many choices for $B$. There are (at least) two different ways to narrow our selection:

\begin{itemize}
\item[$(i)$] If $f$ is a flat map, then we can demand that $B$ be {\em flat} over $A$. In this case, the homotopy groups of $B$ are determined by the homotopy groups of $A$. Consequently, we have good understanding of mapping spaces
$\bHom_{ (\EInfty)_{A/} }(C, B)$ with {\em codomain} $B$, at least when $C$ is a free $A$-algebra.

\item[$(ii)$] We can demand that the canonical map
$$ \bHom_{ ( \EInfty)_{A/} }( B, C) \rightarrow \bHom_{ ( \EInfty)_{ \pi_0 A/}}( B', \pi_0 C)$$
be a homotopy equivalence for every commutative $A$-algebra $C$. In this case, we have a good understanding of the mapping spaces $\bHom_{ ( \EInfty)_{A/} }( B, C)$ with {\em domain} $B$.
\end{itemize}

It is clear that property $(ii)$ characterized $B$ up to equivalence. If $f$ is \etale, then
Proposition \ref{urvine} asserts that $(i) \Rightarrow (ii)$. Moreover, Theorem \ref{turncoat}
implies the existence of an $A$-algebra $B$ satisfying $(i)$. We therefore have an example satisfying both $(i)$ and $(ii)$; since property $(ii)$ characterizes $B$ up to equivalence, we conclude that
$(i) \Rightarrow (ii)$ (at least when $f$ is \etalenospace). The equivalence of $(i)$ and $(ii)$ makes the theory of \etale extensions of $E_{\infty}$-rings extremely well-behaved.
\end{remark}

\begin{proof}[Proof of Proposition \ref{urvine}]
Let $A_0$, $B_0$, and $C_0$ be connective covers of $A$, $B$, and $C$, respectively.
We have a pushout diagram
$$ \xymatrix{ A_0 \ar[r] \ar[d]^{f_0} & A \ar[d]^{f} \\
B_0 \ar[r] & B }$$
where $f_0$ is \etale (see Remark \ref{unter}). It follows that the induced maps
$$ \bHom_{ (\EInfty)_{A/} }(B, C) \rightarrow \bHom_{ (\EInfty)_{A_0/}}(B_0, C) \leftarrow
\bHom_{ ( \EInfty)_{A_0/} }(B_0, C_0)$$
are homotopy equivalences. We may therefore replace $A$, $B$ and $C$ by their connective covers, and thereby reduce to the case where $A$, $B$, and $C$ are connective.

We have a commutative diagram
$$ \xymatrix{ & \bHom_{ (\EInfty)_{A/} }( B, \pi_0 C) \ar[dr]^{\psi} & \\
\bHom_{ (\EInfty)_{A/} }( B, C) \ar[ur]^{\phi} \ar[rr] & & \bHom_{ (\EInfty)_{\pi_0 A/ }}( \pi_0 B, \pi_0 C) }$$
where the map $\psi$ is a homotopy equivalence. It will therefore suffice to show that $\phi$ is a homotopy equivalence.

Let us say that a map $g: D \rightarrow D'$ of commutative $A$-algebras is {\em good} if
the induced map $\phi_{g}: \bHom_{ ( \EInfty)_{A/} }(B, D) \rightarrow \bHom_{ ( \EInfty)_{A/} }(B,D')$
is a homotopy equivalence. Equivalently, $g$ is good if $e_{B}(g)$ is an equivalence, where
$e_{B}: ( \EInfty)_{A/} \rightarrow \SSet$ is the functor corepresented by $B$. We wish to show
that the truncation map $C \rightarrow \pi_0 C$ is good. We will employ the following chain of reasoning:

\begin{itemize}
\item[$(a)$] Let $D$ be a commutative $A$-algebra, let $M$ be a $D$-module, and let
$g: D \oplus M \rightarrow D$ be the projection. For every map of commutative $A$-algebras
$h: B \rightarrow D$, the homotopy fiber of $\phi_{g}$ over the point $h$ can be identified with
$\bHom_{ \Mod_{B} }( L_{B/A}, M) \simeq \bHom_{ \Mod_{D}}( L_{B/A} \otimes_{B} D, M)$.
Since $f$ is \etale, the relative cotangent complex $L_{B/A}$ vanishes (Proposition \ref{etrel}), so the homotopy fibers of $\phi_{g}$ are contractible. It follows that $\phi_{g}$ is a homotopy equivalence, so that $g$ is good.

\item[$(b)$] The collection of good morphisms is stable under pullback. This follows immediately from the observation that $e_{B}$ preserves limits.

\item[$(c)$] Any square-zero extension is good. This follows from $(a)$ and $(b)$.

\item[$(d)$] Suppose given a sequence of good morphisms
$$ \ldots D_{2} \rightarrow D_{1} \rightarrow D_0.$$
Then the induced map $\lim \{ D_i \} \rightarrow D_0$ is good. This follows again from the observation that $e_{B}$ preserves limits.

\item[$(e)$] For every connective $A$-algebra $C$, the truncation map $C \rightarrow \pi_0 C$
is good. This follows by applying $(d)$ to the Postnikov tower
$$ \ldots \rightarrow \tau_{\leq 2} C \rightarrow \tau_{\leq 1} C \rightarrow \tau_{\leq 0} C \simeq \pi_0 C,$$
which is a sequence of square-zero extensions by Corollary \ref{subte} (it follows from Proposition \ref{sumptine} that the limit of this tower is indeed equivalent to $C$).
\end{itemize}
\end{proof}


\begin{thebibliography}{99}

\bibitem{giraud} Artin, M. {\it Th\'{e}orie des topos et cohomologie
\'{e}tale des sch\'{e}mas.} SGA 4. Lecture Notes in Mathematics
269, Springer-Verlag, Berlin and New York, 1972.

\bibitem{artinmazur} Artin, M. and B. Mazur. {\it \'{E}tale Homotopy.} Lecture Notes in Mathematics 100, Springer-Verlag, Berlin and New York, 1969.

\bibitem{basterra} Basterra, M. {\it \Andre-Quillen cohomology of commutative $S$-algebras.} Journal of Pure and Applied Algebra 144 (1999) no. 2, 111-143.

\bibitem{virtual} Behrend, K. and B. Fantechi. {\it The intrinsic
normal cone.} Inventiones Mathematicae 128 (1997) no. 1, 45-88.

\bibitem{BBD} Beilinson, A. , Bernstein, J. and P. Deligne.
{\it Faisceaux pervers.} Asterisuqe 100, Volume 1, 1982.

\bibitem{bergner} Bergner, J.E. {\it A Model Category Structure on the Category of Simplicial Categories.} Transactions of the American Mathematical Society 359 (2007), 2043-2058.

\bibitem{bergner2} Bergner, J.E. {\it A survey of $(\infty,1)$-categories.} Available at math.AT/0610239

\bibitem{bergner3} Bergner, J.E. {\it Rigidification of algebras over multi-sorted theories.}
Algebraic and Geometric Topoogy 7, 2007.

\bibitem{bergner4} Bergner, J.E. {\it Three models for the homotopy theory of homotopy theories,} Topology 46 (2007), 397-436.

\bibitem{LKM} Bosch, Guntzer, U., and R. Remmert, R. {\it Non-Archimedean Analysis: a Systematic Approach to Rigid Analytic Geometry.} Springer-Verlag, Berlin and Heidelberg, 1984.

\bibitem{bousfieldkan} Bousfield, A.K. and D.M. Kan. {\it Homotopy
limits, completions, and localizations.} Lecture Notes in
Mathematics 304, Springer-Verlag, 1972.

\bibitem{cismoer} Cisinski, D-C and I. Moerdijk. {\it Dendroidal sets as models for homotopy operads.} Available for download as arXiv:0902.1954v1.

\bibitem{combmodel} Dugger, D. {\it Combinatorial model categories have presentations.} Advances in Mathematics 164, 2001, 177-201.

\bibitem{eilenbergsteenrod} Eilenberg, S. and N.E. Steenrod. {\it Axiomatic approach to homology theory.} Proc. Nat. Acad. Sci. U.S.A. 31, 1945, 117-120.

\bibitem{eisenbud} Eisenbud, D. {\it Commutative algebra.} Springer-Verlag, New York, 1995. 

\bibitem{EKMM} Elmendorf, A.D., Kriz, I. , Mandell, M.A., and J.P.
May. {\it Rings, modules and algebras in stable homotopy theory.}
Mathematical Surveys and Monographs 47, American Mathematical
Society, 1997.

\bibitem{bezout} Fulton, W. {\it Algebraic curves.} W.A.
Benjamin, Inc., New York, 1969.

\bibitem{goerssjardine} Goerss, P. and J.F. Jardine. {\it Simplicial Homotopy Theory.} Progress in Mathematics, Birkhauser, Boston, 1999.

\bibitem{goodwillie} Goodwillie, T. {\it Calculus III: Taylor Series.} Geometry and Topology, Volume 7 (2003) 645-711.

\bibitem{stein} Grauert, H. and R. Remmert. {\it Theory of Stein Spaces.} Springer-Verlag, Berlin Heidelberg, 2004.

\bibitem{gunning} Gunning, R. and H. Rossi. {\it Analytic functions of several complex variables.} Prentice-Hall, Englewood Cliffs, N.J, 1965.

\bibitem{hatcher} Hatcher, A. {\it Algebraic Topology}. Cambridge University Press, 2002.

\bibitem{bordism} Hook, E.C. {\it Equivariant cobordism and
duality.} Transactions of the American Mathematical Society 178
(1973) 241-258.

\bibitem{stablemodel} Hovey, M. {\it Model Categories.}
Mathematical Surveys and Monographs 63, AMS, Providence, RI, 1999.

\bibitem{symmetricspectra} Hovey, M., Shipley, B. and J. Smith. {\it Symmetric spectra.} Journal of the American Mathematical Society 13, 2000, no. 1, 149-208.

\bibitem{illusie} Illusie, L. {\it Complexe cotangent et d\'{e}formations
I}. Lecture Notes in Mathematics 239, Springer-Verlag, 1971.

\bibitem{illusie2} Illusie, L. {\it Complexe cotangent et d\'{e}formations
II}. Lecture Notes in Mathematics 283, Springer-Verlag, 1972.

\bibitem{mainjoyal} Joyal, A. {\it Notes on quasi-categories.}

\bibitem{joyalsimp} Joyal, A. {\it Simplicial categories vs. quasi-categories.}

\bibitem{joyalt} Joyal, A. and M. Tierney. {\it Quasi-categories vs. Segal Spaces.} Preprint available at 
math.AT/0607820. 

\bibitem{kerz} Kerz, M. {\it The complex of words and Nakaoka stability.} Homology, Homotopy and Applications, volume 7(1), 2005, pp. 77-85.

\bibitem{knutson} Knutson, D. {\it Algebraic spaces.} Lecture
Notes in Mathematics 203, Springer-Verlag, 1971.

\bibitem{categoricalring} Laplaza, M. {\it Coherence for
distributivity.} Coherence in categories, 29-65. Lecture Notes in
Mathematics 281, Springer-Verlag, 1972.

\bibitem{stacks} Laumon, G. and L. Moret-Bailly. {\it Champs
algebriques.} Springer-Verlag, 2000.

\bibitem{lazard} Lazard, Daniel. {\it Sur les modules plats.} C.R.
Acad. Sci. Paris 258, 1964, 6313-6316.

\bibitem{topoi} Lurie, J. {\it Higher Topos Theory.} Available for download at http://www.math.harvard.edu/~lurie/ .

\bibitem{DAGStable} Lurie, J. {\it Derived Algebraic Geometry I: Stable $\infty$-Categories.} Available for download.

\bibitem{monoidal} Lurie, J. {\it Derived Algebraic Geometry II: Noncommutative Algebra.} Available for download.

\bibitem{symmetric} Lurie, J. {\it Derived Algebraic Geometry III: Commutative Algebra.} Available for download.

\bibitem{deformation} Lurie, J. {\it Derived Algebraic Geometry IV: Deformation Theory.} Available for download.

\bibitem{structured} Lurie, J. {\it Derived Algebraic Geometry V: Structured Spaces.} Available for download.

\bibitem{spectral} Lurie, J. {\it Derived Algebraic Geometry VI: Spectral Schemes.} In preparation.

\bibitem{derivative} Lurie, J. {\it $(\infty,2)$-Categories and the Goodwillie Calculus I.} Available for download.

\bibitem{calculus} Lurie, J. {\it $(\infty,2)$-categories and the Goodwillie Calculus II.} In preparation.

\bibitem{elliptic1} Lurie, J. {\it Elliptic curves in spectral algebraic geometry.} In preparation.

\bibitem{ellipticloop} Lurie, J. {\it Toric varieties, elliptic cohomology at infinity, and loop group representations.} In preparation.

\bibitem{maclane} MacLane, S. {\it Categories for the Working Mathematician.} Second edition. Graduate Txts in Mathematics, 5. Springer-Verlag, New York, 1998.

\bibitem{gabriel} Mitchell, B. {\it A quick proof of the
Gabriel-Popesco theorem.} Journal of Pure and Applied Algebra 20
(1981), 313-315.

\bibitem{neeman} Neeman, A. {\it Triangulated categories.} Annals
of Mathematics Studies, 148. Princeton University Press, 2001.

\bibitem{homotopicalalgebra} Quillen, D. {\it Homotopical Algebra.} Lectures Notes in Mathematics 43, SpringerÐVerlag, Berlin, 1967. 

\bibitem{completesegal} Rezk, C. {\it A model for the homotopy theory of homotopy theory.} Transactions of the American Mathematical Society 35 (2001), no. 3, 973-1007.

\bibitem{homotopyvarieties} Rosicky, J. {\it On Homotopy Varieties.} Advances in Mathematics
214, 2007 no. 2, 525-550.

\bibitem{schwede} Schwede, S. {\it Spectra in model categories and applications to the algebraic
cotangent complex.} Journal of Pure and Applied Algebra 120 (1997), pp.
77-104.

\bibitem{monmod} Schwede, S. and B. Shipley. {\it Algebras and Modules in Monoidal Model Categories.} Proceedings of the London Mathematical Society (80) 2000, 491-511.

\bibitem{schwedeshipley} Schwede, S. and B. Shipley. {\it Stable model categories are categories of modules.} Topology 42, 2003, no. 1, 103-153.

\bibitem{intersection} Serre, Jean-Pierre. {\it Local algebra.}
Springer-Verlag, 2000.

\bibitem{shipley} Shipley, B. {\it A Convenient Model Category for Commutative Ring Spectra.} Homotopy theory: relations with algebraic geometry, group cohomology, and algebraic $K$-theory. Contemp. Math. volume 346 pp. 473-483, American Mathematical Society, Providence, RI, 2004. 

\bibitem{spivak} Spivak, D. {\it Quasi-smooth Derived Manifolds.} PhD dissertation.

\bibitem{srinivas} Srinivas, V. {\it Algebraic K-Theory.} Birkhauser, Boston, 1993.

\bibitem{toen} To\"{e}n, B. {\it Champs affines.} Available for
download: math.AG/0012219.

\bibitem{toenchar} To\"{e}n, B. {\it Vers une axiomatisation de la th\'{e}orie des cat\'{e}gories sup\'{e}riures.} K-theory 34 (2005), no. 3, 233-263.

\bibitem{toen2} To\"{e}n, B. and G. Vezzosi. {\it From HAG to DAG:
derived moduli stacks.} Available for download: math.AG/0210407.

\bibitem{toen3} To\"{e}n, B. and G. Vezzosi. {\it Algebraic
geometry over model categories.} Available for download:
math.AG/0110109.

\bibitem{toen4} To\"{e}n, B. and G. Vezzosi. {\it ``Brave New''
Algebraic Geometry and global derived moduli spaces of ring
spectra.} Available for download: math.AT/0309145.

\bibitem{toen5} To\"{e}n, B. and G. Vezzosi. {\it Segal topoi and
stacks over Segal categories.} Available for download:
math.AG/0212330.

\bibitem{toenK} To\"{e}n, B. and G. Vezzosi. {\it A remark on K-theory and S-categories.} Topology 43, No. 4 (2004), 765-791


\bibitem{verity} Verity, D. {\it Weak complicial sets, a simplicial weak omega-category
theory. Part I: basic homotopy theory.} 

\bibitem{verity2} Verity, D. {\it Weak complicial sets, a simplicial weak omega-category
theory. Part II: nerves of complicial Gray-categories.}

\bibitem{weibel} Weibel, C. {\it An Introduction to Homological Algebra.} Cambridge University Press, 1995.

\end{thebibliography}
\end{document}